\theoremstyle{plain}
\newtheorem{thm}{Theorem}[section]
\newtheorem{lem}[thm]{Lemma}
\theoremstyle{definition}
\newtheorem{op}[thm]{Open Problem}
\newtheorem{rmk}[thm]{Remark}
\numberwithin{equation}{section}
\numberwithin{figure}{section}
\numberwithin{table}{section}
\newtheorem{conj}{Conjecture}
\newcommand{\M}{\operatorname{M}}
\newcommand{\Hf}{\operatorname{H}}
\newcommand{\wt}{\operatorname{wt}}
\newcommand{\Vol}{\operatorname{Vol}}
\begin{document}

\title{Tiling Enumeration of Hexagons with Off-central Holes}

\author{Mihai Ciucu}
\address{Department of Mathematics, Indiana University, Bloomington, IN 47405. USA}
\email{mciucu@indiana.edu}
\thanks{M.C. was supported in part by NSF grants DMS-1101670 and DMS-1501052.}

\author{Tri Lai}
\address{Department of Mathematics, University of Nebraska -- Lincoln, Lincoln, NE 68588, USA}
\email{tlai3@unl.edu}
\thanks{T.L. was supported in part by Simons Foundation Collaboration Grant (\# 585923) and by the Institute for Mathematics and its Applications with funds provided by NSF grant DMS-0931945.}

\subjclass[2010]{05A15,  05B45}

\keywords{Perfect matchings, lozenge tilings, dual graph, Kuo condensation, plane partitions}

\date{\today}

\dedicatory{}

\begin{abstract}
Motivated in part by Propp's intruded Aztec diamond regions, we consider hexagonal regions out of which two horizontal chains of triangular holes (called ferns) are removed, so that the chains are at the same height, and are attached to the boundary. By contrast with the intruded Aztec diamonds (whose number of domino tilings contain some large prime factors in their factorization), the number of lozenge tilings of our doubly-intruded hexagons turns out to be given by simple product formulas in which all factors are linear in the parameters. We present in fact $q$-versions of these formulas, which enumerate the corresponding plane-partitions-like structures by their volume. We also pose some natural statistical mechanics questions suggested by our set-up, which should be possible to tackle using our formulas.
\end{abstract}

\maketitle

\section{Introduction}

A large part of the intensity with which tilings of lattice regions have been studied in the last few decades has its origin in MacMahon's century-old theorem \cite{Mac} on the enumeration of plane partitions that fit in a given rectangular box (see \cite{And}\cite{Sta}\cite{Kup}\cite{Ste}\cite{KKZ} and the survey \cite{Bres} for more recent developments). In an equivalent formulation (see \cite{DT}), this states that the number of lozenge tilings of a semi-regular hexagon\footnote{ We consider the triangular lattice drawn so that one family of lattice lines is horizontal. A lozenge is the union of any two unit triangles that share an edge, and a lozenge tiling of a lattice region is a covering of it by lozenges, with no gaps or overlaps.}
of side-lengths $a$, $b$, $c$, $a$, $b$, $c$ (in cyclic order)
on the triangular lattice is given by the compelling formula
\begin{equation}\label{MacMahoneq}
P(a,b,c):=\frac{\Hf(a)\Hf(b)\Hf(c)\Hf(a+b+c)}{\Hf(a+b)\Hf(b+c)\Hf(c+a)},
\end{equation}
where $\Hf(n)$ (the hyperfactorial) is given by
\begin{equation*}
\Hf(n):=0!\,1!\,2!\cdots(n-1)!
\end{equation*}

The elegance of this formula invites one to search for extensions of it, and this led to our previous work \cite{pp1}, in which we considered triangular gaps along the vertical symmetry axis of the hexagon (this in turn led to the connections to electrostatics worked out in \cite{sc} and \cite{ov}), and more recently to \cite{ff} and \cite{fv}, in which we removed from the center of the hexagon four-lobed or multi-lobed structures called shamrocks and ferns, respectively. The number of lozenge tilings of each of these regions was seen to be given by a simple product formula. The papers \cite{Tri1} and \cite{Tri2} provide $q$-enumerations for lozenge tilings of hexagons from which three two-lobed structures, called bowties, respectively a shamrock, are removed next to the boundary.

In this paper we consider lattice hexagons in which ``intrusions'' (reminiscent of Propp's intruded Aztec diamonds described in \cite{Propp}) are made by two ferns lined up along a common horizontal lattice line. Again, the resulting regions turn out to have their lozenge tilings enumerated by simple product formulas (this does not seem to be the case for the intruded Aztec diamonds, whose number of domino tilings contain some large prime factors in their factorization). We now describe in detail our regions and state the formulas. It will help the exposition to first consider the unweighted case.

\section{Statement of unweighted tiling enumeration results}

Let $s(b_1,b_2,\dotsc,b_l)$ denote the number of lozenge tilings of the semihexagonal region $S(b_1,b_2,\dotsc,b_l)$ with the leftmost $b_1$ up-pointing unit triangles on its base removed, the next segment of length $b_2$ intact, the following $b_3$ removed, and so on (see an illustration in Figure\ref{semihexmultiple}; note that the $b_i$'s, together with the requirement that the region contains an equal number of unit triangles of the two orientations --- a necessary condition for the existence of tilings --- determine the lengths of all four sides of the semihexagon). By the Cohn-Larsen-Propp \cite{CLP} interpretation of the Gelfand-Tsetlin result \cite{GT} we have that\footnote{ The first equality in (2.1) holds due to forced lozenges in the tilings of $S(b_1,b_2,\dotsc,b_{2l})$, after whose removal one is left precisely with the region $S(b_1,b_2,\dotsc,b_{2l-1})$.}${}^{,}$\footnote{ We include here the original formula for convenience. Let $T_{m,n}(x_1,\dotsc,x_n)$ be the region obtained from the trapezoid of side lengths $m$, $n$, $m+n$, $n$ (clockwise from top) by removing the up-pointing unit triangles from along its bottom that are in positions $x_1,x_2,\dotsc,x_n$ as counted from left to right. Then the number of lozenge tilings of  $T_{m,n}(x_1,\dotsc,x_n)$ is equal to
$\prod_{1\leq i<j\leq n}\frac{x_j-x_i}{j-i}$.
}

\begin{align}\label{semieq}
s(b_1,b_2,\dots,b_{2l-1})&=s(b_1,b_2,\dots,b_{2l})\notag\\
&=\dfrac{1}{\Hf(b_1+b_{3}+b_{5}+\dotsc+b_{2l-1})}\notag\\
&\,\,\,\times\dfrac{\prod_{\substack{1\leq i\leq j\leq 2l-1,\,\text{$j-i+1$ odd}}}\Hf(b_i+b_{i+1}+\dotsc+b_{j})}{\prod_{\substack{1\leq i\leq j\leq 2l-1,\,\text{$j-i+1$ even}}}\Hf(b_i+b_{i+1}+\dotsc+b_{j})}.
\end{align}

\begin{figure}\centering
\setlength{\unitlength}{3947sp}%
\begingroup\makeatletter\ifx\SetFigFont\undefined%
\gdef\SetFigFont#1#2#3#4#5{%
  \reset@font\fontsize{#1}{#2pt}%
  \fontfamily{#3}\fontseries{#4}\fontshape{#5}%
  \selectfont}%
\fi\endgroup%
\resizebox{8cm}{!}{
\begin{picture}(0,0)%
\includegraphics{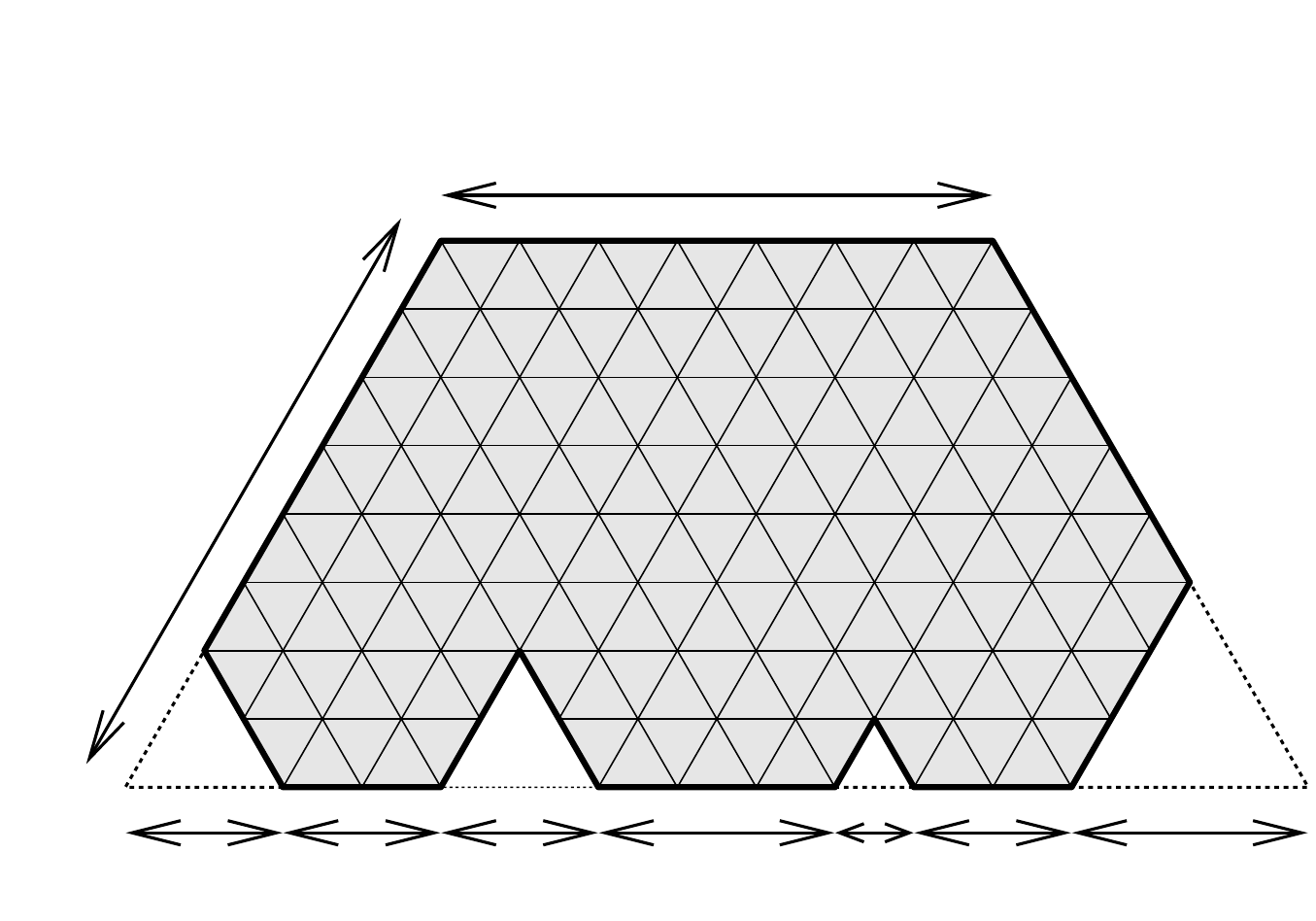}%
\end{picture}

\begin{picture}(6488,4544)(1524,-5559)
\put(4531,-1700){\makebox(0,0)[lb]{\smash{{\SetFigFont{14}{16.8}{\rmdefault}{\mddefault}{\itdefault}{$b_2+b_4+b_6$}%
}}}}
\put(2000,-3800){\rotatebox{60.0}{\makebox(0,0)[lb]{\smash{{\SetFigFont{14}{16.8}{\rmdefault}{\mddefault}{\itdefault}{$b_1+b_3+b_5+b_7$}%
}}}}}
\put(2296,-5544){\makebox(0,0)[lb]{\smash{{\SetFigFont{14}{16.8}{\rmdefault}{\mddefault}{\itdefault}{$b_1$}%
}}}}
\put(3136,-5544){\makebox(0,0)[lb]{\smash{{\SetFigFont{14}{16.8}{\rmdefault}{\mddefault}{\itdefault}{$b_2$}%
}}}}
\put(3916,-5544){\makebox(0,0)[lb]{\smash{{\SetFigFont{14}{16.8}{\rmdefault}{\mddefault}{\itdefault}{$b_3$}%
}}}}
\put(4771,-5544){\makebox(0,0)[lb]{\smash{{\SetFigFont{14}{16.8}{\rmdefault}{\mddefault}{\itdefault}{$b_4$}%
}}}}
\put(5589,-5544){\makebox(0,0)[lb]{\smash{{\SetFigFont{14}{16.8}{\rmdefault}{\mddefault}{\itdefault}{$b_5$}%
}}}}
\put(6219,-5544){\makebox(0,0)[lb]{\smash{{\SetFigFont{14}{16.8}{\rmdefault}{\mddefault}{\itdefault}{$b_6$}%
}}}}
\put(7186,-5544){\makebox(0,0)[lb]{\smash{{\SetFigFont{14}{16.8}{\rmdefault}{\mddefault}{\itdefault}{$b_7$}%
}}}}
\end{picture}}
\caption{The semihexagon with multiple holes $S(2,2,2,3,1,2,3)$.}
\label{semihexmultiple}
\end{figure}

We recall from \cite{fv} that a {\it fern} is a sequence of contiguous equilateral triangles of alternating orientations on the triangular lattice. In \cite{fv} the first author showed that the region obtained from a lattice hexagon by removing an arbitrary fern from its center has tilings enumerated by a simple product formula. The present paper considers lattice hexagons with {\it two} ferns removed, lined up along a common horizontal lattice line and touching the boundary of the hexagon. We present simple product formulas for the number of their lozenge tilings, thus obtaining a new generalization of MacMahon's formula (\ref{MacMahoneq}).

There are two cases to distinguish, depending on the height of the common axis of the removed ferns (see Figure \ref{arrayQ}). Both depend on non-negative integer parameters $x$, $y$, $z$ and $t$, which determine the side-lengths of the hexagon from which the ferns are removed and the location of the horizontal from along which they are removed, and on two (possibly empty) lists $a_1,\dotsc,a_m$ and $b_1,\dotsc,b_n$ of non-negative integers specifying the sizes of the lobes of the two ferns.

\begin{figure}\centering
\setlength{\unitlength}{3947sp}%
\begingroup\makeatletter\ifx\SetFigFont\undefined%
\gdef\SetFigFont#1#2#3#4#5{%
  \reset@font\fontsize{#1}{#2pt}%
  \fontfamily{#3}\fontseries{#4}\fontshape{#5}%
  \selectfont}%
\fi\endgroup%
\resizebox{15cm}{!}{
\begin{picture}(0,0)%
\includegraphics{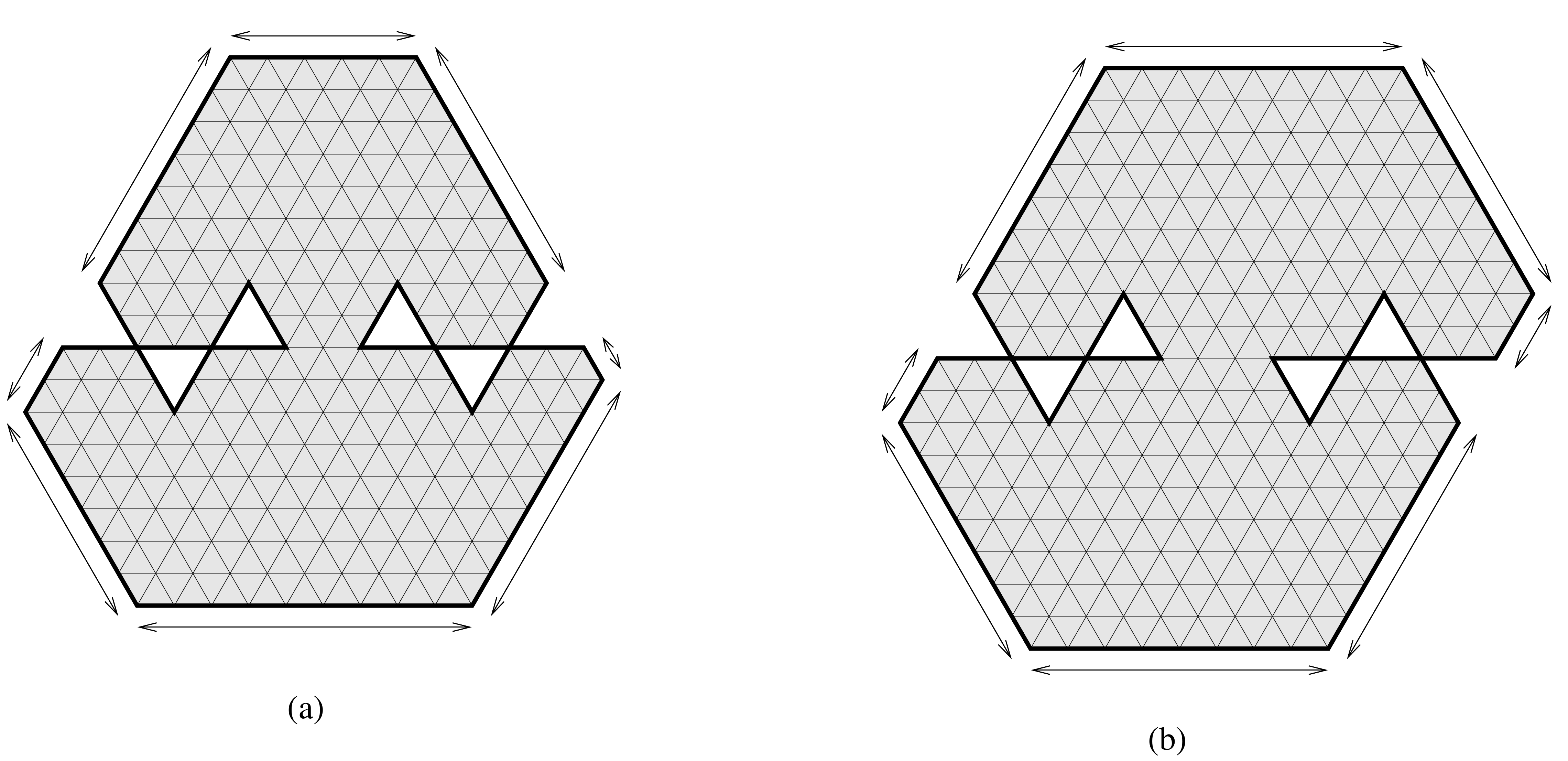}%
\end{picture}%
\begin{picture}(21881,10616)(21730,-12577)
\put(23086,-4831){\rotatebox{60.0}{\makebox(0,0)[lb]{\smash{{\SetFigFont{20}{24.0}{\rmdefault}{\mddefault}{\itdefault}{$y+a_3+b_1+b_3$}%
}}}}}
\put(25606,-2341){\makebox(0,0)[lb]{\smash{{\SetFigFont{20}{24.0}{\rmdefault}{\mddefault}{\itdefault}{$z+a_2+b_2$}%
}}}}
\put(28336,-3121){\rotatebox{300.0}{\makebox(0,0)[lb]{\smash{{\SetFigFont{20}{24.0}{\rmdefault}{\mddefault}{\itdefault}{$y+a_1+a_3+b_3$}%
}}}}}
\put(30481,-6871){\makebox(0,0)[lb]{\smash{{\SetFigFont{20}{24.0}{\rmdefault}{\mddefault}{\itdefault}{$t$}%
}}}}
\put(29596,-9811){\rotatebox{60.0}{\makebox(0,0)[lb]{\smash{{\SetFigFont{20}{24.0}{\rmdefault}{\mddefault}{\itdefault}{$y+x+a_2+b_2$}%
}}}}}
\put(21840,-8821){\rotatebox{300.0}{\makebox(0,0)[lb]{\smash{{\SetFigFont{20}{24.0}{\rmdefault}{\mddefault}{\itdefault}{$y+t+a_2+b_2$}%
}}}}}
\put(24481,-11191){\makebox(0,0)[lb]{\smash{{\SetFigFont{20}{24.0}{\rmdefault}{\mddefault}{\itdefault}{$z+a_1+a_3+b_1+b_3$}%
}}}}
\put(21796,-7021){\makebox(0,0)[lb]{\smash{{\SetFigFont{20}{24.0}{\rmdefault}{\mddefault}{\itdefault}{$x$}%
}}}}
\put(22861,-6661){\makebox(0,0)[lb]{\smash{{\SetFigFont{20}{24.0}{\rmdefault}{\mddefault}{\itdefault}{$a_1$}%
}}}}
\put(23986,-7246){\makebox(0,0)[lb]{\smash{{\SetFigFont{20}{24.0}{\rmdefault}{\mddefault}{\itdefault}{$a_2$}%
}}}}
\put(25006,-6631){\makebox(0,0)[lb]{\smash{{\SetFigFont{20}{24.0}{\rmdefault}{\mddefault}{\itdefault}{$a_3$}%
}}}}
\put(29236,-6601){\makebox(0,0)[lb]{\smash{{\SetFigFont{20}{24.0}{\rmdefault}{\mddefault}{\itdefault}{$b_1$}%
}}}}
\put(28156,-7216){\makebox(0,0)[lb]{\smash{{\SetFigFont{20}{24.0}{\rmdefault}{\mddefault}{\itdefault}{$b_2$}%
}}}}
\put(27046,-6631){\makebox(0,0)[lb]{\smash{{\SetFigFont{20}{24.0}{\rmdefault}{\mddefault}{\itdefault}{$b_3$}%
}}}}
\put(35282,-5116){\rotatebox{60.0}{\makebox(0,0)[lb]{\smash{{\SetFigFont{20}{24.0}{\rmdefault}{\mddefault}{\itdefault}{$y+t+a_3+b_2$}%
}}}}}
\put(38176,-2446){\makebox(0,0)[lb]{\smash{{\SetFigFont{20}{24.0}{\rmdefault}{\mddefault}{\itdefault}{$z+a_2+b_1+b_3$}%
}}}}
\put(42090,-3196){\rotatebox{300.0}{\makebox(0,0)[lb]{\smash{{\SetFigFont{20}{24.0}{\rmdefault}{\mddefault}{\itdefault}{$y+a_1+a_3+b_2$}%
}}}}}
\put(43246,-6946){\makebox(0,0)[lb]{\smash{{\SetFigFont{20}{24.0}{\rmdefault}{\mddefault}{\itdefault}{$t$}%
}}}}
\put(41296,-10741){\rotatebox{60.0}{\makebox(0,0)[lb]{\smash{{\SetFigFont{20}{24.0}{\rmdefault}{\mddefault}{\itdefault}{$x+y+a_2+b_3$}%
}}}}}
\put(36916,-11791){\makebox(0,0)[lb]{\smash{{\SetFigFont{20}{24.0}{\rmdefault}{\mddefault}{\itdefault}{$z+a_1+a_3+b_2$}%
}}}}
\put(34020,-9001){\rotatebox{300.0}{\makebox(0,0)[lb]{\smash{{\SetFigFont{20}{24.0}{\rmdefault}{\mddefault}{\itdefault}{$y+a_2+b_1+b_3$}%
}}}}}
\put(33886,-7126){\makebox(0,0)[lb]{\smash{{\SetFigFont{20}{24.0}{\rmdefault}{\mddefault}{\itdefault}{$x$}%
}}}}
\put(35041,-6751){\makebox(0,0)[lb]{\smash{{\SetFigFont{20}{24.0}{\rmdefault}{\mddefault}{\itdefault}{$a_1$}%
}}}}
\put(36181,-7336){\makebox(0,0)[lb]{\smash{{\SetFigFont{20}{24.0}{\rmdefault}{\mddefault}{\itdefault}{$a_2$}%
}}}}
\put(37186,-6781){\makebox(0,0)[lb]{\smash{{\SetFigFont{20}{24.0}{\rmdefault}{\mddefault}{\itdefault}{$a_3$}%
}}}}
\put(39796,-7366){\makebox(0,0)[lb]{\smash{{\SetFigFont{20}{24.0}{\rmdefault}{\mddefault}{\itdefault}{$b_3$}%
}}}}
\put(40816,-6796){\makebox(0,0)[lb]{\smash{{\SetFigFont{20}{24.0}{\rmdefault}{\mddefault}{\itdefault}{$b_2$}%
}}}}
\put(41956,-7456){\makebox(0,0)[lb]{\smash{{\SetFigFont{20}{24.0}{\rmdefault}{\mddefault}{\itdefault}{$b_1$}%
}}}}
\end{picture}}
\caption{The regions (a) $P_{2,1,1,1}(2,2,2;2,2,2)$ and (b) $Q_{2,1,2,2}(2,2,2;2,2,2)$.}
\label{arrayQ}
\end{figure}

If the horizontal along which the ferns are lined up leaves the western and eastern vertices of the hexagon on the same side of it\footnote{ Without loss of generality we may assume that both vertices are below this horizontal.}, we obtain the $P$-regions, defined as follows. Set
\begin{align}
o_a&:=a_1+a_3+a_5+\cdots\\
e_a&:=a_2+a_4+a_6+\cdots\\
o_b&:=b_1+b_3+b_5+\cdots\\
e_b&:=b_2+b_4+b_6+\cdots\\
a&:=a_1+a_2+a_3+a_4+\cdots\\
b&:=b_1+b_2+b_3+b_4+\cdots
\end{align}
From the hexagon
of side-lengths\footnote{From now on, we always list the side-lengths of a hexagon on the triangular lattice in clockwise order, starting from the northwest side.} $x+y+o_a+o_b,z+e_a+e_b,y+t+o_a+o_b, x+y+e_a+e_b, z+o_a+o_b,y+t+e_a+e_b$, remove a fern of lobe-sizes (from left to right) $a_1,\dotsc,a_m$ and a fern of lobe-sizes (from right to left) $b_1,\dotsc,b_n$ as indicated in Figure \ref{arrayQ}(a) (in the figure, $x=2$, $y=z=t=1$, $m=n=3$, and $a_1=a_2=a_3=b_1=b_2=b_3=2$). The resulting region is defined to be our $P$-region $P_{x,y,z,t}(a_1,\dotsc,a_m;b_1,\dotsc,b_n)$. One readily sees that the distance separating the two ferns (i.e. the distance between the rightmost point of the left fern and the leftmost point of the right fern) is equal to $y+z$.

The second family of regions, which we call $Q$-regions, corresponds to the case when the horizontal through the ferns leaves the western and eastern vertices of the hexagon on opposite sides. Without loss of generality, we may assume that the western vertex is below and the eastern above this horizontal. Start with the hexagon of side-lengths $x+y+t+o_a+e_b,z+e_a+o_b, y+o_a+e_b, x+y+t+e_a+o_b, z+o_a+e_b, y+e_a+o_b$, and remove from it the same two ferns as before, but positioned as indicated in Figure \ref{arrayQ}(b) (in the figure, $x=2$, $y=1$, $z=t=2$, $m=n=3$ and $a_1=a_2=a_3=b_1=b_2=b_3=2$). We denote the resulting region by $Q_{x,y,z,t}(a_1,\dotsc,a_m;b_1,\dotsc,b_n)$. Just as in the case of the $P$-regions, the separation between the two ferns is $y+z$.

\medskip
Our results are stated in terms of the $\Phi$ and $\Psi$ functions defined as follows. For non-negative integers $x,y,z,t$ and $a_1,\dotsc,a_m,b_1,\dotsc,b_n$, define $\Phi_{x,y,z,t}(a_1,\dotsc,a_m;b_1,\dotsc,b_n)$ by
\small{\begin{align}\label{function1}
\Phi_{x,y,z,t}&(a_1,\dotsc,a_{2l};b_1,b_2,\dotsc,b_{2k}):=  s(a_1,\dotsc,a_{2l-1},a_{2l}+y+z+b_{2k},b_{2k-1},\dotsc,b_{1})\notag\\
  &\times s(x,a_1,\dotsc,a_{2l},y+z,b_{2k},\dotsc,b_{1},t)\notag\\
 &\times \frac{\Hf(y)\Hf(z)\Hf\left(a+b+x+2y+z+t\right)}
 {\Hf(y+z)\Hf\left(a+b+x+2y+t\right)}
\frac{\Hf\left(a+b+x+y+t\right)}{\Hf\left(a+b+x+y+z+t\right)}\notag\\
&\times \frac{\Hf\left(a+x+y\right)\Hf\left(b+y+t\right)}{\Hf\left(a+x\right)\Hf\left(b+t\right)}
 \frac{\Hf\left(e_a+e_b+x+t\right)\Hf\left(o_a+o_b\right)}{\Hf\left(e_a+e_b+x+y+t\right)\Hf\left(o_a+o_b+y\right)}
\end{align}}
\normalsize
and
\[\Phi_{x,y,z,t}(a_1,\dotsc,a_{2l-1};b_1,b_2,\dotsc,b_{2k}):=\Phi_{x,y,z,t}(a_1,\dotsc,a_{2l-1},0;b_1,b_2,\dotsc,b_{2k}),\]
\[\Phi_{x,y,z,t}(a_1,\dotsc,a_{2l};b_1,b_2,\dotsc,b_{2k-1}):=\Phi_{x,y,z,t}(a_1,\dotsc,a_{2l};b_1,b_2,\dotsc,b_{2k-1},0),\]
\[\Phi_{x,y,z,t}(a_1,\dotsc,a_{2l-1},0;b_1,b_2,\dotsc,b_{2k-1}):=\Phi_{x,y,z,t}(a_1,\dotsc,a_{2l-1},0;b_1,b_2,\dotsc,b_{2k-1},0).\]

The counterpart functions $\Psi_{x,y,z,t}(a_1,\dotsc,a_m;b_1,\dotsc,b_n)$ are defined by
\small{\begin{align}\label{function2}
\Psi_{x,y,z,t}&(a_1,\dotsc,a_{2l};\ b_1,b_2,\dotsc,b_{2k}):=  s(a_1,\dotsc,a_{2l-1},a_{2l}+y+z,b_{2k},\dotsc,b_{1},t)\notag\\
  &\times s(x,a_1,\dotsc,a_{2l},y+z+b_{2k},b_{2k-1},\dotsc,b_{1})\notag\\
&\times  \frac{\Hf(y)\Hf(z)\Hf\left(a+b+x+2y+z+t\right)}
 {\Hf(y+z)\Hf\left(a+b+x+2y+t\right)}\frac{\Hf\left(a+b+x+y+t\right)}{\Hf\left(a+b+x+y+z+t\right)}\notag\\
&\times \frac{\Hf\left(a+x+y\right)\Hf\left(b+y+t\right)}{\Hf\left(a+x\right)\Hf\left(b+t\right)}
  \frac{\Hf\left(e_a+o_b+x\right)\Hf\left(o_a+e_b+t\right)}{\Hf\left(e_a+o_b+x+y\right)\Hf\left(o_a+e_b+y+t\right)}
\end{align}}
\normalsize  and
\[\Psi_{x,y,z,t}(a_1,\dotsc,a_{2l-1};b_1,b_2,\dotsc,b_{2k}):=\Psi_{x,y,z,t}(a_1,\dotsc,a_{2l-1},0;b_1,b_2,\dotsc,b_{2k}),\]
\[\Psi_{x,y,z,t}(a_1,\dotsc,a_{2l};b_1,b_2,\dotsc,b_{2k-1}):=\Psi_{x,y,z,t}(a_1,\dotsc,a_{2l};b_1,b_2,\dotsc,b_{2k-1},0),\]

\begin{figure}[h]
\centerline{
\hfill
{\includegraphics[width=0.363\textwidth]{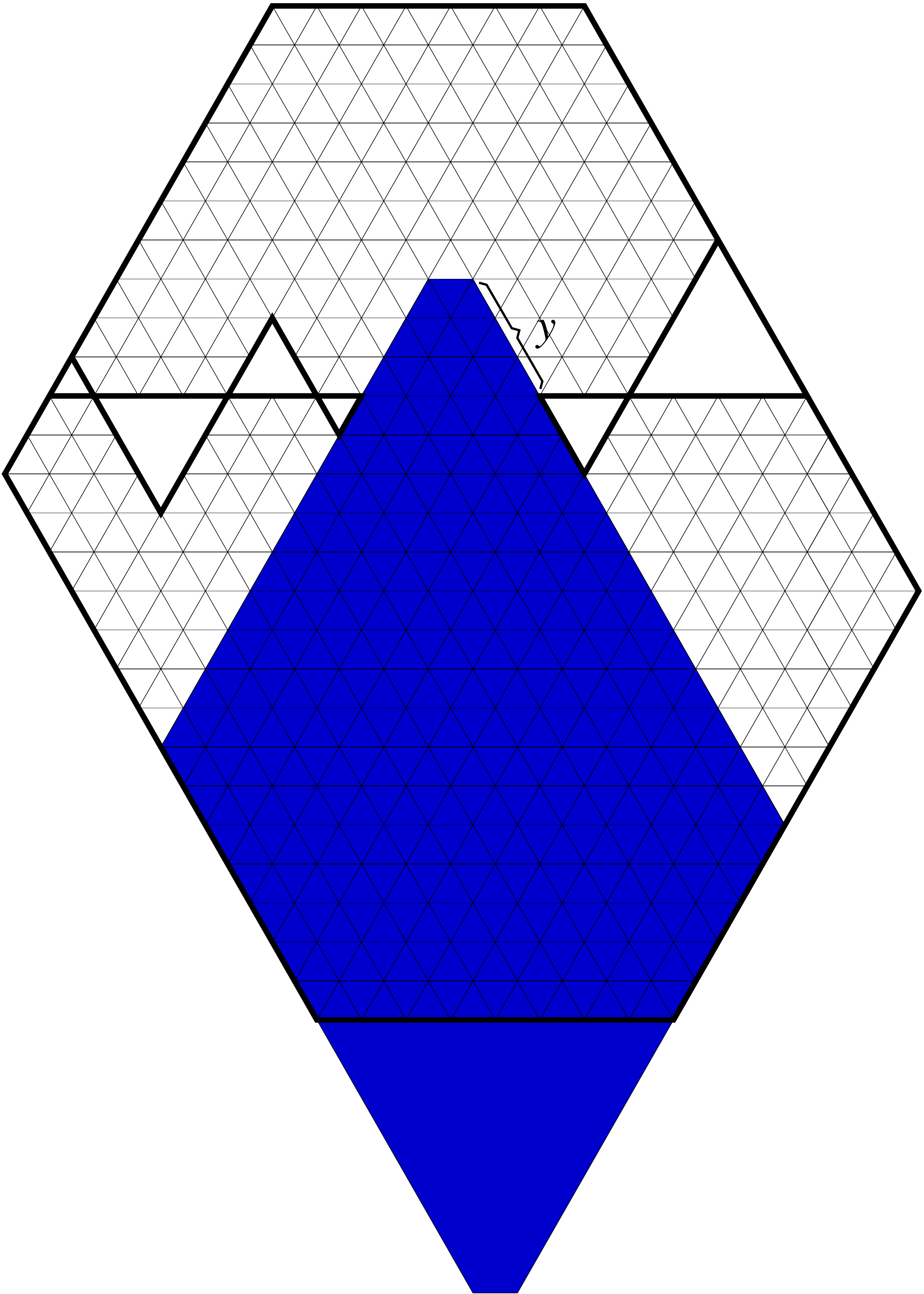}}
\hfill
{\includegraphics[width=0.428\textwidth]{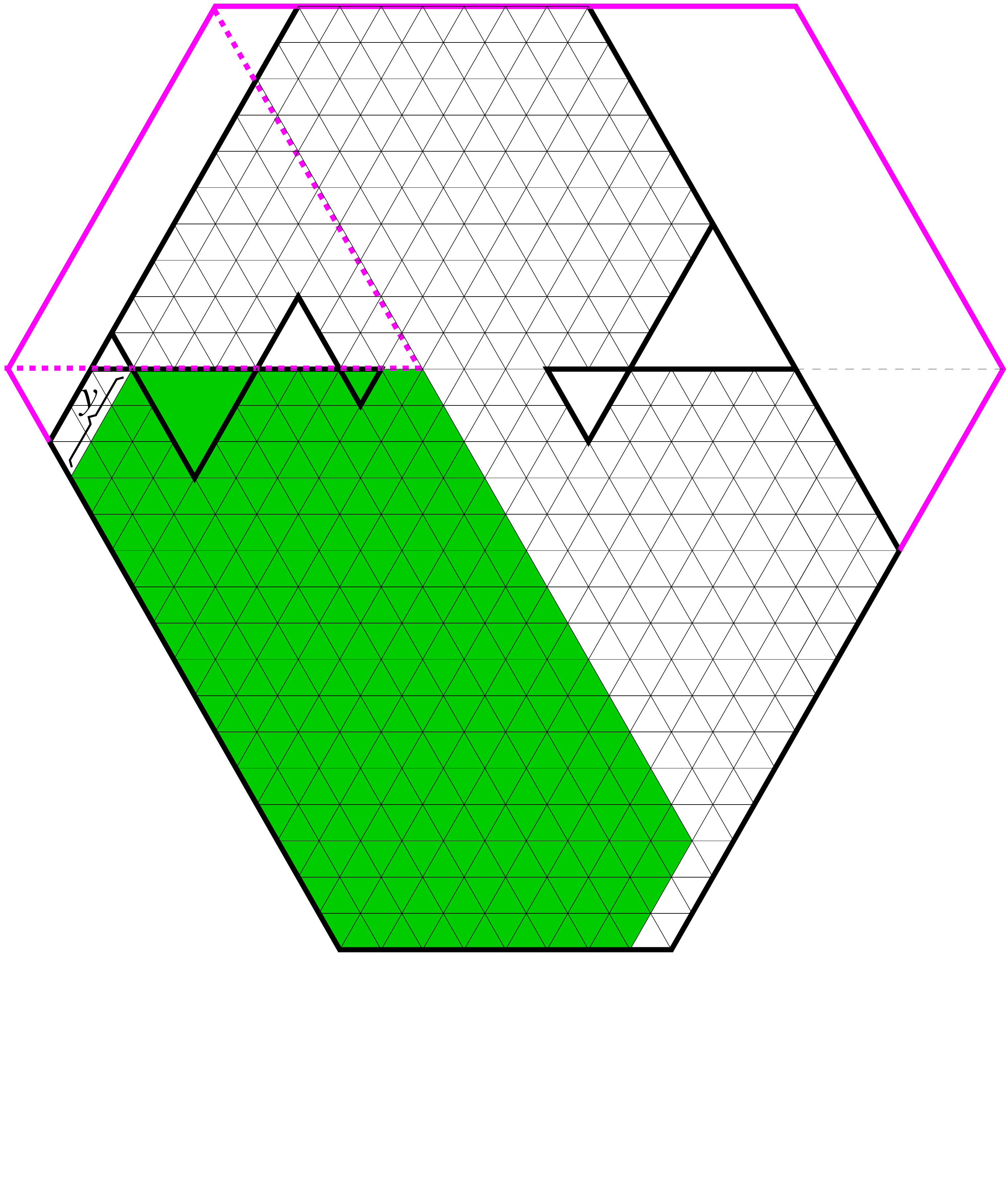}}
\hfill
}
\vskip-0.1in
\caption{The two hexagons in the numerator.}
\vskip-0.1in
\label{numeratorhex}
\end{figure}

\begin{figure}[h]
\centerline{
\hfill
{\includegraphics[width=0.363\textwidth]{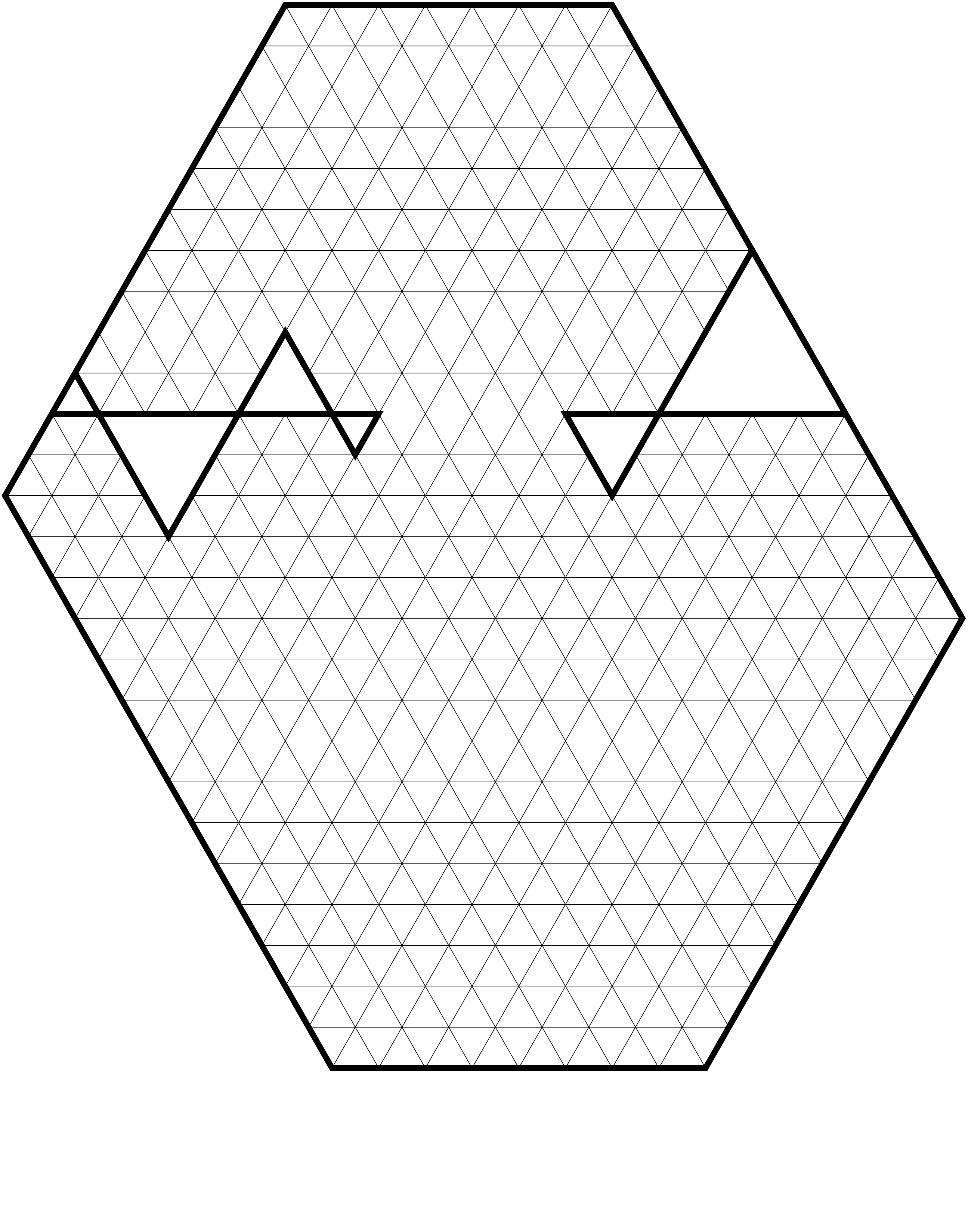}}
\hfill
{\includegraphics[width=0.363\textwidth]{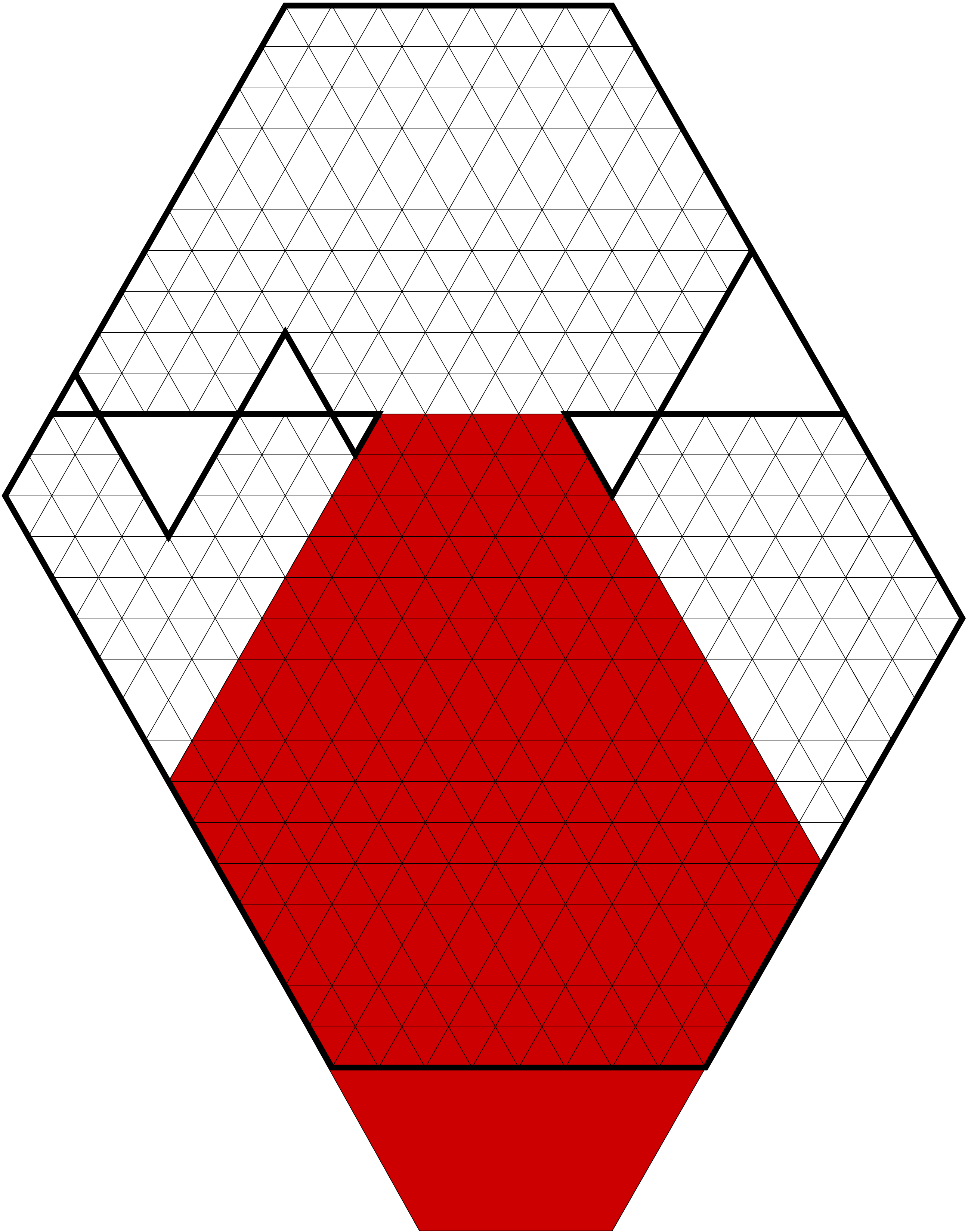}}
\hfill
}
\vskip-0.1in
\caption{The $P$-region (left) and the hexagon in the denominator (right).}
\vskip-0.1in
\label{denominatorhex}
\end{figure}

\[\Psi_{x,y,z,t}(a_1,\dotsc,a_{2l-1};b_1,b_2,\dotsc,b_{2k-1}):=\Psi_{x,y,z,t}(a_1,\dotsc,a_{2l-1},0;b_1,b_2,\dotsc,b_{2k-1},0).\]

\medskip
The main result of this paper concerning unweighted tiling enumeration (which then led, via its three dimensional interpretation presented in the next section, to the $q$-generalization we give in Section 4) is the following. Throughout the paper, $\M(R)$ denotes the number of lozenge tilings of region $R$.

\begin{thm}\label{main1}
Let $x,y,z,t$, $a_1,\dotsc,a_m$ and $b_1,\dotsc,b_n$ be non-negative integers. Then
\begin{equation}\label{main1eq1}
 \M\Big(P_{x,y,z,t}(a_1,\dotsc,a_m;b_1,\dotsc,b_n)\Big)=\Phi_{x,y,z,t}(a_1,\dotsc,a_m;b_1,\dotsc,b_n)
\end{equation}
and
\begin{equation}\label{main1eq2}
 \M\Big(Q_{x,y,z,t}(a_1,\dotsc,a_m;b_1,\dotsc,b_n)\Big)=\Psi_{x,y,z,t}(a_1,\dotsc,a_m;b_1,\dotsc,b_n).
 \end{equation}
\end{thm}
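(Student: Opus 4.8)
The plan is to prove both identities simultaneously by induction, using Kuo condensation (graphical vertex condensation) as the main engine, exactly as in the method that proved the single-fern result in \cite{fv}. First I would pass to the dual graph picture, so that $\M(P_{x,y,z,t}(\mathbf a;\mathbf b))$ and $\M(Q_{x,y,z,t}(\mathbf a;\mathbf b))$ become numbers of perfect matchings of planar bipartite graphs. The base cases are the ones in which one or both ferns are empty (or reduce to a single lobe), or in which one of $x,y,z,t$ vanishes so that the region degenerates: when both ferns are absent, $P_{x,y,z,t}(\,;\,)$ and $Q_{x,y,z,t}(\,;\,)$ become ordinary (or near-ordinary) hexagons, and one checks directly that $\Phi$, respectively $\Psi$, collapses to MacMahon's $P(a,b,c)$ via (\ref{MacMahoneq}); similarly, the cases where a fern has a single lobe reduce to the semihexagon counts $s(\cdots)$ already recorded in (\ref{semieq}), so these serve as the anchor of the induction.

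The inductive step is where the work lies. I would choose the four distinguished unit triangles (the vertices $w$ on which Kuo condensation is applied) along the horizontal axis through the ferns — two near the left end of the left fern and two near the right end of the right fern — so that each of the five regions appearing in Kuo's identity
\[
\M(G)\,\M(G') \;=\; \M(G_1)\,\M(G_2)\;+\;\M(G_3)\,\M(G_4)
\]
is again a $P$-region or $Q$-region with the same $y,z$ but with one of the outer lobes $a_1$, $b_1$ (or $a_{2l}$, $b_{2k}$, depending on parities) decreased by one, or with one of $x,t$ shifted by one. Concretely, removing/adding a single up- or down-pointing triangle at the tip of a fern changes exactly one $a_i$ or $b_j$ by $\pm 1$ and simultaneously adjusts $x$ or $t$, so all six regions in the identity stay inside the family $\{P,Q\}$ — this is the crucial combinatorial point, and it mirrors the "pushing the fern" moves of \cite{fv}. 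Then it suffices to verify that the functions $\Phi$ and $\Psi$ satisfy the \emph{same} three-term recurrence: substituting the product formulas (\ref{function1}) and (\ref{function2}) into Kuo's identity, every factor that is a single hyperfactorial either cancels or contributes a ratio of the form $\Hf(N+1)/\Hf(N)=N!$, and the remaining factors are the semihexagon functions $s(\cdots)$, which satisfy their own (Dodgson-type) recurrences coming from (\ref{semieq}); the identity to check thereby reduces to a finite rational-function identity in $x,y,z,t,a,b,o_a,e_a,o_b,e_b$, which one confirms by direct manipulation.

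Some care is needed because the Kuo move that works depends on the parities of $m$ and $n$ and on which of $x,y,z,t$ are positive, so in practice there are a handful of cases (roughly: both ferns of even length, mixed, and the degenerate boundary situations), each requiring its own choice of the four vertices $w$; this bookkeeping, rather than any single deep step, is the main obstacle. A secondary subtlety is that when an outer lobe has size $0$ the region has a "pinch" and one must argue (via forced lozenges) that the convention $\Phi_{x,y,z,t}(\dots,a_{2l-1};\mathbf b)=\Phi_{x,y,z,t}(\dots,a_{2l-1},0;\mathbf b)$ etc.\ is consistent with the recurrence — i.e.\ that decreasing a lobe from $1$ to $0$ does not break the induction. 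Once all cases of the recurrence are matched against the Kuo identity and the base cases are in hand, the theorem follows; the same computation, with the electrostatically suggestive "$q$-deformation" of each hyperfactorial, will later yield the volume-graded refinement of Section 4.
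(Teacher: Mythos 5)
Your overall architecture (dual graph, Kuo condensation, induction, then checking that the product formula satisfies the same recurrence) is the right one and matches the paper's in spirit, but there are two concrete gaps that would stop the argument as you have set it up. First, your anchor is wrong: the case where one fern is empty is \emph{not} a semihexagon count or a direct consequence of MacMahon's formula. A hexagon with a single boundary-touching fern removed (the region $R_{x,y,z,t}(\mathbf a)$ of Section~\ref{onefern}) is a genuine two-dimensional region with a hole, and its enumeration is itself a theorem requiring its own Kuo-condensation induction on $y+z+t$, with base cases $y=0$, $z=0$, $t=0$ handled by the region-splitting Lemma~\ref{GS}. The paper devotes all of Section~\ref{onefern} to exactly this. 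Treating it as something that ``reduces to the semihexagon counts $s(\cdots)$'' leaves the foundation of your induction unproved.

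Second, your proposed Kuo move keeps $y$ and $z$ fixed and shrinks the outer lobes. This compounds the first problem: if $y$ and $z$ never move, the recurrence can only terminate by emptying a fern, i.e.\ precisely at the one-fern case you have not established. It is also doubtful that such a move stays inside the family at all: the side-lengths of a $P$- or $Q$-region are determined by the lobe sizes (e.g.\ the northwest side of a $P$-region is $x+y+o_a+o_b$), so decrementing $a_1$ or $b_1$ forces compensating changes to the outer hexagon that your vertex placement does not account for, and the intermediate regions $G-\{u,v\}$, etc., need not be balanced regions of the form $P_{x',y',z',t'}(\mathbf a';\mathbf b')$. The paper's resolution is the opposite of yours: it fixes the ferns entirely and chooses the four Kuo vertices (on a $P$-region augmented by a strip on top, using Theorem~\ref{kuothm3}; on the $Q$-region directly, using Theorem~\ref{kuothm2}) so that the six regions differ only in $x,y,z,t$ — for instance
$\M(P_{x,y,z,t})\M(P_{x-1,y+1,z-1,t-1})$ is expressed through $P_{x,y+1,z-1,t-1}$, $P_{x-1,y,z,t}$, $P_{x-1,y+1,z-1,t}$, $P_{x,y,z,t-1}$ — so that $z$ (and $y$) genuinely decrease and the induction lands on the $y=0$ or $z=0$ base cases, where the region splits into two dented semihexagons counted by \eqref{semieq}. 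Separate elementary reductions (forced lozenges, reflections) dispose of $x=0$, $t=0$, and of zero lobes, which is the part of your write-up that is essentially correct. If you reorient your induction along these lines — ferns fixed, $x+y+z+t$ decreasing, with the one-fern theorem proved first as its own induction — the rest of your plan (verifying the two-term identity satisfied by $\Phi$ and $\Psi$) goes through as you describe.
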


\begin{rmk} (\textsc{Geometrical interpretation})
\label{geominterpr}
It turns out that we can re-write the $\Phi$- and $\Psi$-functions  in terms of MacMahon's product formula $P(a,b,c)$ given by \eqref{MacMahoneq}. Indeed, we have

\small{\begin{align}\label{function1b}
\Phi_{x,y,z,t}&(a_1,\dotsc,a_{2l};b_1,b_2,\dotsc,b_{2k}):=
 \frac{P(z, b+y+t, a+x+y)P(e_a+e_b+x+t,o_a+o_b,y)}{P(a+x,b+t,y+z)}\notag\\
 &\times s(a_1,\dotsc,a_{2l-1},a_{2l}+y+z+b_{2k},b_{2k-1},\dotsc,b_{1})\notag\\
  &\times s(x,a_1,\dotsc,a_{2l},y+z,b_{2k},\dotsc,b_{1},t),
\end{align}}
\normalsize and
\small{\begin{align}\label{function2b}
\Psi_{x,y,z,t}&(a_1,\dotsc,a_{2l};b_1,b_2,\dotsc,b_{2k}):=
  \frac{P(z, b+y+t, a+x+y)P(e_a+o_b+x,o_a+e_b+t,y)}{P(a+x,b+t,y+z)}\notag\\
 &\times s(a_1,\dotsc,a_{2l-1},a_{2l}+y+z,b_{2k},\dotsc,b_{1},t)\notag\\
  &\times s(x,a_1,\dotsc,a_{2l},y+z+b_{2k},b_{2k-1},\dotsc,b_{1}).
\end{align}}
\end{rmk}

The three hexagons corresponding to the $P$-functions on the right hand side of (\ref{function1b}) are indicated in Figures \ref{numeratorhex} and \ref{denominatorhex}. The picture on the left in Figure \ref{denominatorhex} shows the $P$-region itself. The shaded hexagon in the picture on the right in  Figure \ref{denominatorhex} has side-lengths equal to the arguments of the $P$-function in the denominator on the right hand side of (\ref{function1b}). The lengths of its top three sides are clear from the picture; the shaded hexagon is the semi-regular hexagon determined by these three consecutive sides.

The hexagons corresponding to the two $P$-functions in the numerator on the right hand side of (\ref{function1b}) are described in Figure \ref{numeratorhex}. The one on the left is the semi-regular hexagon obtained from the shaded hexagon in Figure \ref{denominatorhex} by ``stretching'' it $y$ units as indicated in the figure. The one on the right in Figure \ref{numeratorhex} is obtained by (1) enlarging the boundary of the $P$-region to the indicated outer symmetric hexagon (shown in magenta), (2) considering the point inside the resulting contour that determines an equilateral triangle with its northwestern side, and (3) ``trimming'' the angle determined by the point considered in (2) and the western and southwestern corners of the enlarged contour (with the western corner at the apex) by cutting off an equilateral triangle of side $y$, as indicated in the figure; the resulting three consecutive side-lengths determine the shaded semi-regular hexagon on the right in Figure \ref{numeratorhex}.

We note that the above geometrical interpretation follows directly from the definition of the functions $\Phi$ and $\Psi$. We do not yet have a combinatorial explanation for that. This is discussed some more in Section 8.

\section{A unified formulation of the two parts of Theorem 2.1}

We explain in this section how the lozenge tiling enumerations of our $P$- and $Q$-regions can be stated in a unified way. This will be a useful point to keep in mind later, when we present our proofs.

As we will see below, our $P$- and $Q$-regions can be viewed as sub-regions of vertically symmetric hexagons with a fern removed from the western corner, and another fern removed from the eastern corner. Denote the lobe sizes in the former, from left to right, by $a_0,a_1,\dotsc,a_m$ (we will refer to this as an ${a}$-fern), and the lobe sizes in the latter, from {\it right to left}, by $b_0,b_1,\dotsc,b_n$ (we call this a ${b}$-fern). The left fern will always be positioned so that the lobe of size $a_0$ points downward, but we will allow the right fern to have the lobe of size $b_0$ point either down or up (this captures the full generality). Set
\begin{align}
u_a&:=a_1+a_3+a_5+\cdots\\\
d_a&:=a_0+a_2+a_4+\cdots\\
u_b&:=\left.
  \begin{cases}
    b_1+b_3+b_5+\cdots, & \text{if the lobe of size $b_0$ points down }  \\
    b_0+b_2+b_4+\cdots, & \text{if the lobe of size $b_0$ points up }
   \end{cases}
  \right. \\
d_b&:=\left.
  \begin{cases}
    b_0+b_2+b_4+\cdots, & \text{if the lobe of size $b_0$ points down }  \\
    b_1+b_3+b_5+\cdots, & \text{if the lobe of size $b_0$ points up }
   \end{cases}
  \right.
\end{align}
(so that e.g. $u_b$ is the sum of the {\it u}p-pointing lobe sizes in the $b$-fern).

\begin{figure}\centering
\setlength{\unitlength}{3947sp}%
\begingroup\makeatletter\ifx\SetFigFont\undefined%
\gdef\SetFigFont#1#2#3#4#5{%
  \reset@font\fontsize{#1}{#2pt}%
  \fontfamily{#3}\fontseries{#4}\fontshape{#5}%
  \selectfont}%
\fi\endgroup%
\resizebox{13cm}{!}{
\begin{picture}(0,0)%
\includegraphics{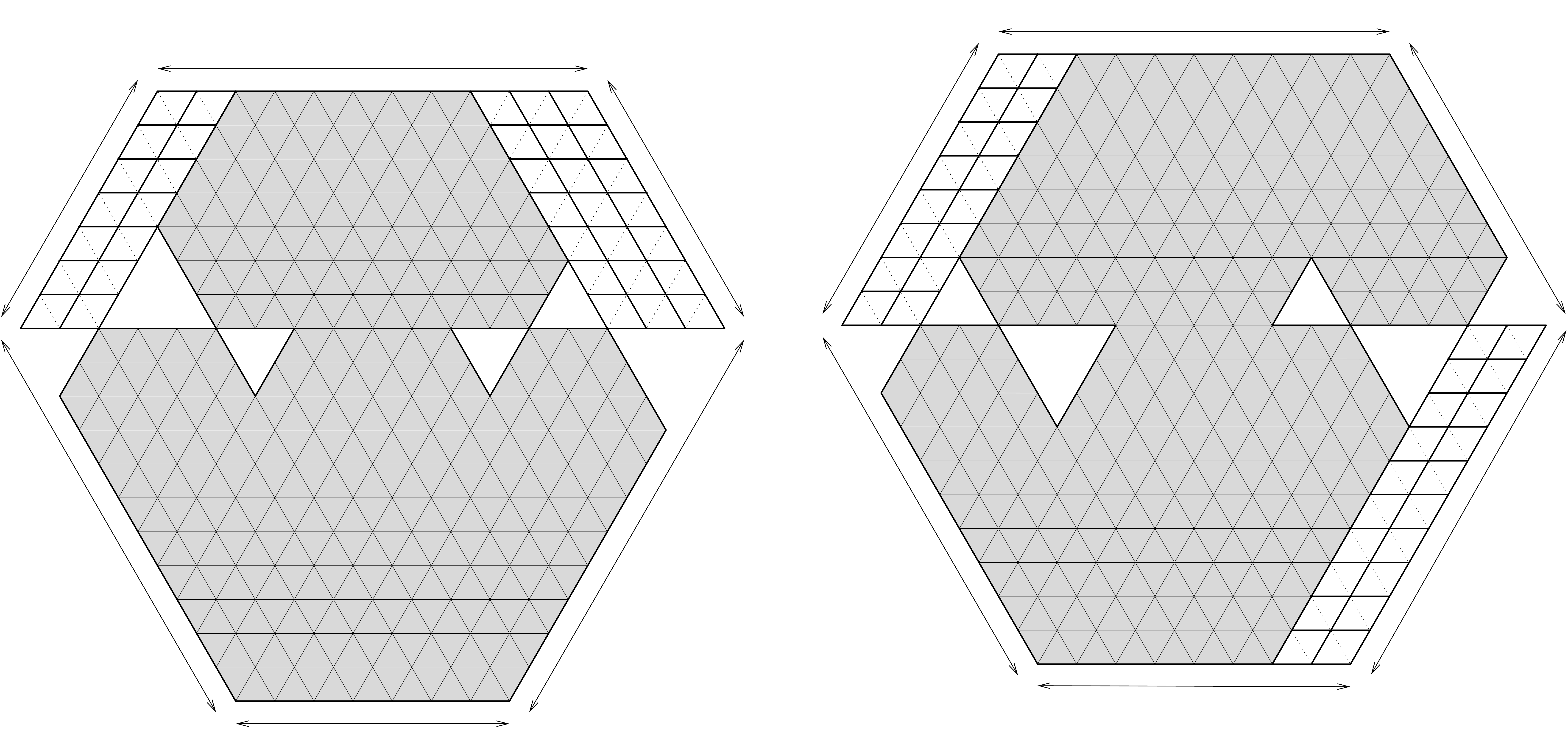}%
\end{picture}%
%
%

\begin{picture}(32794,15679)(2886,-17382)
\put(4078,-11991){\rotatebox{300.0}{\makebox(0,0)[lb]{\smash{{\SetFigFont{20}{24.0}{\rmdefault}{\mddefault}{\itdefault}{$y+d_a+d_b$}%
}}}}}
\put(20867,-6341){\rotatebox{60.0}{\makebox(0,0)[lb]{\smash{{\SetFigFont{20}{24.0}{\rmdefault}{\mddefault}{\itdefault}{$y+u_a+u_b$}%
}}}}}
\put(26757,-2111){\makebox(0,0)[lb]{\smash{{\SetFigFont{20}{24.0}{\rmdefault}{\mddefault}{\itdefault}{$z+d_a+d_b$}%
}}}}
\put(33957,-4761){\rotatebox{300.0}{\makebox(0,0)[lb]{\smash{{\SetFigFont{20}{24.0}{\rmdefault}{\mddefault}{\itdefault}{$y+u_a+u_b$}%
}}}}}
\put(33377,-13471){\rotatebox{60.0}{\makebox(0,0)[lb]{\smash{{\SetFigFont{20}{24.0}{\rmdefault}{\mddefault}{\itdefault}{$y+d_a+d_b$}%
}}}}}
\put(21397,-12041){\rotatebox{300.0}{\makebox(0,0)[lb]{\smash{{\SetFigFont{20}{24.0}{\rmdefault}{\mddefault}{\itdefault}{$y+d_a+d_b$}%
}}}}}
\put(26677,-16491){\makebox(0,0)[lb]{\smash{{\SetFigFont{20}{24.0}{\rmdefault}{\mddefault}{\itdefault}{$z+u_a+u_b$}%
}}}}
\put(21127,-9051){\makebox(0,0)[lb]{\smash{{\SetFigFont{20}{24.0}{\rmdefault}{\mddefault}{\itdefault}{$a_0$}%
}}}}
\put(22817,-8191){\makebox(0,0)[lb]{\smash{{\SetFigFont{20}{24.0}{\rmdefault}{\mddefault}{\itdefault}{$a_1$}%
}}}}
\put(24887,-9241){\makebox(0,0)[lb]{\smash{{\SetFigFont{20}{24.0}{\rmdefault}{\mddefault}{\itdefault}{$a_2$}%
}}}}
\put(34357,-8101){\makebox(0,0)[lb]{\smash{{\SetFigFont{20}{24.0}{\rmdefault}{\mddefault}{\itdefault}{$b_0$}%
}}}}
\put(32217,-9371){\makebox(0,0)[lb]{\smash{{\SetFigFont{20}{24.0}{\rmdefault}{\mddefault}{\itdefault}{$b_1$}%
}}}}
\put(30147,-8141){\makebox(0,0)[lb]{\smash{{\SetFigFont{20}{24.0}{\rmdefault}{\mddefault}{\itdefault}{$b_2$}%
}}}}
\put(3848,-9167){\makebox(0,0)[lb]{\smash{{\SetFigFont{20}{24.0}{\rmdefault}{\mddefault}{\itdefault}{$a_0$}%
}}}}
\put(5858,-7987){\makebox(0,0)[lb]{\smash{{\SetFigFont{20}{24.0}{\rmdefault}{\mddefault}{\itdefault}{$a_1$}%
}}}}
\put(8068,-9157){\makebox(0,0)[lb]{\smash{{\SetFigFont{20}{24.0}{\rmdefault}{\mddefault}{\itdefault}{$a_2$}%
}}}}
\put(16588,-9397){\makebox(0,0)[lb]{\smash{{\SetFigFont{20}{24.0}{\rmdefault}{\mddefault}{\itdefault}{$b_0$}%
}}}}
\put(14588,-8237){\makebox(0,0)[lb]{\smash{{\SetFigFont{20}{24.0}{\rmdefault}{\mddefault}{\itdefault}{$b_1$}%
}}}}
\put(13008,-9197){\makebox(0,0)[lb]{\smash{{\SetFigFont{20}{24.0}{\rmdefault}{\mddefault}{\itdefault}{$b_2$}%
}}}}
\put(9448,-2787){\makebox(0,0)[lb]{\smash{{\SetFigFont{20}{24.0}{\rmdefault}{\mddefault}{\itdefault}{$z+d_a+d_b$}%
}}}}
\put(3329,-6934){\rotatebox{60.0}{\makebox(0,0)[lb]{\smash{{\SetFigFont{20}{24.0}{\rmdefault}{\mddefault}{\itdefault}{$y+u_a+u_b$}%
}}}}}
\put(16958,-4851){\rotatebox{300.0}{\makebox(0,0)[lb]{\smash{{\SetFigFont{20}{24.0}{\rmdefault}{\mddefault}{\itdefault}{$y+u_a+u_b$}%
}}}}}
\put(15849,-14254){\rotatebox{60.0}{\makebox(0,0)[lb]{\smash{{\SetFigFont{20}{24.0}{\rmdefault}{\mddefault}{\itdefault}{$y+d_a+d_b$}%
}}}}}
\put(9428,-17367){\makebox(0,0)[lb]{\smash{{\SetFigFont{20}{24.0}{\rmdefault}{\mddefault}{\itdefault}{$z+u_a+u_b$}%
}}}}
\end{picture}%
}
\caption{Obtaining a $P$-region from a $D^{(1)}$-region (left); obtaining a $Q$-region from a $D^{(2)}$-region (right).}\label{Dregion}
\end{figure}

Let $y,z\geq0$ be integers, and consider on the triangular lattice the vertically symmetric hexagon of side-lengths $y+u_a+u_b$, $z+d_a+d_b$, $y+u_a+u_b$, $y+d_a+d_b$, $z+u_a+u_b$, $y+d_a+d_b$ (clockwise starting with the northwestern side). Define $D^{(1)}_{y,z}(a_0,\dotsc,a_m;b_0,\dotsc,b_n)$ to be the region obtained from this hexagon by removing an ${a}$-fern with down-pointing $a_0$-lobe from its western corner, and a ${b}$-fern with down-pointing $b_0$-lobe from its eastern corner. Define the region $D^{(2)}_{y,z}(a_0,\dotsc,a_m;b_0,\dotsc,b_n)$ in the same way, with the only difference that the $b_0$-lobe in the removed ${b}$-fern points upward (these regions are illustrated in Figure \ref{Dregion}; the letter of their name recalls the fact that the axes of the removed ferns are along the horizontal diagonal of the hexagon).

One readily sees that, after removing the forced lozenges in $D^{(1)}_{y,z}(a_0,\dotsc,a_m;b_0,\dotsc,b_n)$, the leftover region is our $P$-region $P_{a_0,y,z,b_0}(a_1,\dotsc,a_m;b_1,\dotsc,b_n)$. Similarly, it is readily checked that the region obtained from $D^{(2)}_{y,z}(a_0,\dotsc,a_m;b_0,\dotsc,b_n)$ by removing all the forced lozenges is our $Q$-region $Q_{a_0,y,z,b_0}(a_1,\dotsc,a_m;b_1,\dotsc,b_n)$.

Then Theorem 2.1 can be stated more elegantly as follows.

\begin{thm}
Let $y,z$ and $a_0,\dotsc,a_m,b_0,\dotsc,b_n$ be non-negative integers. Set $A=a_0+\cdots+a_m$, $B=b_0+\cdots+b_n$, and let $u_a$, $d_a$, $u_b$ and $d_b$ be given by the equations (3.1)--(3.4) above. Then for $i=1,2$ we have
\begin{align}\label{unified}
&\M\Big(D^{(i)}_{y,z}(a_0,\dotsc,a_m;b_0,\dotsc,b_n)\Big)=\M(S^+)\M(S^-)\notag\\
&\times
\frac{\Hf(y)\Hf(z)\Hf\left(A+B+2y+z\right)}{\Hf(y+z)\Hf\left(A+B+2y\right)}\frac{\Hf\left(A+B+y\right)}{\Hf\left(A+B+y+z\right)}\notag\\
 &\times \frac{\Hf\left(A+y\right)\Hf\left(B+y\right)}{\Hf\left(A\right)\Hf\left(B\right)}
 \frac{\Hf\left(d_a+d_b\right)\Hf\left(u_a+u_b\right)}{\Hf\left(d_a+d_b+y\right)\Hf\left(u_a+u_b+y\right)},\notag\\
\end{align}
where $S^+$ and $S^-$ are the dented semihexagons determined by the sequences of dents occurring above and below the axis of the ferns in $D^{(i)}_{y,z}(a_0,\dotsc,a_m;b_0,\dotsc,b_n)$, respectively $($in particular, their number of lozenge tilings is given by formula $(2.1)$$)$.

\end{thm}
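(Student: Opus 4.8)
The plan is to deduce the unified identity \eqref{unified} by induction, with the inductive step supplied by Kuo's graphical condensation. Since the $P$- and $Q$-regions of Theorem~\ref{main1} are obtained from the $D^{(1)}$- and $D^{(2)}$-regions by deleting forced lozenges, and since, after the substitutions $x=a_0$, $t=b_0$ (under which $a+x=A$, $b+t=B$, $o_a=u_a$, $o_b=u_b$, and $e_a+x=d_a$, $e_b+t=d_b$ for $D^{(1)}$, with the evident modification for $D^{(2)}$), the functions $\Phi$ and $\Psi$ defined by \eqref{function1} and \eqref{function2} become precisely the right-hand side of \eqref{unified}, it is enough to prove \eqref{unified}. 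I would phrase everything in terms of perfect matchings of the planar dual graph; the quantities $\M(S^+)$ and $\M(S^-)$ are then the values of the function $s$ of \eqref{semieq} attached to the dent sequences lying above and below the axis $\ell$ of the ferns.

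The induction would be run on $N:=y+z+A+B$. For the base cases one takes the ferns empty ($A=B=0$): then $D^{(i)}_{y,z}$ is the semiregular hexagon of sides $y,z,y,y,z,y$, the regions $S^{\pm}$ are undented parallelograms with a unique tiling each ($\M(S^{\pm})=1$ by \eqref{semieq}), and the hyperfactorial part of \eqref{unified} collapses to $P(y,y,z)$; hence \eqref{MacMahoneq} settles these cases. The first genuinely new instances, with exactly one fern empty, are not treated separately but are produced along the way by the recurrence below.

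For the inductive step I would apply Kuo condensation to the dual graph of $D^{(i)}_{y,z}$, selecting four suitably chosen boundary unit triangles, away from the two ferns, so that each region obtained by deleting one of the relevant pairs of triangles is again (the dual of) a $D$-region of the same type with one of $y$, $z$ or one of the outermost lobes $a_0,a_m,b_0,b_n$ decreased by $1$; the padding conventions $\Phi_{x,y,z,t}(\dots,a_{2l-1}):=\Phi_{x,y,z,t}(\dots,a_{2l-1},0)$ and their analogues absorb the degenerate cases, and in a few sub-cases a residual semiregular hexagon splits off and is evaluated by \eqref{MacMahoneq}. This yields a three-term relation $\M(D^{(i)}_{y,z})\,\M(D')=\M(D'')\,\M(D''')+\M(D'''')\,\M(D''''')$ in which every region other than $D^{(i)}_{y,z}$ itself has strictly smaller $N$. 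It then remains to check that the product on the right of \eqref{unified} obeys the same relation: using $\Hf(n)=\Hf(n-1)\,(n-1)!$, the hyperfactorial prefactor reduces to a rational identity in $y,z,A,B,u_a+u_b,d_a+d_b$, while the two $s$-factors, being instances of the Cohn-Larsen-Propp/Gelfand-Tsetlin product $\prod_{1\le i<j}(x_j-x_i)/(j-i)$ underlying \eqref{semieq}, satisfy one-step recurrences that follow directly from that product form. Multiplying the two verifications closes the induction, and with it Theorem~\ref{main1}.

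The hard part is the combinatorial bookkeeping of the inductive step: the four condensation vertices must be placed so that \emph{all} regions that arise are honestly of $D$-type (rather than merely visually similar), which forces a case split on the parities of $m$ and $n$ and on which outermost lobes vanish, and in those sub-cases one must correctly identify the auxiliary semiregular hexagon that peels off. The second, more mechanical difficulty is the verification that the product formula satisfies the recurrence; it is routine in principle but delicate because of the sheer number of hyperfactorial factors, and it is cleanest to organize through the $P(a,b,c)$-form of the prefactor recorded in Remark~\ref{geominterpr}, which reduces the prefactor identity to a handful of instances of $P$.
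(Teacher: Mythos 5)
Your overall strategy --- Kuo condensation plus induction, with the product formula checked against the resulting recurrence --- is the same skeleton the paper uses (it proves the equivalent Theorem~\ref{main1} via the $q$-weighted Theorem~\ref{main2}), but two steps have genuine gaps. First, the base cases are insufficient for an induction on $N=y+z+A+B$. Any Kuo recurrence here produces terms in which $z$ (and/or $y$) is decreased, so the cases $z=0$ and $y=0$ with nonempty ferns must themselves be base cases; they do not follow from $A=B=0$. The paper disposes of them with the region-splitting Lemma~\ref{GS}: the region falls apart into the two dented semihexagons $S^{+}$ and $S^{-}$ plus forced lozenges, after which one must still check that the hyperfactorial prefactor degenerates correctly --- a verification you omit. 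More seriously, the one-fern case (say $B=0$ with the $a$-fern nonempty) is not ``produced along the way'': it is a genuine base case and is itself a nontrivial product formula, which the paper proves by a separate Kuo-condensation induction occupying all of Section~\ref{onefern}. It cannot be absorbed into the two-fern recurrence, because condensation vertices placed on the outer boundary ``away from the two ferns'' can only move $y$, $z$ and the corner lobes $a_0$, $b_0$; the interior lobes $a_1,\dotsc,a_m,b_1,\dotsc,b_n$ are never altered, so a region with one fern entirely empty is never reached from one with both ferns nonempty.

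Second, the assertion that four boundary vertices can be chosen so that all six Kuo terms are again $D$-regions, each with a single parameter decreased by $1$, does not match what actually happens and is the step most likely to fail as stated. In the paper's version the six terms are $P_{x,y,z,t}$, $P_{x-1,y+1,z-1,t-1}$, $P_{x,y+1,z-1,t-1}$, $P_{x-1,y,z,t}$, $P_{x-1,y+1,z-1,t}$ and $P_{x,y,z,t-1}$: several parameters change simultaneously and $y$ \emph{increases} in some terms. Even to realize this, the authors must first glue an extra row of unit triangles onto the top of the $P$-region and apply the unbalanced form of Kuo's theorem (Theorem~\ref{kuothm3}), while the $Q$-regions require the rearranged form (Theorem~\ref{kuothm2}). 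Moreover, when $a_0$ or $b_0$ reaches $0$ the leftover region after stripping forced lozenges is of the \emph{other} type --- a $P$-region reflects to a $Q$-region and vice versa, consuming the next lobe in the process --- so the two families must be run through the induction together. Without exhibiting the vertex placement and tracking these type changes, the inductive step is not established.
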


The explicit statement of this result given in Theorem 2.1 will however be more convenient to work with when we present its proof.

\section{Three dimensional interpretation}

A plane partition is a rectangular array of non-negative integers with weakly decreasing rows (from left to right) and columns (from top to bottom). 

\begin{figure}\centering
\includegraphics[width=10cm]{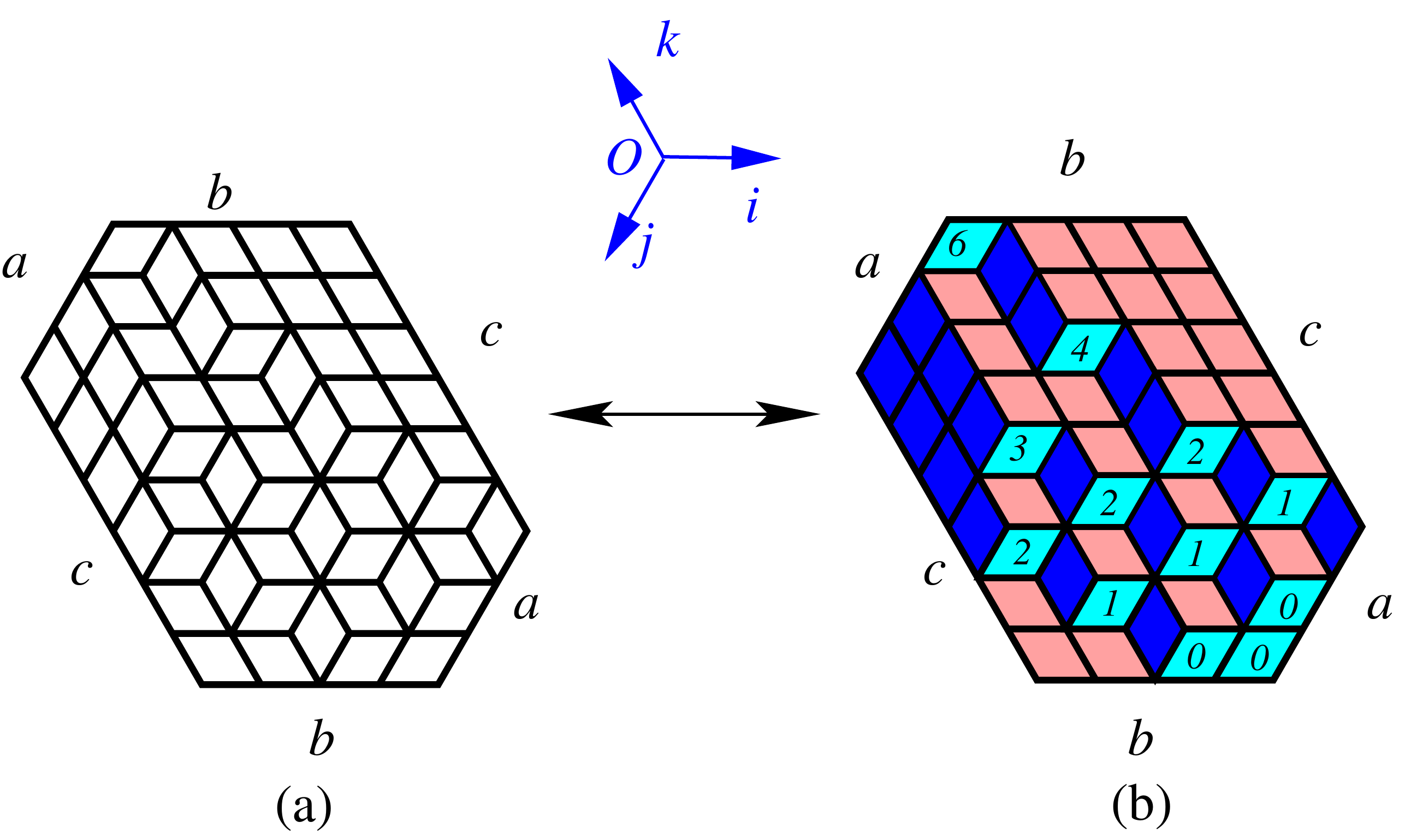}
\caption{The correspondence between plane partitions fitting a given box and lozenge tilings of a hexagon.}
\label{hextiled1}
\end{figure}

Plane partitions with $a$ rows, $b$ columns, and entries at most $c$ can conveniently be identified with their three dimensional diagrams --- stacks of unit cubes with certain monotonicity requirements fitting in an $a\times b\times c$ box (see Figure \ref{hextiled1}(b)). Namely, the heights of the columns of such a stack are required to be weakly decreasing from northeast to southwest and from northwest to south east. We call such stacks \emph{monotone stacks}. The latter are in turn in bijection with lozenge tilings of the semi-regular hexagon $H_{a,b,c}$ of side-lengths $a$, $b$, $c$, $a$, $b$, $c$ (in cyclic order).
 For example, the plane partition
\begin{equation} \pi=\begin{tabular}{rccccccccc}
6   & 4           & 2         & 1  \\\noalign{\smallskip\smallskip}
3   &  2          & 1           & 0         \\\noalign{\smallskip\smallskip}
2   &  1          & 0          &  0                    \\\noalign{\smallskip\smallskip}
\end{tabular}
\end{equation}
corresponds to the monotone stack of unit cubes  fitting in the $3\times 4\times 6$ box in Figure \ref{hextiled1}(b), which in turn corresponds to the lozenge tiling of $H_{3,4,6}$ pictured in Figure \ref{hextiled1}(a).

\begin{figure}[h]
\centerline{
\hfill
{\includegraphics[width=0.43\textwidth]{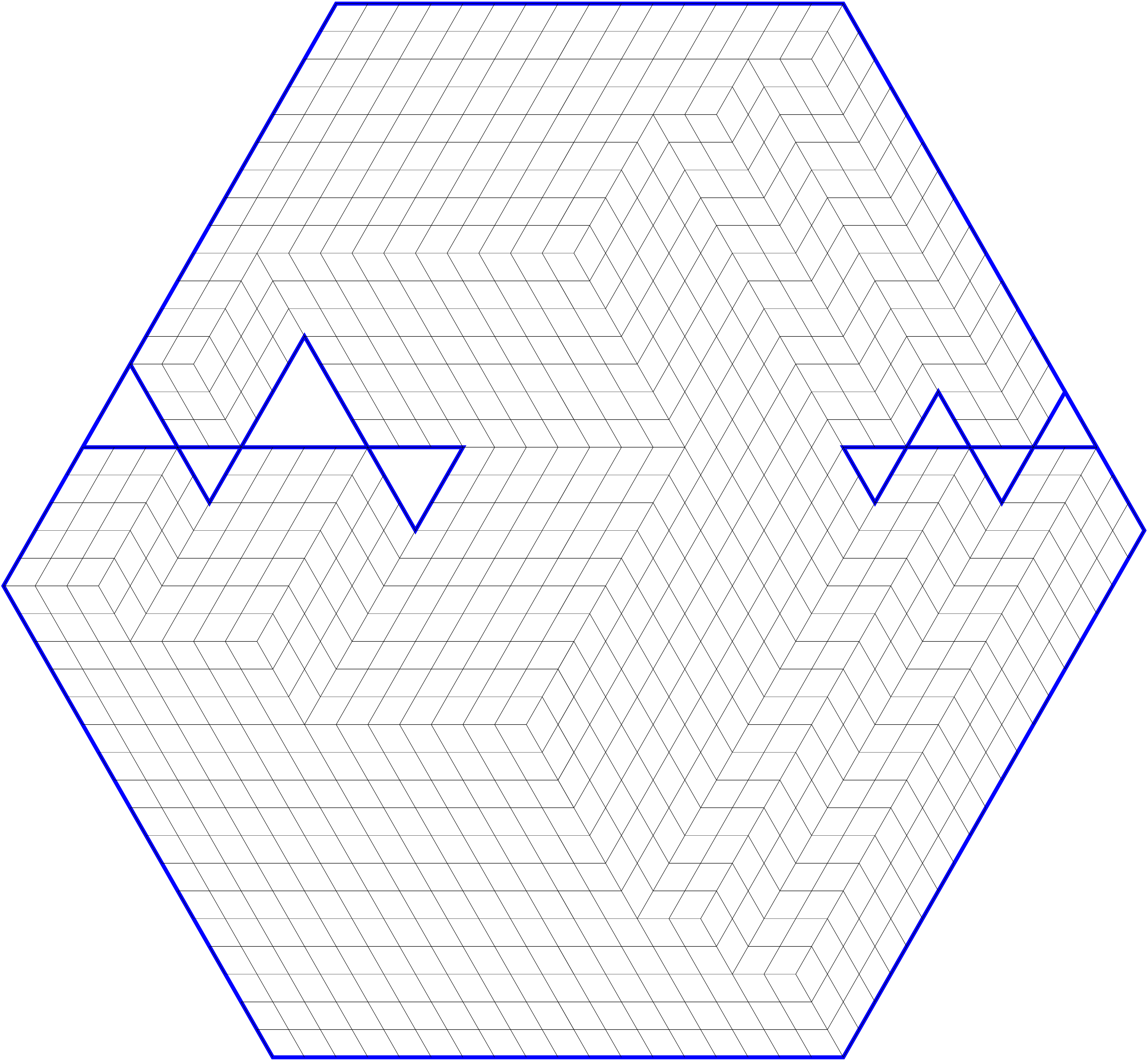}}
\hfill
{\includegraphics[width=0.43\textwidth]{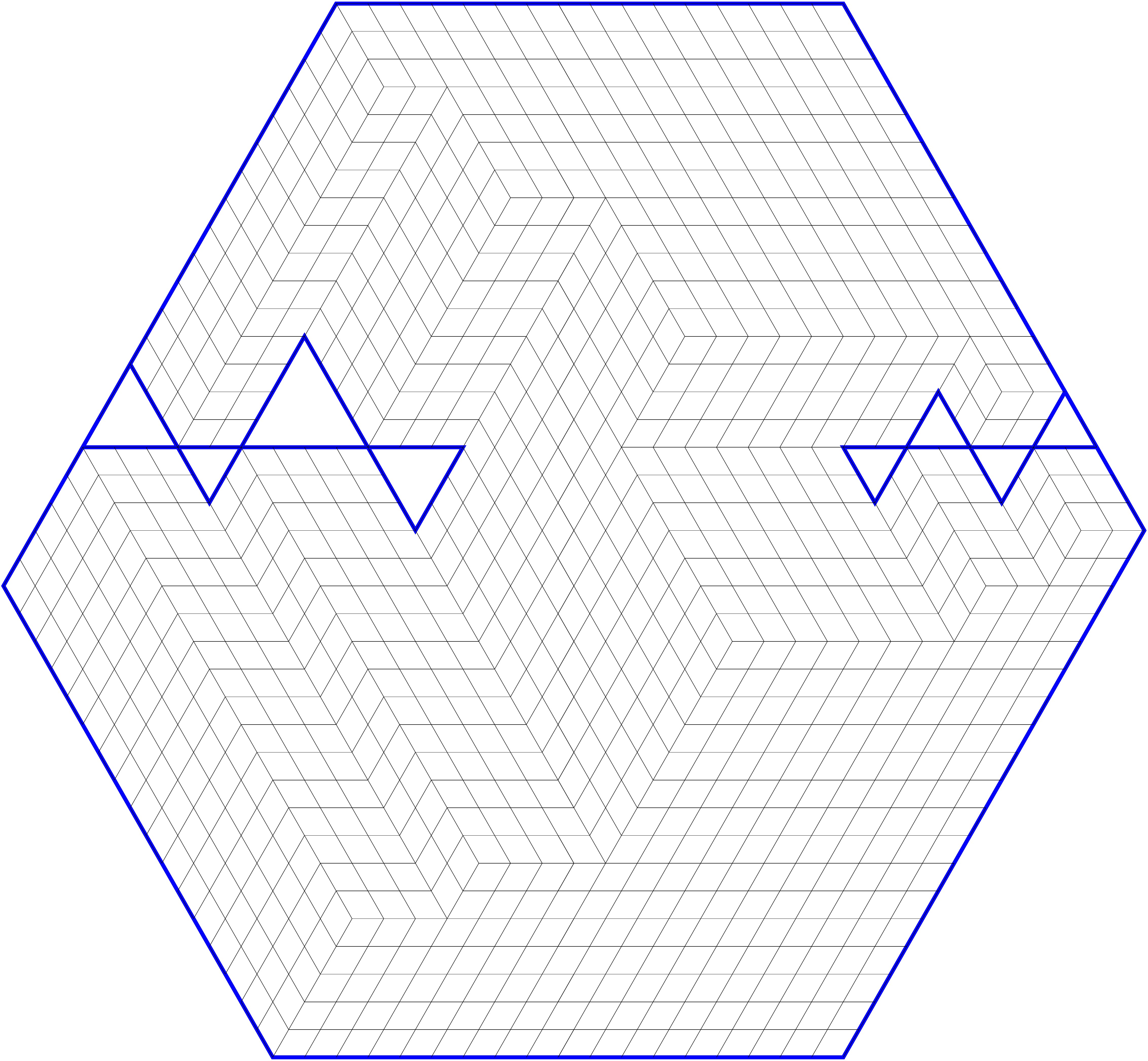}}
\hfill
}
\vskip-0.1in
\caption{The highest tiling (left) and the lowest tiling (right).}
\vskip-0.1in
\label{highlowtilings}
\end{figure}

\begin{figure}[h]
\vskip0.1in
\centerline{
\hfill
{\includegraphics[width=0.43\textwidth]{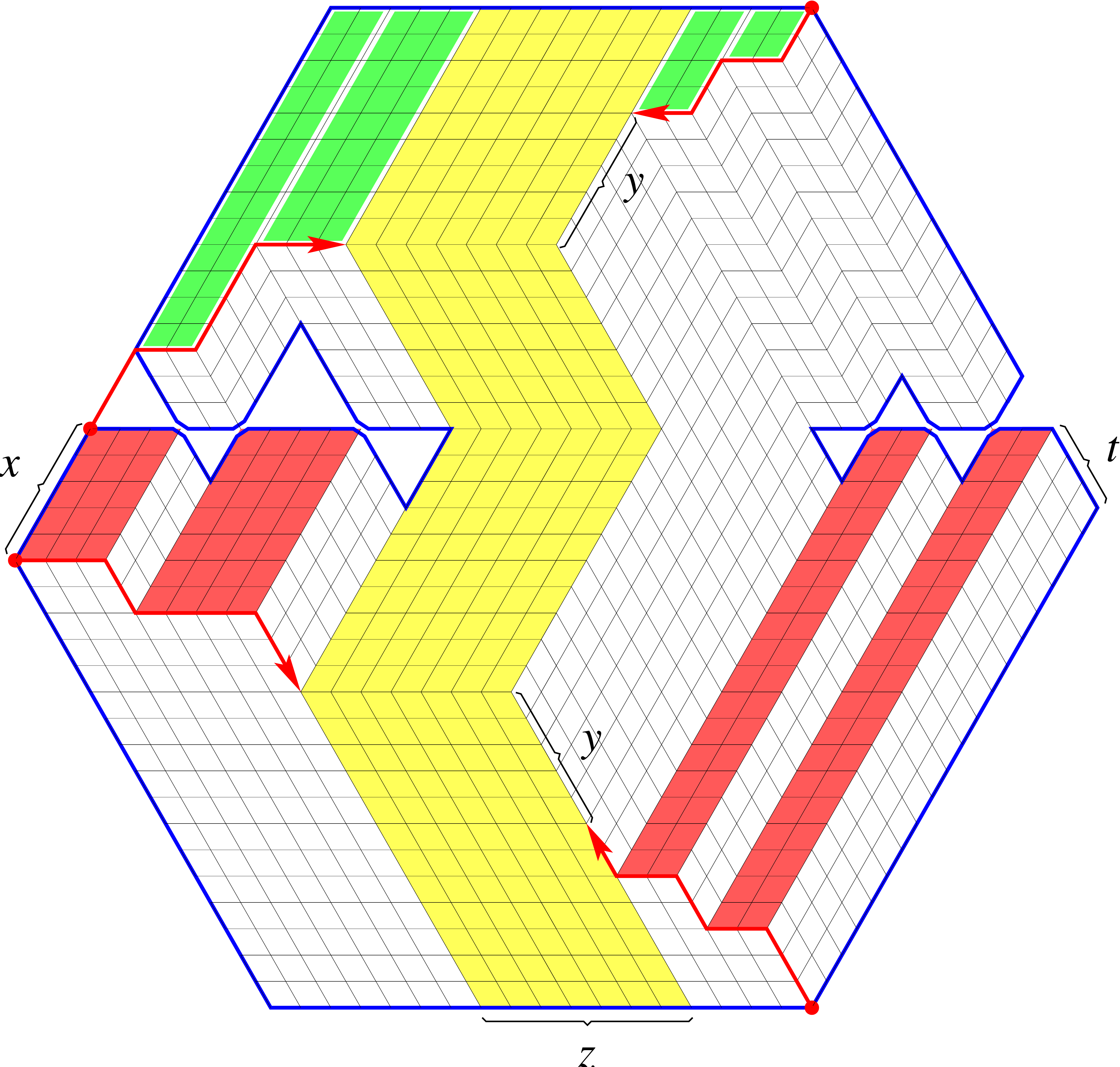}}
\hfill
{\includegraphics[width=0.43\textwidth]{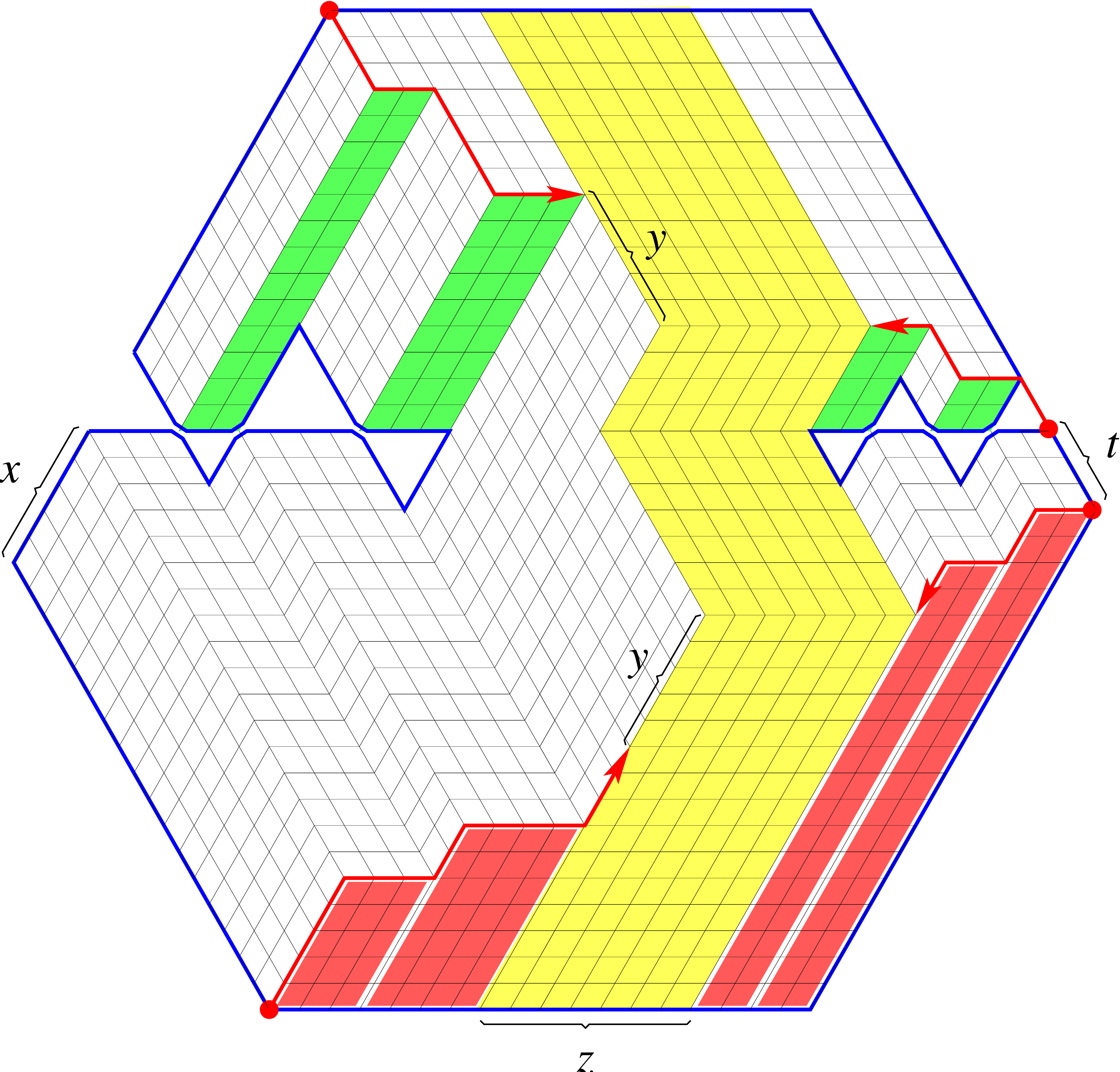}}
\hfill
}
\vskip-0.1in
\caption{The upper envelope (left) and the lower envelope (right).}
\vskip-0.1in
\label{highlowenvelope}
\end{figure}

\bigskip
\begin{figure}[h]
\vskip0.1in
\centerline{
\hfill
{\includegraphics[width=0.43\textwidth]{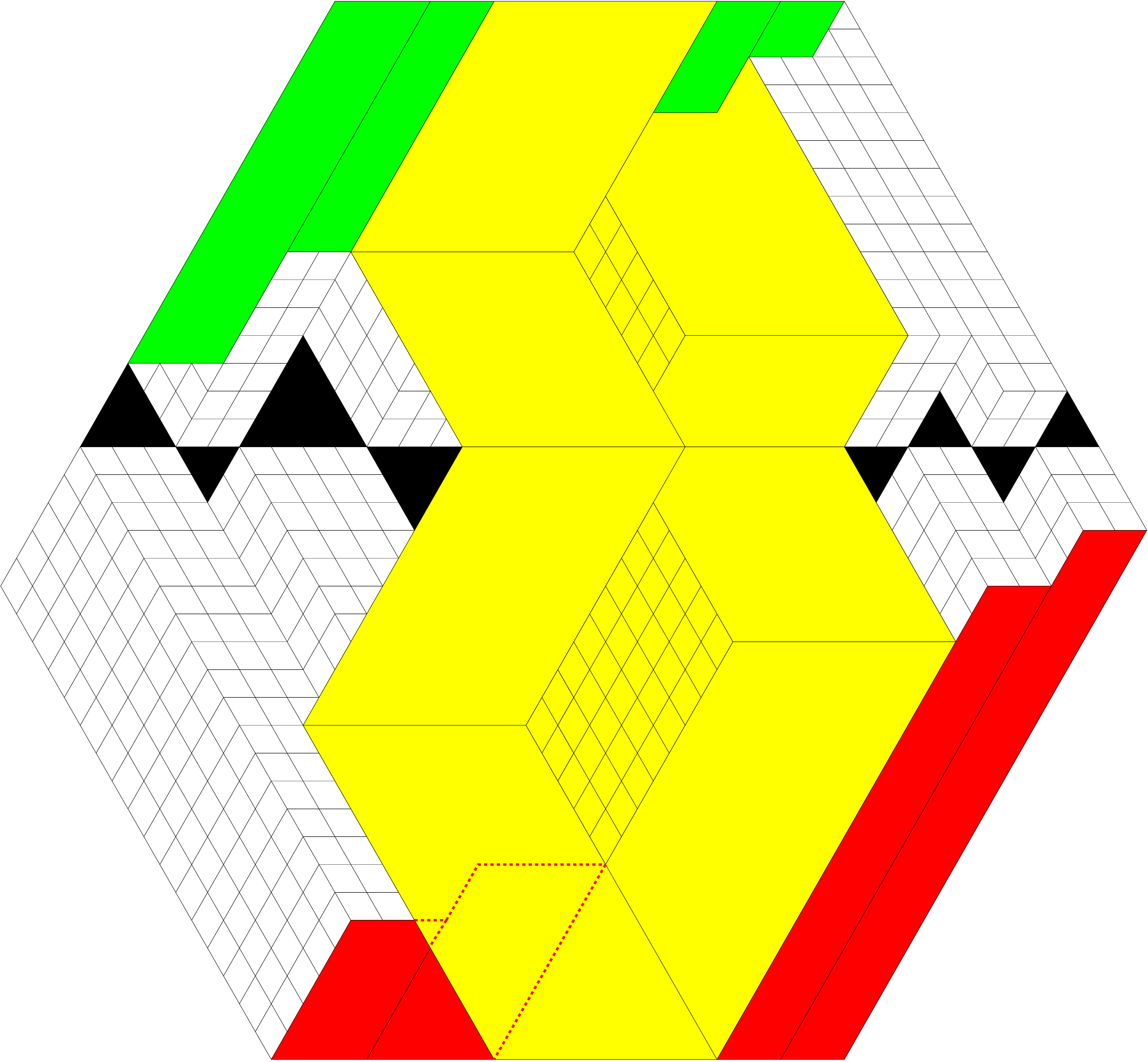}}
\hfill
{\includegraphics[width=0.43\textwidth]{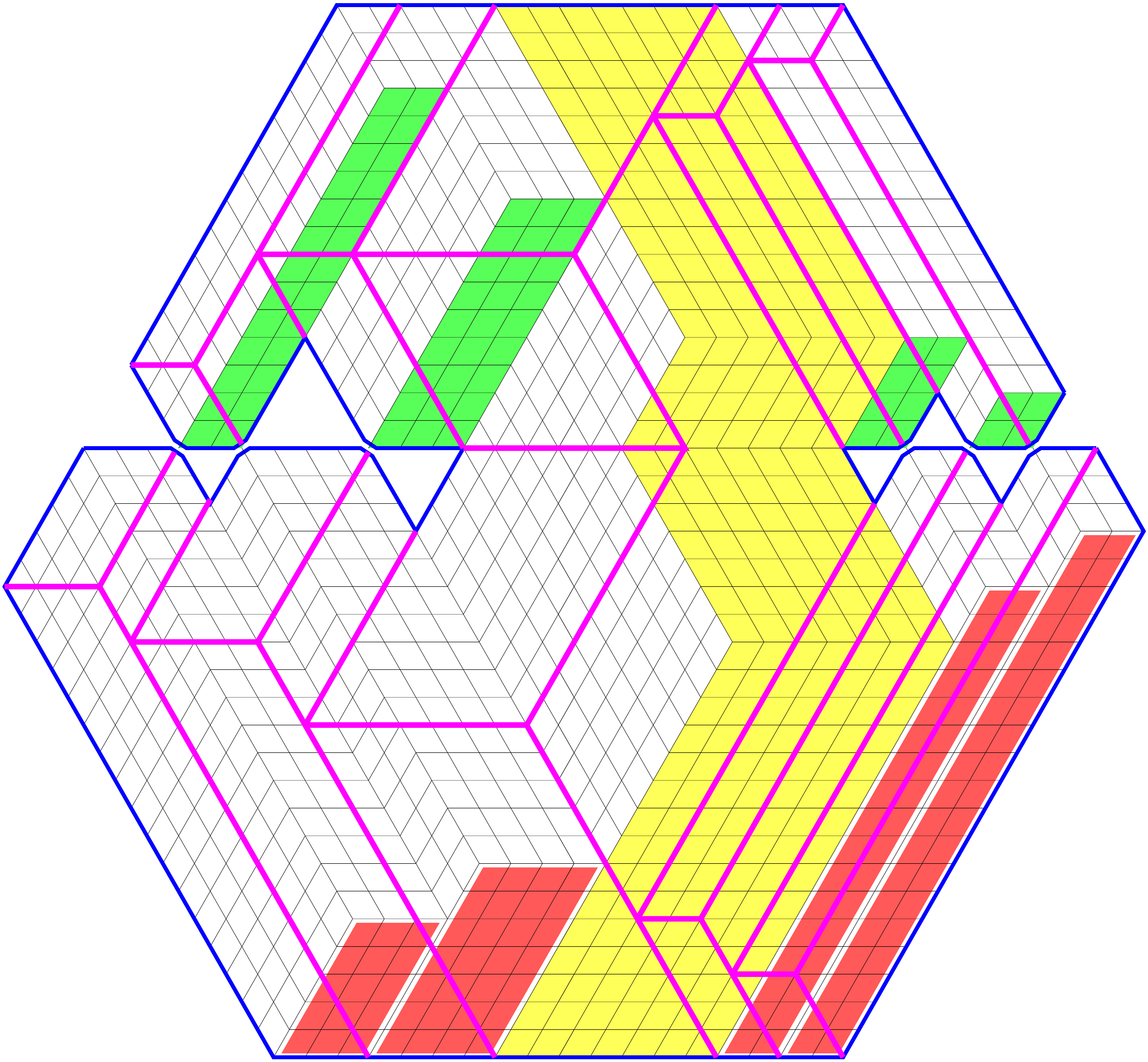}}
\hfill
}
\vskip-0.1in
\caption{The house with slab and roof (left) and the house with stairs and cupboards (right).}
\vskip-0.1in
\label{3Dinterpretation}
\end{figure}

It is less obvious how to interpret lozenge tilings of our $P$- and $Q$-regions as monotone stacks fitting in certain solids, but a quite satisfying such interpretation can nevertheless be given. This is the object of this section.

Viewing our triangular lattice as the orthogonal projection of a cubic lattice, so that traveling in the polar directions $1$, $2\pi/3$ and $-2\pi/3$ in the triangular lattice corresponds to moving up in the cubic lattice (and hence traveling in the directions $\pi/3$, $-1$ and $-\pi/3$ in the triangular lattice corresponds to moving {\it down} in the cubic lattice), each tiling of a doubly-intruded hexagon is the projection of a stepped surface in the cubic lattice. For consistency, choose the stepped surfaces corresponding to the tilings so that say the top left corner of the hexagon lifts to a common vertex of the cubic lattice. Then one readily sees that among all the surfaces obtained in this way, there is a ``highest'' one, in the sense that all the others are (weakly) below it, and that there is also a lowest one. These two tilings are shown in Figure \ref{highlowtilings} for a representative instance of a doubly intruded $P$-hexagon (there is a slight difference in the details according to the parities of the number of lobes of the two ferns; for brevity, we discuss only the case when both ferns have an even number of lobes, the other cases being perfectly analogous); the case of the $Q$-regions is similar.

The 3D surfaces corresponding to these two tilings are shown in Figure \ref{highlowenvelope} (in the figures, the portion of the boundary that projects to the ferns is shown with some ``rounded'' corners, in order to emphasize that these are really skew 3D contours, and that the pairs of rounded corners consist really of distinct points, even though they happen to project onto the same vertex of the triangular lattice). The 3D region inside which plane-partition-style stacks of unit cubes are in bijection with lozenge tilings of our doubly-intruded hexagon consists of the unit cubes of the cubic lattice that are contained in between these two surfaces (since the boundaries of the highest and lowest surfaces are the same, these two surfaces fit together perfectly).

In order to imagine this solid, we find the following description useful. Start with a ``house'' consisting of the union of two partially overlapping parallelepipeds, as indicated in the central portion of the picture on the left in Figure \ref{3Dinterpretation} (shown there in yellow). The values of the lengths of edges that determine it can be read off from Figure \ref{highlowenvelope}, once we note that the lengths of the steps of the red paths in those figures are as follows: in the picture on the left, $a_1,a_2,\dotsc,a_{2l-1},a_{2l}$ for the two on the left,  $b_1,b_2,\dotsc,b_{2k-1},b_{2k}$ for the top right, and $t,b_1,b_2,\dotsc,b_{2k-1},b_{2k}$ for the bottom right; and in the picture on the right, $a_1,a_2,\dotsc,a_{2l-1},a_{2l}$ for the top left, $x,a_1,a_2,\dotsc,a_{2l-1},a_{2l}$ for the bottom left, and $b_1,b_2,\dotsc,b_{2k-1},b_{2k}$ for the two on the right.

The 3D region we want is a certain enlargement of this house, with what we can imagine to be (storage) ``stairs'' built on an extension of the slab (base) of the house (this slab extension is shown in red in the picture on the left in Figure~\ref{3Dinterpretation}), and ``cupboards'' built under an extension of the flat roof of the house (this roof extension is shown in green in the same picture). The red rectangles in that picture are the footprints of the stairs; the green ones are the ceiling-prints of the cupboards. The sizes of these footprints and ceiling-prints can be read off from Figures~\ref{highlowenvelope}, using the description of the red paths in the previous paragraph; the heights of the stairs and cupboards are read off from the same figures.

The picture on the right in Figure \ref{3Dinterpretation} sketches a view of the obtained solid, by showing just the ``frame'' of the highest surface (in magenta) over the lowest surface. One can see how the red and green rectangles pair with magenta rectangular frames to form the stairs and cupboards, respectively.

We conclude this section by showing a fancier visualization of our augmented boxes, using the \texttt{Mathematica} code provided by one of the anonymous reviewers. Figure \ref{house1} shows more clearly the three dimensional solid we described above using Figure  \ref{3Dinterpretation}. Figure \ref{house2} shows the same solid from a slightly different point of view, revealing even more clearly the structure of the augmented box.

\begin{figure}\centering
\includegraphics[width=10cm]{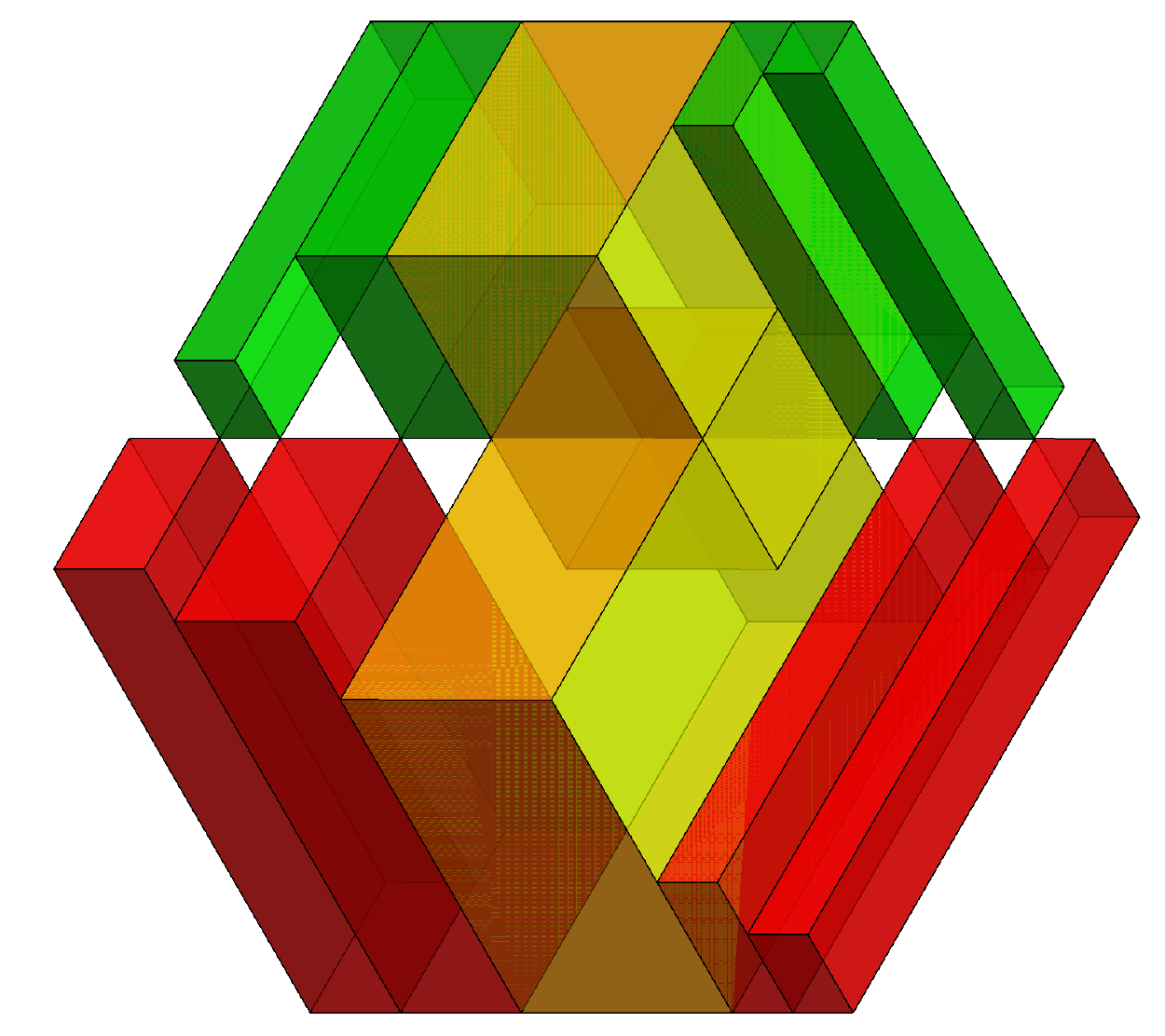}
\caption{The three dimensional visualization (made using \texttt{Mathematica 11}) of the augmented box corresponding to the region $P_{5,5,7,3}(3,2,4,3;\ 2,2,2,2)$.}\label{house1}
\end{figure}

\begin{figure}\centering
\includegraphics[width=10cm]{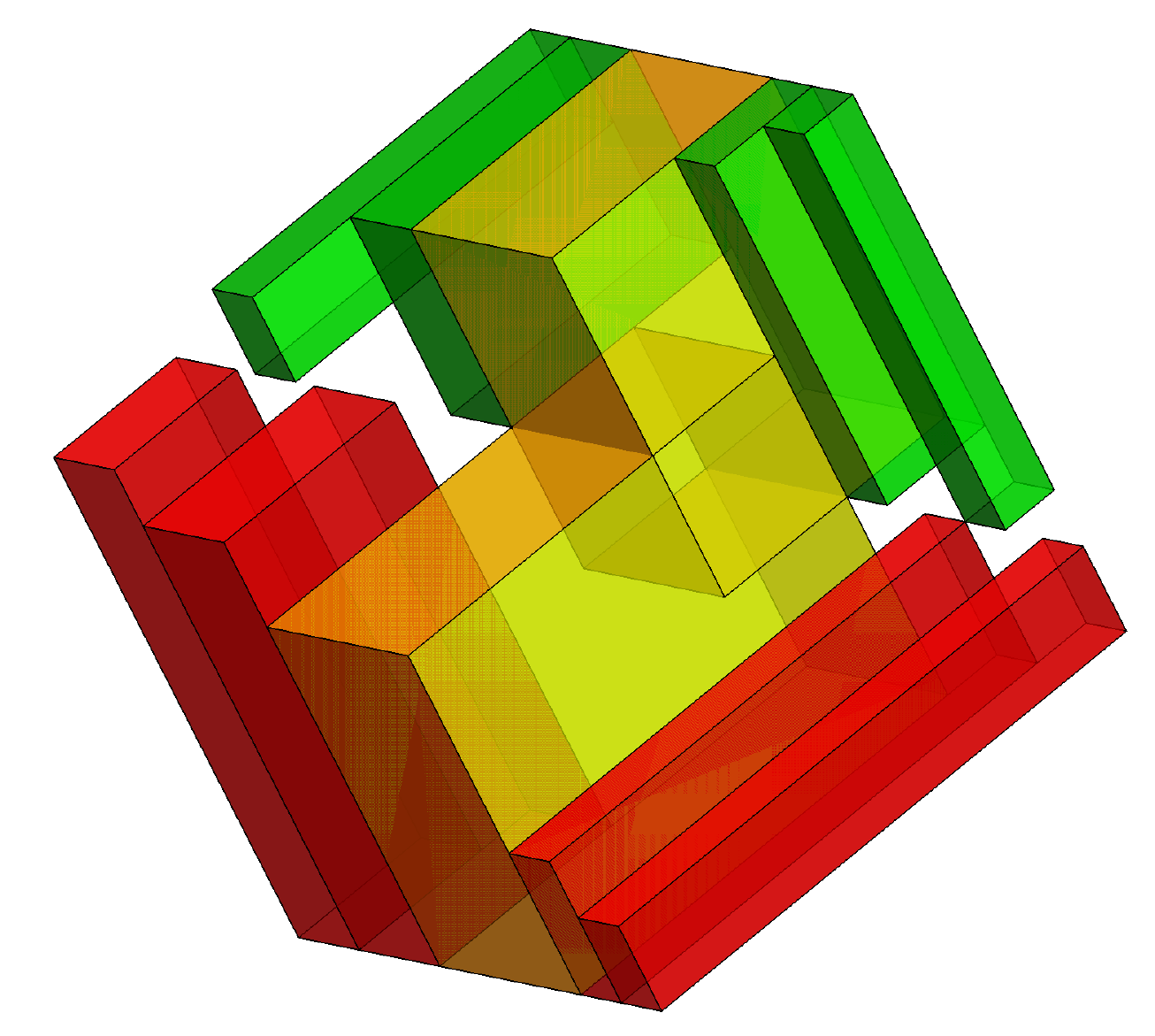}
\caption{The solid in Figure \ref{house1} viewed from a slightly different angle.}\label{house2}
\end{figure}

\section{Statement of weighted enumeration results}

By Figure \ref{hextiled1}, each lozenge tiling of a semi-regular hexagon corresponds to a plane partition (which we consider identified with its three dimensional diagram), which naturally aquires a volume --- the number of unit cubes in the corresponding monotone stack. MacMahon \cite{Mac} also gave an elegant refinement of formula (\ref{MacMahoneq}), which counts plane partitions according to their volume. This serves as our model for the weighted enumerations of the tilings of our $P$- and $Q$-regions which we present in this section.

We recall that the \emph{$q$-integer} $[n]_q$ is defined by $[n]_q:=1+q+q^2+\cdots+q^{n-1}$, the \emph{$q$-factorial} as $[n]_q!:=[1]_q[2]_q\cdots[n]_q$, and \emph{$q$-hyperfactorial} as $\Hf_q(n):=[1]_q![2]_q!\cdots[n-1]_q!$. MacMahon's \cite{Mac} refined enumeration of plane partitions according to their volume is the following.

\begin{thm}[MacMahon's $q$-formula]\label{MacMahoneqq}For non-negative integers $a,b,c$, we have
\begin{equation}
\sum_{\pi}q^{|\pi|}=\frac{\Hf_q(a)\Hf_q(b)\Hf_q(c)\Hf_q(a+b+c)}{\Hf_q(a+b)\Hf_q(b+c)\Hf_q(c+a)},
\end{equation}
where $|\pi|$ denotes the volume of the monotone stack $\pi$, and the sum is taken over all monotone stacks $\pi$ fitting in an $a\times b \times c$ box.
\end{thm}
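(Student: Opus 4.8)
The plan is to $q$-refine the classical non-intersecting lattice path proof of the unweighted MacMahon formula \eqref{MacMahoneq}. Recall from Figure \ref{hextiled1} that monotone stacks fitting in an $a\times b\times c$ box are in bijection with lozenge tilings of the semiregular hexagon $H_{a,b,c}$. The first step is to mark, in each such tiling, the lozenges of one fixed orientation — choose the orientation for which these form a family of $b$ pairwise non-intersecting lattice paths, running between two fixed sets of lattice points on opposite sides of the hexagon, each path built from unit steps of the two remaining lozenge types. The second ingredient is a weighting of the lozenges whose product over a tiling equals $q^{|\pi|}$, where $\pi$ is the corresponding stack (e.g.\ weight each lozenge by a monomial $q^{h}$ recording its ``height''); under the path bijection this weight becomes multiplicative along the paths, a step at height $h$ contributing a fixed power of $q$.

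Next I would apply the Lindström--Gessel--Viennot lemma in its weighted form: since the two sets of path endpoints are arranged so that only the identity permutation admits non-intersecting families, the signed sum over permutations collapses to a single determinant, giving
\[
\sum_{\pi\subseteq a\times b\times c}q^{|\pi|}\;=\;q^{N_0}\det_{1\le i,j\le b}\big(h_{ij}\big),
\]
where $h_{ij}$ is the $q$-weighted generating function for single lattice paths from the $i$-th starting point to the $j$-th ending point, and $N_0$ is an explicit (possibly trivial) normalization power. A direct computation via the $q$-binomial theorem shows each $h_{ij}$ is a power of $q$ times a Gaussian binomial coefficient $\binom{a+c}{\,c+j-i\,}_q$, so the right-hand side becomes a power of $q$ times a $q$-binomial Vandermonde-type determinant. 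Evaluating this determinant is the core of the argument: one can either cite the standard evaluation of such determinants (the Lindström/Gessel--Viennot determinant, or the toolbox in Krattenthaler's determinant calculus), or proceed self-containedly by column-reducing with the $q$-Pascal recurrence $\binom{n}{k}_q=\binom{n-1}{k}_q+q^{\,n-k}\binom{n-1}{k-1}_q$ until the matrix is triangular (equivalently, of $q$-Vandermonde shape), at which point its determinant is a manifest product.

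This computation delivers the triple product $\prod_{i=1}^{a}\prod_{j=1}^{b}\prod_{k=1}^{c}\frac{1-q^{\,i+j+k-1}}{1-q^{\,i+j+k-2}}$, and the final step is the routine telescoping and regrouping of this product into the ratio of $q$-hyperfactorials $\Hf_q$ appearing in the statement (using $1-q^{m}=(1-q)[m]_q$ together with the identity expressing a product of $[m]_q$ over a triangular array as a quotient of $q$-hyperfactorials), after verifying that all stray powers of $q$ — from $N_0$ and from the prefactors of the $h_{ij}$ — cancel. I expect this middle stage to be the only genuine obstacle: the LGV reduction and the determinant evaluation are both standard, but fixing the lozenge weighting so that the product is \emph{exactly} $q^{|\pi|}$ with no global shift, pinning down the precise Gaussian-binomial indices, and tracking the compensating powers of $q$ through the column operations is where essentially all the care — and all the chances for off-by-one errors — reside. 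As an alternative that sidesteps the determinant, one may slice the box into a chain of interlacing partitions and read off the same triple product directly from a finite (dual) form of the Cauchy identity for Schur polynomials; the telescoping final step is then identical.
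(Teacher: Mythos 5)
Your proposal is a correct and essentially standard derivation, but there is nothing in the paper to compare it against: Theorem \ref{MacMahoneqq} is quoted as MacMahon's classical result with a citation to \cite{Mac}, and no proof is given (the paper only uses it downstream, e.g.\ in Lemma \ref{lemsixfive}, which in turn cites \cite{Tri1}). Your route --- encode the tiling of $H_{a,b,c}$ as $b$ non-intersecting lattice paths, weight by $q^{\text{area}}$ so that the product over a tiling is $q^{|\pi|}$, apply the weighted Lindstr\"om--Gessel--Viennot lemma to get a determinant of Gaussian binomials $\binom{a+c}{c+j-i}_q$ (up to monomial prefactors), evaluate it to $\prod_{i,j,k}(1-q^{i+j+k-1})/(1-q^{i+j+k-2})$, and regroup into $q$-hyperfactorials --- is one of the two textbook proofs; the alternative you mention (interlacing slices and principal specializations of Schur functions) is the other, and is how \cite{Stanley} presents it. The one point genuinely worth care is the one you flag: the ``natural'' weight in which a right-tilting lozenge carries $q^{h}$ with $h$ the number of cubes beneath it is tiling-dependent, so to make LGV applicable one must pass to a tiling-independent positional weight and correct by a global power of $q$; this is exactly the maneuver the paper performs in Section 6 with the weight $\wt_1$, and the $\wt_1$-weighted evaluation $\M_1(H_{a,b,c})=q^{b\binom{a+1}{2}}\cdot\frac{\Hf_q(a)\Hf_q(b)\Hf_q(c)\Hf_q(a+b+c)}{\Hf_q(a+b)\Hf_q(b+c)\Hf_q(c+a)}$ of Lemma \ref{lemsixfive} is precisely the intermediate quantity your determinant computes before the normalizing prefactor $q^{N_0}$ is stripped off. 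Provided you carry out the determinant evaluation and the bookkeeping of those powers of $q$, the argument goes through.
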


Via the bijection depicted in Figure \ref{hextiled1}, our weighted tiling enumeration results can be stated as refined countings of monotone stacks fitting in certain solids according to their volume.

We call the three dimensional solids constructed in Section 4 {\it augmented boxes}. Each augmented box is determined by the $P$- or $Q$-region it corresponds to. Our formulas can be regarded as new generalizations of Theorem \ref{MacMahoneqq} corresponding to counting monotone stacks that fit inside a given augmented box according to their volume. Figures \ref{tilingarray}(a) and (b) show illustrative examples of monotone stacks inside augmented boxes corresponding to a $P$- and a $Q$-region, respectively.

\begin{figure}\centering
\setlength{\unitlength}{3947sp}%
\begingroup\makeatletter\ifx\SetFigFont\undefined%
\gdef\SetFigFont#1#2#3#4#5{%
  \reset@font\fontsize{#1}{#2pt}%
  \fontfamily{#3}\fontseries{#4}\fontshape{#5}%
  \selectfont}%
\fi\endgroup%
\resizebox{15cm}{!}{
\begin{picture}(0,0)%
\includegraphics{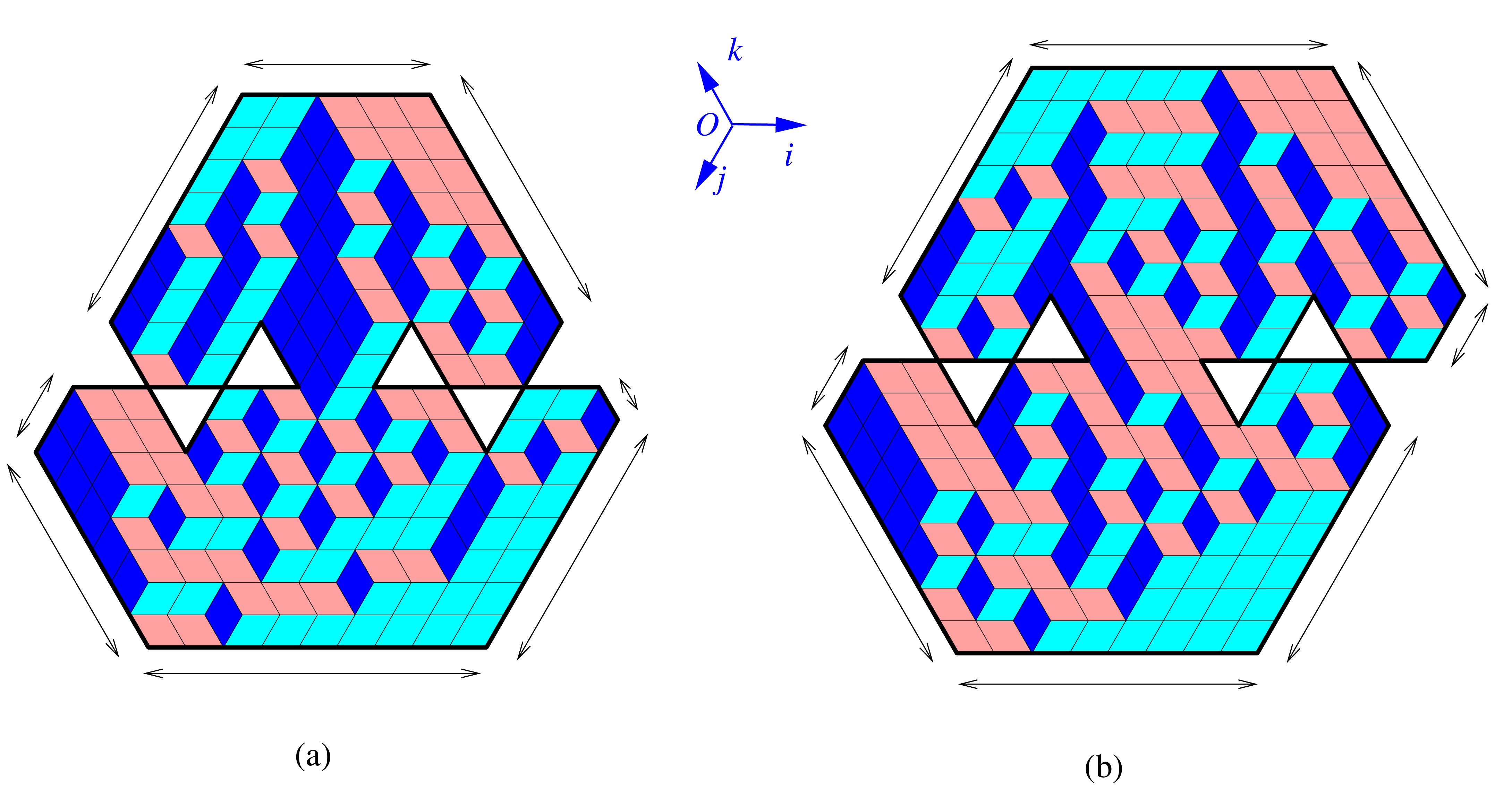}%
\end{picture}

\begin{picture}(20904,10916)(2668,-10897)
\put(21901,-5461){\makebox(0,0)[lb]{\smash{{\SetFigFont{20}{24.0}{\rmdefault}{\mddefault}{\itdefault}{$b_1$}%
}}}}
\put(4081,-3331){\rotatebox{60.0}{\makebox(0,0)[lb]{\smash{{\SetFigFont{20}{24.0}{\rmdefault}{\mddefault}{\itdefault}{$y+a_3+b_1+b_3$}%
}}}}}
\put(6571,-691){\makebox(0,0)[lb]{\smash{{\SetFigFont{20}{24.0}{\rmdefault}{\mddefault}{\itdefault}{$z+a_2+b_2$}%
}}}}
\put(9751,-1723){\rotatebox{300.0}{\makebox(0,0)[lb]{\smash{{\SetFigFont{20}{24.0}{\rmdefault}{\mddefault}{\itdefault}{$y+a_1+a_3+b_3$}%
}}}}}
\put(11623,-5407){\makebox(0,0)[lb]{\smash{{\SetFigFont{20}{24.0}{\rmdefault}{\mddefault}{\itdefault}{$t$}%
}}}}
\put(10816,-8416){\rotatebox{60.0}{\makebox(0,0)[lb]{\smash{{\SetFigFont{20}{24.0}{\rmdefault}{\mddefault}{\itdefault}{$y+x+a_2+b_2$}%
}}}}}
\put(2778,-7352){\rotatebox{300.0}{\makebox(0,0)[lb]{\smash{{\SetFigFont{20}{24.0}{\rmdefault}{\mddefault}{\itdefault}{$y+t+a_2+b_2$}%
}}}}}
\put(5889,-9796){\makebox(0,0)[lb]{\smash{{\SetFigFont{20}{24.0}{\rmdefault}{\mddefault}{\itdefault}{$z+a_1+a_3+b_1+b_3$}%
}}}}
\put(2910,-5445){\makebox(0,0)[lb]{\smash{{\SetFigFont{20}{24.0}{\rmdefault}{\mddefault}{\itdefault}{$x$}%
}}}}
\put(3991,-5131){\makebox(0,0)[lb]{\smash{{\SetFigFont{20}{24.0}{\rmdefault}{\mddefault}{\itdefault}{$a_1$}%
}}}}
\put(5071,-5761){\makebox(0,0)[lb]{\smash{{\SetFigFont{20}{24.0}{\rmdefault}{\mddefault}{\itdefault}{$a_2$}%
}}}}
\put(6061,-5191){\makebox(0,0)[lb]{\smash{{\SetFigFont{20}{24.0}{\rmdefault}{\mddefault}{\itdefault}{$a_3$}%
}}}}
\put(10471,-5101){\makebox(0,0)[lb]{\smash{{\SetFigFont{20}{24.0}{\rmdefault}{\mddefault}{\itdefault}{$b_1$}%
}}}}
\put(9241,-5731){\makebox(0,0)[lb]{\smash{{\SetFigFont{20}{24.0}{\rmdefault}{\mddefault}{\itdefault}{$b_2$}%
}}}}
\put(8191,-5221){\makebox(0,0)[lb]{\smash{{\SetFigFont{20}{24.0}{\rmdefault}{\mddefault}{\itdefault}{$b_3$}%
}}}}
\put(15047,-2956){\rotatebox{60.0}{\makebox(0,0)[lb]{\smash{{\SetFigFont{20}{24.0}{\rmdefault}{\mddefault}{\itdefault}{$y+t+a_3+b_2$}%
}}}}}
\put(17821,-361){\makebox(0,0)[lb]{\smash{{\SetFigFont{20}{24.0}{\rmdefault}{\mddefault}{\itdefault}{$z+a_2+b_1+b_3$}%
}}}}
\put(22051,-1141){\rotatebox{300.0}{\makebox(0,0)[lb]{\smash{{\SetFigFont{20}{24.0}{\rmdefault}{\mddefault}{\itdefault}{$y+a_1+a_3+b_2$}%
}}}}}
\put(23191,-4981){\makebox(0,0)[lb]{\smash{{\SetFigFont{20}{24.0}{\rmdefault}{\mddefault}{\itdefault}{$t$}%
}}}}
\put(21631,-8161){\rotatebox{60.0}{\makebox(0,0)[lb]{\smash{{\SetFigFont{20}{24.0}{\rmdefault}{\mddefault}{\itdefault}{$x+y+a_2+b_3$}%
}}}}}
\put(16861,-10021){\makebox(0,0)[lb]{\smash{{\SetFigFont{20}{24.0}{\rmdefault}{\mddefault}{\itdefault}{$z+a_1+a_3+b_2$}%
}}}}
\put(13485,-6676){\rotatebox{300.0}{\makebox(0,0)[lb]{\smash{{\SetFigFont{20}{24.0}{\rmdefault}{\mddefault}{\itdefault}{$y+a_2+b_1+b_3$}%
}}}}}
\put(13801,-5131){\makebox(0,0)[lb]{\smash{{\SetFigFont{20}{24.0}{\rmdefault}{\mddefault}{\itdefault}{$x$}%
}}}}
\put(14821,-4681){\makebox(0,0)[lb]{\smash{{\SetFigFont{20}{24.0}{\rmdefault}{\mddefault}{\itdefault}{$a_1$}%
}}}}
\put(15991,-5401){\makebox(0,0)[lb]{\smash{{\SetFigFont{20}{24.0}{\rmdefault}{\mddefault}{\itdefault}{$a_2$}%
}}}}
\put(17011,-4801){\makebox(0,0)[lb]{\smash{{\SetFigFont{20}{24.0}{\rmdefault}{\mddefault}{\itdefault}{$a_3$}%
}}}}
\put(19681,-5341){\makebox(0,0)[lb]{\smash{{\SetFigFont{20}{24.0}{\rmdefault}{\mddefault}{\itdefault}{$b_3$}%
}}}}
\put(20701,-4831){\makebox(0,0)[lb]{\smash{{\SetFigFont{20}{24.0}{\rmdefault}{\mddefault}{\itdefault}{$b_2$}%
}}}}
\end{picture}}
\caption{Viewing lozenge tilings of $P$- and $Q$-regions as monotone stacks of unit cubes fitting in an augmented box.}
\label{tilingarray}
\end{figure}

We define $s_q(\textbf{b})$ to be the expression obtained from formula \eqref{semieq} for $s(\textbf{b})$ by replacing each hyperfactorial by its $q$-analog:

\begin{align}\label{semieqq}
s_q(b_1,b_2,\dots,b_{2l-1})&=s_q(b_1,b_2,\dots,b_{2l})\notag\\
&=\dfrac{1}{\Hf_q(b_1+b_{3}+b_{5}+\dotsc+b_{2l-1})}\notag\\
&\times \dfrac{\prod_{\substack{1\leq i\leq j\leq 2l-1,\,\text{$j-i+1$ odd}}}\Hf_q(b_i+b_{i+1}+\dotsc+b_{j})}{\prod_{\substack{1\leq i\leq j\leq 2l-1,\,\text{$j-i+1$ even}}}\Hf_q(b_i+b_{i+1}+\dotsc+b_{j})}.
\end{align}

We define the $q$-analog
$\Phi^{q}_{x,y,z,t}(a_1,\dotsc,a_m;b_1,\dotsc,b_n)$ of
$\Phi_{x,y,z,t}(a_1,\dotsc,a_m;b_1,\dotsc,b_n)$ and the $q$-analog
$\Psi^{q}_{x,y,z,t}(a_1,\dotsc,a_m;b_1,\dotsc,b_n)$ of
$\Psi_{x,y,z,t}(a_1,\dotsc,a_m;b_1,\dotsc,b_n)$
by replacing each hyperfactorial and each $s$-function by the corresponding $q$-hyperfactorial and $s_q$-function, respectively.

The main result of this paper is the following.

\begin{thm}\label{main2}
Let $x,y,z,t$ and $a_1,\dotsc,a_m,b_1,\dotsc,b_n$ be non-negative integers.

$(${\rm a}$)$ We have
\begin{equation}\label{main2eq1}
\sum_{\pi\in P}q^{|\pi|}=\Phi^{q}_{x,y,z,t}(a_1,\dotsc,a_m;b_1,\dotsc,b_n),
\end{equation}
where the sum is taken over all monotone stacks $\pi$ that fit in the augmented box corresponding to the region $P_{x,y,z,t}(a_1,\dotsc,a_m;b_1,\dotsc,b_n)$.

$(${\rm b}$)$ We have
\begin{equation}\label{main2eq2}
\sum_{\pi\in Q}q^{|\pi|}=\Psi^{q}_{x,y,z,t}(a_1,\dotsc,a_{n};b_1,\dotsc,b_n),
 \end{equation}
 where the sum is taken over all monotone stacks $\pi$ that fit in the augmented box corresponding to the region $Q_{x,y,z,t}(a_1,\dotsc,a_m;b_1,\dotsc,b_n)$.
\end{thm}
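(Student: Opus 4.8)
The plan is to run the same argument that proves Theorem~\ref{main1}, but carrying a volume-recording weight through every step. The first task is to turn the statistic $|\pi|$ into a monomial weight on lozenge tilings. Under the bijection of Section~4 between lozenge tilings of a $P$- or $Q$-region and monotone stacks in the corresponding augmented box, one can --- exactly as in the tiling proof of MacMahon's $q$-formula (Theorem~\ref{MacMahoneqq}) --- assign to each lozenge a weight equal to a power of $q$ depending only on its orientation and position, so that the product of these weights over a tiling $T$ equals $q^{|\pi(T)|}$, up to a single global power of $q$ (the same for all tilings) coming from the choice of the lowest stack as the origin and from the forced lozenges removed when passing between the $D^{(1)}$-, $D^{(2)}$-regions of Section~3 and the $P$-, $Q$-regions. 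Passing to the planar dual graph, computing $\sum_\pi q^{|\pi|}$ becomes computing a weighted perfect-matching sum $\M_q(\cdot)$, the weight of a matching being the product of its edge weights.

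Next comes the inductive engine. The proof of Theorem~\ref{main1} applies Kuo-type graphical condensation repeatedly, removing the lobes of the two ferns one at a time, until it reaches base regions that are dented semihexagons, whose tiling numbers are given by~\eqref{semieq}. The identical scheme works here because graphical condensation is valid for weighted matchings over any commutative ring: for a planar bipartite graph $G$ with appropriately placed boundary vertices $w,x,y,z$,
\[\M_q(G)\,\M_q(G-\{w,x,y,z\})=\M_q(G-\{w,x\})\,\M_q(G-\{y,z\})+\M_q(G-\{w,z\})\,\M_q(G-\{x,y\}).\]
One applies this to the dual graphs of the $D^{(i)}$-regions at precisely the cuts used in the unweighted argument. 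Removing a dual vertex amounts to forcing a collection of lozenges, which shifts the volume by a fixed amount, so each of the five terms above acquires a clean monomial prefactor; dividing them out turns the condensation identity into a recurrence for the generating functions $\sum_\pi q^{|\pi|}$.

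It then remains to match the recurrence against the claimed product formulas. The base case is the enumeration of lozenge tilings of a dented semihexagon by volume, which equals $s_q$ of~\eqref{semieqq}: this is the $q$-analog of the Cohn--Larsen--Propp formula and follows from the Lindström--Gessel--Viennot lemma with $q$-weighted lattice-path steps (equivalently, it is a principal specialization of a Schur function). One then checks that $\Phi^{q}_{x,y,z,t}$ and $\Psi^{q}_{x,y,z,t}$ satisfy the same weighted condensation recurrences. Every hyperfactorial identity that forced the $q=1$ computation to close is a pure-product identity --- after cancellation each reduces to a $q$-Chu--Vandermonde / $q$-binomial relation among $q$-hyperfactorials and $s_q$-functions --- so the $q$-deformed identities hold verbatim, and the induction closes.

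The main obstacle is the bookkeeping of exponents of $q$. One must (i) pin down the local lozenge weights precisely enough that their product over a tiling is genuinely $q^{|\pi|}$ for the augmented box, whose ``house $+$ stairs $+$ cupboards'' shape (Section~4) is considerably more intricate than an ordinary box; and (ii) compute exactly the constant shifts in $|\pi|$ produced by removing the forced lozenges relating $D^{(i)}$ to $P/Q$ and by each condensation cut, then verify that these shifts are precisely the powers of $q$ concealed inside the $q$-hyperfactorial prefactors of $\Phi^{q}$ and $\Psi^{q}$. Getting every exponent right is where the genuine work lies; the rest is a mechanical $q$-deformation of the proof of Theorem~\ref{main1}. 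An alternative route would bypass condensation and instead decompose the augmented box into the house, the stairs and the cupboards of Section~4, apply MacMahon's $q$-formula and $q$-Lindström--Gessel--Viennot to the pieces, and invoke a $q$-analog of the identity in Remark~\ref{geominterpr}; this meets the same difficulty of controlling the volume contributed by the stairs and cupboards.
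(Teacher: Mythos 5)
Your proposal follows essentially the same route as the paper's proof: replace the tiling-dependent volume weight by position-only weights whose tiling generating functions differ from $\sum_\pi q^{|\pi|}$ by a fixed power of $q$ (Lemma~\ref{ratio}), run weighted Kuo condensation on the dual graphs, reduce the base cases to $q$-counted dented semihexagons, and verify that $\Phi^q$ and $\Psi^q$ satisfy the resulting recurrences. The only place your sketch is too casual is the claim that this works ``exactly as in the tiling proof of MacMahon's $q$-formula'': for an augmented box the natural volume weight genuinely depends on the tiling (Figure~\ref{Naturalweight}), and the paper in fact needs \emph{two} position-only weights $\wt_1$ and $\wt_2$ because the reflections used to handle the base cases $\overline{m}=0$, $x=0$, $t=0$ do not preserve a single such weight --- but you correctly identify the exponent bookkeeping as the real content, and that is precisely where the paper's effort (the functions $A^{(i)}$, $B^{(i)}$ and the room-by-room decomposition of the augmented box) goes.
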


The $q=1$ specialization of the above result recovers Theorem \ref{main1}. However, we chose to state Theorem \ref{main1} first in order to show how that unweighted version, through its three dimensional interpretation described in Section 3, led us, in the spirit of MacMahon's $q$-generalization of the plane partition enumeration formula (\ref{MacMahoneq}), to the $q$-version stated above.


Such `\emph{simple}' product $q$-formulas turn out to be fairly rare in the field of tiling enumeration (here `simple' means that the size of the largest factor in the product is linear in the parameters). We refer the reader for instance to \cite{Stanley}, \cite{Tri1} and \cite{Tri2} for some other examples and for further discussion of $q$-enumerations of lozenge tilings.

The rest of the paper is organized as follows. In Section \ref{Pre}, we go over some preliminary results that we will employ in our proofs. The section after that is devoted to the $q$-enumeration of lozenge tilings of a hexagon with one fern removed. We will use the latter in our proof of Theorem~\ref{main2} in Section~8.
In Section~9 we present an open problem concerning a Schur function identity suggested by our results. Section~10 considers a statistical physics problem that can be tackled using our enumeration formulas.


\section{Preliminaries}\label{Pre}
In general, we allow lozenges in a region to carry some weights, which are determined by the positions of the lozenges (equivalently, we allow the edges of the planar dual to have weights on them). Define the \emph{weight} of a lozenge tiling to be the product of weights of its lozenges. In the weighted case, we use the notation $\M(R)$ for the sum of weights of all tilings of the region $R$. When the region is unweighted (i.e. all lozenges have weight $1$), then $\M(R)$ is exactly the number of lozenge tilings of the region. This is consistent with the use of the notation $\M(R)$ in the previous sections.

A \emph{forced lozenge} in a region $R$ on the triangular lattice is a lozenge contained in any tiling of $R$. Assume that we remove several forced lozenges $l_1,l_2\dotsc,l_n$ from the region $R$ and get a new region $R'$. Then clearly
\begin{equation}\label{forcedeq}
\M(R)=\M(R')\prod_{i=1}^{n}wt(l_i),
\end{equation}
where $wt(l_i)$ is the weight of the lozenge $l_i$.

\begin{lem}[Region-splitting Lemma]\label{GS}
Let $R$ be a balanced region\footnote{ A balanced region on the triangular lattice is a finite lattice region which contains the same number of unit triangles of each orientation. Since each lozenge covers one unit triangle of each orientation, this is a necessary condition for a region to admit lozenge tilings.} on the triangular lattice. Assume that a sub-region $Q$ of $R$ satisfies the following two conditions:
\begin{enumerate}
\item[(i)] \text{\rm{(Separating Condition)}} All unit triangles in $Q$ that are adjacent to some unit triangle of $R-Q$ have the same orientation.


\item[(ii)] \text{\rm{(Balancing Condition)}} $Q$ is balanced.
\end{enumerate}
Then
\begin{equation}
\M(R)=\M(Q)\, \M(R-Q).
\end{equation}
\end{lem}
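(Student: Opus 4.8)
The plan is to show that in any lozenge tiling of $R$ no lozenge can straddle the common boundary of $Q$ and $R-Q$; once this is established, tilings of $R$ are in weight-preserving bijection with pairs consisting of a tiling of $Q$ and a tiling of $R-Q$, and the product formula follows at once.

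First I would set up terminology. Call a lozenge of a tiling $T$ of $R$ a \emph{crossing} lozenge if it covers one unit triangle of $Q$ and one unit triangle of $R-Q$. Since $Q$ and $R-Q$ partition the unit triangles of $R$, every lozenge of $T$ is either contained in $Q$, contained in $R-Q$, or crossing. Without loss of generality assume the common orientation in the Separating Condition (i) is up-pointing. The $Q$-half of a crossing lozenge is a unit triangle of $Q$ sharing an edge with a triangle of $R-Q$, hence by (i) it is up-pointing, and therefore its $(R-Q)$-half is down-pointing. In particular, crossing lozenges cover no down-pointing triangle of $Q$.

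The heart of the argument — the step I expect to need the most care — is a counting argument using the Balancing Condition (ii). Let $k$ be the number of lozenges of $T$ contained in $Q$; each such lozenge covers exactly one up-pointing and one down-pointing triangle of $Q$. Every down-pointing triangle of $Q$ is covered either by one of these $k$ lozenges or by a crossing lozenge, but the previous paragraph excludes the latter, so $Q$ has exactly $k$ down-pointing triangles. By (ii), $Q$ also has exactly $k$ up-pointing triangles, and all $k$ of them are already accounted for by the lozenges of $T$ contained in $Q$; hence no up-pointing triangle of $Q$ is covered by a crossing lozenge either. Thus $T$ has no crossing lozenge, so $T$ restricts to a lozenge tiling $T|_Q$ of $Q$ and a lozenge tiling $T|_{R-Q}$ of $R-Q$.

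Finally I would observe that $R-Q$ is balanced (its numbers of up- and down-pointing triangles are the differences of the corresponding numbers for $R$ and for $Q$, both balanced), so the reverse construction is legitimate: any tiling of $Q$ together with any tiling of $R-Q$ assembles to a tiling of $R$. Hence $T\mapsto(T|_Q,\,T|_{R-Q})$ is a bijection from the tilings of $R$ onto the product of the set of tilings of $Q$ with the set of tilings of $R-Q$, and since the weight of a tiling is the product of the weights of its lozenges, $wt(T)=wt(T|_Q)\,wt(T|_{R-Q})$. Summing over all tilings of $R$ and factoring the resulting double sum yields $\M(R)=\M(Q)\,\M(R-Q)$, as claimed.
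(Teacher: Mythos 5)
Your argument is correct. The key step — showing that no lozenge of a tiling of $R$ can straddle the boundary between $Q$ and $R-Q$ — is carried out cleanly: the Separating Condition forces the $Q$-half of any would-be crossing lozenge to have the distinguished orientation, so crossing lozenges can touch only one orientation class of $Q$; the count of internal lozenges then pins down the number of down-pointing triangles of $Q$, and the Balancing Condition forces all up-pointing triangles of $Q$ to be used up internally as well, leaving nothing for a crossing lozenge to cover. The paper, by contrast, does not argue this directly at all: it passes to the dual graph and invokes the Graph-splitting Lemma 3.6(a) of Lai's earlier paper \cite{Tri}, which is the same statement phrased for perfect matchings of weighted graphs (no edge of a matching can join the two parts, so matchings factor). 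What your route buys is a short, self-contained proof requiring no external reference; what the paper's route buys is brevity and the reuse of a result already established in the matching language, which is the setting in which Kuo condensation is applied later anyway. One small remark: your observation that $R-Q$ is balanced is not actually needed for the bijection — if either piece has no tilings then both sides of $\M(R)=\M(Q)\,\M(R-Q)$ vanish — but it does no harm.
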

\begin{proof}
Let $G$ be the dual graph of $R$, and $H$ the dual graph of $Q$. Then $H$ satisfies the conditions in the Graph-splitting Lemma 3.6(a) in \cite{Tri}, and the lemma follows.
\end{proof}

Let $G$ be a finite simple graph without loops. A \emph{perfect matching} of $G$ is a collection of disjoint edges covering all vertices of $G$. The \emph{(planar) dual graph} of a region $R$ on the triangular lattice is the graph whose vertices are the unit triangles in $R$, and whose edges connect precisely those pairs of unit triangles that share an edge. In the weighted case, the edge of the dual graph carries the same weight as its corresponding lozenge in the region. The tilings of a region can be identified with the perfect matchings of its dual graph. The sum of weights of all perfect matchings of the graph $G$ is denoted by $\M(G)$, where the \emph{weight} of a perfect matching is the product of weights of its constituent edges  (writing $\M(R)$ for the weighted sum of lozenge tilings of the region $R$ is justified by the above mentioned identification between tilings and perfect matchings).

In our proofs we use Kuo's graphical condensation method \cite{Kuo}. We include below the forms of Kuo's results that we will need.

\begin{thm}[Theorem 5.1 \cite{Kuo}]\label{kuothm}
Let $G=(V_1,V_2,E)$ be a (weighted) bipartite planar graph in which $|V_1|=|V_2|$. Assume that  $u, v, w, s$ are four vertices appearing in this cyclic order on a face of $G$ so that $u,w \in V_1$ and $v,s \in V_2$. Then
\begin{equation}\label{kuoeq}
\M(G)\M(G-\{u, v, w, s\})=\M(G-\{u, v\})\M(G-\{ w, s\})+\M(G-\{u, s\})\M(G-\{v, w\}).
\end{equation}
\end{thm}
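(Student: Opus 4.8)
The plan is to give the standard combinatorial proof by \emph{superposition of perfect matchings}, in which the hypothesis that $u,v,w,s$ lie in this cyclic order on a single face does the essential work. Regard each product on the two sides of \eqref{kuoeq} as a sum, over pairs of perfect matchings, of the product of their weights. For the left-hand side a typical term is a pair $(M_1,M_2)$ with $M_1$ a perfect matching of $G$ and $M_2$ a perfect matching of $G-\{u,v,w,s\}$. Color the edges of $M_1$ red and those of $M_2$ blue and overlay them. The superposition decomposes into doubled edges, even alternating red/blue cycles, and alternating paths whose endpoints are exactly the vertices covered by one matching but not the other. Since $M_2$ omits precisely $u,v,w,s$, these four are the path endpoints, so the superposition contains exactly two alternating paths, and at each of $u,v,w,s$ the single incident edge is red.

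The next step is a parity observation. Each path has both end-edges red, hence odd length, and in a bipartite graph an odd path joins a $V_1$-vertex to a $V_2$-vertex. As $u,w\in V_1$ and $v,s\in V_2$, the two paths must pair the four vertices either as $\{u,v\},\{w,s\}$ (call this Type~A) or as $\{u,s\},\{v,w\}$ (Type~B); the pairing $\{u,w\},\{v,s\}$ is impossible on color grounds alone, since $u,w$ lie in the same part. The identical bookkeeping for the right-hand terms, where now the end-edge colors at the two omitted vertices differ, shows instead: a pair counted by $\M(G-\{u,v\})\M(G-\{w,s\})$ produces a superposition whose paths pair the endpoints either as Type~A or as the crossing pairing $\{u,w\},\{v,s\}$ (Type~C); and a pair counted by $\M(G-\{u,s\})\M(G-\{v,w\})$ gives either Type~B or Type~C.

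The crux is to eliminate Type~C using planarity. Here I would argue that, after assuming $G$ connected and fixing the planar embedding, the closure on the sphere of the complement of the open face carrying $u,v,w,s$ is a topological disk on whose boundary these four vertices appear in this cyclic order; since two vertex-disjoint arcs joining boundary points of a disk can realize only a \emph{non-crossing} pairing of their endpoints, the crossing pairing $\{u,w\},\{v,s\}$ cannot occur. Hence Type~C never arises, and the right-hand side counts exactly the Type~A and the Type~B configurations. It then remains to match these with the left-hand side by a weight-preserving bijection: given an LHS pair of Type~A, interchange the two colors along the unique alternating path terminating at $u$ (which in Type~A is the $u$--$v$ path), leaving the rest of the superposition untouched. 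This turns $(M_1,M_2)$ into a pair $(N_1,N_2)$ with $N_1$ a matching of $G-\{u,v\}$ and $N_2$ a matching of $G-\{w,s\}$; it preserves the edge multiset, hence the weight, and is clearly involutive, so it gives a bijection between Type~A LHS pairs and the pairs counted by $\M(G-\{u,v\})\M(G-\{w,s\})$. Recoloring along the $u$--$s$ path does the same for Type~B and $\M(G-\{u,s\})\M(G-\{v,w\})$. Adding the two bijections yields \eqref{kuoeq}.

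I expect the main obstacle to be making the Type~C exclusion fully rigorous: one must genuinely use the fixed embedding, treat the face as a disk on the sphere, and apply the Jordan curve theorem to disjoint paths whose interiors may wind arbitrarily through the graph. An alternative that trades topology for algebra is the determinantal route: by Kasteleyn's theorem $\M(G)=\pm\det K$ for a signed weighted biadjacency matrix $K$ with rows indexed by $V_1$ and columns by $V_2$, so that deleting a $V_1$- (resp. $V_2$-) vertex deletes a row (resp. column), and \eqref{kuoeq} becomes the Desnanot--Jacobi (Dodgson) identity for the complementary minors indexed by rows $u,w$ and columns $v,s$. There the difficulty migrates to reconciling the minus sign of Desnanot--Jacobi with the plus sign of \eqref{kuoeq}, which is precisely where the Kasteleyn signs and the cyclic-order hypothesis must be shown to conspire.
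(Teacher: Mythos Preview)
Your argument is the standard superposition proof and is essentially correct; it is in fact the approach Kuo himself uses in \cite{Kuo}. Note, however, that the present paper does not give its own proof of this statement: Theorem~\ref{kuothm} is quoted as a known result from \cite{Kuo} and used as a black box in the Kuo condensation arguments of Sections~\ref{onefern} and~8, so there is no ``paper's proof'' to compare against.
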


\begin{thm}[Theorem 5.2 \cite{Kuo}]\label{kuothm2}
Let $G=(V_1,V_2,E)$ be a (weighted) bipartite planar graph in which $|V_1|=|V_2|$. Assume that  $u, v, w, s$ are four vertices appearing in this cyclic order on a face of $G$ so that $u,v \in V_1$ and $w,s \in V_2$. Then
\begin{equation}\label{kuoeq2}
\M(G-\{u, s\})\M(G-\{v, w\})=\M(G)\M(G-\{u, v, w, s\})+\M(G-\{u,w\})\M(G-\{v, s\}).
\end{equation}
\end{thm}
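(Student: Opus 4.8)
The plan is to prove the identity by a weight-preserving bijection between ordered pairs of perfect matchings, which is the classical route to graphical condensation. Write $\mathcal{M}(H)$ for the set of perfect matchings of a graph $H$, and recall that each side of \eqref{kuoeq2} is a sum of products of two $\M$-values, hence enumerates ordered pairs of matchings. Accordingly I set
\[
\mathcal{A}=\mathcal{M}(G-\{u,s\})\times\mathcal{M}(G-\{v,w\}),\qquad
\mathcal{B}=\mathcal{M}(G)\times\mathcal{M}(G-\{u,v,w,s\}),
\]
\[
\mathcal{C}=\mathcal{M}(G-\{u,w\})\times\mathcal{M}(G-\{v,s\}),
\]
where each pair is weighted by the product of the weights of its two matchings. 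The total weight of $\mathcal{A}$ is the left-hand side of \eqref{kuoeq2}, and the total weight of $\mathcal{B}\sqcup\mathcal{C}$ is its right-hand side, so it suffices to produce a weight-preserving bijection $\mathcal{A}\xrightarrow{\sim}\mathcal{B}\sqcup\mathcal{C}$.

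Given a pair $(M_1,M_2)$ in any of these families, I would form the superposition $M_1\cup M_2$, regarded as an edge-multiset. Since in every case the four vertices $u,v,w,s$ are each covered by exactly one of the two matchings while every other vertex is covered by both, the vertices $u,v,w,s$ have degree $1$ and all others have degree $2$ in the superposition. Hence $M_1\cup M_2$ decomposes into doubled edges, even alternating cycles, and exactly \emph{two} vertex-disjoint alternating paths whose endpoints are $u,v,w,s$. The doubled edges and cycles carry over verbatim between the three families (each cycle has two proper $2$-colourings, matched identically on both sides), so the whole construction reduces to understanding how the two paths pair up the four special vertices. There are three pairings of $\{u,v,w,s\}$ into two pairs: $\{u,v\}\mid\{w,s\}$, $\{u,s\}\mid\{v,w\}$, and the ``crossing'' one $\{u,w\}\mid\{v,s\}$.

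The crux of the argument is to pin down which pairings actually occur in each family, and this is where both the bipartite structure and the planarity hypothesis enter. Tracking the two bipartite classes ($u,v\in V_1$ and $w,s\in V_2$) along each alternating path, a parity check on the colours forced at its two endpoints shows that in $\mathcal{A}$ only $\{u,v\}\mid\{w,s\}$ and $\{u,s\}\mid\{v,w\}$ are admissible, in $\mathcal{B}$ only $\{u,s\}\mid\{v,w\}$ and the crossing pairing, and in $\mathcal{C}$ only $\{u,v\}\mid\{w,s\}$ and the crossing pairing. The hypothesis that $u,v,w,s$ lie in this cyclic order on a single face now rules out the crossing pairing everywhere: two vertex-disjoint paths with endpoints alternating around a face cannot be realised without crossing in a plane embedding, by a Jordan-curve argument (close the $u$--$w$ path through the common face to separate $v$ from $s$, and observe that the $v$--$s$ path would then have to meet it). I expect this non-crossing step, together with the careful endpoint-parity bookkeeping, to be the main obstacle; it is also exactly where planarity is indispensable, since without it the crossing pairings in $\mathcal{B}$ and $\mathcal{C}$ would contribute spurious terms with no counterpart in $\mathcal{A}$.

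With the crossing pairing excluded, $\mathcal{A}$ splits into the part with pairing $\{u,s\}\mid\{v,w\}$ and the part with $\{u,v\}\mid\{w,s\}$, while $\mathcal{B}$ realises only the former and $\mathcal{C}$ only the latter. The bijection is then the path-swap: keeping the doubled edges and all cycle colourings fixed, I recolour a single one of the two paths (the $u$--$s$ path to pass from $\mathcal{A}$ to $\mathcal{B}$, the $u$--$v$ path to pass from $\mathcal{A}$ to $\mathcal{C}$), that is, I interchange which matching each of its edges belongs to. One checks that this swap preserves the matching property at every interior vertex, relocates the deficiencies at the endpoints exactly as required, and is clearly involutive, hence a bijection; and since the underlying edge-multiset $M_1\cup M_2$ is unchanged, the weight $\prod_e \wt(e)$ is preserved. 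This yields the desired bijection and the identity. As an alternative I would note the determinantal route: by Kasteleyn's theorem each $\M$-value equals, up to a sign fixed by planarity, a minor of a single signed biadjacency matrix, after which \eqref{kuoeq2} is precisely the Desnanot--Jacobi (Dodgson) identity for that matrix, with the face condition guaranteeing that all the signs align to give the stated ``$+$''.
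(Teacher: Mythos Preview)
The paper does not give its own proof of this statement: Theorem~\ref{kuothm2} is quoted verbatim from Kuo's paper \cite{Kuo} (as Theorem~5.2 there) and used as a black box in the Kuo-condensation arguments of Sections~\ref{onefern} and~8. So there is nothing in the present paper to compare your proposal against.

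That said, your argument is essentially the standard proof (and is the one Kuo gives): superpose the two matchings, decompose into doubled edges, alternating cycles, and two alternating paths with endpoints $\{u,v,w,s\}$; use the bipartite colour classes to determine, for each of $\mathcal A,\mathcal B,\mathcal C$, which of the three pairings of endpoints are parity-admissible; use planarity and the cyclic order on a common face to exclude the crossing pairing $\{u,w\}\mid\{v,s\}$; and then recolour one path to realise the bijection. Your parity bookkeeping and your Jordan-curve exclusion of the crossing case are both correct. One small point worth making explicit: in the $\{u,v\}\mid\{w,s\}$ case, swapping along the $u$--$v$ path sends $(M_1,M_2)\in\mathcal M(G-\{u,s\})\times\mathcal M(G-\{v,w\})$ to a pair in $\mathcal M(G-\{v,s\})\times\mathcal M(G-\{u,w\})$, so to land in $\mathcal C=\mathcal M(G-\{u,w\})\times\mathcal M(G-\{v,s\})$ you must also transpose the ordered pair; this is harmless but should be stated. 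The Kasteleyn/Desnanot--Jacobi alternative you mention at the end is also valid and well known.
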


\begin{thm}[Theorem 5.3 \cite{Kuo}]\label{kuothm3}
Let $G=(V_1,V_2,E)$ be a (weighted) bipartite planar graph in which $|V_1|=|V_2|+1$. Assume that  $u, v, w, s$ are four vertices appearing in this cyclic order on a face of $G$ so that $u,v,w \in V_1$ and $s \in V_2$. Then
\begin{equation}\label{kuoeq3}
\M(G-\{v\})\M(G-\{u, w,s\})=\M(G-\{u\})\M(G-\{ v, w, s\})+\M(G-\{w\})\M(G-\{u,v, s\}).
\end{equation}
\end{thm}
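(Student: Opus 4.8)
The plan is to prove the identity bijectively, via the classical superposition-of-matchings technique that underlies all of Kuo's condensation results. Write $\mathcal{A}$ for the set of ordered pairs $(M_1,M_2)$ in which $M_1$ is a perfect matching of $G-\{v\}$ and $M_2$ a perfect matching of $G-\{u,w,s\}$, and write $\mathcal{B}$, $\mathcal{C}$ for the analogous sets of pairs attached to the two products on the right-hand side of (\ref{kuoeq3}). Since $|V_1|=|V_2|+1$, one checks immediately that each of the six graphs $G-\{v\}$, $G-\{u,w,s\}$, $G-\{u\}$, $G-\{v,w,s\}$, $G-\{w\}$, $G-\{u,v,s\}$ has equal numbers of vertices in the two color classes, so each can carry perfect matchings. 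The weighted identity (\ref{kuoeq3}) is then exactly the statement that the total weight of $\mathcal{A}$ equals that of $\mathcal{B}$ plus that of $\mathcal{C}$, where the weight of a pair is the product of the two matching-weights, so I would aim to construct a weight-preserving bijection $\mathcal{A}\longleftrightarrow\mathcal{B}\sqcup\mathcal{C}$.

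The engine is the superposition $M_1\uplus M_2$, viewed as a multiset of edges: every vertex other than $u,v,w,s$ is covered exactly twice, so the superposition decomposes into doubled edges, alternating cycles, and exactly two alternating paths whose four endpoints are $u,v,w,s$. The crux is to show that only the right pairings of $\{u,v,w,s\}$ into two paths occur, governed by two independent constraints. First, a bipartite parity argument: an alternating path joining two vertices of the same class $V_1$ has even length, while one joining $V_1$ to $V_2$ has odd length, and tracking whether the terminal edge lies in $M_1$ or $M_2$ shows that the vertex missed by the single-vertex-deleted matching (namely $v$ for $\mathcal{A}$, $u$ for $\mathcal{B}$, $w$ for $\mathcal{C}$) can never be joined by a path to $s\in V_2$. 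Second, a planarity argument: because $u,v,w,s$ lie in this cyclic order on a common face, two vertex-disjoint paths of $G$ cannot realize the crossing pairing $\{u,w\},\{v,s\}$, by the Jordan curve theorem applied in the planar embedding. Combining the two, $\mathcal{A}$ splits as $\mathcal{A}_{\{uv\},\{ws\}}\sqcup\mathcal{A}_{\{vw\},\{us\}}$, whereas $\mathcal{B}$ is forced to consist entirely of pairs of pairing type $\{uv\},\{ws\}$, and $\mathcal{C}$ entirely of type $\{vw\},\{us\}$.

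Finally I would produce the bijection by path-swapping. Given $(M_1,M_2)\in\mathcal{A}_{\{uv\},\{ws\}}$, interchange the roles of $M_1$ and $M_2$ along the alternating path joining $v$ to $u$, leaving the other path, the cycles, and the doubled edges untouched; this transfers the missing vertex from $v$ to $u$ and produces a pair in $\mathcal{B}$ with the same superposition multiset $M_1\uplus M_2$, the inverse being the identical swap, so the map is a bijection $\mathcal{A}_{\{uv\},\{ws\}}\to\mathcal{B}$. Swapping instead along the path joining $v$ to $w$ gives a bijection $\mathcal{A}_{\{vw\},\{us\}}\to\mathcal{C}$. Because the superposition multiset is preserved, the product $\wt(M_1)\wt(M_2)=\prod_{e\in M_1\uplus M_2}\wt(e)$ is unchanged, so both bijections are weight-preserving and (\ref{kuoeq3}) follows. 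I expect the main obstacle to be the bookkeeping in the case analysis of the middle paragraph: one must verify carefully, separately for $\mathcal{A}$, $\mathcal{B}$ and $\mathcal{C}$ and using the precise cyclic order $u,v,w,s$, that the parity and planarity constraints leave exactly the claimed single (respectively, pair of) pairing type(s), since it is this rigidity — rather than the swap itself — that makes precisely the two right-hand terms, and no others, appear.
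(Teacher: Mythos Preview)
Your proposal is correct and follows the standard superposition-of-matchings argument that Kuo himself uses. Note, however, that the present paper does not prove this statement at all: it is quoted verbatim as Theorem~5.3 of \cite{Kuo} and used as a black box in the Kuo-condensation recurrences of Sections~\ref{onefern} and~8. So there is no ``paper's own proof'' to compare against here; your argument is essentially a reconstruction of Kuo's original proof, and it is sound. The one place worth tightening is the parity check: you should make explicit that at each of the four endpoints the incident edge is forced to lie in a specific $M_i$ (e.g.\ at $v$ it must be an $M_2$-edge since $v\notin M_1$, while at $u,w,s$ it must be an $M_1$-edge), and then trace the alternation to see that a $v$--$s$ path would have to begin in $M_2$ and end in $M_1$, forcing even length, contradicting the bipartite parity. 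With that spelled out, the case analysis you flag as the ``main obstacle'' goes through cleanly.
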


There are three orientations of the lozenges on the triangular lattice: vertical, left-tilting, and right-tilting (see Figure \ref{rhumbustype}). As each tiling of a semi-regular hexagon can be identified with a monotone stack, each monotone stack yields a weight assignment to the corresponding tiling as follows. Given a monotone stack, the top of each column in the stack corresponds to a right-tilting lozenge (see Figure~\ref{hextiled1}). We assign to each right-tilting lozenge the weight $q^{h}$, where $h$ is the height of the stack of unit cubes under it; assign  weight 1 to all left-tilting and all vertical lozenges. Define the \emph{natural $q$-weight} of the tiling to be the product of the weights of all its constituent lozenges; clearly, this is precisely $q$ to the volume of the corresponding monotone stack. We call the described weight assignment the \emph{natural $q$-weight assignment}, and we denote it by $\wt_0$.

Similarly, using the bijection described in Section 3 between lozenge tilings of our $P$- and $Q$-regions and monotone stacks that fit inside the corresponding augmented boxes, we obtain the natural $q$-weight assignment for tilings of $P$- and $Q$-regions. However, in contrast to the case of a simple box, it is not hard to see that for an augmented box this weight assignment to the right-tilting lozenges depends, in general, on the tiling containing it (so the same lozenge may carry different weights in different tilings). See Figure \ref{Naturalweight} for an example this fact for a hexagon, viewed as a special case of $P$-and $Q$-regions. This prevents us from applying the method of Kuo condensation directly for proving our results. We surmount this obstacle by introducing the following two additional $q$-weight assignments that are independent of the choice of the tiling.

\begin{figure}\centering
\includegraphics[width=6cm]{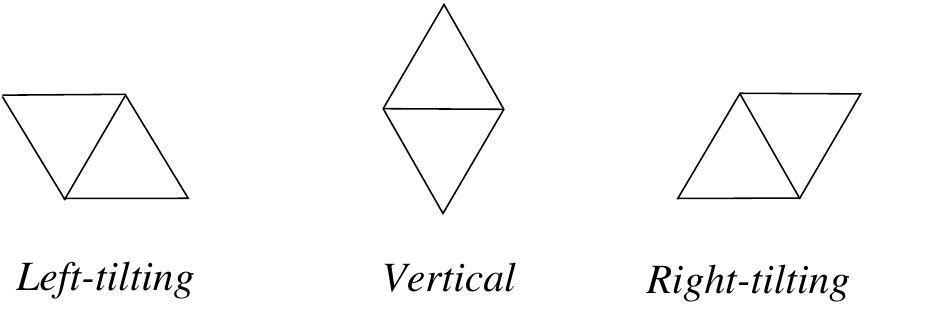}
\caption{Three orientations of lozenges.}
\label{rhumbustype}
\end{figure}

\begin{figure}\centering
\includegraphics[width=8cm]{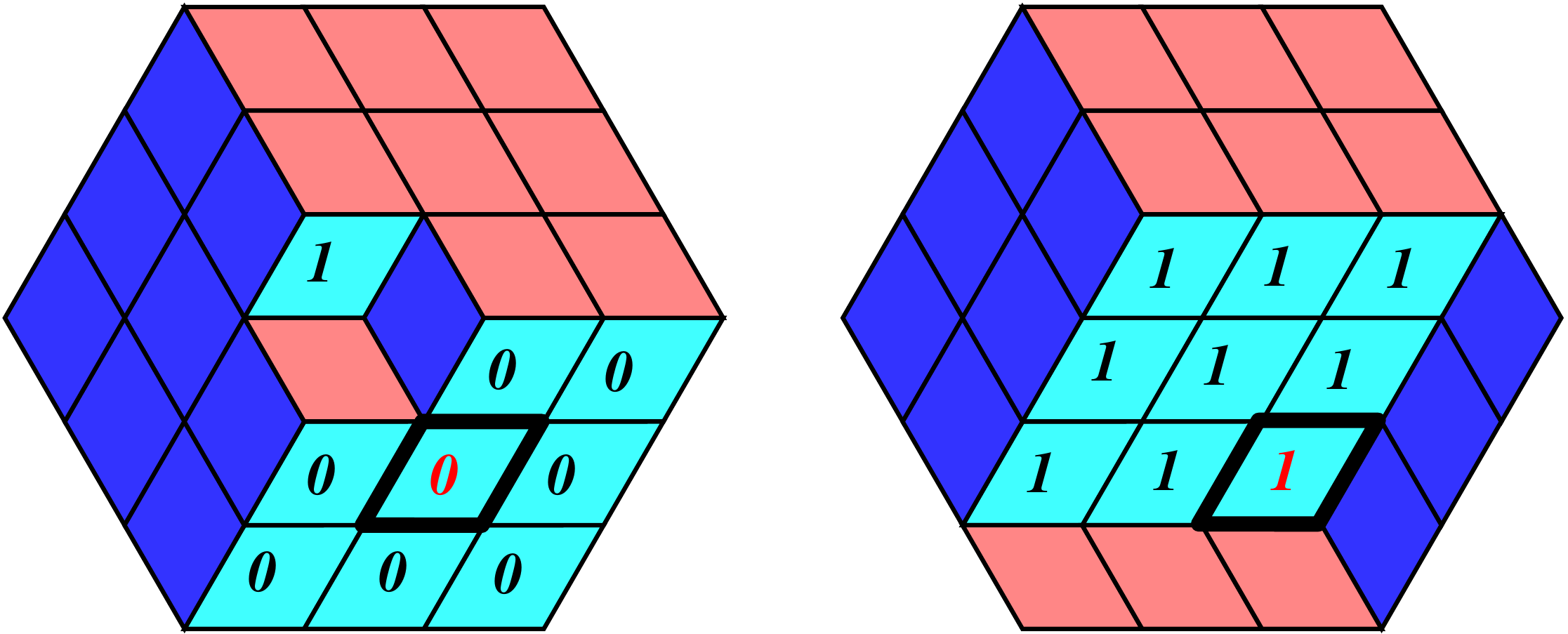}
\caption{Natural $q$-weighting depends on the tilings; the lozenge with label $i$ is weighted by $q^{i}$. The same right-tilting lozenge, that is restricted by the bold contour, has different weights in two different tilings.}\label{Naturalweight}
\end{figure}

In the first of these additional $q$-weight assignments, denoted $\wt_1$, each right-tilting lozenge has weight $q^{x}$, where~$x$ is the distance (measured in altitudes of unit triangles) from its top to the bottom of the hexagon; all other lozenges are weighted by $1$ (see Figure \ref{tilingarrayweight}(a)). In the second one, denoted $\wt_2$, each {\it left}-tilting lozenge whose top side is $y$ units above the bottom edge of the hexagon is weighted by $q^{y}$, and all other lozenges have weight $1$ (see Figure \ref{tilingarrayweight}(b)). The weight of a tiling is defined as the product of the weights of its lozenges, as usual.

One readily sees that the above $q$-weight assignments can be applied to any bounded simply-connected region on the triangular lattice, in particular to dented semihexagons and to our $P$- and $Q$-regions. Denote the corresponding weighted tiling counts of a simply connected region $R$ by
\[\M_1(R)=\sum_{T\in \mathcal{T}(R)}\wt_1(T) \text{  and } \M_2(R)=\sum_{T\in \mathcal{T}(R)}\wt_2(T),\]
where $\mathcal{T}(R)$ denotes the set of all tilings of $R$. We call $\M_1(R)$ and $\M_2(R)$ \emph{tiling generating functions} of $R$.

\begin{figure}\centering
\setlength{\unitlength}{3947sp}%
\begingroup\makeatletter\ifx\SetFigFont\undefined%
\gdef\SetFigFont#1#2#3#4#5{%
  \reset@font\fontsize{#1}{#2pt}%
  \fontfamily{#3}\fontseries{#4}\fontshape{#5}%
  \selectfont}%
\fi\endgroup%
\resizebox{13cm}{!}{
\begin{picture}(0,0)%
\includegraphics{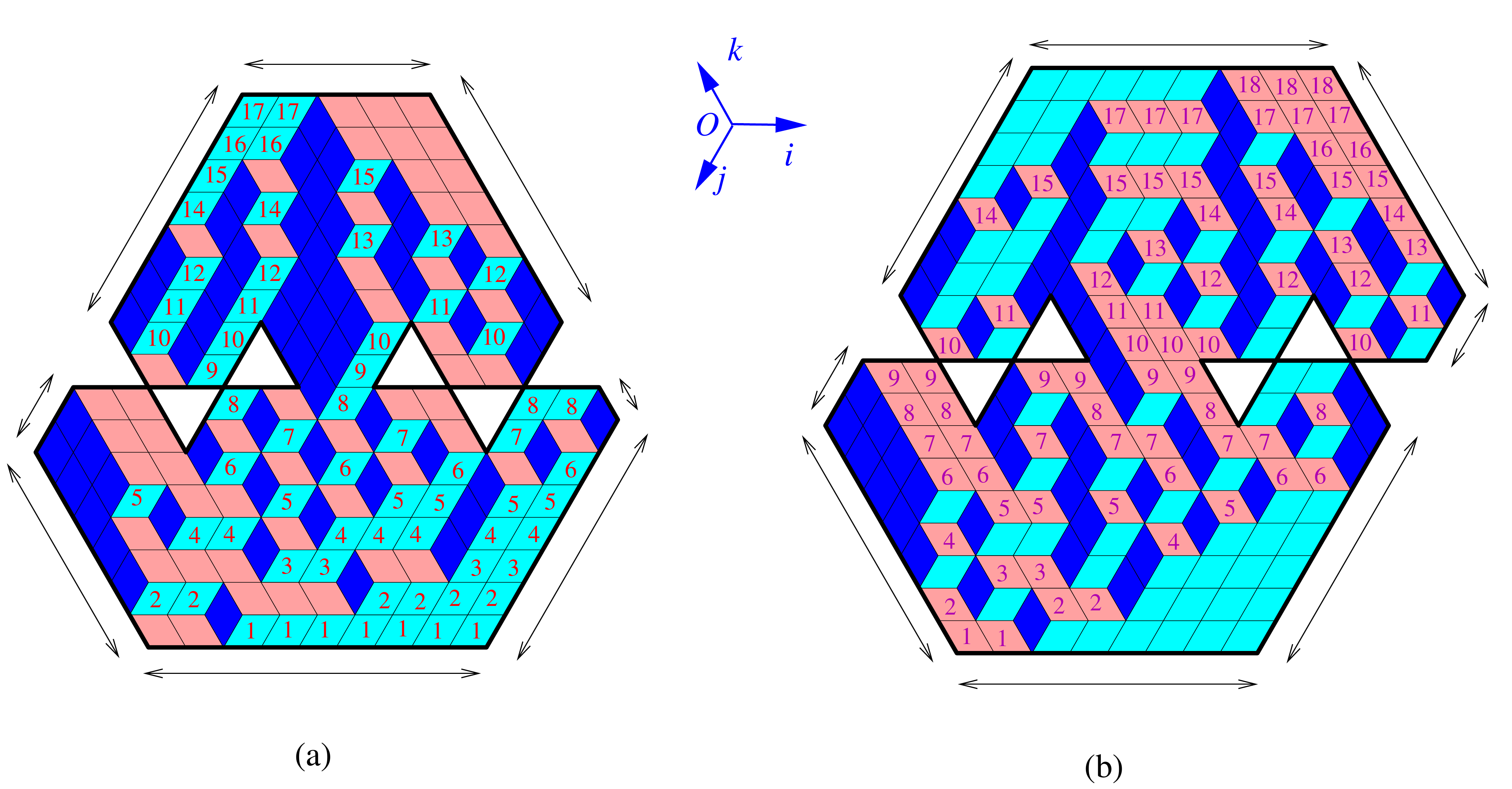}%
\end{picture}%
\begin{picture}(20904,10916)(2668,-10897)
\put(15991,-5401){\makebox(0,0)[lb]{\smash{{\SetFigFont{20}{24.0}{\rmdefault}{\mddefault}{\itdefault}{$a_2$}%
}}}}
\put(15991,-5401){\makebox(0,0)[lb]{\smash{{\SetFigFont{20}{24.0}{\rmdefault}{\mddefault}{\itdefault}{$a_2$}%
}}}}
\put(4081,-3331){\rotatebox{60.0}{\makebox(0,0)[lb]{\smash{{\SetFigFont{20}{24.0}{\rmdefault}{\mddefault}{\itdefault}{$y+a_3+b_1+b_3$}%
}}}}}
\put(6571,-691){\makebox(0,0)[lb]{\smash{{\SetFigFont{20}{24.0}{\rmdefault}{\mddefault}{\itdefault}{$z+a_2+b_2$}%
}}}}
\put(9751,-1723){\rotatebox{300.0}{\makebox(0,0)[lb]{\smash{{\SetFigFont{20}{24.0}{\rmdefault}{\mddefault}{\itdefault}{$y+a_1+a_3+b_3$}%
}}}}}
\put(11623,-5407){\makebox(0,0)[lb]{\smash{{\SetFigFont{20}{24.0}{\rmdefault}{\mddefault}{\itdefault}{$t$}%
}}}}
\put(10681,-8581){\rotatebox{60.0}{\makebox(0,0)[lb]{\smash{{\SetFigFont{20}{24.0}{\rmdefault}{\mddefault}{\itdefault}{$y+x+a_2+b_2$}%
}}}}}
\put(2778,-7352){\rotatebox{300.0}{\makebox(0,0)[lb]{\smash{{\SetFigFont{20}{24.0}{\rmdefault}{\mddefault}{\itdefault}{$y+t+a_2+b_2$}%
}}}}}
\put(5889,-9796){\makebox(0,0)[lb]{\smash{{\SetFigFont{20}{24.0}{\rmdefault}{\mddefault}{\itdefault}{$z+a_1+a_3+b_1+b_3$}%
}}}}
\put(2910,-5445){\makebox(0,0)[lb]{\smash{{\SetFigFont{20}{24.0}{\rmdefault}{\mddefault}{\itdefault}{$x$}%
}}}}
\put(3991,-5131){\makebox(0,0)[lb]{\smash{{\SetFigFont{20}{24.0}{\rmdefault}{\mddefault}{\itdefault}{$a_1$}%
}}}}
\put(5071,-5761){\makebox(0,0)[lb]{\smash{{\SetFigFont{20}{24.0}{\rmdefault}{\mddefault}{\itdefault}{$a_2$}%
}}}}
\put(6061,-5191){\makebox(0,0)[lb]{\smash{{\SetFigFont{20}{24.0}{\rmdefault}{\mddefault}{\itdefault}{$a_3$}%
}}}}
\put(10351,-5191){\makebox(0,0)[lb]{\smash{{\SetFigFont{20}{24.0}{\rmdefault}{\mddefault}{\itdefault}{$b_1$}%
}}}}
\put(9241,-5731){\makebox(0,0)[lb]{\smash{{\SetFigFont{20}{24.0}{\rmdefault}{\mddefault}{\itdefault}{$b_2$}%
}}}}
\put(8191,-5221){\makebox(0,0)[lb]{\smash{{\SetFigFont{20}{24.0}{\rmdefault}{\mddefault}{\itdefault}{$b_3$}%
}}}}
\put(15047,-2956){\rotatebox{60.0}{\makebox(0,0)[lb]{\smash{{\SetFigFont{20}{24.0}{\rmdefault}{\mddefault}{\itdefault}{$y+t+a_3+b_2$}%
}}}}}
\put(17821,-361){\makebox(0,0)[lb]{\smash{{\SetFigFont{20}{24.0}{\rmdefault}{\mddefault}{\itdefault}{$z+a_2+b_1+b_3$}%
}}}}
\put(22051,-1141){\rotatebox{300.0}{\makebox(0,0)[lb]{\smash{{\SetFigFont{20}{24.0}{\rmdefault}{\mddefault}{\itdefault}{$y+a_1+a_3+b_2$}%
}}}}}
\put(23191,-4981){\makebox(0,0)[lb]{\smash{{\SetFigFont{20}{24.0}{\rmdefault}{\mddefault}{\itdefault}{$t$}%
}}}}
\put(21406,-8476){\rotatebox{60.0}{\makebox(0,0)[lb]{\smash{{\SetFigFont{20}{24.0}{\rmdefault}{\mddefault}{\itdefault}{$x+y+a_2+b_3$}%
}}}}}
\put(16861,-10021){\makebox(0,0)[lb]{\smash{{\SetFigFont{20}{24.0}{\rmdefault}{\mddefault}{\itdefault}{$z+a_1+a_3+b_2$}%
}}}}
\put(13485,-6676){\rotatebox{300.0}{\makebox(0,0)[lb]{\smash{{\SetFigFont{20}{24.0}{\rmdefault}{\mddefault}{\itdefault}{$y+a_1+b_1+b_3$}%
}}}}}
\put(13801,-5131){\makebox(0,0)[lb]{\smash{{\SetFigFont{20}{24.0}{\rmdefault}{\mddefault}{\itdefault}{$x$}%
}}}}
\put(14821,-4681){\makebox(0,0)[lb]{\smash{{\SetFigFont{20}{24.0}{\rmdefault}{\mddefault}{\itdefault}{$a_1$}%
}}}}
\put(15991,-5401){\makebox(0,0)[lb]{\smash{{\SetFigFont{20}{24.0}{\rmdefault}{\mddefault}{\itdefault}{$a_2$}%
}}}}
\put(17011,-4801){\makebox(0,0)[lb]{\smash{{\SetFigFont{20}{24.0}{\rmdefault}{\mddefault}{\itdefault}{$a_3$}%
}}}}
\put(19681,-5341){\makebox(0,0)[lb]{\smash{{\SetFigFont{20}{24.0}{\rmdefault}{\mddefault}{\itdefault}{$b_3$}%
}}}}
\put(20701,-4831){\makebox(0,0)[lb]{\smash{{\SetFigFont{20}{24.0}{\rmdefault}{\mddefault}{\itdefault}{$b_2$}%
}}}}
\put(21901,-5461){\makebox(0,0)[lb]{\smash{{\SetFigFont{20}{24.0}{\rmdefault}{\mddefault}{\itdefault}{$b_1$}%
}}}}
\put(15991,-5401){\makebox(0,0)[lb]{\smash{{\SetFigFont{20}{24.0}{\rmdefault}{\mddefault}{\itdefault}{$a_2$}%
}}}}
\end{picture}}
\caption{(a) The weight assignment $\wt_1$ on a sample tiling of a $P$-region. (b) The weight assignment $\wt_2$ on a sample tiling of a $Q$-region.}
\label{tilingarrayweight}
\end{figure}

The following is a consequence of MacMahon's Theorem \ref{MacMahoneqq}.

\begin{lem} For any non-negative integers $a,b,c$, we have
\label{lemsixfive}
  \begin{equation}
\M_1(H_{a,b,c})=q^{b\binom{a+1}{2}}\frac{\Hf_q(a)\Hf_q(b)\Hf_q(c)\Hf_q(a+b+c)}{\Hf_q(a+b)\Hf_q(b+c)\Hf_q(c+a)}
\end{equation}
and
\begin{equation}
\M_2(H_{a,b,c})=q^{b\binom{c+1}{2}}\frac{\Hf_q(a)\Hf_q(b)\Hf_q(c)\Hf_q(a+b+c)}{\Hf_q(a+b)\Hf_q(b+c)\Hf_q(c+a)}.
\end{equation}
\end{lem}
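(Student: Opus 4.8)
The plan is to reduce the statement to MacMahon's $q$-formula (Theorem \ref{MacMahoneqq}), which already computes the tiling generating function of $H_{a,b,c}$ under the \emph{natural} $q$-weight $\wt_0$: since $\wt_0(T)=q^{|\pi(T)|}$, where $\pi(T)$ is the monotone stack corresponding to the tiling $T$, Theorem \ref{MacMahoneqq} says that
\[
\sum_{T\in\mathcal T(H_{a,b,c})}\wt_0(T)=\frac{\Hf_q(a)\Hf_q(b)\Hf_q(c)\Hf_q(a+b+c)}{\Hf_q(a+b)\Hf_q(b+c)\Hf_q(c+a)}.
\]
So it suffices to prove that $\wt_1$ and $\wt_2$ differ from $\wt_0$ only by a global monomial factor depending on $a,b,c$: that there are integers $C_1=C_1(a,b,c)$ and $C_2=C_2(a,b,c)$ with $\wt_1(T)=q^{C_1}\wt_0(T)$ and $\wt_2(T)=q^{C_2}\wt_0(T)$ for \emph{every} tiling $T$ of $H_{a,b,c}$, and then to identify $C_1=b\binom{a+1}{2}$ and $C_2=b\binom{c+1}{2}$.

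First I would establish the factorization for $\wt_1$. A right-tilting lozenge of a tiling $T$ is the top face of some column of the monotone stack $\pi(T)$, say the column over the base cell indexed by $j$; its $\wt_0$-exponent is the stack height $h_j$ over that cell, while its $\wt_1$-exponent is the distance from its top to the bottom edge of the hexagon. As $T$ varies, this lozenge slides along the fixed line perpendicular to the base, so its distance to the bottom edge exceeds $h_j$ by a quantity $c_j$ depending only on the column, not on $T$. Summing over the $ab$ right-tilting lozenges gives $\wt_1(T)=q^{\sum_j c_j}\,q^{|\pi(T)|}=q^{C_1}\wt_0(T)$ with $C_1=\sum_j c_j$. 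For $\wt_2$ one argues in the same way after observing that the left-tilting lozenges are exactly the top faces of the stack seen from the appropriate side: viewing $\pi(T)$ from that direction yields another plane partition, whose box has a $b\times c$ base, and adding one unit cube to $\pi(T)$ raises exactly one of these side-silhouette heights by $1$; hence the left-tilt contribution to $\wt_2(T)$ is again $q^{C_2}\,q^{|\pi(T)|}$. (Equivalently, and more uniformly: the elementary cube-adding flip connects any two tilings of $H_{a,b,c}$ and multiplies $\wt_0$ by $q$, and a local inspection shows it also multiplies $\wt_1$ and $\wt_2$ by $q$, since it moves the single right-tilting, resp.\ left-tilting, lozenge it involves up by exactly one triangle altitude.)

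It remains to pin down $C_1$ and $C_2$, which I would do by evaluating at the minimal tiling: for the empty stack $\pi=0$ we have $\wt_0=q^{0}=1$, so $C_i=\wt_i(\text{empty stack})$. For the empty stack the $ab$ right-tilting lozenges are precisely the base cells of the $a\times b\times c$ box; these sit at heights $1,2,\dots,a$ above the bottom edge of the hexagon, each height occurring with multiplicity $b$, so $C_1=b(1+2+\cdots+a)=b\binom{a+1}{2}$. Likewise the $bc$ left-tilting lozenges of the empty stack sit at heights $1,2,\dots,c$ above the bottom edge, each with multiplicity $b$, giving $C_2=b\binom{c+1}{2}$. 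Combining, $\M_i(H_{a,b,c})=\sum_T\wt_i(T)=q^{C_i}\sum_T\wt_0(T)$ equals $q^{C_i}$ times MacMahon's product, which is the assertion.

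I expect the main obstacle to be the bookkeeping in this last step: checking, against the precise projection conventions of Section 3 (Figure \ref{hextiled1}), that in the empty-stack tiling the right-tilting lozenges occupy exactly the heights $1,\dots,a$ with multiplicity $b$ and the left-tilting lozenges the heights $1,\dots,c$ with multiplicity $b$, and correspondingly that the cube-adding flip raises the relevant right- and left-tilting lozenges by exactly one altitude (as opposed to lowering them, which would only change the monomial). These are elementary but orientation-sensitive verifications; once they are in place, the result is immediate from Theorem \ref{MacMahoneqq}.
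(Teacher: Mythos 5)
Your treatment of $\M_1$ is sound and is essentially the paper's own argument (the paper quotes the slice-by-slice weight $q^{i+h}$ for a right-tilting lozenge in the $i$th slice with $h$ cubes beneath it, which is exactly your decomposition into a column-dependent constant plus the stack height). The problem is the $\M_2$ half. The elementary cube-adding flip does \emph{not} multiply $\wt_2$ by $q$: inside the unit hexagon being flipped, the right-tilting and the left-tilting lozenges trade the top and bottom strips, so when the right-tilting lozenge rises by one altitude (which is what ``adding a cube'' means, by the $q^{i+h}$ rule) the left-tilting lozenge \emph{descends} by one altitude. Hence $\wt_2(T)=q^{C}\,q^{-|\pi(T)|}$, not $q^{C}\,q^{|\pi(T)|}$. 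Consistently with this, your evaluation at the empty stack is also off: in the minimal tiling the $b\times c$ rhombus of left-tilting lozenges is the one adjacent to the \emph{top} side of the hexagon, so its top edges sit at heights $a+1,\dotsc,a+c$ (each with multiplicity $b$), giving $C=abc+b\binom{c+1}{2}$ rather than $b\binom{c+1}{2}$. Your two errors cancel only because MacMahon's polynomial is palindromic, $\sum_\pi q^{|\pi|}=q^{abc}\sum_\pi q^{-|\pi|}$ (complementation $\pi\mapsto\pi^c$ in the box) --- a fact you never invoke, and without which your fallback remark that lowering instead of raising ``would only change the monomial'' is not a valid repair, since it turns $P(q)$ into $P(1/q)$.

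The statement is true and your framework is salvageable in either of two ways: (i) keep $\wt_2(T)=q^{abc+b\binom{c+1}{2}}q^{-|\pi|}$ and then apply the complementation symmetry above; or (ii) do what the paper does, namely observe that reflecting $H_{a,b,c}$ weighted by $\wt_2$ in a vertical line yields $H_{c,b,a}$ weighted by $\wt_1$ (reflection swaps the two tilted orientations and preserves heights), so the second identity is the first with $a$ and $c$ interchanged. A quick sanity check of the sign issue on $H_{1,1,1}$: its two tilings have $\wt_0=1,q$ and $\wt_1=q,q^2$, but $\wt_2=q^2,q$.
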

\begin{proof}
The first equality was proved in Corollary 3.2 in \cite{Tri1}; it holds because if we think of the plane partition as consisting of $a$ slices parallel to the sides of length $b$ (see Figure \ref{hextiled1}(a)), then each of the $b$ right-tilting lozenges $L$ in the $i$th slice from the front has weight $q^{i+h}$, where $h$ is the number of unit cubes under $L$, for $i=1,\dotsc,a$. Reflecting the hexagon $H_{a,b,c}$ weighted by $\wt_2$ over a vertical line, we get the hexagon $H_{c,b,a}$ weighted by $\wt_1$. Then the second equality follows from the first one.
\end{proof}

The next lemma gives the $q$-enumerations of lozenge tilings of dented semihexagons.

\begin{lem}\label{semi}For any
non-negative integers $b_1,b_2,b_3,\dotsc,b_{2l}$ we have
\begin{align}
\M_1(S(b_1,b_2,\dotsc,b_{2l}))&=\M_1(S(b_1,b_2,\dotsc,b_{2l-1}))\notag\\
&=q^{\sum_{i=1}^{l-1}b_{2i}\binom{b_{2i+1}+\dotsc+b_{2l-1}+1}{2}}s_{q}(b_1,b_2,\dotsc,b_{2l-1}).
\end{align}
and
\begin{equation}
\M_2(S(b_1,b_2,\dotsc,b_{n}))=\M_1(S(b_n,b_{n-1},\dotsc,b_{1})).
\end{equation}
\end{lem}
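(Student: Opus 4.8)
The plan is to derive the $q$-enumeration of the dented semihexagon $S(b_1,\dotsc,b_{2l})$ under the weightings $\wt_1$ and $\wt_2$ by reducing everything to MacMahon's $q$-formula (Theorem \ref{MacMahoneqq}) via the region-splitting technique, exactly mirroring the unweighted identity \eqref{semieq} but now tracking the $q$-exponent. First I would dispose of the equality $\M_1(S(b_1,\dotsc,b_{2l}))=\M_1(S(b_1,\dotsc,b_{2l-1}))$: as noted in the footnote after \eqref{semieq}, removing the $b_{2l}$ block of dents on the far right forces a parallelogram of lozenges, and every forced lozenge in that block is \emph{left-tilting} or \emph{vertical} (it lies in the triangular region cut off on the right edge), so by \eqref{forcedeq} the $\wt_1$-weight is unchanged. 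Thus it suffices to treat the case of an even number of blocks and prove the displayed power of $q$.

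Next I would set up the induction. The cleanest route is to iterate the Region-splitting Lemma \ref{GS}: cut $S(b_1,\dotsc,b_{2l})$ along the vertical lattice line separating the blocks $b_1,\dotsc,b_{2l-2}$ from $b_{2l-1},b_{2l}$, after first embedding the semihexagon into a slightly larger trapezoid so that the piece on the right is a genuine semi-regular hexagon (of horizontal side $b_{2l-1}$, appropriately tall) with a triangular dent pattern; alternatively, one proceeds by the standard inductive peeling used to prove the Cohn--Larsen--Propp / Gelfand--Tsetlin formula, splitting off one semi-regular hexagon at a time. At each split, Lemma \ref{GS} gives $\M_1(R)=\M_1(Q)\,\M_1(R-Q)$, and for the hexagonal piece $Q=H_{a,b,c}$ one substitutes Lemma \ref{lemsixfive}, which contributes both the product of $q$-hyperfactorials and a prefactor $q^{b\binom{a+1}{2}}$. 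Tracking the latter prefactors across all the splits, one must check that they sum to exactly $\sum_{i=1}^{l-1}b_{2i}\binom{b_{2i+1}+\dotsc+b_{2l-1}+1}{2}$; the combinatorial meaning is transparent from the $\wt_1$ definition, since a left block $b_{2i}$ of dents sits at horizontal distance roughly $b_{2i+1}+\cdots+b_{2l-1}$ from the right edge, and the right-tilting lozenges stacked against that block each pick up that many extra powers of $q$ — summing an arithmetic progression of length $b_{2i}$ gives the binomial coefficient.

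The $\wt_2$ statement then follows with no extra work: reflecting the region $S(b_1,\dotsc,b_n)$ in a vertical line turns the dent sequence into $b_n,b_{n-1},\dotsc,b_1$ and, since reflection in a vertical line interchanges left-tilting and right-tilting lozenges while preserving heights above the base, it carries the assignment $\wt_2$ to $\wt_1$; hence $\M_2(S(b_1,\dotsc,b_n))=\M_1(S(b_n,\dotsc,b_1))$, which is the claimed relation.

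The main obstacle I anticipate is purely bookkeeping: pinning down the exact power of $q$ emerging from the chain of region-splittings. The $q$-hyperfactorial parts are guaranteed to assemble into $s_q$ by the $q=1$ case together with the fact that each split contributes a quotient of $q$-hyperfactorials of exactly MacMahon shape, but the accumulated $q^{b\binom{a+1}{2}}$ prefactors depend on precisely how one normalizes the "base of the hexagon" at each stage (i.e., which vertex is lifted to a common lattice point, in the language of Section 3), and getting the index ranges and the $+1$ inside the binomial right requires care. A safe fallback is to verify the formula directly for small $l$ (say $l=1$, where it reduces to Lemma \ref{lemsixfive}, and $l=2$) and then run the induction on $l$, peeling off the rightmost hexagonal slab $H_{b_{2l-1},\,\ast,\,\ast}$ and checking that the new prefactor is exactly $q^{b_{2l-2}\binom{b_{2l-1}+1}{2}}$ times a relabeled instance of the inductive hypothesis — a single clean arithmetic-progression sum.
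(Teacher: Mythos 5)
Your reductions at the two ends are fine: the forced strip created by the intact block $b_{2l}$ contains no right-tilting lozenges, so $\M_1(S(b_1,\dotsc,b_{2l}))=\M_1(S(b_1,\dotsc,b_{2l-1}))$ by \eqref{forcedeq}; and reflection in a vertical line does interchange left- and right-tilting lozenges while preserving heights, which gives the $\M_2$ identity exactly as in the paper. The problem is the core of your argument: a dented semihexagon does \emph{not} decompose, via the Region-splitting Lemma \ref{GS}, into semi-regular hexagons, so Lemma \ref{lemsixfive} cannot be iterated the way you propose. This works only for $l\le 2$: after absorbing forced lozenges, a dent block at a corner of the trapezoid merely truncates that corner, so $S(b_1,b_2,b_3)$ is a single hexagon and $s(b_1,b_2,b_3)=P(b_1,b_2,b_3)$. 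But for $l\ge 3$ the interior dent blocks become triangular holes sitting on the base away from the corners, and no lattice cut satisfying the separating and balancing conditions isolates them. One can also see the failure directly from formula \eqref{semieq}: the factor $\Hf(b_1+b_3+\cdots+b_{2l-1})^{-1}$ couples \emph{all} the dent blocks, so $s(b_1,\dotsc,b_{2l-1})$ is not a product of MacMahon expressions $P(a,b,c)$, whereas any region that split into hexagons would have such a product as its tiling count. Your "fallback" of peeling off a rightmost hexagonal slab fails for the same reason.

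The route the paper takes (and the one you need) bypasses decomposition entirely: encode a $\wt_1$-weighted tiling of $S(b_1,\dotsc,b_{2l-1})$ as a column-strict plane partition by recording, in each right-tilting lozenge, its height datum (as in Figure \ref{tilingarrayweight}(a) restricted to a dented semihexagon); the resulting arrays are exactly the column-strict plane partitions of a fixed shape with bounded part size, whose generating function by sum of entries is classical (see \cite[pp.~374--375]{Stanley}, essentially the principal specialization $\prod_{i<j}[x_j-x_i]_q/[j-i]_q$ of the Cohn--Larsen--Propp product). Both the $s_q$ factor and the prefactor $q^{\sum_{i=1}^{l-1}b_{2i}\binom{b_{2i+1}+\cdots+b_{2l-1}+1}{2}}$ come out of that single known identity, rather than from an accumulation of $q^{b\binom{a+1}{2}}$ terms; your heuristic for the exponent (which, incidentally, attaches it to the intact blocks $b_{2i}$, not to dent blocks) would at best be a plausibility check, not a proof.
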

\begin{proof}
The first equation follows from the $q$-enumeration of column-strict plane partitions that fit in a given Young diagram and have bounded part size (see e.g. \cite[pp. 374--375]{Stanley}) and a bijection between tilings of a dented semihexagon and column-strict plane partitions (in a picture analogous to Figure~\ref{tilingarrayweight}(a) for a dented hexagon, the corresponding column-strict plane partition is given by the numbers written in the right-tilting lozenges). The second equality follows from the first one by reflecting the region over a vertical line.
\end{proof}

We now present the connection between the weighted enumerations (under $\wt_1$ and $\wt_2$) of the lozenge tilings of our regions and the $q$-enumeration of the corresponding monotone stacks according to volume. Both weighted tiling enumerations turn out to differ from the latter just by a power of $q$. However, the exponents of $q$ are somewhat complicated expressions in $x$, $y$, $z$, $t$, the $a_i$'s and the $b_i$'s, and for convenient reference we define them below.

We adopt the following notation for $0\leq a<b$:
\begin{equation}
\label{sq}
\left\langle\begin{matrix}
b\\ a\end{matrix}\right\rangle:=(a+1)+(a+2)+\dotsc+b.
\end{equation}


For any non-negative integers $x,y,z,t$ and sequences of non-negative integers $\textbf{a}=(a_1,\dotsc,a_m)$ and $\textbf{ b}=(b_1,\dotsc,b_n)$ define two functions as follows:

\begin{align}
\label{A1}
A^{(1)}_{x,y,z,t}(\textbf{a};\textbf{b})=&\sum_{i=1}^{\lfloor \frac{m+1}{2}\rfloor}a_{2i-1}\left\langle\begin{matrix}x+\sum_{j=1}^{i-1}a_{2j}\\0\end{matrix}\right\rangle+z\left\langle\begin{matrix}x+y+e_a\\0\end{matrix}\right\rangle \notag\\
&+\sum_{i=1}^{\lfloor\frac{n+1}{2}\rfloor}b_{2i-1}\left\langle\begin{matrix}x+y+e_a+e_b-\sum_{j=1}^{i-1}b_{2j}\\0\end{matrix}\right\rangle\notag\\
&
+\sum_{i=1}^{\lfloor\frac{m}{2}\rfloor}a_{2i}\left\langle\begin{matrix}\Delta+y+o_a+o_b-\sum_{j=1}^{i}a_{2j-1}\\ \Delta\end{matrix}\right\rangle+z\left\langle\begin{matrix}\Delta+o_b\\\Delta\end{matrix}\right\rangle\notag\\
&
+\sum_{i=1}^{\lfloor\frac{n}{2}\rfloor}b_{2i}\left\langle\begin{matrix}\Delta+\sum_{j=0}^{i-1}b_{2j+1} \\\Delta \end{matrix}\right\rangle,
\end{align}
\begin{align}
\label{A2}
A^{(2)}_{x,y,z,t}(\textbf{a};\textbf{b})=
&\sum_{i=1}^{\lfloor\frac{m+1}{2}\rfloor}a_{2i-1}\left\langle\begin{matrix}
                                                       y+t+e_a+e_b-\sum_{j=1}^{i-1}a_{2j} \\
                                                       0
                                                     \end{matrix}\right\rangle+z\left\langle\begin{matrix}y+t+e_b\\0\end{matrix}\right\rangle \notag\\
&+\sum_{i=1}^{\lfloor\frac{n+1}{2}\rfloor}b_{2i-1}\left\langle\begin{matrix}t+\sum_{j=1}^{i-1}b_{2j}\\0\end{matrix}\right\rangle\notag\\
&
+\sum_{i=1}^{\lfloor\frac{m}{2}\rfloor}a_{2i}\left\langle\begin{matrix}\Delta+\sum_{j=1}^{i}a_{2j-1}\\\Delta\end{matrix}\right\rangle+z\left\langle\begin{matrix}\Delta+o_a\\\Delta\end{matrix}\right\rangle\notag\\
&
+\sum_{i=1}^{\lfloor\frac{n}{2}\rfloor}b_{2i}\left\langle\begin{matrix}\Delta+y+o_a+o_b-\sum_{j=1}^{i}b_{2i-1}\\\Delta\end{matrix}\right\rangle,
\end{align}
where $\Delta=x+y+t+e_a+e_b$. We note that
\begin{equation}
\label{A1A2symm}
A^{(2)}_{x,y,z,t}(\textbf{a};\textbf{b})=A^{(1)}_{t,y,z,x}(\textbf{b};\textbf{a}).
\end{equation}
Similarly, we define two more functions:
\begin{align}
\label{B1}
B^{(1)}_{x,y,z,t}(\textbf{a};\textbf{b})=&
\sum_{i=1}^{\lfloor\frac{m+1}{2}\rfloor}a_{2i-1}\left\langle\begin{matrix}
                                                                                            x+\sum_{j=1}^{i-1}a_{2j} \\
                                                                                            0
                                                                                          \end{matrix}\right\rangle+ z\left\langle\begin{matrix}
                                                                                                            x+y+e_a \\
                                                                                                            0
                                                                                                          \end{matrix}\right\rangle \notag\\
&+\sum_{i=1}^{\lfloor\frac{n}{2}\rfloor}b_{2i}\left\langle\begin{matrix}
                                                                                                                                                                x+y+e_a+o_b-\sum_{j=1}^{i}b_{2j-1} \\
                                                                                                                                                                0
                                                                                                                                                              \end{matrix}\right\rangle\notag\\
&
+\sum_{i=1}^{\lfloor\frac{m}{2}\rfloor}a_{2i}\left\langle\begin{matrix}
                                                \square+y+t+o_a+e_b-\sum_{j=1}^{i}a_{2j-1} \\
                                                 \square
                                              \end{matrix}\right\rangle+z\left\langle\begin{matrix}
                                                               \square+t+e_b \\
                                                               \square
                                                             \end{matrix}\right\rangle\notag\\
&+\sum_{i=1}^{\lfloor\frac{n+1}{2}\rfloor}b_{2i-1}\left\langle\begin{matrix}
                                                                                                                            \square+t+\sum_{j=1}^{i-1}b_{2j} \\
                                                                                                                            \square
                                                                                                                          \end{matrix}\right\rangle
\end{align}
and
\begin{align}
\label{B2}
B^{(2)}_{x,y,z,t}(\textbf{a};\textbf{b})=&
\sum_{i=1}^{\lfloor\frac{m+1}{2}\rfloor}a_{2i-1}\left\langle\begin{matrix}
                                                                                            y+e_a+o_b-\sum_{j=1}^{i-1}a_{2j} \\
                                                                                            0
                                                                                          \end{matrix}\right\rangle+ z\left\langle\begin{matrix}
                                                                                                           o_b \\
                                                                                                            0
                                                                                                          \end{matrix}\right\rangle
+\sum_{i=1}^{\lfloor\frac{n}{2}\rfloor}b_{2i}\left\langle\begin{matrix}
                                                                                                                                                               \sum_{j=1}^{i}b_{2j-1} \\
                                                                                                                                                                0
                                                                                                                                                              \end{matrix}\right\rangle\notag\\
&+\sum_{i=1}^{\lfloor\frac{m}{2}\rfloor}a_{2i}\left\langle\begin{matrix}
                                                \square+\sum_{j=1}^{i}a_{2j-1} \\
                                                \square
                                              \end{matrix}\right\rangle+z\left\langle\begin{matrix}
                                                               \square+y+o_a \\
                                                               \square
                                                             \end{matrix}\right\rangle\notag\\
&+\sum_{i=1}^{\lfloor\frac{n+1}{2}\rfloor}b_{2i-1}\left\langle\begin{matrix}
                                                                                                                            \square+y+o_a+e_b-\sum_{j=1}^{i-1}b_{2j} \\
                                                                                                                            \square
                                                                                                                          \end{matrix}\right\rangle,
\end{align}
where $\square= x+y+e_a+o_b$.

There is a connection between the expressions in \eqref{B1} and \eqref{B2} which is similar to \eqref{A1A2symm}, but more complicated to state. Note that the sums in \eqref{B1} and \eqref{B2} can be rewritten as sums over the even- (or odd-) indexed $a_i$'s (resp., $b_i$'s), multiplied by symbols \eqref{sq}
in the same fashion as in
\eqref{B1} and \eqref {B2}, involving sums of the odd (or even-) $a_i$'s (resp., $b_i$'s) with indices that are strictly less than~$i$. 
Then the expression in \eqref{B2} is obtained from the expression in \eqref{B1} by swapping simultaneously~(1)~the $a$- and $b$-lists, (2) summation over odd-indexed $a_i$'s and summation over even-indexed $b_i$'s, and~(3)~summation over even-indexed $a_i$'s and summation over odd-indexed~$b_i$'s.

We are now ready to state the result connecting $\wt_1$- and $\wt_2$-tiling enumeration with $q$-enumeration of monotone stacks by volume.

\begin{lem}\label{ratio} Let $x,y,z,t$ be non-negative integers, and $\textbf{a}=\{a_i\}_{i=1}^{m}$ and $\textbf{b}=\{b_j\}_{j=1}^{n}$ sequences of non-negative integers. Then we have

\begin{equation}\label{ratio1}
\M_1(P_{x,y,z,t}(\textbf{a};\textbf{b}))=q^{A^{(1)}_{x,y,z,t}(\textbf{a};\textbf{b})}\sum_{\pi}q^{|\pi|},
\end{equation}
\begin{equation}\label{ratio2}
\M_2(P_{x,y,z,t}(\textbf{a};\textbf{b}))=q^{A^{(2)}_{x,y,z,t}(\textbf{a};\textbf{b})}\sum_{\pi}q^{|\pi|},
\end{equation}
\begin{equation}\label{ratio3}
\M_1(Q_{x,y,z,t}(\textbf{a};\textbf{b}))=q^{B^{(1)}_{x,y,z,t}(\textbf{a};\textbf{b})}\sum_{\pi}q^{|\pi|},
\end{equation}
and
\begin{equation}\label{ratio4}
\M_2(Q_{x,y,z,t}(\textbf{a};\textbf{b}))=q^{B^{(2)}_{x,y,z,t}(\textbf{a};\textbf{b})}\sum_{\pi}q^{|\pi|},
\end{equation}
where in each equation
the sum on the right-hand side is taken over all monotone stacks of unit cubes $\pi$ fitting in the augmented box
corresponding to the region on the left hand side.
\end{lem}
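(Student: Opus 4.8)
The plan is to reduce everything to the statement that, for each of the simply-connected regions $R\in\{P_{x,y,z,t}(\textbf{a};\textbf{b}),\ Q_{x,y,z,t}(\textbf{a};\textbf{b})\}$, the products $\wt_1(T)/\wt_0(T)$ and $\wt_2(T)\,\wt_0(T)$ do not depend on the tiling $T$ of $R$. Here $\wt_0$ denotes the natural $q$-weight, so that $\wt_0(T)=q^{|\pi_T|}$, where $\pi_T$ is the monotone stack assigned to $T$ by the bijection of Section~3. Once this is known, write the two common values as $q^{\,c_1(R)}$ and $q^{\,c_2(R)}$; summing over all tilings of $R$ then gives $\M_1(R)=q^{\,c_1(R)}\sum_{\pi}q^{|\pi|}$ and $\M_2(R)=q^{\,c_2(R)}\sum_{\pi}q^{-|\pi|}$, and it remains to evaluate the exponents and to recast the second identity.

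The tiling-independence is proved using the elementary flip. Since $R$ is simply connected its tilings are connected under flips, and under the bijection of Section~3 a flip corresponds to adding or removing a single unit cube from $\pi_T$, hence multiplies $\wt_0(T)$ by $q^{\pm1}$. A flip rotates the three lozenges filling a unit hexagon; one of them is right-tilting, one left-tilting and one vertical (the last carrying $\wt_1$- and $\wt_2$-weight $1$), and inspecting the local picture shows that adding the cube raises the top of the right-tilting lozenge by exactly one altitude of a unit triangle and lowers the top of the left-tilting lozenge by exactly one such altitude. Therefore a flip multiplies $\wt_1(T)$ by $q^{\pm1}$ and $\wt_2(T)$ by $q^{\mp1}$, so that $\wt_1(T)/\wt_0(T)$ and $\wt_2(T)\,\wt_0(T)$ are flip-invariant. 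Evaluating at the unique lowest tiling $T_0$ (the one with $\pi_{T_0}=\varnothing$, so $\wt_0(T_0)=1$) identifies $c_1(R)$ with the sum, over the right-tilting lozenges of $T_0$, of the altitude of the top of each lozenge above the bottom edge of the hexagon, and $c_2(R)$ with the analogous sum over left-tilting lozenges.

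It then remains to compute these two sums for the lowest tilings of the $P$- and $Q$-regions. I would read off $T_0$ from the explicit ``house with stairs and cupboards'' description of Section~4 (see Figures \ref{highlowtilings}--\ref{house1}) and group its right-tilting lozenges by the block of the construction they lie over --- above a given lobe $a_i$ or $b_j$, or above one of the two $z$-blocks --- each group contributing a sum of consecutive integers, i.e. a symbol as in \eqref{sq}; summing, one gets $c_1(P)=A^{(1)}_{x,y,z,t}(\textbf{a};\textbf{b})$ and $c_1(Q)=B^{(1)}_{x,y,z,t}(\textbf{a};\textbf{b})$, which is \eqref{ratio1} and \eqref{ratio3}. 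For $\wt_2$ one runs the same count over left-tilting lozenges (or, more economically, reflects $R$ in a vertical line, which turns $\wt_2$ into $\wt_1$ and carries $P$- and $Q$-regions to $P$- and $Q$-regions with parameters transformed as encoded in \eqref{A1A2symm} and in the relation noted right after \eqref{B2}); in either case one arrives at $\M_2(R)=q^{\,c_2(R)}\sum_{\pi}q^{-|\pi|}$. Finally, since the stack volumes of the augmented box range symmetrically in $[0,\Vol]$ --- i.e. the generating function $\sum_{\pi}q^{|\pi|}$ is palindromic (equivalently, the complementation $\pi\mapsto\bar\pi$ of monotone stacks in the augmented box reverses volume) --- one has $\sum_{\pi}q^{-|\pi|}=q^{-\Vol}\sum_{\pi}q^{|\pi|}$, so $\M_2(R)=q^{\,c_2(R)-\Vol}\sum_{\pi}q^{|\pi|}$; matching this against \eqref{ratio2} and \eqref{ratio4} (using again the lowest-tiling description, or \eqref{A1A2symm}) yields $A^{(2)}=c_2(P)-\Vol$ and $B^{(2)}=c_2(Q)-\Vol$.

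The flip-invariance argument is routine --- it is the same mechanism underlying Lemma \ref{lemsixfive}. The work, and the main obstacle, is the bookkeeping: describing the lowest tilings of the $P$- and $Q$-regions in complete generality, tracking how the fern lobes and the parameters $x,y,z,t$ are distributed among the ``stairs'' and ``cupboards'' of the augmented box, matching the resulting sums of altitudes term by term with the lengthy formulas \eqref{A1}, \eqref{A2}, \eqref{B1}, \eqref{B2}, and checking the palindromicity of $\sum_{\pi}q^{|\pi|}$ that is needed to put the $\wt_2$-statements into the stated shape.
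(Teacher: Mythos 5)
Your proposal is correct in outline, but it establishes the key intermediate fact by a genuinely different mechanism than the paper. Both proofs rest on showing that $\wt_1(T)/\wt_0(T)$ and $\wt_2(T)\,\wt_0(T)$ are independent of the tiling $T$ and then identifying the constants with $q^{A^{(1)}}$, etc. You get the invariance from flip-connectivity of tilings of a simply connected region plus the local observation that a flip shifts the top of the right-tilting lozenge up by one altitude and the top of the left-tilting lozenge down by one, and you then evaluate the constant at the lowest tiling $\pi=\varnothing$. The paper instead partitions the augmented box into $m+n+3$ parallelepiped ``rooms'' $B_i$ of dimensions $x_i\times y_i\times z_i$ sitting at heights $t_i$, encodes the induced tiling of each room by non-intersecting lozenge paths, and computes the ratio directly as $q^{\sum_i\left(x_iy_it_i+y_ix_i(x_i+1)/2\right)}$; this is less elementary but has the advantage that each room contributes a term of exactly the shape $y_i\left\langle\begin{smallmatrix}t_i+x_i\\ t_i\end{smallmatrix}\right\rangle$, so the match with \eqref{A1}--\eqref{B2} is more transparent than reading the exponent off the lowest tiling. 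Your final step --- converting $\sum_\pi q^{-|\pi|}$ into $q^{-\Vol}\sum_\pi q^{|\pi|}$ via complementation of stacks in the augmented box --- is precisely the paper's own $\sum_\pi q^{|\pi^c|}=\sum_\pi q^{|\pi|}$ step, so you inherit rather than introduce whatever justification that identity requires. In both treatments the remaining content is the (unavoidable but routine) bookkeeping identifying the constants with the explicit expressions \eqref{A1}--\eqref{B2}.
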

\begin{proof}
The proof of this lemma follows along the lines of the proofs of  Proposition 3.1 in \cite{Tri1} and Proposition 2.4  in \cite{Tri2}. We present the proofs of \eqref{ratio1} and \eqref{ratio2}; equations \eqref{ratio3} and \eqref{ratio4}  can be proved in an analogous manner.

Let $T$ be any lozenge tiling of $P:=P_{x,y,z,t}(\textbf{a};\textbf{b})$.
We will show that
\begin{equation}\label{ratio5}
\frac{\wt_1(T)}{\wt_0(T)}=q^{A_1},
\end{equation}
where $A_1=A^{(1)}_{x,y,z,t}(\textbf{a};\textbf{b})$.
Then (\ref{ratio1}) will follow by adding these equations over all lozenge tilings $T$ of $P$.

Recall that the lozenge tilings of $P$ can be viewed as monotone stacks of unit cubes fitting in an augmented box. The augmented box can be divided into cupboard-rooms, stair-rooms, and the house. There are $\lfloor\frac{m}{2}\rfloor +\lfloor\frac{n}{2}\rfloor$ cupboard-rooms and $\lceil\frac{m}{2}\rceil +\lceil\frac{n}{2}\rceil$ stair-rooms, and the house in the middle can be divided further into three rooms as in Figure \ref{subarraytiling}(b). This means that the augmented box corresponding to $P_{x,y,z,t}(\textbf{a};\textbf{b})$ can be partitioned into $m+n+3$ rooms, each being a parallelepiped.
Let $B_i$ be the $i$th parallelepiped in the partition, and let its dimensions be $x_i\times y_i\times z_i$, for $i=1,2,\dotsc,m+n+3$.
Under the projection in Figure \ref{3Dinterpretation}, $B_i$ projects onto a hexagon $H_{x_i,y_i,z_i}$ of side-lengths $x_i$, $y_i$, $z_i$, $x_i$, $y_i$, $z_i$ (in clockwise order, starting with the northwestern side). Denote by $t_i$ the distance between the bottom side of $H_{x_i,y_i,z_i}$ and the bottom side of $P_{x,y,z,t}(\textbf{a};\textbf{b})$.
One readily sees that $t_i=0$ if $B_i$ is a stair-room or a ground room of the house, $t_i=x+e_a$ for the central room of the house, and $t_i=x+y+t+e_a+e_b$ if $B_i$ is a cupboard-room or the upper room of the house.

Let $T$ be a lozenge tiling of $P$. Let $\pi=\pi_T$ be the corresponding monotone stack of unit cubes that fits in the augmented box corresponding to $P$. Partition $\pi$ into the $m+n+3$ sub-stacks $\pi_i$ fitting into the rooms $B_i$. Each sub-stack $\pi_i$ gives (by projection)  a lozenge tiling $T_i$ of the hexagon $H_{x_i,y_i,z_i}$ (see Figure \ref{subarraytiling}(a) for an example). We note that $T_i$ can generally \emph{not} be obtained by restricting $T$ to the hexagon $H_{x_i,y_i,z_i}$ (which may not even be possible, due to lozenges crossing the border of $H_{x_i,y_i,z_i}$).

\begin{figure}\centering
\setlength{\unitlength}{3947sp}%
\begingroup\makeatletter\ifx\SetFigFont\undefined%
\gdef\SetFigFont#1#2#3#4#5{%
  \reset@font\fontsize{#1}{#2pt}%
  \fontfamily{#3}\fontseries{#4}\fontshape{#5}%
  \selectfont}%
\fi\endgroup%
\resizebox{15cm}{!}{
\begin{picture}(0,0)%
\includegraphics{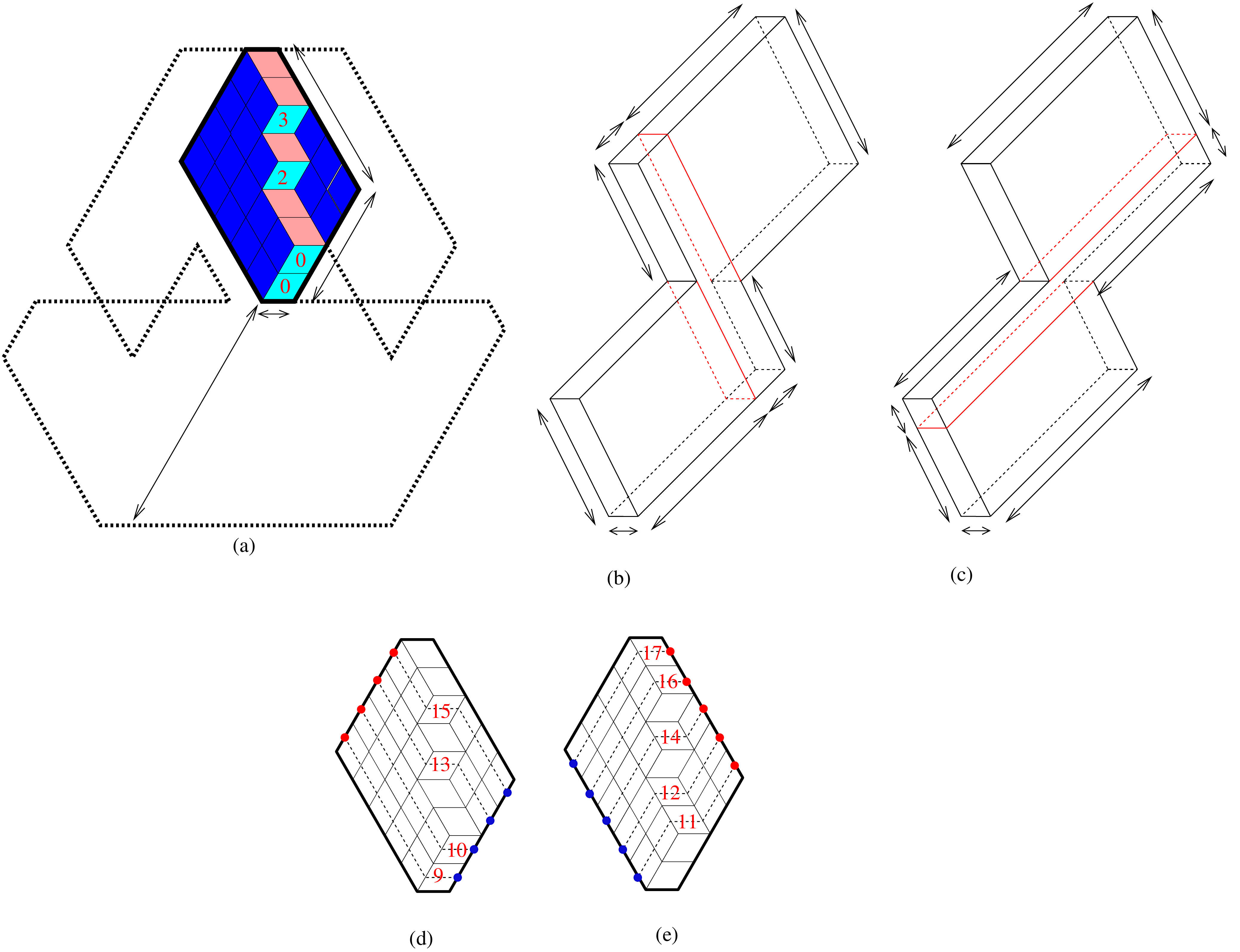}%
\end{picture}%
%
%

\begin{picture}(19922,15286)(2500,-15609)
\put(7981,-2056){\makebox(0,0)[lb]{\smash{{\SetFigFont{14}{16.8}{\rmdefault}{\mddefault}{\updefault}{$z_8=5$}%
}}}}
\put(5776,-7366){\makebox(0,0)[lb]{\smash{{\SetFigFont{14}{16.8}{\rmdefault}{\mddefault}{\updefault}{$t_8=\Delta=8$}%
}}}}
\put(8266,-4381){\makebox(0,0)[lb]{\smash{{\SetFigFont{14}{16.8}{\rmdefault}{\mddefault}{\updefault}{$x_8=4$}%
}}}}
\put(6706,-5731){\makebox(0,0)[lb]{\smash{{\SetFigFont{14}{16.8}{\rmdefault}{\mddefault}{\updefault}{$y_8=1$}%
}}}}
\put(12391,-9121){\makebox(0,0)[lb]{\smash{{\SetFigFont{14}{16.8}{\rmdefault}{\mddefault}{\updefault}{$z$}%
}}}}
\put(18031,-9211){\makebox(0,0)[lb]{\smash{{\SetFigFont{14}{16.8}{\rmdefault}{\mddefault}{\updefault}{$z$}%
}}}}
\put(15271,-6961){\makebox(0,0)[lb]{\smash{{\SetFigFont{14}{16.8}{\rmdefault}{\mddefault}{\updefault}{$y$}%
}}}}
\put(12084,-2371){\makebox(0,0)[lb]{\smash{{\SetFigFont{14}{16.8}{\rmdefault}{\mddefault}{\updefault}{$y$}%
}}}}
\put(13816,-8431){\rotatebox{45.0}{\makebox(0,0)[lb]{\smash{{\SetFigFont{14}{16.8}{\rmdefault}{\mddefault}{\updefault}{$x+e_a$}%
}}}}}
\put(19546,-8146){\rotatebox{45.0}{\makebox(0,0)[lb]{\smash{{\SetFigFont{14}{16.8}{\rmdefault}{\mddefault}{\updefault}{$x+y+e_a$}%
}}}}}
\put(11071,-7501){\rotatebox{295.0}{\makebox(0,0)[lb]{\smash{{\SetFigFont{14}{16.8}{\rmdefault}{\mddefault}{\updefault}{$y+t+e_b$}%
}}}}}
\put(15961,-1156){\rotatebox{295.0}{\makebox(0,0)[lb]{\smash{{\SetFigFont{14}{16.8}{\rmdefault}{\mddefault}{\updefault}{$y+o_a$}%
}}}}}
\put(13111,-1216){\rotatebox{45.0}{\makebox(0,0)[lb]{\smash{{\SetFigFont{14}{16.8}{\rmdefault}{\mddefault}{\updefault}{$o_b$}%
}}}}}
\put(14926,-5011){\rotatebox{295.0}{\makebox(0,0)[lb]{\smash{{\SetFigFont{14}{16.8}{\rmdefault}{\mddefault}{\updefault}{$t+e_b$}%
}}}}}
\put(12211,-3931){\rotatebox{295.0}{\makebox(0,0)[lb]{\smash{{\SetFigFont{14}{16.8}{\rmdefault}{\mddefault}{\updefault}{$o_a$}%
}}}}}
\put(21524,-1123){\rotatebox{295.0}{\makebox(0,0)[lb]{\smash{{\SetFigFont{14}{16.8}{\rmdefault}{\mddefault}{\updefault}{$o_a$}%
}}}}}
\put(16674,-7231){\makebox(0,0)[lb]{\smash{{\SetFigFont{14}{16.8}{\rmdefault}{\mddefault}{\updefault}{$y$}%
}}}}
\put(17372,-5701){\rotatebox{45.0}{\makebox(0,0)[lb]{\smash{{\SetFigFont{14}{16.8}{\rmdefault}{\mddefault}{\updefault}{$x+e_a$}%
}}}}}
\put(16955,-7893){\rotatebox{295.0}{\makebox(0,0)[lb]{\smash{{\SetFigFont{14}{16.8}{\rmdefault}{\mddefault}{\updefault}{$t+e_a$}%
}}}}}
\put(21128,-4661){\rotatebox{45.0}{\makebox(0,0)[lb]{\smash{{\SetFigFont{14}{16.8}{\rmdefault}{\mddefault}{\updefault}{$o_b$}%
}}}}}
\put(18196,-1741){\rotatebox{45.0}{\makebox(0,0)[lb]{\smash{{\SetFigFont{14}{16.8}{\rmdefault}{\mddefault}{\updefault}{$y+o_b$}%
}}}}}
\put(22254,-2446){\makebox(0,0)[lb]{\smash{{\SetFigFont{14}{16.8}{\rmdefault}{\mddefault}{\updefault}{$y$}%
}}}}
\end{picture}}
  \caption{(a) The sub-stack $\pi_8$ corresponding to the tiling in Figure \ref{tilingarray}. (b) and (c) Dividing the house into three rooms. (d) Encoding the tiling $T_8$ as a family of $x_8$ disjoint lozenge paths. (e) Encoding the tiling $T_8$ as a family of $z_8$ disjoint lozenge paths.  }\label{subarraytiling}
\end{figure}

The weight $\wt_1$ on the lozenges of $T$ yields, by restriction, a weight on the lozenges of $T_i$. Under it, each right-lozenge is weighted by $q^{t_i+l}$, where $l$ is the distance between the top of the lozenge and the bottom of the hexagon $H_{x_i,y_i,z_i}$; all vertical and left-lozenges are weighted by $1$.
Encode the tiling $T_i$ as a family of $x_i$ non-intersecting paths of lozenges starting from the northwest side and ending at the southeast side of $H_{x_i,y_i,z_i}$ (see Figure \ref{subarraytiling} (d)). Dividing the weight of each right-lozenge on the $j$-th lozenge path (ordered from bottom to top) by $q^{j+t_i}$, for all $i=1,2,\dots,m+n+3$ and $j=1,2,\dots, x_i$, we get back the natural $q$-weight assignment $\wt_0$ on the whole tiling $T$ of $P$. It is easy to see that  each of our $x_i$ lozenge paths contains exactly $y_i$ right-lozenges (indeed, each such path intersects each of the $y_i$ analogous non-intersecting paths of lozenges connecting the top and bottom sides of $H_{x_i,y_i,z_i}$ in $T_i$  in precisely one --- necessarily right-leaning --- lozenge). Therefore,
\begin{equation}
\frac{\wt_1(T)}{\wt_0(T)}=\frac{\wt_1(T)}{q^{|\pi|}}=q^{\sum_{i=1}^{m+n+3}(x_iy_it_i+y_ix_i(x_i+1)/2)}.
\end{equation}
A straightforward calculation shows that, once the values of the $x_i$'s, $y_i$'s and $z_i$'s (which can be read from Figures \ref{3Dinterpretation} and \ref{highlowenvelope}) are plugged in, the sum above becomes precisely the expression given in \eqref{A1}, thus
proving \eqref{ratio5}.

We now turn to the enumeration under our second weight. Let the tiling $T$ be weighted by $\wt_2$. Consider again the monotone stack $\pi=\pi_T$ corresponding to $T$, but now split this stack into $m+n+3$ sub-stacks $\pi_i$ using a slightly different partitioning of $\pi$ into sub-stacks $\pi_i$: Namely, use the same $\lceil\frac{m}{2}\rceil +\lceil\frac{n}{2}\rceil$ stair-rooms and $\lfloor\frac{m}{2}\rfloor +\lfloor\frac{n}{2}\rfloor$ cupboard-rooms as before, but split the house into three rooms by extending the inner {\it horizontal} surfaces as in Figure \ref{subarraytiling}(c). Each sub-stack $\pi_i$ yields a tiling $T_i$ as before.

Encode this time each such tiling $T_i$ as a $z_i$-tuple of disjoint lozenge-paths starting from the northeast side and ending at the southwest side (see Figure \ref{subarraytiling}(e)). The weight assignment $\wt_2$ yields, by restriction, a weight assignment on each $T_i$. Dividing the weight of each left lozenge on the $j$-th lozenge path (ordered from bottom to top) by $q^{j+t_i}$ (with precisely the same definition of $t_i$ as before), for all $i=1,2,\dots,m+n+3$ and $j=1,2,\dots, z_i$, we get a new weight assignment $\wt_{*}$ on the whole tiling $T$ of $P$. It is easy to see that $\wt_{*}(T)=q^{|\pi^c|}=q^{\sum_{i=1}^{m+n+3}x_iy_iz_i-|\pi|}$, where $\pi^c$ is the complement of the stack $\pi$ in the augmented box ${B}$ corresponding to $P_{x,y,z,t}(\textbf{a};\textbf{b})$. Thus, we get
\begin{equation}
\frac{\wt_2(T)}{\wt_*(T)}=\frac{\wt_2(T)}{q^{\sum_{i=1}^{m+n+3}x_iy_iz_i-|\pi|}}=q^{\sum_{i=1}^{m+n+3}(z_iy_it_i+y_iz_i(z_i+1)/2))}.
\end{equation}
This implies that
\begin{equation}
\frac{\M_2(P_{x,y,z,t}(\textbf{a};\textbf{b}))}{\sum_{\pi}\left(\frac{1}{q}\right)^{|\pi|}}=q^{\Vol(B)+\sum_{i=1}^{m+n+3}(z_iy_it_i+y_iz_i(z_i+1)/2)}.
\end{equation}
%
However, we have
\begin{align}
q^{\Vol(B)}\sum_{\pi}\left(\frac{1}{q}\right)^{|\pi|}
&=\sum_{\pi} q^{\Vol(B)-|\pi|}\\
&=\sum_{\pi} q^{|\pi^c|}\\
&=\sum_{\pi} q^{|\pi|}.
\end{align}

It is not hard to verify that
\begin{equation}
\sum_{i=1}^{m+n+3}(z_iy_it_i+y_iz_i(z_i+1)/2)
=
A^{(2)}_{x,y,z,t}(\textbf{a};\textbf{b}),
\end{equation}
and then \eqref{ratio2} follows.
\end{proof}

\section{Hexagon with one intrusion}\label{onefern}

This section is devoted to the $q$-enumerations of lozenge tilings of a hexagon with a single, boundary-touching fern removed from it. We will use this result in our proof of Theorem \ref{main2} in the next section.

Let $x,y,z,t,a_1,\dotsc,a_n$ be non-negative integers, and set $\textbf{a}=\{a_i\}_{i=1}^{n}$. Consider a hexagon of side-lengths $x+y+o_a$, $z+e_a,t+o_a,x+y+e_a,z+o_a,t+e_a$ (clockwise, starting with the northwestern side). At $x$ units above the leftmost vertex of the hexagon, remove a fern of lobe sizes $a_1,a_2,\dots,a_n$ (from left to right), so that the triangular lobes of side-length $a_{2i-1}$ are pointing up, and the lobes of side-length $a_{2i}$ are pointing down, for all $i\geq1$ (Figure \ref{arrayhole} illustrates the case $x=3$, $y=2$, $z=3$, $t=2$, $a_1=a_2=\dotsc=a_6=2$). Denote the resulting region by $R_{x,y,z,t}(\mathbf{a})$.

We point out that, in order to avoid splitting the family of $R$-regions into two subfamilies, the parameter $t$ has a different significance than in the definition of the $P$- and $Q$-regions. Namely, the parameter $t$ in the $R$-regions corresponds to $t-y$ or $y-t$ in the definition of the $P$- and $Q$-regions, respectively. Thus, $R_{x,y,z,t}(\textbf{a})$ is the region $P_{x,y,z,t-y}(\textbf{a}; \emptyset)$ if $t\geq y$, and is $Q_{x,y,z,y-t}(\textbf{a}; \emptyset)$ if $t\leq y$.


\begin{figure}\centering
\setlength{\unitlength}{3947sp}%
\begingroup\makeatletter\ifx\SetFigFont\undefined%
\gdef\SetFigFont#1#2#3#4#5{%
  \reset@font\fontsize{#1}{#2pt}%
  \fontfamily{#3}\fontseries{#4}\fontshape{#5}%
  \selectfont}%
\fi\endgroup%
\resizebox{15cm}{!}{
\begin{picture}(0,0)%
\includegraphics{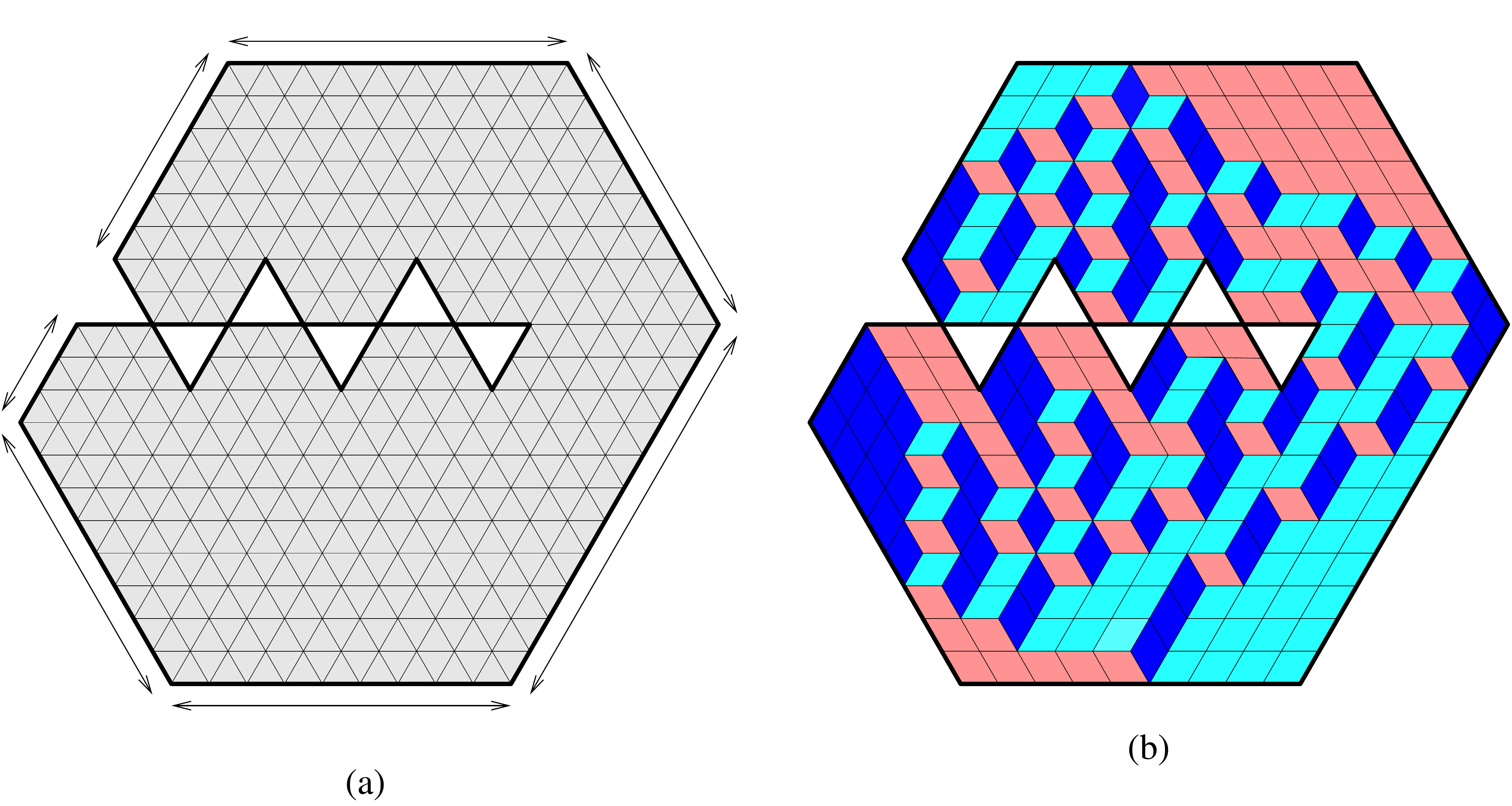}%
\end{picture}%

\begin{picture}(20815,11090)(284,-10441)
\put(8776,-7711){\rotatebox{60.0}{\makebox(0,0)[lb]{\smash{{\SetFigFont{20}{24.0}{\rmdefault}{\mddefault}{\itdefault}{$x+y+a_2+a_4+a_6$}%
}}}}}
\put(1861,-1846){\rotatebox{60.0}{\makebox(0,0)[lb]{\smash{{\SetFigFont{20}{24.0}{\rmdefault}{\mddefault}{\itdefault}{$y+a_3+a_5$}%
}}}}}
\put(1636,-3601){\makebox(0,0)[lb]{\smash{{\SetFigFont{20}{24.0}{\rmdefault}{\mddefault}{\itdefault}{$a_1$}%
}}}}
\put(2731,-4186){\makebox(0,0)[lb]{\smash{{\SetFigFont{20}{24.0}{\rmdefault}{\mddefault}{\itdefault}{$a_2$}%
}}}}
\put(3781,-3616){\makebox(0,0)[lb]{\smash{{\SetFigFont{20}{24.0}{\rmdefault}{\mddefault}{\itdefault}{$a_3$}%
}}}}
\put(4831,-4216){\makebox(0,0)[lb]{\smash{{\SetFigFont{20}{24.0}{\rmdefault}{\mddefault}{\itdefault}{$a_4$}%
}}}}
\put(5836,-3676){\makebox(0,0)[lb]{\smash{{\SetFigFont{20}{24.0}{\rmdefault}{\mddefault}{\itdefault}{$a_5$}%
}}}}
\put(6931,-4246){\makebox(0,0)[lb]{\smash{{\SetFigFont{20}{24.0}{\rmdefault}{\mddefault}{\itdefault}{$a_6$}%
}}}}
\put(496,-4096){\makebox(0,0)[lb]{\smash{{\SetFigFont{20}{24.0}{\rmdefault}{\mddefault}{\itdefault}{$x$}%
}}}}
\put(586,-6721){\rotatebox{300.0}{\makebox(0,0)[lb]{\smash{{\SetFigFont{20}{24.0}{\rmdefault}{\mddefault}{\itdefault}{$t+a_2+a_4+a_6$}%
}}}}}
\put(9206,-1186){\rotatebox{300.0}{\makebox(0,0)[lb]{\smash{{\SetFigFont{20}{24.0}{\rmdefault}{\mddefault}{\itdefault}{$t+a_1+a_3+a_5$}%
}}}}}
\put(4921,299){\makebox(0,0)[lb]{\smash{{\SetFigFont{20}{24.0}{\rmdefault}{\mddefault}{\itdefault}{$z+a_2+a_4+a_6$}%
}}}}
\put(4351,-9436){\makebox(0,0)[lb]{\smash{{\SetFigFont{20}{24.0}{\rmdefault}{\mddefault}{\itdefault}{$z+a_1+a_3+a_5$}%
}}}}
\end{picture}}
\caption{(a) A hexagon with a fern removed. (b) Viewing a tiling of an $R$-region as a stack of unit cubes.}
\label{arrayhole}
\end{figure}

\begin{thm}\label{qmain1} 
Let $x,y,z,t,a_1,\dotsc,a_n$ be non-negative integers, and set $\textbf{a}=(a_i)_{i=1}^{n}$.
Then if $y\leq t$, we have
\begin{align}\label{qmain1eq1a}
\M_1(R_{x,y,z,t}(\textbf{a}))&=q^{A^{(1)}_{x,y,z,t-y}(\textbf{a};\emptyset)-A^{(1)}_{t-y,y,z,x}(\emptyset;\textbf{a})}M_2(R_{x,y,z,t}(\textbf{a}))\notag\\
&=q^{A^{(1)}_{x,y,z,t-y}(\textbf{a};\emptyset)}\Phi^{q}_{x,y,z,t-y}(\textbf{a};\emptyset),
\end{align}
while if $y\geq t$, we have
\begin{align}\label{qmain1eq2a}
\M_1(R_{x,y,z,t}(\textbf{a}))&=q^{B^{(1)}_{x,t,z,y-t}(\textbf{a};\emptyset)-B^{(1)}_{y-t,t,z,x}(\emptyset;\textbf{a})}M_2(R_{x,y,z,t}(\textbf{a}))\notag\\
&=q^{B^{(1)}_{x,t,z,y-t}(\textbf{a};\emptyset)}\Psi^{q}_{x,t,z,y-t}(\textbf{a};\emptyset).
\end{align}

\end{thm}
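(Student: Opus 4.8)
The plan is to isolate the part of the statement that really requires work. The two ``$\M_1=q^{(\cdots)}\M_2$'' equalities are essentially free: since $R_{x,y,z,t}(\textbf a)$ is $P_{x,y,z,t-y}(\textbf a;\emptyset)$ when $y\le t$ and $Q_{x,t,z,y-t}(\textbf a;\emptyset)$ when $y\ge t$, Lemma~\ref{ratio} applied with an empty second list gives, for $i=1,2$, $\M_i(R_{x,y,z,t}(\textbf a))=q^{A^{(i)}_{x,y,z,t-y}(\textbf a;\emptyset)}\sum_{\pi}q^{|\pi|}$ in the first case and $\M_i(R_{x,y,z,t}(\textbf a))=q^{B^{(i)}_{x,t,z,y-t}(\textbf a;\emptyset)}\sum_{\pi}q^{|\pi|}$ in the second, with the same sum in the $i=1$ and $i=2$ instances. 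Dividing the $i=1$ relation by the $i=2$ relation and using the exponent identity \eqref{A1A2symm} (respectively, its $\emptyset$-specialized $B$-counterpart $B^{(2)}_{x,t,z,y-t}(\textbf a;\emptyset)=B^{(1)}_{y-t,t,z,x}(\emptyset;\textbf a)$, which is a short direct check from \eqref{B1}--\eqref{B2}) produces exactly the first equalities in \eqref{qmain1eq1a}--\eqref{qmain1eq2a}. Thus everything reduces to the second equalities, and -- again by Lemma~\ref{ratio} -- these are in turn equivalent to the single assertion $\sum_{\pi}q^{|\pi|}=\Phi^{q}_{x,y,z,t-y}(\textbf a;\emptyset)$ when $y\le t$ and $\sum_{\pi}q^{|\pi|}=\Psi^{q}_{x,t,z,y-t}(\textbf a;\emptyset)$ when $y\ge t$, the sum being over monotone stacks in the augmented box of $R_{x,y,z,t}(\textbf a)$; equivalently, to a product formula for the $\wt_1$-generating function $\M_1(R_{x,y,z,t}(\textbf a))$.

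I would prove that product formula by induction via Kuo condensation, working with $\M_1$ rather than with $\sum_\pi q^{|\pi|}$ directly: because $\wt_1$ weights each lozenge by a quantity depending only on its location in the plane (not on the tiling), Theorems~\ref{kuothm}--\ref{kuothm3} hold verbatim for the $\M_1$-counts of the simply-connected regions that arise. After padding $\textbf a$ with a trailing zero lobe (which changes neither $R_{x,y,z,t}(\textbf a)$ nor, by the defining conventions, $\Phi^q$ and $\Psi^q$) so that $n$ is even, I would induct on $n$ together with $a_1+\cdots+a_n$. The base case is the empty fern, where $R_{x,y,z,t}(\textbf a)$ is the semi-regular hexagon $H_{x+y,z,t}$: its $\wt_1$-generating function is given by Lemma~\ref{lemsixfive}, and one checks that $q^{A^{(1)}_{x,y,z,t-y}(\emptyset;\emptyset)}\Phi^{q}_{x,y,z,t-y}(\emptyset;\emptyset)$ collapses to the same expression, all $s_q$-factors degenerating into $q$-enumerations of undented trapezoids (Gaussian binomials) and the remaining $\Hf_q$-ratio and power of $q$ matching those in Lemma~\ref{lemsixfive}.

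For the inductive step I would apply one of the Kuo identities of Section~\ref{Pre} (the balanced versions, Theorem~\ref{kuothm} or \ref{kuothm2}, since $R_{x,y,z,t}(\textbf a)$ is balanced), choosing the four distinguished vertices on the outer boundary -- roughly, at a top corner, at the two outermost corners of the removed fern, and at a bottom corner -- so that each of the six regions occurring in the identity, after removal of its forced lozenges, is again an $R$-region with a strictly shorter fern (an end lobe decreased, and eventually deleted) or a dented semihexagon whose count is supplied by Lemma~\ref{semi}. The induction hypothesis together with Lemmas~\ref{semi} and \ref{lemsixfive} then turns the Kuo recurrence into an identity among $\Hf_q$'s, $s_q$-values, and powers of $q$: its $s_q$-part is an instance of the three-term (Desnanot--Jacobi/Dodgson-type) relation for $q$-counts of dented semihexagons, equivalent to the explicit product \eqref{semieqq}; its $\Hf_q$-part follows from the elementary product rules for the $q$-hyperfactorial; and its $q$-power part is settled once the change of $A^{(1)}_{x,y,z,t-y}(\textbf a;\emptyset)$ (respectively $B^{(1)}_{x,t,z,y-t}(\textbf a;\emptyset)$) under the relevant parameter shifts is computed from \eqref{A1} (respectively \eqref{B1}). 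The case $y\ge t$ runs in exactly the same way with $B^{(1)}$ and $\Psi^q$ in place of $A^{(1)}$ and $\Phi^q$ (using $R_{x,y,z,t}(\textbf a)=Q_{x,t,z,y-t}(\textbf a;\emptyset)$), and could alternatively be deduced from the $y\le t$ case by reflecting the region in a suitable line and transporting $\wt_1$.

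The main obstacle is organizational rather than any single computation. One must choose the Kuo vertices so that \emph{all six} regions produced remain inside a family that is closed under the recurrence; in practice this typically forces one to run the induction on a mildly enlarged family -- for instance allowing one extra dent on the undented edge of the underlying hexagon, or a ``partial'' first lobe -- and to re-verify the base cases there. Granting that, the remaining work is to carry the somewhat intricate exponents $A^{(1)}_{x,y,z,t-y}(\textbf a;\emptyset)$ and $B^{(1)}_{x,t,z,y-t}(\textbf a;\emptyset)$ faithfully through the recurrence and to confirm that they, together with the accompanying $\Hf_q$- and $s_q$-identities, telescope correctly; that bookkeeping, while elementary, is the real labor.
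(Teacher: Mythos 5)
Your reduction of the two ``$\M_1=q^{(\cdots)}\M_2$'' equalities to the product formula for $\M_1$ is exactly right and is the paper's own route: Lemma \ref{ratio} with an empty second list, combined with \eqref{A1A2symm} and its $B$-analogue, disposes of the first equalities, and the same lemma shows the second equalities are equivalent to a closed form for $\M_1(R_{x,y,z,t}(\textbf a))$. Your base case (empty fern, Lemma \ref{lemsixfive}) also agrees with the paper's $n=0$ case. The problem is the induction scheme you propose for $n\geq 1$.

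You induct on $n$ and $a_1+\cdots+a_n$, with the plain hexagon as the only base case, and you need Kuo vertices placed so that all five auxiliary regions in the condensation identity have strictly shorter ferns or are dented semihexagons. That requirement cannot be met: in Theorem \ref{kuothm} (or \ref{kuothm2}) four of the six graphs lose only \emph{two} of the four marked triangles, so however you distribute $u,v,w,s$ between the fern and the outer boundary, at least some of the auxiliary regions retain the full fern $\textbf a$ and differ from $R_{x,y,z,t}(\textbf a)$ only in the hexagon parameters --- which your induction statistic does not decrease. You gesture at this yourself (``a mildly enlarged family''), but identifying a family closed under the recurrence, together with its boundary cases, is the actual content of the proof, not bookkeeping. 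The paper resolves it by inverting your priorities: it fixes the fern $\textbf a$ entirely, places all four Kuo vertices so that the six regions are $R_{x,y',z',t'}(\textbf a)$ with the \emph{same} fern and $y'+z'+t'<y+z+t$ (the recurrence \eqref{qrecurrence2} involves $R_{x,y,z,t-1}$, $R_{x,y-1,z,t}$, $R_{x,y-1,z+1,t-1}$, $R_{x,y-1,z,t-1}$), inducts on $y+z+t$, and takes as base cases $y=0$, $z=0$ and $t=0$, where the Region-splitting Lemma \ref{GS} decomposes the region into two dented semihexagons whose $q$-counts come from Lemma \ref{semi} --- not from Lemma \ref{lemsixfive}. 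Without those degenerate-parameter base cases and without $y+z+t$ in the induction statistic, your recursion does not terminate, so as written the argument has a genuine gap.
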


\begin{proof}
The first equalities in \eqref{qmain1eq1a} and \eqref{qmain1eq2a} follow from \eqref{ratio1}--\eqref{ratio4}, together with \eqref{A1A2symm} and an equivalent formula for $B^{(1)}$ (see the paragraph after \eqref{B2}). It is therefore enough to consider the weight $\wt_1$, and show that if $y\leq t$ we have
\begin{align}\label{qmain1eq1}
\M_1(R_{x,y,z,t}(\textbf{a}))&=q^{A^{(1)}_{x,y,z,t-y}(\textbf{a};\emptyset)}\Phi^{q}_{x,y,z,t-y}(\textbf{a};\emptyset)\notag\\
&=q^{A^{(1)}_{x,y,z,t-y}(\textbf{a};\emptyset)}\frac{\Hf_q(y)\Hf_q(z)\Hf_q\left(a+x+y+z+t\right)} {\Hf_q(y+z)\Hf_q\left(a+x+y+t\right)}\frac{\Hf_q\left(a+x+t\right)}{\Hf_q\left(a+x+z+t\right)}
\notag\\
 &\times
 \frac{\Hf_q\left(a+x+y\right)\Hf_q\left(t\right)}{\Hf_q\left(a+x\right)\Hf_q\left(t-y\right)}
 \frac{\Hf_q\left(e_a+x+t-y\right)\Hf_q\left(o_a\right)}{\Hf_q\left(e_a+x+t\right)\Hf_q\left(o_a+y\right)}\notag\\
 &\times s_q(a_1,\dotsc,a_{2l-1},a_{2l}+y+z) s_q(x,a_1,\dotsc,a_{2l},y+z,t-y),
\end{align}
while if $y\geq t$ we have
\begin{align}\label{qmain1eq2}
\M_1(R_{x,y,z,t}(\textbf{a}))&=q^{B^{(1)}_{x,t,z,y-t}(\textbf{a};\emptyset)}\Psi^{q}_{x,t,z,y-t}(\textbf{a};\emptyset)\notag\\
&= q^{B^{(1)}_{x,t,z,y-t}(\textbf{a};\emptyset)} \frac{\Hf_q(t)\Hf_q(z)\Hf_q\left(a+x+y+z+t\right)}
 {\Hf_q(t+z)\Hf_q\left(a+x+y+t\right)}\frac{\Hf_q\left(a+x+y\right)}{\Hf_q\left(a+x+y+z\right)}
\notag\\
 &\times
 \frac{\Hf_q\left(a+x+t\right)\Hf_q\left(y\right)}{\Hf_q\left(a+x\right)\Hf_q\left(y-t\right)}
  \frac{\Hf_q\left(e_a+x\right)\Hf_q\left(o_a)+y-t\right)}{\Hf_q\left(e_a+x+t\right)\Hf_q\left(o_a+y\right)}\notag\\
 &\times s_q(a_1,\dotsc,a_{2l-1},a_{2l}+t+z,y-t)s_q(x,a_1,\dotsc,a_{2l},t+z),
\end{align}
where $s_q$ is given by equation \eqref{semieqq}.

If $n=0$, we have no removed triangle, and the theorem follows from Lemma \ref{lemsixfive}.

Suppose therefore that $n\geq1$. We prove (\ref{qmain1eq1}) and (\ref{qmain1eq2}) by induction on $y+z+t$. Our base cases are the situations when at least one of $y$, $z$, and $t$ is equal to $0$.

If $y=0$ (so $y\leq t$), the portion of $R_{x,y,z,t}(\mathbf{a})$ that is above the horizontal line containing the fern axis is readily seen to be balanced (i.e., to have the same number of up- and down-pointing unit triangles). Therefore, by the region-splitting Lemma \ref{GS}, the weighted count of the tilings of $R_{x,y,z,t}(\mathbf{a})$ is equal to the product of the weighted counts of the resulting two semihexagons with dents illustrated in Figure~\ref{arraybasecase2}(a) (note that these have several forced left lozenges, each of which have weight $1$).

There are two cases to distinguish. If $n=2l$, then the upper subregion is the dented semihexagon $S(a_1,a_2,\dotsc,a_{2l-1})$, and the lower region is $S(x,a_1,a_2,\dotsc,a_{2l},z,t)$ reflected across its base (see Figure~\ref{arraybasecase2}(a)). However, these dented semihexagons are \emph{not} weighted by $\wt_1$; so we need to modify the weights of lozenges to bring back the weight assignment $\wt_1$. For the upper subregion, dividing the weight of each right lozenge by $q^{x+t+e_a}$ one obtains the weight assignment $\wt_1$. For the lower dented semihexagon, divide the weight of each right lozenge by $q^{x+t+e_a+1}$, reflect across its top side, and then reflect the resulting region across a vertical line to get the region $S(t,z,a_{2l},a_{2l-1},\dotsc,a_{1},x)$  weighted by $\wt_1$ with $q$ replaced by $q^{-1}$ (the first reflection turns the right-leaning lozenge into a left-leaning one, but the second turns it back into a right-leaning one). Taking into account all such contributions to the exponent of $q$, we obtain
\begin{equation}
\M_1(R_{x,0,z,t}(\text{a}))=q^{D}\M_1(S_q(a_1,a_2,\dotsc,a_{2l-1}))\M_1(S_{q^{-1}}(t,z,a_{2l},a_{2l-1},\dotsc,a_{1},x)),
\end{equation}
where

\begin{align}
D=(x+t+e_a)&\sum_{i=1}^{l-1}a_{2l-2i}\sum_{j=1}^{i}a_{2l-2j+1}\notag\\
&+(x+t+e_a+1)\left(z\left(x+e_a\right)+\sum_{i=1}^{l}a_{2i-1}\left(x+\sum_{j=1}^{i-1}a_{2j}\right)\right)
\end{align}
and where we use the subscripts $q$ and $q^{-1}$ in the semihexagons to emphasize their weight assignments.
Express $s_{q^{-1}}(t,z,a_{2l},a_{2l-1},\dotsc,a_{1},x)$ in terms of  $s_{q}(t,z,a_{2l},a_{2l-1},\dotsc,a_{1},x)$ by using the simple fact $[n]_{q^{-1}}=[n]_q/q^{n-1}$, and use Lemma \ref{semi} to obtain (\ref{qmain1eq1}). For the remaining case when  $n=2l-1$, the upper region is $S(a_1,a_2,\dotsc,a_{2l-1})$ and the lower region is $S(x,a_1,a_2,\dotsc,a_{2l-2},a_{2l-1}+z,t)$ reflected across the base (this is illustrated in Figure \ref{arraybasecase2}(b)). Equation (\ref{qmain1eq1}) follows then from Lemma \ref{semi} in the same fashion.

If $t=0$ (so $y\geq t$), the region-splitting Lemma \ref{GS} allows us again to separate our region into two weighted dented semihexagons. The one on the bottom contains now some forced right lozenges, and we have to keep track of their weights, as they contribute non-trivial multiplicative factors to the weighted tiling count.
If $n=2l$, the upper region is $S(a_1,a_2,\dots,a_{2l}+z,y)$, the lower one is $S(x,a_1,a_2,\dotsc,a_{2l})$ reflected across the base (see Figure \ref{arraybasecase2}(e) for an illustration), and the product of the weights of the forced lozenges is $q^{z\binom{x+e_a+1}{2}}$. If $n=2l-1$, the upper part is $S(a_1,a_2,\dots,a_{2l-1},z,y)$, the lower part is still a reflected version of $S(x,a_1,a_2,\dotsc,a_{2l-1})$, and the product of the weights of the forced lozenges is $q^{(z+a_{2l-1})\binom{x+e_a+1}{2}}$  (see Figure \ref{arraybasecase2}(f)). Similarly to the case when $y=0$, the weightings on these dented semihexagons can be turned into $\wt_1$ at the expense of a multiplicative factor equal to a power of $q$, and (\ref{qmain1eq2}) follows from Lemma~\ref{semi}.

If $z=0$, we can apply the region-splitting Lemma \ref{GS} one more time, obtaining two weighted dented semihexagons and several forced vertical lozenges (whose weights are all $1$, as none of them is right-leaning). There are again two subcases, depending on the parity of $n$. If $n=2l$, then the upper part is $S(a_1,a_2,\dots,a_{2l},y)$ and the lower part is a reflected version of $S(x,a_1,a_2,\dotsc,a_{2l-1},a_{2l}+t)$  (see Figure \ref{arraybasecase2}(c)). If $n=2l-1$, the upper part is $S(a_1,a_2,\dots,a_{2l-2},a_{2l-1}+t)$ and the lower part is a reflected $S(x,a_1,a_2,\dotsc,a_{2l-1},y)$ (see Figure \ref{arraybasecase2}(d)). As in the above two base cases, we can change the weights of the lozenges in these dented semihexagons to make them to be weighted by $\wt_1$,  and  then (\ref{qmain1eq1})  and (\ref{qmain1eq2}) follow again from Lemma \ref{semi}.

\begin{figure}\centering
\setlength{\unitlength}{3947sp}%
\begingroup\makeatletter\ifx\SetFigFont\undefined%
\gdef\SetFigFont#1#2#3#4#5{%
  \reset@font\fontsize{#1}{#2pt}%
  \fontfamily{#3}\fontseries{#4}\fontshape{#5}%
  \selectfont}%
\fi\endgroup%
\resizebox{12cm}{!}{
\begin{picture}(0,0)%
\includegraphics{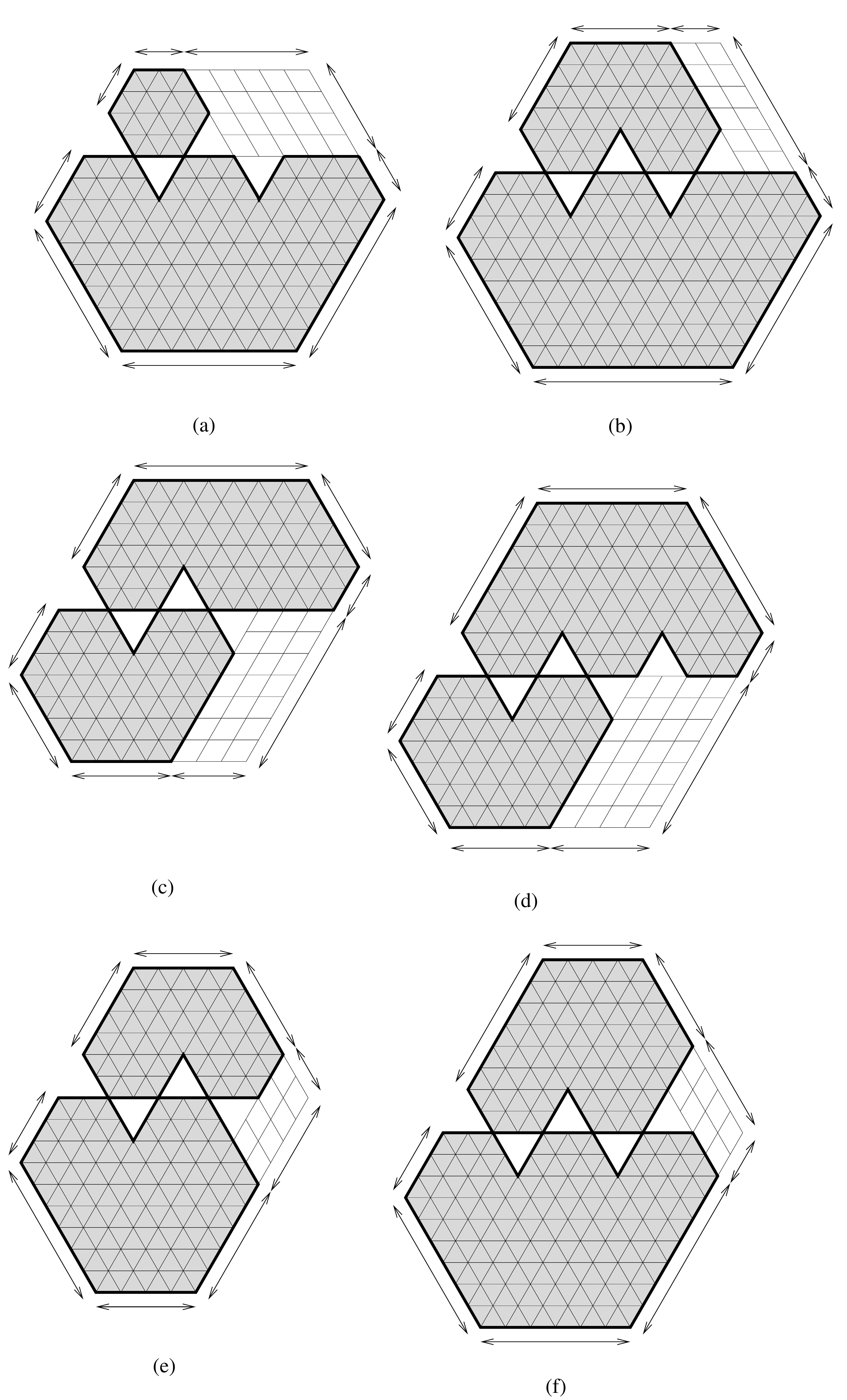}%
\end{picture}%
%
%

\begin{picture}(17611,29120)(289,-28462)
\put(1050,-3108){\rotatebox{60.0}{\makebox(0,0)[lb]{\smash{{\SetFigFont{20}{24.0}{\rmdefault}{\mddefault}{\itdefault}{$x$}%
}}}}}
\put(2040,-1068){\rotatebox{60.0}{\makebox(0,0)[lb]{\smash{{\SetFigFont{20}{24.0}{\rmdefault}{\mddefault}{\itdefault}{$a_3$}%
}}}}}
\put(3628,-170){\makebox(0,0)[lb]{\smash{{\SetFigFont{20}{24.0}{\rmdefault}{\mddefault}{\itdefault}{$a_2$}%
}}}}
\put(5101,-143){\makebox(0,0)[lb]{\smash{{\SetFigFont{20}{24.0}{\rmdefault}{\mddefault}{\itdefault}{$z+a_4$}%
}}}}
\put(970,-5135){\rotatebox{300.0}{\makebox(0,0)[lb]{\smash{{\SetFigFont{20}{24.0}{\rmdefault}{\mddefault}{\itdefault}{$t+a_2+a_4$}%
}}}}}
\put(4200,-7413){\makebox(0,0)[lb]{\smash{{\SetFigFont{20}{24.0}{\rmdefault}{\mddefault}{\itdefault}{$z+a_1+a_3$}%
}}}}
\put(7755,-5703){\rotatebox{60.0}{\makebox(0,0)[lb]{\smash{{\SetFigFont{20}{24.0}{\rmdefault}{\mddefault}{\itdefault}{$z+a_2+a_4$}%
}}}}}
\put(654,-22687){\rotatebox{60.0}{\makebox(0,0)[lb]{\smash{{\SetFigFont{20}{24.0}{\rmdefault}{\mddefault}{\itdefault}{$x$}%
}}}}}
\put(1931,-20411){\rotatebox{60.0}{\makebox(0,0)[lb]{\smash{{\SetFigFont{20}{24.0}{\rmdefault}{\mddefault}{\itdefault}{$y+a_3$}%
}}}}}
\put(3641,-18991){\makebox(0,0)[lb]{\smash{{\SetFigFont{20}{24.0}{\rmdefault}{\mddefault}{\itdefault}{$a_2+a_4$}%
}}}}
\put(5891,-19797){\rotatebox{300.0}{\makebox(0,0)[lb]{\smash{{\SetFigFont{20}{24.0}{\rmdefault}{\mddefault}{\itdefault}{$a_1+a_3$}%
}}}}}
\put(6911,-21506){\rotatebox{300.0}{\makebox(0,0)[lb]{\smash{{\SetFigFont{20}{24.0}{\rmdefault}{\mddefault}{\itdefault}{$t$}%
}}}}}
\put(573,-24733){\rotatebox{300.0}{\makebox(0,0)[lb]{\smash{{\SetFigFont{20}{24.0}{\rmdefault}{\mddefault}{\itdefault}{$t+a_2+a_4$}%
}}}}}
\put(6566,-23711){\rotatebox{60.0}{\makebox(0,0)[lb]{\smash{{\SetFigFont{20}{24.0}{\rmdefault}{\mddefault}{\itdefault}{$y+a_4$}%
}}}}}
\put(5501,-25691){\rotatebox{60.0}{\makebox(0,0)[lb]{\smash{{\SetFigFont{20}{24.0}{\rmdefault}{\mddefault}{\itdefault}{$x+a_2$}%
}}}}}
\put(2716,-26997){\makebox(0,0)[lb]{\smash{{\SetFigFont{20}{24.0}{\rmdefault}{\mddefault}{\itdefault}{$a_1+a_3$}%
}}}}
\put(619,-12529){\rotatebox{60.0}{\makebox(0,0)[lb]{\smash{{\SetFigFont{20}{24.0}{\rmdefault}{\mddefault}{\itdefault}{$x$}%
}}}}}
\put(1834,-10309){\rotatebox{60.0}{\makebox(0,0)[lb]{\smash{{\SetFigFont{20}{24.0}{\rmdefault}{\mddefault}{\itdefault}{$y+a_3$}%
}}}}}
\put(4111,-8796){\makebox(0,0)[lb]{\smash{{\SetFigFont{20}{24.0}{\rmdefault}{\mddefault}{\itdefault}{$z+a_2+a_4$}%
}}}}
\put(7456,-9516){\rotatebox{300.0}{\makebox(0,0)[lb]{\smash{{\SetFigFont{20}{24.0}{\rmdefault}{\mddefault}{\itdefault}{$a_1+a_3$}%
}}}}}
\put(348,-14419){\rotatebox{300.0}{\makebox(0,0)[lb]{\smash{{\SetFigFont{20}{24.0}{\rmdefault}{\mddefault}{\itdefault}{$a_2+a_4$}%
}}}}}
\put(2451,-15928){\makebox(0,0)[lb]{\smash{{\SetFigFont{20}{24.0}{\rmdefault}{\mddefault}{\itdefault}{$a_1+a_3$}%
}}}}
\put(4501,-15921){\makebox(0,0)[lb]{\smash{{\SetFigFont{20}{24.0}{\rmdefault}{\mddefault}{\itdefault}{$z$}%
}}}}
\put(6665,-14278){\rotatebox{60.0}{\makebox(0,0)[lb]{\smash{{\SetFigFont{20}{24.0}{\rmdefault}{\mddefault}{\itdefault}{$x+a_2+a_4$}%
}}}}}
\put(8056,-11905){\rotatebox{60.0}{\makebox(0,0)[lb]{\smash{{\SetFigFont{20}{24.0}{\rmdefault}{\mddefault}{\itdefault}{$y$}%
}}}}}
\put(2296,-2386){\makebox(0,0)[lb]{\smash{{\SetFigFont{20}{24.0}{\rmdefault}{\mddefault}{\itdefault}{$a_1$}%
}}}}
\put(3436,-2941){\makebox(0,0)[lb]{\smash{{\SetFigFont{20}{24.0}{\rmdefault}{\mddefault}{\itdefault}{$a_2$}%
}}}}
\put(4411,-2446){\makebox(0,0)[lb]{\smash{{\SetFigFont{20}{24.0}{\rmdefault}{\mddefault}{\itdefault}{$a_3$}%
}}}}
\put(5505,-3003){\makebox(0,0)[lb]{\smash{{\SetFigFont{20}{24.0}{\rmdefault}{\mddefault}{\itdefault}{$a_4$}%
}}}}
\put(1666,-21972){\makebox(0,0)[lb]{\smash{{\SetFigFont{20}{24.0}{\rmdefault}{\mddefault}{\itdefault}{$a_1$}%
}}}}
\put(2876,-22571){\makebox(0,0)[lb]{\smash{{\SetFigFont{20}{24.0}{\rmdefault}{\mddefault}{\itdefault}{$a_2$}%
}}}}
\put(3926,-22031){\makebox(0,0)[lb]{\smash{{\SetFigFont{20}{24.0}{\rmdefault}{\mddefault}{\itdefault}{$a_3$}%
}}}}
\put(4946,-22601){\makebox(0,0)[lb]{\smash{{\SetFigFont{20}{24.0}{\rmdefault}{\mddefault}{\itdefault}{$a_4$}%
}}}}
\put(1609,-11764){\makebox(0,0)[lb]{\smash{{\SetFigFont{20}{24.0}{\rmdefault}{\mddefault}{\itdefault}{$a_1$}%
}}}}
\put(2854,-12454){\makebox(0,0)[lb]{\smash{{\SetFigFont{20}{24.0}{\rmdefault}{\mddefault}{\itdefault}{$a_2$}%
}}}}
\put(3874,-11854){\makebox(0,0)[lb]{\smash{{\SetFigFont{20}{24.0}{\rmdefault}{\mddefault}{\itdefault}{$a_3$}%
}}}}
\put(4939,-12454){\makebox(0,0)[lb]{\smash{{\SetFigFont{20}{24.0}{\rmdefault}{\mddefault}{\itdefault}{$a_4$}%
}}}}
\put(9714,-3457){\rotatebox{60.0}{\makebox(0,0)[lb]{\smash{{\SetFigFont{20}{24.0}{\rmdefault}{\mddefault}{\itdefault}{$x$}%
}}}}}
\put(10846,-1291){\rotatebox{60.0}{\makebox(0,0)[lb]{\smash{{\SetFigFont{20}{24.0}{\rmdefault}{\mddefault}{\itdefault}{$a_3+a_5$}%
}}}}}
\put(12792,238){\makebox(0,0)[lb]{\smash{{\SetFigFont{20}{24.0}{\rmdefault}{\mddefault}{\itdefault}{$a_2+a_4$}%
}}}}
\put(14674,280){\makebox(0,0)[lb]{\smash{{\SetFigFont{20}{24.0}{\rmdefault}{\mddefault}{\itdefault}{$z$}%
}}}}
\put(16146,-743){\rotatebox{300.0}{\makebox(0,0)[lb]{\smash{{\SetFigFont{20}{24.0}{\rmdefault}{\mddefault}{\itdefault}{$a_1+a_3+a_5$}%
}}}}}
\put(17479,-3001){\rotatebox{300.0}{\makebox(0,0)[lb]{\smash{{\SetFigFont{20}{24.0}{\rmdefault}{\mddefault}{\itdefault}{$t$}%
}}}}}
\put(16740,-6462){\rotatebox{60.0}{\makebox(0,0)[lb]{\smash{{\SetFigFont{20}{24.0}{\rmdefault}{\mddefault}{\itdefault}{$z+a_2+a_4$}%
}}}}}
\put(9569,-5390){\rotatebox{300.0}{\makebox(0,0)[lb]{\smash{{\SetFigFont{20}{24.0}{\rmdefault}{\mddefault}{\itdefault}{$t+a_2+a_4$}%
}}}}}
\put(12410,-7697){\makebox(0,0)[lb]{\smash{{\SetFigFont{20}{24.0}{\rmdefault}{\mddefault}{\itdefault}{$z+a_1+a_3+a_5$}%
}}}}
\put(8721,-23423){\rotatebox{60.0}{\makebox(0,0)[lb]{\smash{{\SetFigFont{20}{24.0}{\rmdefault}{\mddefault}{\itdefault}{$x$}%
}}}}}
\put(10027,-20968){\rotatebox{60.0}{\makebox(0,0)[lb]{\smash{{\SetFigFont{20}{24.0}{\rmdefault}{\mddefault}{\itdefault}{$y+a_3+a_5$}%
}}}}}
\put(12181,-18868){\makebox(0,0)[lb]{\smash{{\SetFigFont{20}{24.0}{\rmdefault}{\mddefault}{\itdefault}{$a_2+a_4$}%
}}}}
\put(14363,-19613){\rotatebox{300.0}{\makebox(0,0)[lb]{\smash{{\SetFigFont{20}{24.0}{\rmdefault}{\mddefault}{\itdefault}{$a_1+a_3$}%
}}}}}
\put(15427,-21472){\rotatebox{300.0}{\makebox(0,0)[lb]{\smash{{\SetFigFont{20}{24.0}{\rmdefault}{\mddefault}{\itdefault}{$t+a_5$}%
}}}}}
\put(7528,-956){\rotatebox{300.0}{\makebox(0,0)[lb]{\smash{{\SetFigFont{20}{24.0}{\rmdefault}{\mddefault}{\itdefault}{$a_1+a_3$}%
}}}}}
\put(8452,-2633){\rotatebox{300.0}{\makebox(0,0)[lb]{\smash{{\SetFigFont{20}{24.0}{\rmdefault}{\mddefault}{\itdefault}{$t$}%
}}}}}
\put(15945,-23708){\rotatebox{60.0}{\makebox(0,0)[lb]{\smash{{\SetFigFont{20}{24.0}{\rmdefault}{\mddefault}{\itdefault}{$y$}%
}}}}}
\put(14581,-26408){\rotatebox{60.0}{\makebox(0,0)[lb]{\smash{{\SetFigFont{20}{24.0}{\rmdefault}{\mddefault}{\itdefault}{$x+a_2+a_4$}%
}}}}}
\put(8481,-25342){\rotatebox{300.0}{\makebox(0,0)[lb]{\smash{{\SetFigFont{20}{24.0}{\rmdefault}{\mddefault}{\itdefault}{$t+a_2+a_4$}%
}}}}}
\put(11227,-27677){\makebox(0,0)[lb]{\smash{{\SetFigFont{20}{24.0}{\rmdefault}{\mddefault}{\itdefault}{$a_1+a_3+a_5$}%
}}}}
\put(16473,-13351){\rotatebox{60.0}{\makebox(0,0)[lb]{\smash{{\SetFigFont{20}{24.0}{\rmdefault}{\mddefault}{\itdefault}{$y$}%
}}}}}
\put(15061,-15921){\rotatebox{60.0}{\makebox(0,0)[lb]{\smash{{\SetFigFont{20}{24.0}{\rmdefault}{\mddefault}{\itdefault}{$x+a_2+a_4$}%
}}}}}
\put(12257,-9301){\makebox(0,0)[lb]{\smash{{\SetFigFont{20}{24.0}{\rmdefault}{\mddefault}{\itdefault}{$z+a_2+a_4$}%
}}}}
\put(8551,-13869){\rotatebox{60.0}{\makebox(0,0)[lb]{\smash{{\SetFigFont{20}{24.0}{\rmdefault}{\mddefault}{\itdefault}{$x$}%
}}}}}
\put(8165,-15514){\rotatebox{300.0}{\makebox(0,0)[lb]{\smash{{\SetFigFont{20}{24.0}{\rmdefault}{\mddefault}{\itdefault}{$a_2+a_4$}%
}}}}}
\put(10224,-17408){\makebox(0,0)[lb]{\smash{{\SetFigFont{20}{24.0}{\rmdefault}{\mddefault}{\itdefault}{$a_1+a_3$}%
}}}}
\put(12359,-17401){\makebox(0,0)[lb]{\smash{{\SetFigFont{20}{24.0}{\rmdefault}{\mddefault}{\itdefault}{$z+a_5$}%
}}}}
\put(15405,-10071){\rotatebox{300.0}{\makebox(0,0)[lb]{\smash{{\SetFigFont{20}{24.0}{\rmdefault}{\mddefault}{\itdefault}{$a_1+a_3+a_5$}%
}}}}}
\put(9872,-11556){\rotatebox{60.0}{\makebox(0,0)[lb]{\smash{{\SetFigFont{20}{24.0}{\rmdefault}{\mddefault}{\itdefault}{$y+a_3+a_5$}%
}}}}}
\put(9557,-13133){\makebox(0,0)[lb]{\smash{{\SetFigFont{20}{24.0}{\rmdefault}{\mddefault}{\itdefault}{$a_1$}%
}}}}
\put(10743,-13828){\makebox(0,0)[lb]{\smash{{\SetFigFont{20}{24.0}{\rmdefault}{\mddefault}{\itdefault}{$a_2$}%
}}}}
\put(11738,-13201){\makebox(0,0)[lb]{\smash{{\SetFigFont{20}{24.0}{\rmdefault}{\mddefault}{\itdefault}{$a_3$}%
}}}}
\put(12857,-13828){\makebox(0,0)[lb]{\smash{{\SetFigFont{20}{24.0}{\rmdefault}{\mddefault}{\itdefault}{$a_4$}%
}}}}
\put(13801,-13251){\makebox(0,0)[lb]{\smash{{\SetFigFont{20}{24.0}{\rmdefault}{\mddefault}{\itdefault}{$a_5$}%
}}}}
\put(13955,-22708){\makebox(0,0)[lb]{\smash{{\SetFigFont{20}{24.0}{\rmdefault}{\mddefault}{\itdefault}{$a_5$}%
}}}}
\put(15027,-2763){\makebox(0,0)[lb]{\smash{{\SetFigFont{20}{24.0}{\rmdefault}{\mddefault}{\itdefault}{$a_5$}%
}}}}
\put(10799,-2725){\makebox(0,0)[lb]{\smash{{\SetFigFont{20}{24.0}{\rmdefault}{\mddefault}{\itdefault}{$a_1$}%
}}}}
\put(9752,-22671){\makebox(0,0)[lb]{\smash{{\SetFigFont{20}{24.0}{\rmdefault}{\mddefault}{\itdefault}{$a_1$}%
}}}}
\put(10917,-23291){\makebox(0,0)[lb]{\smash{{\SetFigFont{20}{24.0}{\rmdefault}{\mddefault}{\itdefault}{$a_2$}%
}}}}
\put(11927,-3382){\makebox(0,0)[lb]{\smash{{\SetFigFont{20}{24.0}{\rmdefault}{\mddefault}{\itdefault}{$a_2$}%
}}}}
\put(12969,-2738){\makebox(0,0)[lb]{\smash{{\SetFigFont{20}{24.0}{\rmdefault}{\mddefault}{\itdefault}{$a_3$}%
}}}}
\put(11847,-22721){\makebox(0,0)[lb]{\smash{{\SetFigFont{20}{24.0}{\rmdefault}{\mddefault}{\itdefault}{$a_3$}%
}}}}
\put(12951,-23316){\makebox(0,0)[lb]{\smash{{\SetFigFont{20}{24.0}{\rmdefault}{\mddefault}{\itdefault}{$a_4$}%
}}}}
\put(14022,-3321){\makebox(0,0)[lb]{\smash{{\SetFigFont{20}{24.0}{\rmdefault}{\mddefault}{\itdefault}{$a_4$}%
}}}}
\end{picture}%
}
\caption{Three base cases in the proof of Theorem \ref{qmain1}: (a) and (b) $y=0$, (c) and (d) $t=0$, and (e) and (f) $z=0$.}
\label{arraybasecase2}
\end{figure}

\medskip

For the induction step, assume that $y,z,t\geq 1$ and that the theorem holds for any region $R_{x,y,z,t}(\textbf{a})$ in which the sum of the  $y$-, $z$- and $t$-parameters is strictly less than $y+z+t$. We now apply Kuo's Theorem \ref{kuothm} to the {dual graph} $G$ of $R_{x,y,z,t}(\textbf{a})$ weighted by $\wt_1$ (i.e. $G$ is the graph whose vertices are unit triangles in $R$ and whose edges connect precisely those pairs of unit triangles which form a lozenge, inheriting the weight of that lozenge).

Each vertex of $G$ corresponds to a unit triangle of $R$. We pick the four vertices $u,v,w,s$ as indicated in Figure \ref{Kuoarray}(b). Note that the lowest shaded unit triangle corresponds to $u$, and $v,w,s$ correspond to the next shaded unit triangles as we move counter-clockwise from the lowest one.

\begin{figure}\centering
\setlength{\unitlength}{3947sp}%
\begingroup\makeatletter\ifx\SetFigFont\undefined%
\gdef\SetFigFont#1#2#3#4#5{%
  \reset@font\fontsize{#1}{#2pt}%
  \fontfamily{#3}\fontseries{#4}\fontshape{#5}%
  \selectfont}%
\fi\endgroup%
\resizebox{12cm}{!}{
\begin{picture}(0,0)%
\includegraphics{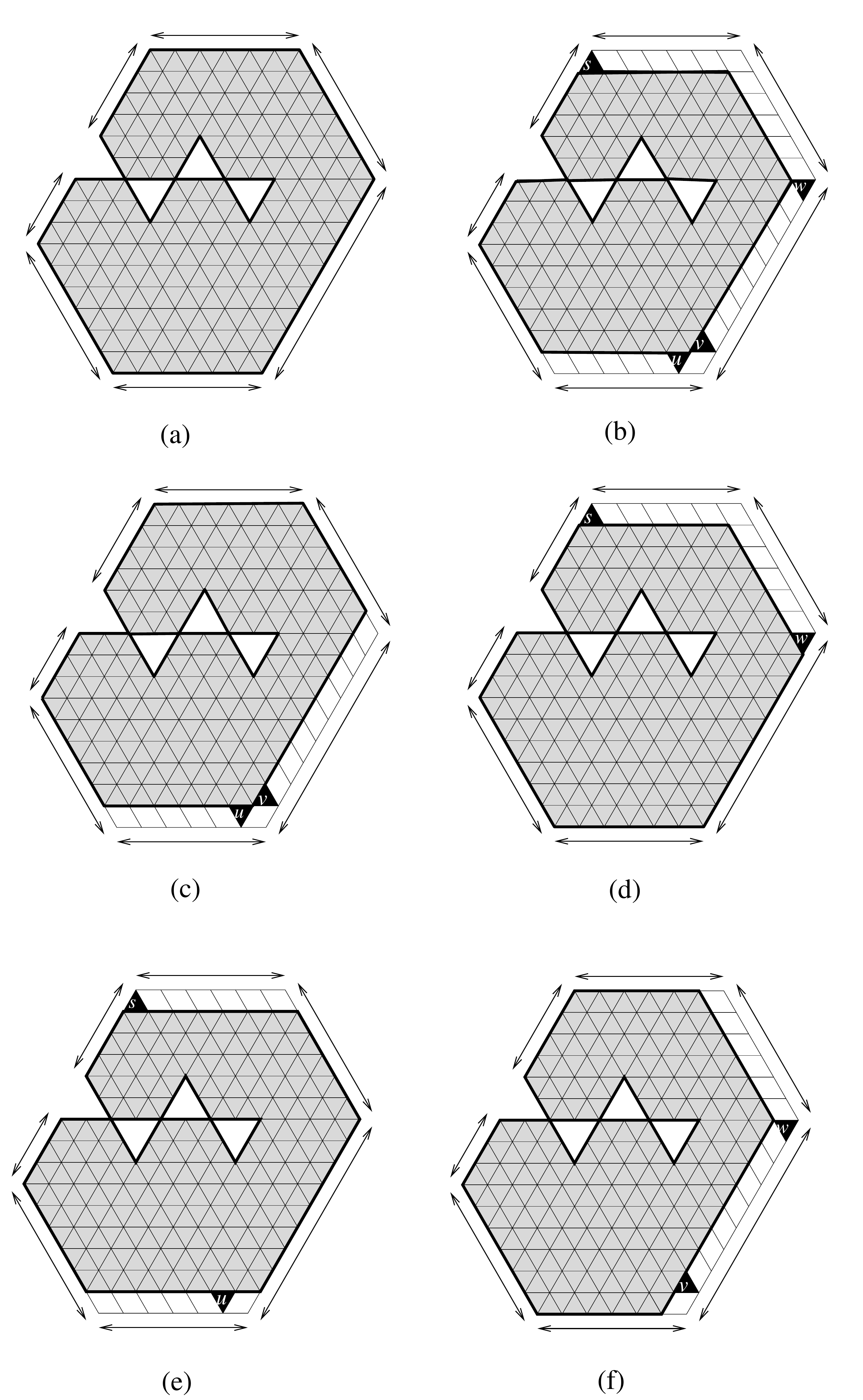}%
\end{picture}%
\begin{picture}(13470,21896)(331,-21067)
\put(886,-2356){\rotatebox{60.0}{\makebox(0,0)[lb]{\smash{{\SetFigFont{20}{24.0}{\rmdefault}{\mddefault}{\itdefault}{$x$}%
}}}}}
\put(1801,-691){\rotatebox{60.0}{\makebox(0,0)[lb]{\smash{{\SetFigFont{20}{24.0}{\rmdefault}{\mddefault}{\itdefault}{$y+a_3$}%
}}}}}
\put(3316,449){\makebox(0,0)[lb]{\smash{{\SetFigFont{20}{24.0}{\rmdefault}{\mddefault}{\itdefault}{$z+a_2+a_4$}%
}}}}
\put(5716,-406){\rotatebox{300.0}{\makebox(0,0)[lb]{\smash{{\SetFigFont{20}{24.0}{\rmdefault}{\mddefault}{\itdefault}{$t+a_1+a_3$}%
}}}}}
\put(5431,-4456){\rotatebox{60.0}{\makebox(0,0)[lb]{\smash{{\SetFigFont{20}{24.0}{\rmdefault}{\mddefault}{\itdefault}{$x+y+a_2+a_4$}%
}}}}}
\put(2686,-5566){\makebox(0,0)[lb]{\smash{{\SetFigFont{20}{24.0}{\rmdefault}{\mddefault}{\itdefault}{$z+a_1+a_3$}%
}}}}
\put(766,-3901){\rotatebox{300.0}{\makebox(0,0)[lb]{\smash{{\SetFigFont{20}{24.0}{\rmdefault}{\mddefault}{\itdefault}{$t+a_2+a_4$}%
}}}}}
\put(1621,-1816){\makebox(0,0)[lb]{\smash{{\SetFigFont{20}{24.0}{\rmdefault}{\mddefault}{\itdefault}{$a_1$}%
}}}}
\put(2521,-2236){\makebox(0,0)[lb]{\smash{{\SetFigFont{20}{24.0}{\rmdefault}{\mddefault}{\itdefault}{$a_2$}%
}}}}
\put(3286,-1861){\makebox(0,0)[lb]{\smash{{\SetFigFont{20}{24.0}{\rmdefault}{\mddefault}{\itdefault}{$a_3$}%
}}}}
\put(4066,-2266){\makebox(0,0)[lb]{\smash{{\SetFigFont{20}{24.0}{\rmdefault}{\mddefault}{\itdefault}{$a_4$}%
}}}}
\put(7802,-2364){\rotatebox{60.0}{\makebox(0,0)[lb]{\smash{{\SetFigFont{20}{24.0}{\rmdefault}{\mddefault}{\itdefault}{$x$}%
}}}}}
\put(8717,-699){\rotatebox{60.0}{\makebox(0,0)[lb]{\smash{{\SetFigFont{20}{24.0}{\rmdefault}{\mddefault}{\itdefault}{$y+a_3$}%
}}}}}
\put(10232,441){\makebox(0,0)[lb]{\smash{{\SetFigFont{20}{24.0}{\rmdefault}{\mddefault}{\itdefault}{$z+a_2+a_4$}%
}}}}
\put(12632,-414){\rotatebox{300.0}{\makebox(0,0)[lb]{\smash{{\SetFigFont{20}{24.0}{\rmdefault}{\mddefault}{\itdefault}{$t+a_1+a_3$}%
}}}}}
\put(12347,-4464){\rotatebox{60.0}{\makebox(0,0)[lb]{\smash{{\SetFigFont{20}{24.0}{\rmdefault}{\mddefault}{\itdefault}{$x+y+a_2+a_4$}%
}}}}}
\put(9602,-5574){\makebox(0,0)[lb]{\smash{{\SetFigFont{20}{24.0}{\rmdefault}{\mddefault}{\itdefault}{$z+a_1+a_3$}%
}}}}
\put(7682,-3909){\rotatebox{300.0}{\makebox(0,0)[lb]{\smash{{\SetFigFont{20}{24.0}{\rmdefault}{\mddefault}{\itdefault}{$t+a_2+a_4$}%
}}}}}
\put(8537,-1824){\makebox(0,0)[lb]{\smash{{\SetFigFont{20}{24.0}{\rmdefault}{\mddefault}{\itdefault}{$a_1$}%
}}}}
\put(9437,-2244){\makebox(0,0)[lb]{\smash{{\SetFigFont{20}{24.0}{\rmdefault}{\mddefault}{\itdefault}{$a_2$}%
}}}}
\put(10202,-1869){\makebox(0,0)[lb]{\smash{{\SetFigFont{20}{24.0}{\rmdefault}{\mddefault}{\itdefault}{$a_3$}%
}}}}
\put(10982,-2274){\makebox(0,0)[lb]{\smash{{\SetFigFont{20}{24.0}{\rmdefault}{\mddefault}{\itdefault}{$a_4$}%
}}}}

\put(947,-9468){\rotatebox{60.0}{\makebox(0,0)[lb]{\smash{{\SetFigFont{20}{24.0}{\rmdefault}{\mddefault}{\itdefault}{$x$}%
}}}}}
\put(1862,-7803){\rotatebox{60.0}{\makebox(0,0)[lb]{\smash{{\SetFigFont{20}{24.0}{\rmdefault}{\mddefault}{\itdefault}{$y+a_3$}%
}}}}}
\put(3377,-6663){\makebox(0,0)[lb]{\smash{{\SetFigFont{20}{24.0}{\rmdefault}{\mddefault}{\itdefault}{$z+a_2+a_4$}%
}}}}
\put(5777,-7518){\rotatebox{300.0}{\makebox(0,0)[lb]{\smash{{\SetFigFont{20}{24.0}{\rmdefault}{\mddefault}{\itdefault}{$t+a_1+a_3$}%
}}}}}
\put(5492,-11568){\rotatebox{60.0}{\makebox(0,0)[lb]{\smash{{\SetFigFont{20}{24.0}{\rmdefault}{\mddefault}{\itdefault}{$x+y+a_2+a_4$}%
}}}}}
\put(2747,-12678){\makebox(0,0)[lb]{\smash{{\SetFigFont{20}{24.0}{\rmdefault}{\mddefault}{\itdefault}{$z+a_1+a_3$}%
}}}}
\put(827,-11013){\rotatebox{300.0}{\makebox(0,0)[lb]{\smash{{\SetFigFont{20}{24.0}{\rmdefault}{\mddefault}{\itdefault}{$t+a_2+a_4$}%
}}}}}
\put(1682,-8928){\makebox(0,0)[lb]{\smash{{\SetFigFont{20}{24.0}{\rmdefault}{\mddefault}{\itdefault}{$a_1$}%
}}}}
\put(2582,-9348){\makebox(0,0)[lb]{\smash{{\SetFigFont{20}{24.0}{\rmdefault}{\mddefault}{\itdefault}{$a_2$}%
}}}}
\put(3347,-8973){\makebox(0,0)[lb]{\smash{{\SetFigFont{20}{24.0}{\rmdefault}{\mddefault}{\itdefault}{$a_3$}%
}}}}
\put(4127,-9378){\makebox(0,0)[lb]{\smash{{\SetFigFont{20}{24.0}{\rmdefault}{\mddefault}{\itdefault}{$a_4$}%
}}}}

\put(7802,-9459){\rotatebox{60.0}{\makebox(0,0)[lb]{\smash{{\SetFigFont{20}{24.0}{\rmdefault}{\mddefault}{\itdefault}{$x$}%
}}}}}
\put(8717,-7794){\rotatebox{60.0}{\makebox(0,0)[lb]{\smash{{\SetFigFont{20}{24.0}{\rmdefault}{\mddefault}{\itdefault}{$y+a_3$}%
}}}}}
\put(10232,-6654){\makebox(0,0)[lb]{\smash{{\SetFigFont{20}{24.0}{\rmdefault}{\mddefault}{\itdefault}{$z+a_2+a_4$}%
}}}}
\put(12632,-7509){\rotatebox{300.0}{\makebox(0,0)[lb]{\smash{{\SetFigFont{20}{24.0}{\rmdefault}{\mddefault}{\itdefault}{$t+a_1+a_3$}%
}}}}}
\put(12347,-11559){\rotatebox{60.0}{\makebox(0,0)[lb]{\smash{{\SetFigFont{20}{24.0}{\rmdefault}{\mddefault}{\itdefault}{$x+y+a_2+a_4$}%
}}}}}
\put(9602,-12669){\makebox(0,0)[lb]{\smash{{\SetFigFont{20}{24.0}{\rmdefault}{\mddefault}{\itdefault}{$z+a_1+a_3$}%
}}}}
\put(7682,-11004){\rotatebox{300.0}{\makebox(0,0)[lb]{\smash{{\SetFigFont{20}{24.0}{\rmdefault}{\mddefault}{\itdefault}{$t+a_2+a_4$}%
}}}}}
\put(8537,-8919){\makebox(0,0)[lb]{\smash{{\SetFigFont{20}{24.0}{\rmdefault}{\mddefault}{\itdefault}{$a_1$}%
}}}}
\put(9437,-9339){\makebox(0,0)[lb]{\smash{{\SetFigFont{20}{24.0}{\rmdefault}{\mddefault}{\itdefault}{$a_2$}%
}}}}
\put(10202,-8964){\makebox(0,0)[lb]{\smash{{\SetFigFont{20}{24.0}{\rmdefault}{\mddefault}{\itdefault}{$a_3$}%
}}}}
\put(10982,-9369){\makebox(0,0)[lb]{\smash{{\SetFigFont{20}{24.0}{\rmdefault}{\mddefault}{\itdefault}{$a_4$}%
}}}}

\put(663,-17073){\rotatebox{60.0}{\makebox(0,0)[lb]{\smash{{\SetFigFont{20}{24.0}{\rmdefault}{\mddefault}{\itdefault}{$x$}%
}}}}}
\put(1578,-15408){\rotatebox{60.0}{\makebox(0,0)[lb]{\smash{{\SetFigFont{20}{24.0}{\rmdefault}{\mddefault}{\itdefault}{$y+a_3$}%
}}}}}
\put(3093,-14268){\makebox(0,0)[lb]{\smash{{\SetFigFont{20}{24.0}{\rmdefault}{\mddefault}{\itdefault}{$z+a_2+a_4$}%
}}}}
\put(5493,-15123){\rotatebox{300.0}{\makebox(0,0)[lb]{\smash{{\SetFigFont{20}{24.0}{\rmdefault}{\mddefault}{\itdefault}{$t+a_1+a_3$}%
}}}}}
\put(5208,-19173){\rotatebox{60.0}{\makebox(0,0)[lb]{\smash{{\SetFigFont{20}{24.0}{\rmdefault}{\mddefault}{\itdefault}{$x+y+a_2+a_4$}%
}}}}}
\put(2463,-20283){\makebox(0,0)[lb]{\smash{{\SetFigFont{20}{24.0}{\rmdefault}{\mddefault}{\itdefault}{$z+a_1+a_3$}%
}}}}
\put(543,-18618){\rotatebox{300.0}{\makebox(0,0)[lb]{\smash{{\SetFigFont{20}{24.0}{\rmdefault}{\mddefault}{\itdefault}{$t+a_2+a_4$}%
}}}}}
\put(1398,-16533){\makebox(0,0)[lb]{\smash{{\SetFigFont{20}{24.0}{\rmdefault}{\mddefault}{\itdefault}{$a_1$}%
}}}}
\put(2298,-16953){\makebox(0,0)[lb]{\smash{{\SetFigFont{20}{24.0}{\rmdefault}{\mddefault}{\itdefault}{$a_2$}%
}}}}
\put(3063,-16578){\makebox(0,0)[lb]{\smash{{\SetFigFont{20}{24.0}{\rmdefault}{\mddefault}{\itdefault}{$a_3$}%
}}}}
\put(3843,-16983){\makebox(0,0)[lb]{\smash{{\SetFigFont{20}{24.0}{\rmdefault}{\mddefault}{\itdefault}{$a_4$}%
}}}}

\put(7533,-17088){\rotatebox{60.0}{\makebox(0,0)[lb]{\smash{{\SetFigFont{20}{24.0}{\rmdefault}{\mddefault}{\itdefault}{$x$}%
}}}}}
\put(8448,-15423){\rotatebox{60.0}{\makebox(0,0)[lb]{\smash{{\SetFigFont{20}{24.0}{\rmdefault}{\mddefault}{\itdefault}{$y+a_3$}%
}}}}}
\put(9963,-14283){\makebox(0,0)[lb]{\smash{{\SetFigFont{20}{24.0}{\rmdefault}{\mddefault}{\itdefault}{$z+a_2+a_4$}%
}}}}
\put(12363,-15138){\rotatebox{300.0}{\makebox(0,0)[lb]{\smash{{\SetFigFont{20}{24.0}{\rmdefault}{\mddefault}{\itdefault}{$t+a_1+a_3$}%
}}}}}
\put(12078,-19188){\rotatebox{60.0}{\makebox(0,0)[lb]{\smash{{\SetFigFont{20}{24.0}{\rmdefault}{\mddefault}{\itdefault}{$x+y+a_2+a_4$}%
}}}}}
\put(9333,-20298){\makebox(0,0)[lb]{\smash{{\SetFigFont{20}{24.0}{\rmdefault}{\mddefault}{\itdefault}{$z+a_1+a_3$}%
}}}}
\put(7413,-18633){\rotatebox{300.0}{\makebox(0,0)[lb]{\smash{{\SetFigFont{20}{24.0}{\rmdefault}{\mddefault}{\itdefault}{$t+a_2+a_4$}%
}}}}}
\put(8268,-16548){\makebox(0,0)[lb]{\smash{{\SetFigFont{20}{24.0}{\rmdefault}{\mddefault}{\itdefault}{$a_1$}%
}}}}
\put(9168,-16968){\makebox(0,0)[lb]{\smash{{\SetFigFont{20}{24.0}{\rmdefault}{\mddefault}{\itdefault}{$a_2$}%
}}}}
\put(9933,-16593){\makebox(0,0)[lb]{\smash{{\SetFigFont{20}{24.0}{\rmdefault}{\mddefault}{\itdefault}{$a_3$}%
}}}}
\put(10713,-16998){\makebox(0,0)[lb]{\smash{{\SetFigFont{20}{24.0}{\rmdefault}{\mddefault}{\itdefault}{$a_4$}%
}}}}
\end{picture}}
\caption{Obtaining the recurrence for the numbers of tilings of $R$-regions.}
\label{Kuoarray}
\end{figure}

First, we consider the region corresponding to the graph $G-\{u,v,w,s\}$ (see Figure \ref{Kuoarray}(b)). The removal of the four vertices $u,v,w,s$ yields several lozenges that are forced to be in any tilings of the leftover region. By removing these forced lozenges, whose weight product is $q$ (as there is only one right-leaning forced lozenge), we get a weighted version of the region $R_{x,y-1,z,t-1}(\textbf{a})$ (see the region restricted by the bold contour in Figure \ref{Kuoarray}(c)). Upon dividing the weight of each right-leaning lozenge by $q$, this weight becomes the weight assignment $\wt_1$.
This contributes a multiplicative factor of $q^{N_{x,y-1,z,t-1}}(\textbf{a})$, where $N_{x,y,z,t}(\textbf{a})$ is the total number of right lozenges in a tiling of the region $R_{x,y-1,z,t-1}(\textbf{a})$ (which is the same for all tilings). A straightforward calculation gives
\begin{equation}
N_{x,y,z,t}(\textbf{a})=xo_a+ye_a+z(x+y+e_a)+\sum_{i=2}^{\lfloor\frac{n+1}{2}\rfloor}a_{2i-1}\sum_{j=1}^{i-1}a_{2j}+\sum_{i=1}^{\lfloor\frac{n}{2}\rfloor}a_{2(k-i)}\sum_{j=1}^{i}a_{2k-2j+1}.
\end{equation}
Therefore, we have
\begin{equation}
\M(G-\{u,v,w,s\})=q\cdot q^{N_{x,y-1,z,t-1}(\textbf{a})} \M_1\Big(R_{x,y-1,z,t-1}(\textbf{a})\Big).
\end{equation}

Similarly, by accounting for the weights of the forced lozenges in Figures \ref{Kuoarray}(c)--(f) and suitably normalizing the weight assignments if needed, we obtain
\begin{equation}
\M(G-\{u,v\})=q\cdot q^{N_{x,y,z,t-1}(\textbf{a})}\M_1\Big(R_{x,y,z,t-1}(\textbf{a})\Big).
\end{equation}
\begin{equation}
\M(G-\{w,s\})=\M_1\Big(R_{x,y-1,z,t}(\textbf{a})\Big),
\end{equation}
\begin{equation}
\M(G-\{u,s\})=q\cdot q^{N_{x,y-1,z+1,t-1}(\textbf{a})}\M_1\Big(R_{x,y-1,z+1,t-1}(\textbf{a})\Big),
\end{equation}
and
\begin{equation}
\M(G-\{w,v\})=q\M_1\Big(R_{x,y,z-1,t}(\textbf{a})\Big).
\end{equation}

Substituting the above five expressions into the equation \eqref{kuoeq}, we get
\begin{align}\label{qrecurrence1}
q^{N_{x,y-1,z,t-1}(\textbf{a})}\M_1&\Big(R_{x,y,z,t}(\textbf{a})\Big)\M_1\Big(R_{x,y-1,z,t-1}(\textbf{a})\Big)\notag\\
&=q^{N_{x,y,z,t-1}(\textbf{a})}\M_1\Big(R_{x,y,z,t-1}(\textbf{a})\Big)\M_1\Big(R_{x,y-1,z,t}(\textbf{a})\Big)\notag\\
&+q^{N_{x,y-1,z+1,t-1}(\textbf{a})+1}\M_1\Big(R_{x,y-1,z+1,t-1}(\textbf{a})\Big)\M_1\Big(R_{x,y-1,z,t-1}(\textbf{a})\Big),
\end{align}
which yields the recurrence
\begin{align}\label{qrecurrence2}
q^{-z-e_a}\M_1\Big(R_{x,y,z,t}(\textbf{a})\Big)&\M_1\Big(R_{x,y-1,z,t-1}(\textbf{a})\Big)=\M_1\Big(R_{x,y,z,t-1}(\textbf{a})\Big)\M_1\Big(R_{x,y-1,z,t}(\textbf{a})\Big)\notag\\
&+q^{x+y-z}\M_1\Big(R_{x,y-1,z+1,t-1}(\textbf{a})\Big)\M_1\Big(R_{x,y-1,z,t-1}(\textbf{a})\Big).
\end{align}
Clearly, all regions in the above recurrence, except for the first one, have the sum of their $y$-, $z$-, $t$-parameters strictly less than $y+z+t$. Thus by the induction hypothesis, the weighted tiling counts of these regions are given by the formulas (\ref{qmain1eq2}) and (\ref{qmain1eq1}). This provides an explicit expression for $\M_1\Big(R_{x,y,z,t}(\textbf{a})\Big)$, and a straightforward calculation checks that this expression agrees with the formulas in the statement of the theorem.
\end{proof}

\section{Proof of Theorem \ref{main2}}

\begin{figure}\centering
\setlength{\unitlength}{3947sp}%
\begingroup\makeatletter\ifx\SetFigFont\undefined%
\gdef\SetFigFont#1#2#3#4#5{%
  \reset@font\fontsize{#1}{#2pt}%
  \fontfamily{#3}\fontseries{#4}\fontshape{#5}%
  \selectfont}%
\fi\endgroup%

\resizebox{15cm}{!}{
\begin{picture}(0,0)%
\includegraphics{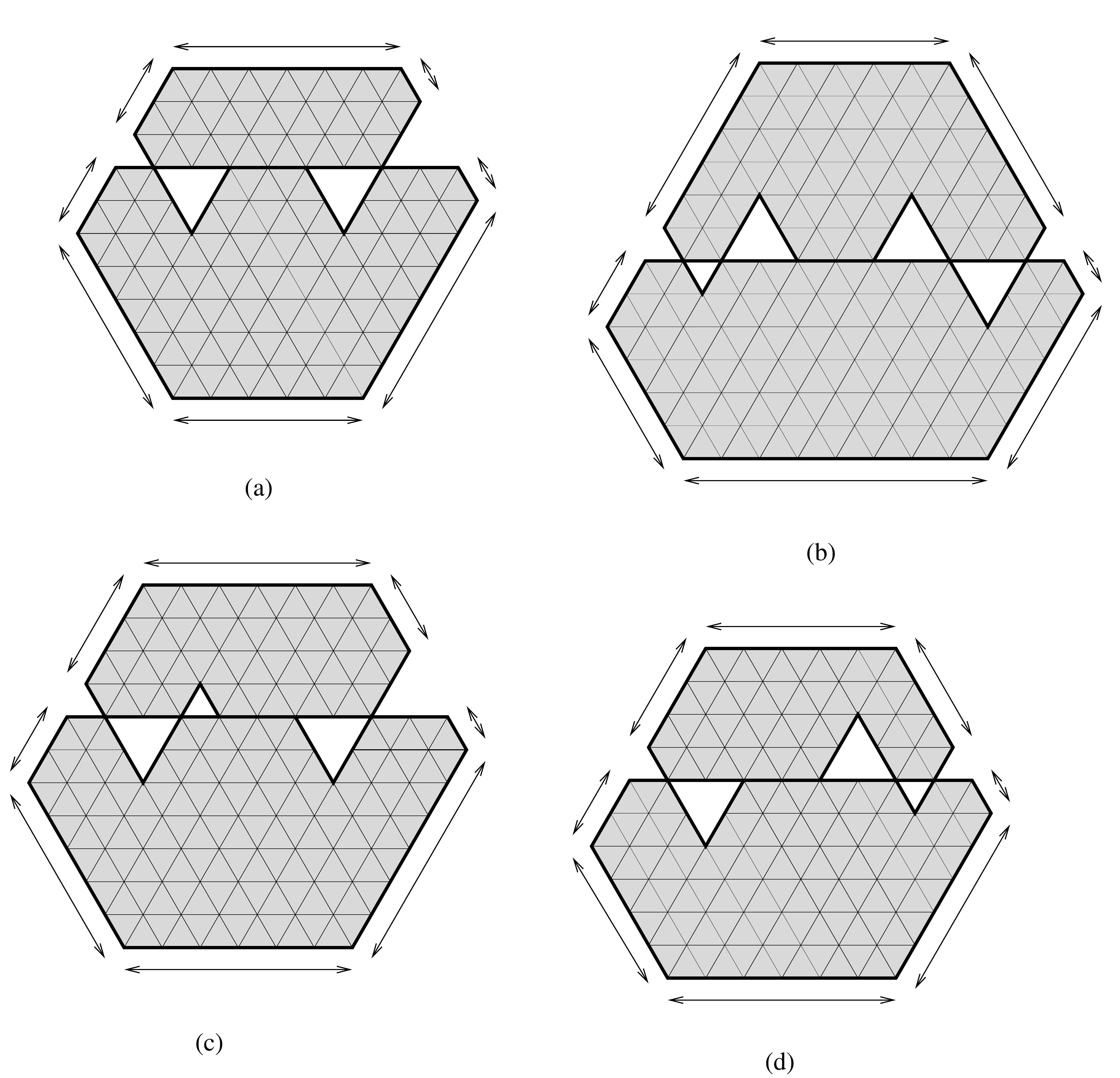}%
\end{picture}%
%
%

\begin{picture}(30968,29405)(335,-28781)
\put(6498,-251){\makebox(0,0)[lb]{\smash{{\SetFigFont{41}{49.2}{\rmdefault}{\mddefault}{\itdefault}{$z+a_2+b_2$}%
}}}}
\put(2988,-1661){\makebox(0,0)[lb]{\smash{{\SetFigFont{41}{49.2}{\rmdefault}{\mddefault}{\itdefault}{$b_1$}%
}}}}
\put(12558,-1121){\makebox(0,0)[lb]{\smash{{\SetFigFont{41}{49.2}{\rmdefault}{\mddefault}{\itdefault}{$a_1$}%
}}}}
\put(14028,-3971){\makebox(0,0)[lb]{\smash{{\SetFigFont{41}{49.2}{\rmdefault}{\mddefault}{\itdefault}{$t$}%
}}}}
\put(12288,-9761){\rotatebox{60.0}{\makebox(0,0)[lb]{\smash{{\SetFigFont{41}{49.2}{\rmdefault}{\mddefault}{\itdefault}{$x+a_2+b_2$}%
}}}}}
\put(6078,-11711){\makebox(0,0)[lb]{\smash{{\SetFigFont{41}{49.2}{\rmdefault}{\mddefault}{\itdefault}{$z+a_1+b_1$}%
}}}}
\put(1608,-7541){\rotatebox{300.0}{\makebox(0,0)[lb]{\smash{{\SetFigFont{41}{49.2}{\rmdefault}{\mddefault}{\itdefault}{$t+a_2+b_2$}%
}}}}}
\put(1968,-4301){\makebox(0,0)[lb]{\smash{{\SetFigFont{41}{49.2}{\rmdefault}{\mddefault}{\itdefault}{$x$}%
}}}}
\put(3168,-3671){\makebox(0,0)[lb]{\smash{{\SetFigFont{41}{49.2}{\rmdefault}{\mddefault}{\itdefault}{$a_1$}%
}}}}
\put(5191,-4531){\makebox(0,0)[lb]{\smash{{\SetFigFont{41}{49.2}{\rmdefault}{\mddefault}{\itdefault}{$a_2$}%
}}}}
\put(9318,-4661){\makebox(0,0)[lb]{\smash{{\SetFigFont{41}{49.2}{\rmdefault}{\mddefault}{\itdefault}{$b_2$}%
}}}}
\put(11508,-3581){\makebox(0,0)[lb]{\smash{{\SetFigFont{41}{49.2}{\rmdefault}{\mddefault}{\itdefault}{$b_1$}%
}}}}
\put(350,-22643){\rotatebox{300.0}{\makebox(0,0)[lb]{\smash{{\SetFigFont{41}{49.2}{\rmdefault}{\mddefault}{\itdefault}{$t+a_2+b_2$}%
}}}}}
\put(15285,-23437){\rotatebox{300.0}{\makebox(0,0)[lb]{\smash{{\SetFigFont{41}{49.2}{\rmdefault}{\mddefault}{\itdefault}{$t+a_2+b_2$}%
}}}}}
\put(15655,-9226){\rotatebox{300.0}{\makebox(0,0)[lb]{\smash{{\SetFigFont{41}{49.2}{\rmdefault}{\mddefault}{\itdefault}{$t+a_2+b_2$}%
}}}}}
\put(28981,-12001){\rotatebox{60.0}{\makebox(0,0)[lb]{\smash{{\SetFigFont{41}{49.2}{\rmdefault}{\mddefault}{\itdefault}{$x+a_2+b_2$}%
}}}}}
\put(26520,-26148){\rotatebox{60.0}{\makebox(0,0)[lb]{\smash{{\SetFigFont{41}{49.2}{\rmdefault}{\mddefault}{\itdefault}{$x+a_2+b_2$}%
}}}}}
\put(11825,-25084){\rotatebox{60.0}{\makebox(0,0)[lb]{\smash{{\SetFigFont{41}{49.2}{\rmdefault}{\mddefault}{\itdefault}{$x+a_2+b_2$}%
}}}}}
\put(30481,-6526){\makebox(0,0)[lb]{\smash{{\SetFigFont{41}{49.2}{\rmdefault}{\mddefault}{\itdefault}{$t$}%
}}}}
\put(28016,-20670){\makebox(0,0)[lb]{\smash{{\SetFigFont{41}{49.2}{\rmdefault}{\mddefault}{\itdefault}{$t$}%
}}}}
\put(13681,-18856){\makebox(0,0)[lb]{\smash{{\SetFigFont{41}{49.2}{\rmdefault}{\mddefault}{\itdefault}{$t$}%
}}}}
\put(21586,-76){\makebox(0,0)[lb]{\smash{{\SetFigFont{41}{49.2}{\rmdefault}{\mddefault}{\itdefault}{$z+a_2+b_2$}%
}}}}
\put(20521,-15901){\makebox(0,0)[lb]{\smash{{\SetFigFont{41}{49.2}{\rmdefault}{\mddefault}{\itdefault}{$z+a_2+b_2$}%
}}}}
\put(5116,-14476){\makebox(0,0)[lb]{\smash{{\SetFigFont{41}{49.2}{\rmdefault}{\mddefault}{\itdefault}{$z+a_2+b_2$}%
}}}}
\put(16276,-7096){\makebox(0,0)[lb]{\smash{{\SetFigFont{41}{49.2}{\rmdefault}{\mddefault}{\itdefault}{$x$}%
}}}}
\put(15671,-21000){\makebox(0,0)[lb]{\smash{{\SetFigFont{41}{49.2}{\rmdefault}{\mddefault}{\itdefault}{$x$}%
}}}}
\put(556,-19306){\makebox(0,0)[lb]{\smash{{\SetFigFont{41}{49.2}{\rmdefault}{\mddefault}{\itdefault}{$x$}%
}}}}
\put(16886,-20205){\makebox(0,0)[lb]{\smash{{\SetFigFont{41}{49.2}{\rmdefault}{\mddefault}{\itdefault}{$a_1$}%
}}}}
\put(1741,-18706){\makebox(0,0)[lb]{\smash{{\SetFigFont{41}{49.2}{\rmdefault}{\mddefault}{\itdefault}{$a_1$}%
}}}}
\put(17401,-6046){\makebox(0,0)[lb]{\smash{{\SetFigFont{41}{49.2}{\rmdefault}{\mddefault}{\itdefault}{$a_1$}%
}}}}
\put(19246,-6916){\makebox(0,0)[lb]{\smash{{\SetFigFont{41}{49.2}{\rmdefault}{\mddefault}{\itdefault}{$a_2$}%
}}}}
\put(3661,-19546){\makebox(0,0)[lb]{\smash{{\SetFigFont{41}{49.2}{\rmdefault}{\mddefault}{\itdefault}{$a_2$}%
}}}}
\put(19106,-21360){\makebox(0,0)[lb]{\smash{{\SetFigFont{41}{49.2}{\rmdefault}{\mddefault}{\itdefault}{$a_2$}%
}}}}
\put(28786,-6196){\makebox(0,0)[lb]{\smash{{\SetFigFont{41}{49.2}{\rmdefault}{\mddefault}{\itdefault}{$b_1$}%
}}}}
\put(26381,-20430){\makebox(0,0)[lb]{\smash{{\SetFigFont{41}{49.2}{\rmdefault}{\mddefault}{\itdefault}{$b_1$}%
}}}}
\put(11326,-18466){\makebox(0,0)[lb]{\smash{{\SetFigFont{41}{49.2}{\rmdefault}{\mddefault}{\itdefault}{$b_1$}%
}}}}
\put(26746,-7186){\makebox(0,0)[lb]{\smash{{\SetFigFont{41}{49.2}{\rmdefault}{\mddefault}{\itdefault}{$b_2$}%
}}}}
\put(25021,-21106){\makebox(0,0)[lb]{\smash{{\SetFigFont{41}{49.2}{\rmdefault}{\mddefault}{\itdefault}{$b_2$}%
}}}}
\put(8986,-19546){\makebox(0,0)[lb]{\smash{{\SetFigFont{41}{49.2}{\rmdefault}{\mddefault}{\itdefault}{$b_2$}%
}}}}
\put(20611,-6301){\makebox(0,0)[lb]{\smash{{\SetFigFont{41}{49.2}{\rmdefault}{\mddefault}{\itdefault}{$a_3$}%
}}}}
\put(5506,-18736){\makebox(0,0)[lb]{\smash{{\SetFigFont{41}{49.2}{\rmdefault}{\mddefault}{\itdefault}{$a_3$}%
}}}}
\put(24661,-6241){\makebox(0,0)[lb]{\smash{{\SetFigFont{41}{49.2}{\rmdefault}{\mddefault}{\itdefault}{$b_3$}%
}}}}
\put(23171,-20490){\makebox(0,0)[lb]{\smash{{\SetFigFont{41}{49.2}{\rmdefault}{\mddefault}{\itdefault}{$b_3$}%
}}}}
\put(20101,-13561){\makebox(0,0)[lb]{\smash{{\SetFigFont{41}{49.2}{\rmdefault}{\mddefault}{\itdefault}{$z+a_1+a_3+b_1+b_3$}%
}}}}
\put(19354,-27510){\makebox(0,0)[lb]{\smash{{\SetFigFont{41}{49.2}{\rmdefault}{\mddefault}{\itdefault}{$z+a_1+b_1+b_3$}%
}}}}
\put(4276,-26724){\makebox(0,0)[lb]{\smash{{\SetFigFont{41}{49.2}{\rmdefault}{\mddefault}{\itdefault}{$z+a_1+a_3+b_1$}%
}}}}
\put(17851,-4711){\rotatebox{60.0}{\makebox(0,0)[lb]{\smash{{\SetFigFont{41}{49.2}{\rmdefault}{\mddefault}{\itdefault}{$b_1+b_3+a_3$}%
}}}}}
\put(2026,-17236){\rotatebox{60.0}{\makebox(0,0)[lb]{\smash{{\SetFigFont{41}{49.2}{\rmdefault}{\mddefault}{\itdefault}{$a_3+b_1$}%
}}}}}
\put(17589,-18661){\rotatebox{60.0}{\makebox(0,0)[lb]{\smash{{\SetFigFont{41}{49.2}{\rmdefault}{\mddefault}{\itdefault}{$b_1+b_3$}%
}}}}}
\put(27880,-1726){\rotatebox{300.0}{\makebox(0,0)[lb]{\smash{{\SetFigFont{41}{49.2}{\rmdefault}{\mddefault}{\itdefault}{$a_1+a_3+b_3$}%
}}}}}
\put(11568,-15189){\rotatebox{300.0}{\makebox(0,0)[lb]{\smash{{\SetFigFont{41}{49.2}{\rmdefault}{\mddefault}{\itdefault}{$a_1+a_3$}%
}}}}}
\put(25876,-17049){\rotatebox{300.0}{\makebox(0,0)[lb]{\smash{{\SetFigFont{41}{49.2}{\rmdefault}{\mddefault}{\itdefault}{$a_1+b_3$}%
}}}}}
\end{picture}}
\caption{Base cases for the $P$-regions when $y=0$.}
\label{arraybasenew1}
\end{figure}

\begin{figure}\centering
\setlength{\unitlength}{3947sp}%
\begingroup\makeatletter\ifx\SetFigFont\undefined%
\gdef\SetFigFont#1#2#3#4#5{%
  \reset@font\fontsize{#1}{#2pt}%
  \fontfamily{#3}\fontseries{#4}\fontshape{#5}%
  \selectfont}%
\fi\endgroup%
\resizebox{15cm}{!}{
\begin{picture}(0,0)%
\includegraphics{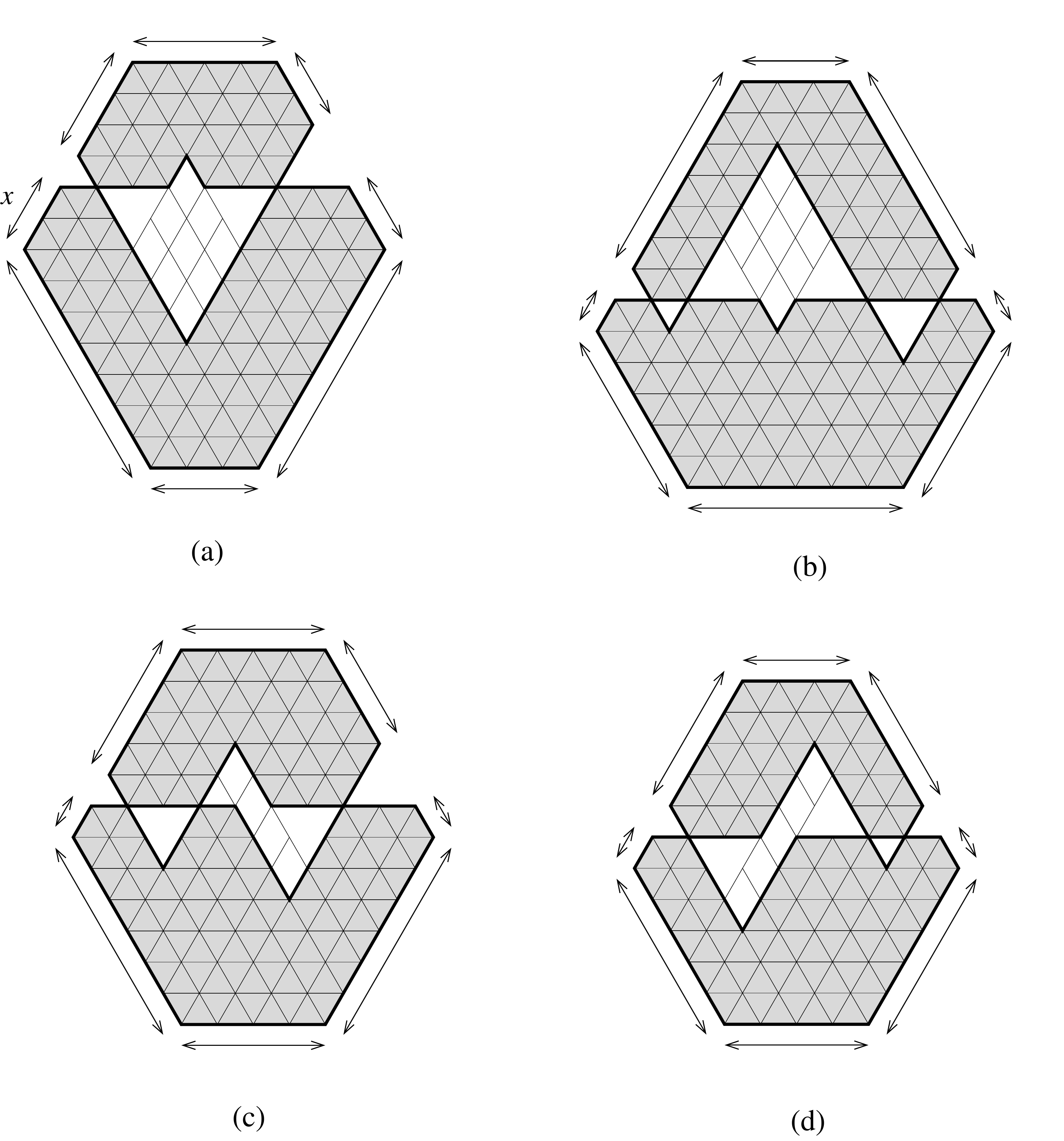}%
\end{picture}%
%
%
\begin{picture}(15334,16417)(984,-16373)
\put(2172,-11396){\makebox(0,0)[lb]{\smash{{\SetFigFont{20}{24.0}{\rmdefault}{\mddefault}{\itdefault}{$a_1$}%
}}}}
\put(3072,-11921){\makebox(0,0)[lb]{\smash{{\SetFigFont{20}{24.0}{\rmdefault}{\mddefault}{\itdefault}{$a_2$}%
}}}}
\put(5157,-11906){\makebox(0,0)[lb]{\smash{{\SetFigFont{20}{24.0}{\rmdefault}{\mddefault}{\itdefault}{$b_2$}%
}}}}
\put(6237,-11381){\makebox(0,0)[lb]{\smash{{\SetFigFont{20}{24.0}{\rmdefault}{\mddefault}{\itdefault}{$b_1$}%
}}}}
\put(10287,-11854){\makebox(0,0)[lb]{\smash{{\SetFigFont{20}{24.0}{\rmdefault}{\mddefault}{\itdefault}{$a_1$}%
}}}}
\put(11217,-12349){\makebox(0,0)[lb]{\smash{{\SetFigFont{20}{24.0}{\rmdefault}{\mddefault}{\itdefault}{$a_2$}%
}}}}
\put(13606,-12241){\makebox(0,0)[lb]{\smash{{\SetFigFont{20}{24.0}{\rmdefault}{\mddefault}{\itdefault}{$b_2$}%
}}}}
\put(14255,-11899){\makebox(0,0)[lb]{\smash{{\SetFigFont{20}{24.0}{\rmdefault}{\mddefault}{\itdefault}{$b_1$}%
}}}}
\put(4047,-11426){\makebox(0,0)[lb]{\smash{{\SetFigFont{20}{24.0}{\rmdefault}{\mddefault}{\itdefault}{$a_3$}%
}}}}
\put(9112,-4383){\makebox(0,0)[lb]{\smash{{\SetFigFont{20}{24.0}{\rmdefault}{\mddefault}{\itdefault}{$x$}%
}}}}
\put(15689,-4323){\makebox(0,0)[lb]{\smash{{\SetFigFont{20}{24.0}{\rmdefault}{\mddefault}{\itdefault}{$t$}%
}}}}
\put(10004,-2995){\rotatebox{60.0}{\makebox(0,0)[lb]{\smash{{\SetFigFont{20}{24.0}{\rmdefault}{\mddefault}{\itdefault}{$y+a_3+b_1+b_3$}%
}}}}}
\put(12044,-580){\makebox(0,0)[lb]{\smash{{\SetFigFont{20}{24.0}{\rmdefault}{\mddefault}{\itdefault}{$a_2+b_2$}%
}}}}
\put(14039,-1450){\rotatebox{300.0}{\makebox(0,0)[lb]{\smash{{\SetFigFont{20}{24.0}{\rmdefault}{\mddefault}{\itdefault}{$y+a_1+a_3+b_3$}%
}}}}}
\put(14924,-6880){\rotatebox{60.0}{\makebox(0,0)[lb]{\smash{{\SetFigFont{20}{24.0}{\rmdefault}{\mddefault}{\itdefault}{$x+y+a_2+b_2$}%
}}}}}
\put(11551,-7711){\makebox(0,0)[lb]{\smash{{\SetFigFont{20}{24.0}{\rmdefault}{\mddefault}{\itdefault}{$a_1+b_1+b_3$}%
}}}}
\put(9184,-5371){\rotatebox{300.0}{\makebox(0,0)[lb]{\smash{{\SetFigFont{20}{24.0}{\rmdefault}{\mddefault}{\itdefault}{$y+t+a_2+b_2$}%
}}}}}
\put(1194,-4747){\rotatebox{300.0}{\makebox(0,0)[lb]{\smash{{\SetFigFont{20}{24.0}{\rmdefault}{\mddefault}{\itdefault}{$y+t+a_2+b_2$}%
}}}}}
\put(3444,-7402){\makebox(0,0)[lb]{\smash{{\SetFigFont{20}{24.0}{\rmdefault}{\mddefault}{\itdefault}{$a_1+b_1$}%
}}}}
\put(3519,-367){\makebox(0,0)[lb]{\smash{{\SetFigFont{20}{24.0}{\rmdefault}{\mddefault}{\itdefault}{$a_2+b_2$}%
}}}}
\put(5499,-742){\rotatebox{300.0}{\makebox(0,0)[lb]{\smash{{\SetFigFont{20}{24.0}{\rmdefault}{\mddefault}{\itdefault}{$y+a_1$}%
}}}}}
\put(1929,-1612){\rotatebox{60.0}{\makebox(0,0)[lb]{\smash{{\SetFigFont{20}{24.0}{\rmdefault}{\mddefault}{\itdefault}{$y+b_1$}%
}}}}}
\put(5784,-6202){\rotatebox{60.0}{\makebox(0,0)[lb]{\smash{{\SetFigFont{20}{24.0}{\rmdefault}{\mddefault}{\itdefault}{$x+y+a_2+b_2$}%
}}}}}
\put(1504,-11651){\makebox(0,0)[lb]{\smash{{\SetFigFont{20}{24.0}{\rmdefault}{\mddefault}{\itdefault}{$x$}%
}}}}
\put(7579,-11629){\makebox(0,0)[lb]{\smash{{\SetFigFont{20}{24.0}{\rmdefault}{\mddefault}{\itdefault}{$t$}%
}}}}
\put(4249,-8801){\makebox(0,0)[lb]{\smash{{\SetFigFont{20}{24.0}{\rmdefault}{\mddefault}{\itdefault}{$a_2+b_2$}%
}}}}
\put(3766,-15436){\makebox(0,0)[lb]{\smash{{\SetFigFont{20}{24.0}{\rmdefault}{\mddefault}{\itdefault}{$a_1+a_3+b_1$}%
}}}}
\put(6678,-14468){\rotatebox{60.0}{\makebox(0,0)[lb]{\smash{{\SetFigFont{20}{24.0}{\rmdefault}{\mddefault}{\itdefault}{$x+y+a_2+b_2$}%
}}}}}
\put(1624,-12751){\rotatebox{300.0}{\makebox(0,0)[lb]{\smash{{\SetFigFont{20}{24.0}{\rmdefault}{\mddefault}{\itdefault}{$y+t+a_2+b_2$}%
}}}}}
\put(2251,-10666){\rotatebox{60.0}{\makebox(0,0)[lb]{\smash{{\SetFigFont{20}{24.0}{\rmdefault}{\mddefault}{\itdefault}{$y+a_3+b_1$}%
}}}}}
\put(6202,-9056){\rotatebox{300.0}{\makebox(0,0)[lb]{\smash{{\SetFigFont{20}{24.0}{\rmdefault}{\mddefault}{\itdefault}{$y+a_1+a_3$}%
}}}}}
\put(9552,-12184){\makebox(0,0)[lb]{\smash{{\SetFigFont{20}{24.0}{\rmdefault}{\mddefault}{\itdefault}{$x$}%
}}}}
\put(15109,-12079){\makebox(0,0)[lb]{\smash{{\SetFigFont{20}{24.0}{\rmdefault}{\mddefault}{\itdefault}{$t$}%
}}}}
\put(9691,-13081){\rotatebox{300.0}{\makebox(0,0)[lb]{\smash{{\SetFigFont{20}{24.0}{\rmdefault}{\mddefault}{\itdefault}{$y+t+a_2+b_2$}%
}}}}}
\put(14493,-14543){\rotatebox{60.0}{\makebox(0,0)[lb]{\smash{{\SetFigFont{20}{24.0}{\rmdefault}{\mddefault}{\itdefault}{$x+y+a_2+b_2$}%
}}}}}
\put(11862,-9199){\makebox(0,0)[lb]{\smash{{\SetFigFont{20}{24.0}{\rmdefault}{\mddefault}{\itdefault}{$a_2+b_2$}%
}}}}
\put(10381,-11086){\rotatebox{60.0}{\makebox(0,0)[lb]{\smash{{\SetFigFont{20}{24.0}{\rmdefault}{\mddefault}{\itdefault}{$y+b_1+b_3$}%
}}}}}
\put(13912,-9836){\rotatebox{300.0}{\makebox(0,0)[lb]{\smash{{\SetFigFont{20}{24.0}{\rmdefault}{\mddefault}{\itdefault}{$y+a_1+b_3$}%
}}}}}
\put(11701,-15406){\makebox(0,0)[lb]{\smash{{\SetFigFont{20}{24.0}{\rmdefault}{\mddefault}{\itdefault}{$a_1+b_1+b_3$}%
}}}}
\put(1598,-2477){\makebox(0,0)[lb]{\smash{{\SetFigFont{25}{30.0}{\rmdefault}{\mddefault}{\itdefault}{$a_1$}%
}}}}
\put(2667,-2966){\makebox(0,0)[lb]{\smash{{\SetFigFont{20}{24.0}{\rmdefault}{\mddefault}{\itdefault}{$a_2$}%
}}}}
\put(5622,-2351){\makebox(0,0)[lb]{\smash{{\SetFigFont{20}{24.0}{\rmdefault}{\mddefault}{\itdefault}{$b_1$}%
}}}}
\put(4261,-3001){\makebox(0,0)[lb]{\smash{{\SetFigFont{20}{24.0}{\rmdefault}{\mddefault}{\itdefault}{$b_2$}%
}}}}
\put(9627,-4099){\makebox(0,0)[lb]{\smash{{\SetFigFont{25}{30.0}{\rmdefault}{\mddefault}{\itdefault}{$a_1$}%
}}}}
\put(10516,-4516){\makebox(0,0)[lb]{\smash{{\SetFigFont{20}{24.0}{\rmdefault}{\mddefault}{\itdefault}{$a_2$}%
}}}}
\put(14750,-4129){\makebox(0,0)[lb]{\smash{{\SetFigFont{20}{24.0}{\rmdefault}{\mddefault}{\itdefault}{$b_1$}%
}}}}
\put(13790,-4639){\makebox(0,0)[lb]{\smash{{\SetFigFont{20}{24.0}{\rmdefault}{\mddefault}{\itdefault}{$b_2$}%
}}}}
\put(12762,-4106){\makebox(0,0)[lb]{\smash{{\SetFigFont{20}{24.0}{\rmdefault}{\mddefault}{\itdefault}{$b_3$}%
}}}}
\put(11142,-4151){\makebox(0,0)[lb]{\smash{{\SetFigFont{20}{24.0}{\rmdefault}{\mddefault}{\itdefault}{$a_3$}%
}}}}
\put(12635,-11914){\makebox(0,0)[lb]{\smash{{\SetFigFont{20}{24.0}{\rmdefault}{\mddefault}{\itdefault}{$b_3$}%
}}}}
\put(6744,-2902){\makebox(0,0)[lb]{\smash{{\SetFigFont{20}{24.0}{\rmdefault}{\mddefault}{\itdefault}{$t$}%
}}}}
\end{picture}
}
\caption{Base cases for $P$-region when $z=0$.}
\label{arrayQbase}
\end{figure}

\begin{figure}
  \centering
  \setlength{\unitlength}{3947sp}%
\begingroup\makeatletter\ifx\SetFigFont\undefined%
\gdef\SetFigFont#1#2#3#4#5{%
  \reset@font\fontsize{#1}{#2pt}%
  \fontfamily{#3}\fontseries{#4}\fontshape{#5}%
  \selectfont}%
\fi\endgroup%
\resizebox{12cm}{!}{
\begin{picture}(0,0)%
\includegraphics{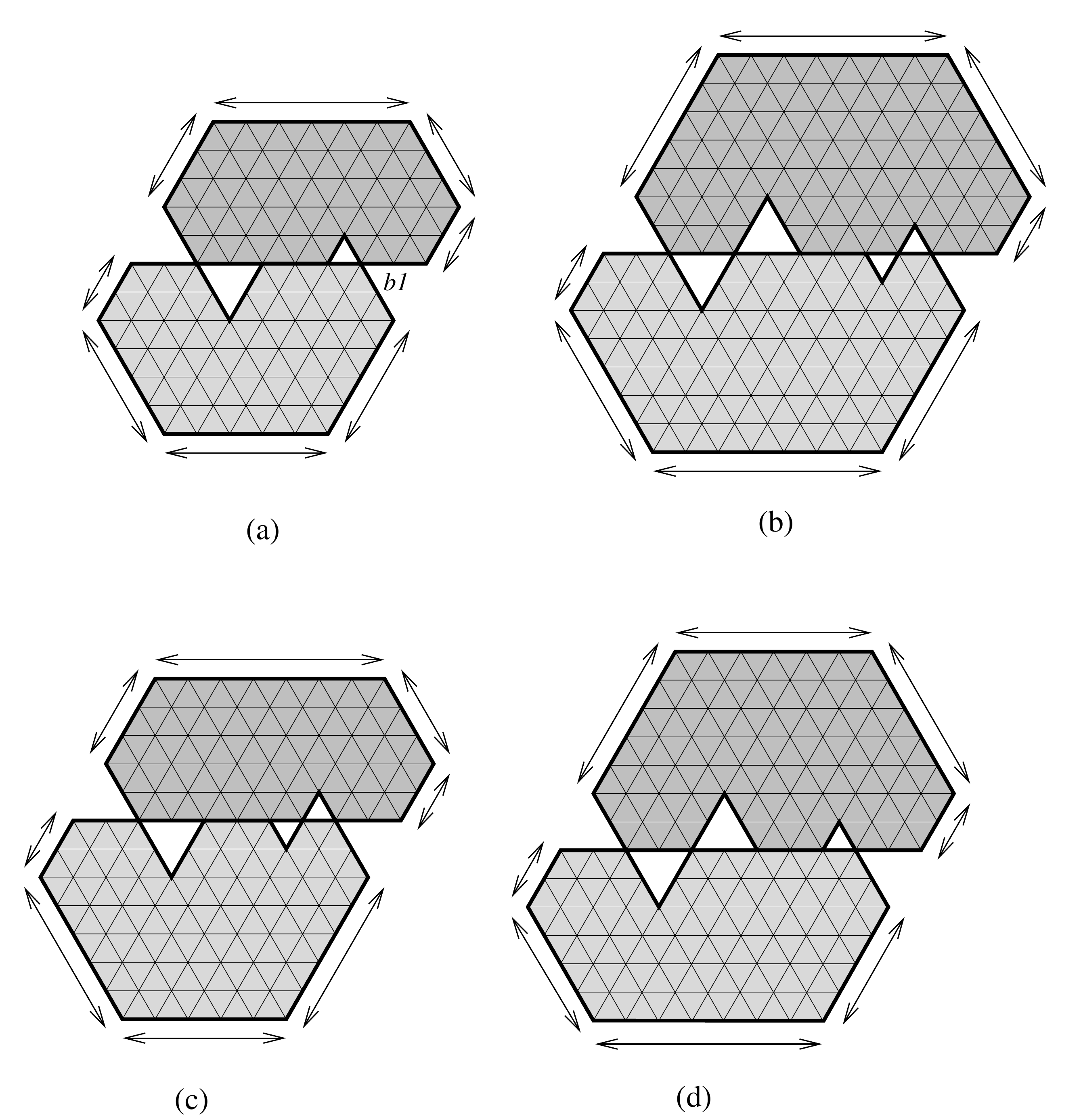}%
\end{picture}%

\begin{picture}(12846,13313)(481,-12980)
\put(1245,-3908){\rotatebox{300.0}{\makebox(0,0)[lb]{\smash{{\SetFigFont{17}{20.4}{\rmdefault}{\mddefault}{\itdefault}{$a_2+b_1$}%
}}}}}
\put(6443,-9822){\makebox(0,0)[lb]{\smash{{\SetFigFont{17}{20.4}{\rmdefault}{\mddefault}{\itdefault}{$x$}%
}}}}
\put(7471,-8532){\rotatebox{60.0}{\makebox(0,0)[lb]{\smash{{\SetFigFont{17}{20.4}{\rmdefault}{\mddefault}{\itdefault}{$t+a_3+b_2$}%
}}}}}
\put(9173,-7047){\makebox(0,0)[lb]{\smash{{\SetFigFont{17}{20.4}{\rmdefault}{\mddefault}{\itdefault}{$z+a_2+b_1$}%
}}}}
\put(11392,-7550){\rotatebox{300.0}{\makebox(0,0)[lb]{\smash{{\SetFigFont{17}{20.4}{\rmdefault}{\mddefault}{\itdefault}{$a1+a_3+b_2$}%
}}}}}
\put(11941,-9815){\makebox(0,0)[lb]{\smash{{\SetFigFont{17}{20.4}{\rmdefault}{\mddefault}{\itdefault}{$t$}%
}}}}
\put(10906,-11705){\rotatebox{60.0}{\makebox(0,0)[lb]{\smash{{\SetFigFont{17}{20.4}{\rmdefault}{\mddefault}{\itdefault}{$x+a_2$}%
}}}}}
\put(6472,-11075){\rotatebox{300.0}{\makebox(0,0)[lb]{\smash{{\SetFigFont{17}{20.4}{\rmdefault}{\mddefault}{\itdefault}{$a_2+b_1$}%
}}}}}
\put(8168,-12410){\makebox(0,0)[lb]{\smash{{\SetFigFont{17}{20.4}{\rmdefault}{\mddefault}{\itdefault}{$z+a_1+a_3+b_2$}%
}}}}
\put(7351,-9665){\makebox(0,0)[lb]{\smash{{\SetFigFont{17}{20.4}{\rmdefault}{\mddefault}{\itdefault}{$a_1$}%
}}}}
\put(8161,-10077){\makebox(0,0)[lb]{\smash{{\SetFigFont{17}{20.4}{\rmdefault}{\mddefault}{\itdefault}{$a_2$}%
}}}}
\put(8911,-9657){\makebox(0,0)[lb]{\smash{{\SetFigFont{17}{20.4}{\rmdefault}{\mddefault}{\itdefault}{$a_3$}%
}}}}
\put(10988,-10107){\makebox(0,0)[lb]{\smash{{\SetFigFont{17}{20.4}{\rmdefault}{\mddefault}{\itdefault}{$b_1$}%
}}}}
\put(10186,-10077){\makebox(0,0)[lb]{\smash{{\SetFigFont{17}{20.4}{\rmdefault}{\mddefault}{\itdefault}{$b_2$}%
}}}}
\put(1499,-9292){\makebox(0,0)[lb]{\smash{{\SetFigFont{17}{20.4}{\rmdefault}{\mddefault}{\itdefault}{$a_1$}%
}}}}
\put(2354,-9705){\makebox(0,0)[lb]{\smash{{\SetFigFont{17}{20.4}{\rmdefault}{\mddefault}{\itdefault}{$a_2$}%
}}}}
\put(4762,-9772){\makebox(0,0)[lb]{\smash{{\SetFigFont{17}{20.4}{\rmdefault}{\mddefault}{\itdefault}{$b_1$}%
}}}}
\put(4102,-9742){\makebox(0,0)[lb]{\smash{{\SetFigFont{17}{20.4}{\rmdefault}{\mddefault}{\itdefault}{$b_2$}%
}}}}
\put(3712,-9304){\makebox(0,0)[lb]{\smash{{\SetFigFont{17}{20.4}{\rmdefault}{\mddefault}{\itdefault}{$b_3$}%
}}}}
\put(1589,-8261){\rotatebox{60.0}{\makebox(0,0)[lb]{\smash{{\SetFigFont{17}{20.4}{\rmdefault}{\mddefault}{\itdefault}{$t+b_2$}%
}}}}}
\put(3037,-7346){\makebox(0,0)[lb]{\smash{{\SetFigFont{17}{20.4}{\rmdefault}{\mddefault}{\itdefault}{$z+a_2+b_1+b_3$}%
}}}}
\put(5482,-7706){\rotatebox{300.0}{\makebox(0,0)[lb]{\smash{{\SetFigFont{17}{20.4}{\rmdefault}{\mddefault}{\itdefault}{$a_1+b_2$}%
}}}}}
\put(5827,-9386){\makebox(0,0)[lb]{\smash{{\SetFigFont{17}{20.4}{\rmdefault}{\mddefault}{\itdefault}{$t$}%
}}}}
\put(4522,-11771){\rotatebox{60.0}{\makebox(0,0)[lb]{\smash{{\SetFigFont{17}{20.4}{\rmdefault}{\mddefault}{\itdefault}{$x+a_2+b_3$}%
}}}}}
\put(2377,-12289){\makebox(0,0)[lb]{\smash{{\SetFigFont{17}{20.4}{\rmdefault}{\mddefault}{\itdefault}{$z+a_1+b_2$}%
}}}}
\put(496,-10606){\rotatebox{300.0}{\makebox(0,0)[lb]{\smash{{\SetFigFont{17}{20.4}{\rmdefault}{\mddefault}{\itdefault}{$a_1+b_1+b_3$}%
}}}}}
\put(610,-9639){\makebox(0,0)[lb]{\smash{{\SetFigFont{17}{20.4}{\rmdefault}{\mddefault}{\itdefault}{$x$}%
}}}}
\put(7007,-2736){\makebox(0,0)[lb]{\smash{{\SetFigFont{17}{20.4}{\rmdefault}{\mddefault}{\itdefault}{$x$}%
}}}}
\put(7915,-1476){\rotatebox{60.0}{\makebox(0,0)[lb]{\smash{{\SetFigFont{17}{20.4}{\rmdefault}{\mddefault}{\itdefault}{$t+a_3+b_2$}%
}}}}}
\put(9436, 14){\makebox(0,0)[lb]{\smash{{\SetFigFont{17}{20.4}{\rmdefault}{\mddefault}{\itdefault}{$z+a_2+b_1+b_3$}%
}}}}
\put(12211,-381){\rotatebox{300.0}{\makebox(0,0)[lb]{\smash{{\SetFigFont{17}{20.4}{\rmdefault}{\mddefault}{\itdefault}{$a_1+a_3+b_2$}%
}}}}}
\put(12876,-2635){\makebox(0,0)[lb]{\smash{{\SetFigFont{17}{20.4}{\rmdefault}{\mddefault}{\itdefault}{$t$}%
}}}}
\put(11642,-4956){\rotatebox{60.0}{\makebox(0,0)[lb]{\smash{{\SetFigFont{17}{20.4}{\rmdefault}{\mddefault}{\itdefault}{$x+a_2+b_3$}%
}}}}}
\put(8852,-5571){\makebox(0,0)[lb]{\smash{{\SetFigFont{17}{20.4}{\rmdefault}{\mddefault}{\itdefault}{$z+a_1+a_3+b_2$}%
}}}}
\put(6970,-3996){\rotatebox{300.0}{\makebox(0,0)[lb]{\smash{{\SetFigFont{17}{20.4}{\rmdefault}{\mddefault}{\itdefault}{$a_2+b_1+b_3$}%
}}}}}
\put(7832,-2530){\makebox(0,0)[lb]{\smash{{\SetFigFont{17}{20.4}{\rmdefault}{\mddefault}{\itdefault}{$a_1$}%
}}}}
\put(8680,-2987){\makebox(0,0)[lb]{\smash{{\SetFigFont{17}{20.4}{\rmdefault}{\mddefault}{\itdefault}{$a_2$}%
}}}}
\put(9407,-2552){\makebox(0,0)[lb]{\smash{{\SetFigFont{17}{20.4}{\rmdefault}{\mddefault}{\itdefault}{$a_3$}%
}}}}
\put(11838,-3017){\makebox(0,0)[lb]{\smash{{\SetFigFont{17}{20.4}{\rmdefault}{\mddefault}{\itdefault}{$b_1$}%
}}}}
\put(11170,-2987){\makebox(0,0)[lb]{\smash{{\SetFigFont{17}{20.4}{\rmdefault}{\mddefault}{\itdefault}{$b_2$}%
}}}}
\put(10818,-2552){\makebox(0,0)[lb]{\smash{{\SetFigFont{17}{20.4}{\rmdefault}{\mddefault}{\itdefault}{$b_3$}%
}}}}
\put(1359,-3050){\makebox(0,0)[lb]{\smash{{\SetFigFont{17}{20.4}{\rmdefault}{\mddefault}{\itdefault}{$x$}%
}}}}
\put(2259,-1674){\rotatebox{60.0}{\makebox(0,0)[lb]{\smash{{\SetFigFont{17}{20.4}{\rmdefault}{\mddefault}{\itdefault}{$t+b_2$}%
}}}}}
\put(3676,-707){\makebox(0,0)[lb]{\smash{{\SetFigFont{17}{20.4}{\rmdefault}{\mddefault}{\itdefault}{$z+a_2+b_1$}%
}}}}
\put(5776,-1044){\rotatebox{300.0}{\makebox(0,0)[lb]{\smash{{\SetFigFont{17}{20.4}{\rmdefault}{\mddefault}{\itdefault}{$a_1+b_2$}%
}}}}}
\put(6121,-2754){\makebox(0,0)[lb]{\smash{{\SetFigFont{17}{20.4}{\rmdefault}{\mddefault}{\itdefault}{$t$}%
}}}}
\put(5049,-4667){\rotatebox{60.0}{\makebox(0,0)[lb]{\smash{{\SetFigFont{17}{20.4}{\rmdefault}{\mddefault}{\itdefault}{$x+a_2$}%
}}}}}
\put(3069,-5274){\makebox(0,0)[lb]{\smash{{\SetFigFont{17}{20.4}{\rmdefault}{\mddefault}{\itdefault}{$z+a_1+b_2$}%
}}}}
\put(2206,-2653){\makebox(0,0)[lb]{\smash{{\SetFigFont{17}{20.4}{\rmdefault}{\mddefault}{\itdefault}{$a_1$}%
}}}}
\put(3054,-3110){\makebox(0,0)[lb]{\smash{{\SetFigFont{17}{20.4}{\rmdefault}{\mddefault}{\itdefault}{$a_2$}%
}}}}
\put(4344,-3088){\makebox(0,0)[lb]{\smash{{\SetFigFont{17}{20.4}{\rmdefault}{\mddefault}{\itdefault}{$b_2$}%
}}}}
\end{picture}}
  \caption{Base cases for $Q$-regions when $y=0$.}\label{arrayPbase3}
\end{figure}

\begin{figure}
  \centering
  \setlength{\unitlength}{3947sp}%
\begingroup\makeatletter\ifx\SetFigFont\undefined%
\gdef\SetFigFont#1#2#3#4#5{%
  \reset@font\fontsize{#1}{#2pt}%
  \fontfamily{#3}\fontseries{#4}\fontshape{#5}%
  \selectfont}%
\fi\endgroup%
\resizebox{12cm}{!}{
\begin{picture}(0,0)%
\includegraphics{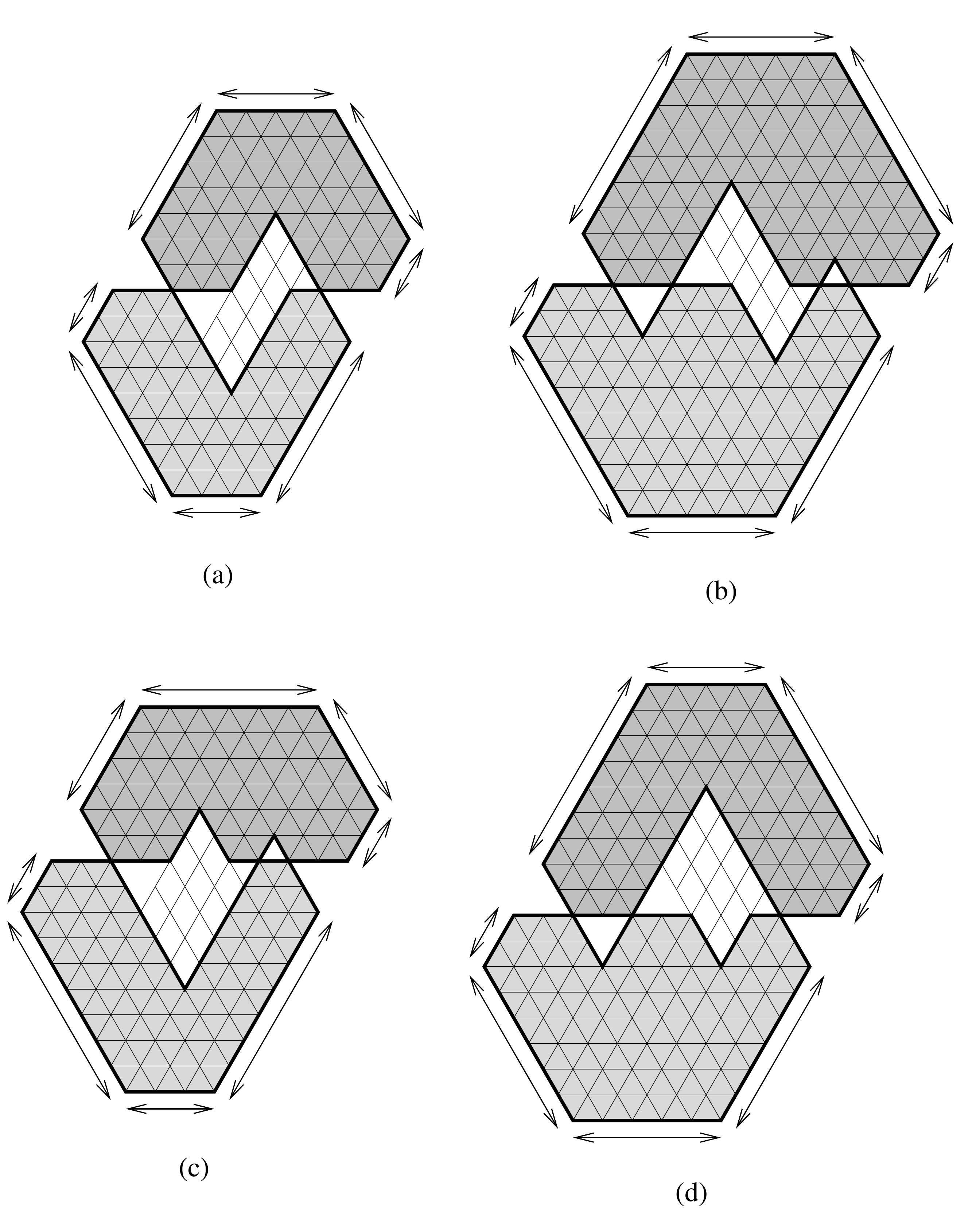}%
\end{picture}%
\begin{picture}(12900,15927)(1623,-15620)
\put(9228,-2032){\rotatebox{60.0}{\makebox(0,0)[lb]{\smash{{\SetFigFont{17}{20.4}{\rmdefault}{\mddefault}{\itdefault}{$y+t+a_3+b_2$}%
}}}}}
\put(11028,-12){\makebox(0,0)[lb]{\smash{{\SetFigFont{17}{20.4}{\rmdefault}{\mddefault}{\itdefault}{$a_2+b_1+b_3$}%
}}}}
\put(13244,-647){\rotatebox{300.0}{\makebox(0,0)[lb]{\smash{{\SetFigFont{17}{20.4}{\rmdefault}{\mddefault}{\itdefault}{$y+a_1+a_3+b_2$}%
}}}}}
\put(14158,-3442){\makebox(0,0)[lb]{\smash{{\SetFigFont{17}{20.4}{\rmdefault}{\mddefault}{\itdefault}{$t$}%
}}}}
\put(8238,-3622){\makebox(0,0)[lb]{\smash{{\SetFigFont{17}{20.4}{\rmdefault}{\mddefault}{\itdefault}{$x$}%
}}}}
\put(12618,-6372){\rotatebox{60.0}{\makebox(0,0)[lb]{\smash{{\SetFigFont{17}{20.4}{\rmdefault}{\mddefault}{\itdefault}{$x+y+a_2+b_3$}%
}}}}}
\put(10186,-7066){\makebox(0,0)[lb]{\smash{{\SetFigFont{17}{20.4}{\rmdefault}{\mddefault}{\itdefault}{$a_1+a_3+b_2$}%
}}}}
\put(8122,-4681){\rotatebox{300.0}{\makebox(0,0)[lb]{\smash{{\SetFigFont{17}{20.4}{\rmdefault}{\mddefault}{\itdefault}{$y+a_2+b_1+b_3$}%
}}}}}
\put(9108,-3312){\makebox(0,0)[lb]{\smash{{\SetFigFont{17}{20.4}{\rmdefault}{\mddefault}{\itdefault}{$a_1$}%
}}}}
\put(9938,-3722){\makebox(0,0)[lb]{\smash{{\SetFigFont{17}{20.4}{\rmdefault}{\mddefault}{\itdefault}{$a_2$}%
}}}}
\put(10688,-3332){\makebox(0,0)[lb]{\smash{{\SetFigFont{17}{20.4}{\rmdefault}{\mddefault}{\itdefault}{$a_3$}%
}}}}
\put(13118,-3772){\makebox(0,0)[lb]{\smash{{\SetFigFont{17}{20.4}{\rmdefault}{\mddefault}{\itdefault}{$b_1$}%
}}}}
\put(12448,-3752){\makebox(0,0)[lb]{\smash{{\SetFigFont{17}{20.4}{\rmdefault}{\mddefault}{\itdefault}{$b_2$}%
}}}}
\put(12048,-3322){\makebox(0,0)[lb]{\smash{{\SetFigFont{17}{20.4}{\rmdefault}{\mddefault}{\itdefault}{$b_3$}%
}}}}
\put(8735,-10258){\rotatebox{60.0}{\makebox(0,0)[lb]{\smash{{\SetFigFont{17}{20.4}{\rmdefault}{\mddefault}{\itdefault}{$y+t+a_3+b_2$}%
}}}}}
\put(10735,-8278){\makebox(0,0)[lb]{\smash{{\SetFigFont{17}{20.4}{\rmdefault}{\mddefault}{\itdefault}{$a_2+b_1$}%
}}}}
\put(12331,-8933){\rotatebox{300.0}{\makebox(0,0)[lb]{\smash{{\SetFigFont{17}{20.4}{\rmdefault}{\mddefault}{\itdefault}{$y+a_1+a_3+b_2$}%
}}}}}
\put(13225,-11628){\makebox(0,0)[lb]{\smash{{\SetFigFont{17}{20.4}{\rmdefault}{\mddefault}{\itdefault}{$t$}%
}}}}
\put(11895,-14168){\rotatebox{60.0}{\makebox(0,0)[lb]{\smash{{\SetFigFont{17}{20.4}{\rmdefault}{\mddefault}{\itdefault}{$x+y+a_2$}%
}}}}}
\put(9525,-15018){\makebox(0,0)[lb]{\smash{{\SetFigFont{17}{20.4}{\rmdefault}{\mddefault}{\itdefault}{$a_1+a_3+b_2$}%
}}}}
\put(7795,-13218){\rotatebox{300.0}{\makebox(0,0)[lb]{\smash{{\SetFigFont{17}{20.4}{\rmdefault}{\mddefault}{\itdefault}{$y+a_2+b_1$}%
}}}}}
\put(7735,-11908){\makebox(0,0)[lb]{\smash{{\SetFigFont{17}{20.4}{\rmdefault}{\mddefault}{\itdefault}{$x$}%
}}}}
\put(8505,-11603){\makebox(0,0)[lb]{\smash{{\SetFigFont{17}{20.4}{\rmdefault}{\mddefault}{\itdefault}{$a_1$}%
}}}}
\put(9385,-12033){\makebox(0,0)[lb]{\smash{{\SetFigFont{17}{20.4}{\rmdefault}{\mddefault}{\itdefault}{$a_2$}%
}}}}
\put(10165,-11593){\makebox(0,0)[lb]{\smash{{\SetFigFont{17}{20.4}{\rmdefault}{\mddefault}{\itdefault}{$a_3$}%
}}}}
\put(11510,-12023){\makebox(0,0)[lb]{\smash{{\SetFigFont{17}{20.4}{\rmdefault}{\mddefault}{\itdefault}{$b_2$}%
}}}}
\put(12210,-12103){\makebox(0,0)[lb]{\smash{{\SetFigFont{17}{20.4}{\rmdefault}{\mddefault}{\itdefault}{$b_1$}%
}}}}
\put(3391,-2235){\rotatebox{60.0}{\makebox(0,0)[lb]{\smash{{\SetFigFont{17}{20.4}{\rmdefault}{\mddefault}{\itdefault}{$y+t+b_2$}%
}}}}}
\put(4891,-805){\makebox(0,0)[lb]{\smash{{\SetFigFont{17}{20.4}{\rmdefault}{\mddefault}{\itdefault}{$a_2+b_1$}%
}}}}
\put(6591,-1275){\rotatebox{300.0}{\makebox(0,0)[lb]{\smash{{\SetFigFont{17}{20.4}{\rmdefault}{\mddefault}{\itdefault}{$y+a_1+b_2$}%
}}}}}
\put(5801,-5915){\rotatebox{60.0}{\makebox(0,0)[lb]{\smash{{\SetFigFont{17}{20.4}{\rmdefault}{\mddefault}{\itdefault}{$x+y+a_2$}%
}}}}}
\put(4171,-6785){\makebox(0,0)[lb]{\smash{{\SetFigFont{17}{20.4}{\rmdefault}{\mddefault}{\itdefault}{$a_1+b_2$}%
}}}}
\put(2457,-4930){\rotatebox{300.0}{\makebox(0,0)[lb]{\smash{{\SetFigFont{17}{20.4}{\rmdefault}{\mddefault}{\itdefault}{$y+a_2+b_1$}%
}}}}}
\put(2391,-3755){\makebox(0,0)[lb]{\smash{{\SetFigFont{17}{20.4}{\rmdefault}{\mddefault}{\itdefault}{$x$}%
}}}}
\put(7276,-3466){\makebox(0,0)[lb]{\smash{{\SetFigFont{17}{20.4}{\rmdefault}{\mddefault}{\itdefault}{$t$}%
}}}}
\put(5028,-11295){\makebox(0,0)[lb]{\smash{{\SetFigFont{17}{20.4}{\rmdefault}{\mddefault}{\itdefault}{$b_2$}%
}}}}
\put(4668,-10905){\makebox(0,0)[lb]{\smash{{\SetFigFont{17}{20.4}{\rmdefault}{\mddefault}{\itdefault}{$b_3$}%
}}}}
\put(2433,-10895){\makebox(0,0)[lb]{\smash{{\SetFigFont{17}{20.4}{\rmdefault}{\mddefault}{\itdefault}{$a_1$}%
}}}}
\put(3333,-11295){\makebox(0,0)[lb]{\smash{{\SetFigFont{17}{20.4}{\rmdefault}{\mddefault}{\itdefault}{$a_2$}%
}}}}
\put(1739,-12535){\rotatebox{300.0}{\makebox(0,0)[lb]{\smash{{\SetFigFont{17}{20.4}{\rmdefault}{\mddefault}{\itdefault}{$y+a_2+b_1$}%
}}}}}
\put(5748,-11405){\makebox(0,0)[lb]{\smash{{\SetFigFont{17}{20.4}{\rmdefault}{\mddefault}{\itdefault}{$b_1$}%
}}}}
\put(3523,-14600){\makebox(0,0)[lb]{\smash{{\SetFigFont{17}{20.4}{\rmdefault}{\mddefault}{\itdefault}{$a_1+b_2$}%
}}}}
\put(1638,-11155){\makebox(0,0)[lb]{\smash{{\SetFigFont{17}{20.4}{\rmdefault}{\mddefault}{\itdefault}{$x$}%
}}}}
\put(5233,-13820){\rotatebox{60.0}{\makebox(0,0)[lb]{\smash{{\SetFigFont{17}{20.4}{\rmdefault}{\mddefault}{\itdefault}{$x+y+a_2+b_3$}%
}}}}}
\put(6763,-10900){\makebox(0,0)[lb]{\smash{{\SetFigFont{17}{20.4}{\rmdefault}{\mddefault}{\itdefault}{$t$}%
}}}}
\put(6219,-8965){\rotatebox{300.0}{\makebox(0,0)[lb]{\smash{{\SetFigFont{17}{20.4}{\rmdefault}{\mddefault}{\itdefault}{$y+a_1+b_2$}%
}}}}}
\put(4123,-8590){\makebox(0,0)[lb]{\smash{{\SetFigFont{17}{20.4}{\rmdefault}{\mddefault}{\itdefault}{$a_2+b_1+b_3$}%
}}}}
\put(2573,-9820){\rotatebox{60.0}{\makebox(0,0)[lb]{\smash{{\SetFigFont{17}{20.4}{\rmdefault}{\mddefault}{\itdefault}{$y+t+b_2$}%
}}}}}
\put(3291,-3360){\makebox(0,0)[lb]{\smash{{\SetFigFont{17}{20.4}{\rmdefault}{\mddefault}{\itdefault}{$a_1$}%
}}}}
\put(4131,-3780){\makebox(0,0)[lb]{\smash{{\SetFigFont{17}{20.4}{\rmdefault}{\mddefault}{\itdefault}{$a_2$}%
}}}}
\put(5436,-3810){\makebox(0,0)[lb]{\smash{{\SetFigFont{17}{20.4}{\rmdefault}{\mddefault}{\itdefault}{$b_2$}%
}}}}
\put(6176,-3840){\makebox(0,0)[lb]{\smash{{\SetFigFont{17}{20.4}{\rmdefault}{\mddefault}{\itdefault}{$b_1$}%
}}}}
\end{picture}}
  \caption{The base cases for $Q$-regions when $z=0$.}\label{arrayPbase2}
\end{figure}

\begin{figure}\centering
\setlength{\unitlength}{3947sp}%
\begingroup\makeatletter\ifx\SetFigFont\undefined%
\gdef\SetFigFont#1#2#3#4#5{%
  \reset@font\fontsize{#1}{#2pt}%
  \fontfamily{#3}\fontseries{#4}\fontshape{#5}%
  \selectfont}%
\fi\endgroup%
\resizebox{13cm}{!}{
\begin{picture}(0,0)%
\includegraphics{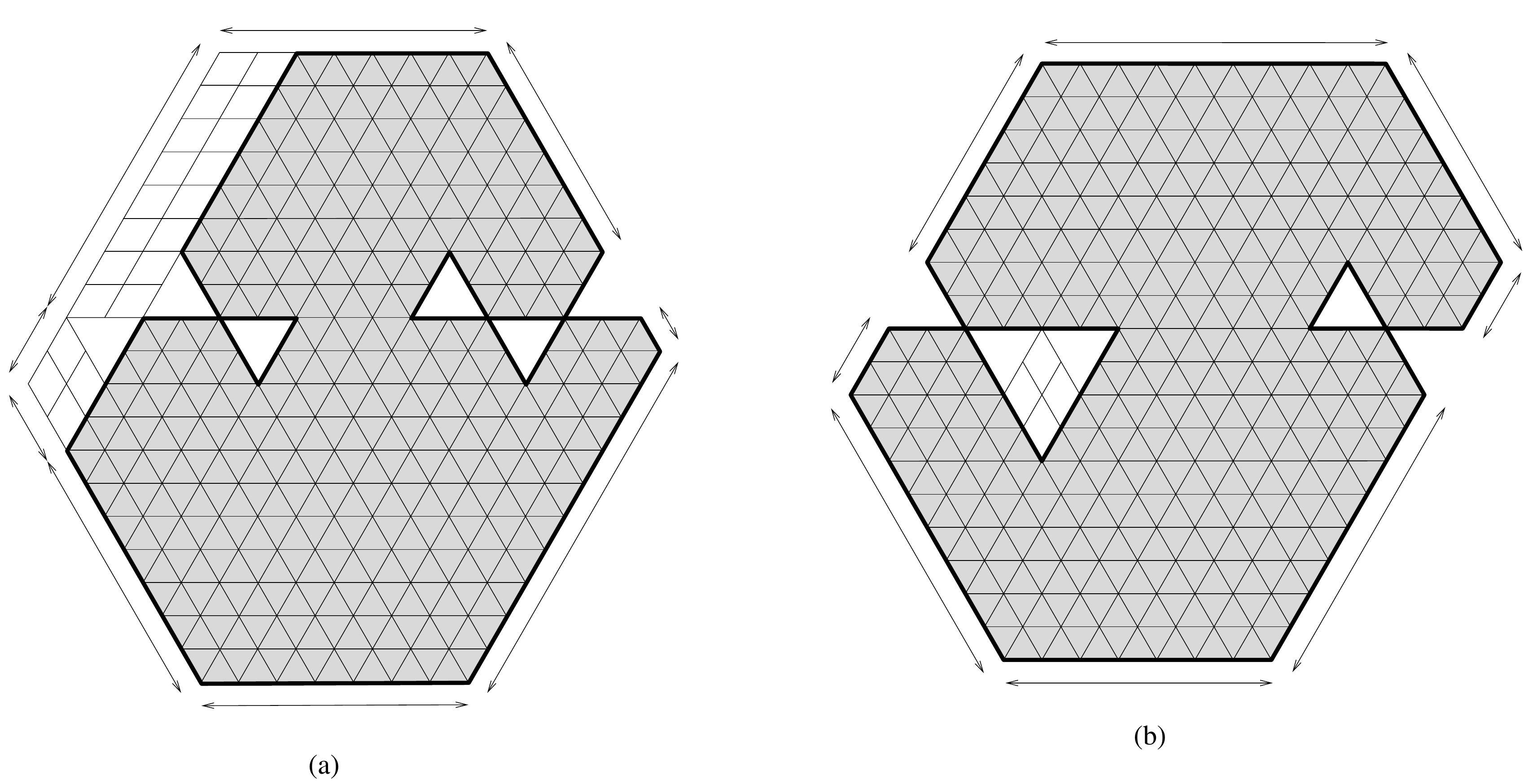}%
\end{picture}%
%
%

\begin{picture}(16300,8376)(3586,-12061)
\put(4629,-7321){\makebox(0,0)[lb]{\smash{{\SetFigFont{16}{14.4}{\rmdefault}{\mddefault}{\updefault}{$a_2$}%
}}}}
\put(5409,-6969){\makebox(0,0)[lb]{\smash{{\SetFigFont{16}{14.4}{\rmdefault}{\mddefault}{\updefault}{$a_3$}%
}}}}
\put(6264,-7321){\makebox(0,0)[lb]{\smash{{\SetFigFont{16}{14.4}{\rmdefault}{\mddefault}{\updefault}{$a_4$}%
}}}}
\put(8311,-6954){\makebox(0,0)[lb]{\smash{{\SetFigFont{16}{14.4}{\rmdefault}{\mddefault}{\updefault}{$b_3$}%
}}}}
\put(10036,-6931){\makebox(0,0)[lb]{\smash{{\SetFigFont{16}{14.4}{\rmdefault}{\mddefault}{\updefault}{$b_1$}%
}}}}
\put(9129,-7396){\makebox(0,0)[lb]{\smash{{\SetFigFont{16}{14.4}{\rmdefault}{\mddefault}{\updefault}{$b_2$}%
}}}}
\put(3661,-7224){\makebox(0,0)[lb]{\smash{{\SetFigFont{16}{14.4}{\rmdefault}{\mddefault}{\updefault}{$x$}%
}}}}
\put(10906,-7066){\makebox(0,0)[lb]{\smash{{\SetFigFont{16}{14.4}{\rmdefault}{\mddefault}{\updefault}{$t$}%
}}}}
\put(9534,-4801){\rotatebox{300.0}{\makebox(0,0)[lb]{\smash{{\SetFigFont{16}{14.4}{\rmdefault}{\mddefault}{\updefault}{$y+a_3+b_3$}%
}}}}}
\put(6646,-3901){\makebox(0,0)[lb]{\smash{{\SetFigFont{16}{14.4}{\rmdefault}{\mddefault}{\updefault}{$z+a_2+a_4+b_2$}%
}}}}
\put(4299,-6286){\rotatebox{60.0}{\makebox(0,0)[lb]{\smash{{\SetFigFont{16}{14.4}{\rmdefault}{\mddefault}{\updefault}{$y+a_3+b_1+b_3$}%
}}}}}
\put(3601,-8439){\makebox(0,0)[lb]{\smash{{\SetFigFont{16}{14.4}{\rmdefault}{\mddefault}{\updefault}{$a_2$}%
}}}}
\put(4203,-9369){\rotatebox{300.0}{\makebox(0,0)[lb]{\smash{{\SetFigFont{16}{14.4}{\rmdefault}{\mddefault}{\updefault}{$y+t+a_4+b_2$}%
}}}}}
\put(6474,-11469){\makebox(0,0)[lb]{\smash{{\SetFigFont{16}{14.4}{\rmdefault}{\mddefault}{\updefault}{$z+a_3+b_1+b_3$}%
}}}}
\put(9564,-10261){\rotatebox{60.0}{\makebox(0,0)[lb]{\smash{{\SetFigFont{16}{14.4}{\rmdefault}{\mddefault}{\updefault}{$y+x+a_2+a_4+b_2$}%
}}}}}
\put(13351,-6924){\makebox(0,0)[lb]{\smash{{\SetFigFont{16}{14.4}{\rmdefault}{\mddefault}{\updefault}{$a_1$}%
}}}}
\put(14251,-7486){\makebox(0,0)[lb]{\smash{{\SetFigFont{16}{14.4}{\rmdefault}{\mddefault}{\updefault}{$a_2$}%
}}}}
\put(15031,-7494){\makebox(0,0)[lb]{\smash{{\SetFigFont{16}{14.4}{\rmdefault}{\mddefault}{\updefault}{$a_4$}%
}}}}
\put(18736,-7486){\makebox(0,0)[lb]{\smash{{\SetFigFont{16}{14.4}{\rmdefault}{\mddefault}{\updefault}{$b_1$}%
}}}}
\put(17881,-7074){\makebox(0,0)[lb]{\smash{{\SetFigFont{16}{14.4}{\rmdefault}{\mddefault}{\updefault}{$b_2$}%
}}}}
\put(15631,-3999){\makebox(0,0)[lb]{\smash{{\SetFigFont{16}{14.4}{\rmdefault}{\mddefault}{\updefault}{$z+a_2+a_4+b_1$}%
}}}}
\put(19006,-4576){\rotatebox{300.0}{\makebox(0,0)[lb]{\smash{{\SetFigFont{16}{14.4}{\rmdefault}{\mddefault}{\updefault}{$y+a_1+b_2$}%
}}}}}
\put(13621,-5566){\rotatebox{60.0}{\makebox(0,0)[lb]{\smash{{\SetFigFont{16}{14.4}{\rmdefault}{\mddefault}{\updefault}{$y+t+b_2$}%
}}}}}
\put(19786,-7036){\makebox(0,0)[lb]{\smash{{\SetFigFont{16}{14.4}{\rmdefault}{\mddefault}{\updefault}{$t$}%
}}}}
\put(12361,-7374){\makebox(0,0)[lb]{\smash{{\SetFigFont{16}{14.4}{\rmdefault}{\mddefault}{\updefault}{$x$}%
}}}}
\put(12406,-8589){\rotatebox{300.0}{\makebox(0,0)[lb]{\smash{{\SetFigFont{16}{14.4}{\rmdefault}{\mddefault}{\updefault}{$y+a_2+a_4+b_1$}%
}}}}}
\put(15234,-11184){\makebox(0,0)[lb]{\smash{{\SetFigFont{16}{14.4}{\rmdefault}{\mddefault}{\updefault}{$z+a_1+b_2$}%
}}}}
\put(18151,-10036){\rotatebox{60.0}{\makebox(0,0)[lb]{\smash{{\SetFigFont{16}{14.4}{\rmdefault}{\mddefault}{\updefault}{$x+y+a_2+a_4$}%
}}}}}
\end{picture}}
\caption{Eliminate lobes of side length $0$ from the ferns.}\label{positivehole}
\end{figure}

\begin{figure}\centering
\setlength{\unitlength}{3947sp}%
\begingroup\makeatletter\ifx\SetFigFont\undefined%
\gdef\SetFigFont#1#2#3#4#5{%
  \reset@font\fontsize{#1}{#2pt}%
  \fontfamily{#3}\fontseries{#4}\fontshape{#5}%
  \selectfont}%
\fi\endgroup%
\resizebox{13cm}{!}{
\begin{picture}(0,0)%
\includegraphics{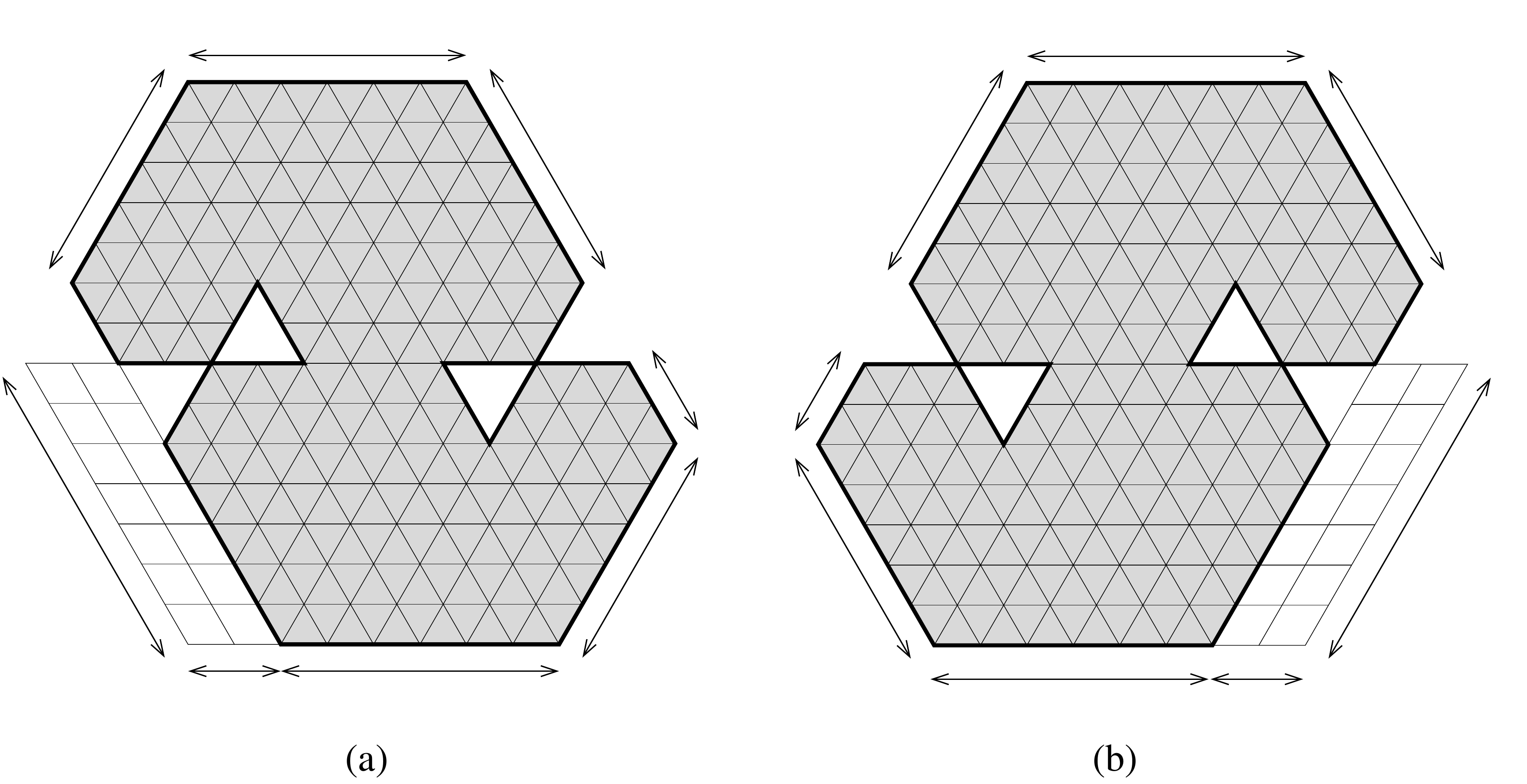}%
\end{picture}%
\begin{picture}(17087,8778)(2367,-8893)
\put(3116,-2371){\rotatebox{60.0}{\makebox(0,0)[lb]{\smash{{\SetFigFont{20}{24.0}{\rmdefault}{\mddefault}{\itdefault}{$y+a_3+b_1$}%
}}}}}
\put(5336,-541){\makebox(0,0)[lb]{\smash{{\SetFigFont{20}{24.0}{\rmdefault}{\mddefault}{\itdefault}{$z+a_2+b_2$}%
}}}}
\put(8336,-1381){\rotatebox{300.0}{\makebox(0,0)[lb]{\smash{{\SetFigFont{20}{24.0}{\rmdefault}{\mddefault}{\itdefault}{$y+a_1+a_3$}%
}}}}}
\put(10106,-4381){\makebox(0,0)[lb]{\smash{{\SetFigFont{20}{24.0}{\rmdefault}{\mddefault}{\itdefault}{$t$}%
}}}}
\put(9596,-7126){\rotatebox{60.0}{\makebox(0,0)[lb]{\smash{{\SetFigFont{20}{24.0}{\rmdefault}{\mddefault}{\itdefault}{$y+a_2+b_2$}%
}}}}}
\put(6356,-8101){\makebox(0,0)[lb]{\smash{{\SetFigFont{20}{24.0}{\rmdefault}{\mddefault}{\itdefault}{$z+a_3+b_1$}%
}}}}
\put(4916,-8146){\makebox(0,0)[lb]{\smash{{\SetFigFont{20}{24.0}{\rmdefault}{\mddefault}{\itdefault}{$a_1$}%
}}}}
\put(2489,-5551){\rotatebox{300.0}{\makebox(0,0)[lb]{\smash{{\SetFigFont{20}{24.0}{\rmdefault}{\mddefault}{\itdefault}{$y+t+a_2+b_2$}%
}}}}}
\put(11261,-4441){\makebox(0,0)[lb]{\smash{{\SetFigFont{20}{24.0}{\rmdefault}{\mddefault}{\itdefault}{$x$}%
}}}}
\put(12566,-2161){\rotatebox{60.0}{\makebox(0,0)[lb]{\smash{{\SetFigFont{20}{24.0}{\rmdefault}{\mddefault}{\itdefault}{$y+b_1+b_3$}%
}}}}}
\put(14861,-571){\makebox(0,0)[lb]{\smash{{\SetFigFont{20}{24.0}{\rmdefault}{\mddefault}{\itdefault}{$z+a_2+b_2$}%
}}}}
\put(17714,-1321){\rotatebox{300.0}{\makebox(0,0)[lb]{\smash{{\SetFigFont{20}{24.0}{\rmdefault}{\mddefault}{\itdefault}{$y+a_1+b_3$}%
}}}}}
\put(16256,-8206){\makebox(0,0)[lb]{\smash{{\SetFigFont{20}{24.0}{\rmdefault}{\mddefault}{\itdefault}{$b_1$}%
}}}}
\put(13811,-8236){\makebox(0,0)[lb]{\smash{{\SetFigFont{20}{24.0}{\rmdefault}{\mddefault}{\itdefault}{$z+a_1+b_3$}%
}}}}
\put(11201,-6121){\rotatebox{300.0}{\makebox(0,0)[lb]{\smash{{\SetFigFont{20}{24.0}{\rmdefault}{\mddefault}{\itdefault}{$y+a_2+b_2$}%
}}}}}
\put(18056,-6871){\rotatebox{60.0}{\makebox(0,0)[lb]{\smash{{\SetFigFont{20}{24.0}{\rmdefault}{\mddefault}{\itdefault}{$x+y+a_2+b_2$}%
}}}}}
\put(2726,-3999){\makebox(0,0)[lb]{\smash{{\SetFigFont{25}{30.0}{\rmdefault}{\mddefault}{\itdefault}{$a_1$}%
}}}}
\put(3986,-4554){\makebox(0,0)[lb]{\smash{{\SetFigFont{20}{24.0}{\rmdefault}{\mddefault}{\itdefault}{$a_2$}%
}}}}
\put(4931,-4074){\makebox(0,0)[lb]{\smash{{\SetFigFont{20}{24.0}{\rmdefault}{\mddefault}{\itdefault}{$a_3$}%
}}}}
\put(7609,-4539){\makebox(0,0)[lb]{\smash{{\SetFigFont{20}{24.0}{\rmdefault}{\mddefault}{\itdefault}{$b_2$}%
}}}}
\put(8674,-4029){\makebox(0,0)[lb]{\smash{{\SetFigFont{20}{24.0}{\rmdefault}{\mddefault}{\itdefault}{$b_1$}%
}}}}
\put(18109,-4029){\makebox(0,0)[lb]{\smash{{\SetFigFont{20}{24.0}{\rmdefault}{\mddefault}{\itdefault}{$b_1$}%
}}}}
\put(16999,-4569){\makebox(0,0)[lb]{\smash{{\SetFigFont{20}{24.0}{\rmdefault}{\mddefault}{\itdefault}{$b_2$}%
}}}}
\put(15919,-4074){\makebox(0,0)[lb]{\smash{{\SetFigFont{20}{24.0}{\rmdefault}{\mddefault}{\itdefault}{$b_3$}%
}}}}
\put(13346,-4524){\makebox(0,0)[lb]{\smash{{\SetFigFont{20}{24.0}{\rmdefault}{\mddefault}{\itdefault}{$a_2$}%
}}}}
\put(12116,-4029){\makebox(0,0)[lb]{\smash{{\SetFigFont{20}{24.0}{\rmdefault}{\mddefault}{\itdefault}{$a_3$}%
}}}}
\end{picture}}
\caption{The $P$-region in the cases when (a) $x=0$ and (b) $t=0$.}
\label{arraybaseQc}
\end{figure}

\begin{figure}
  \centering
  \setlength{\unitlength}{3947sp}%
\begingroup\makeatletter\ifx\SetFigFont\undefined%
\gdef\SetFigFont#1#2#3#4#5{%
  \reset@font\fontsize{#1}{#2pt}%
  \fontfamily{#3}\fontseries{#4}\fontshape{#5}%
  \selectfont}%
  \fi\endgroup
  \resizebox{13cm}{!}{
\begin{picture}(0,0)%
\includegraphics{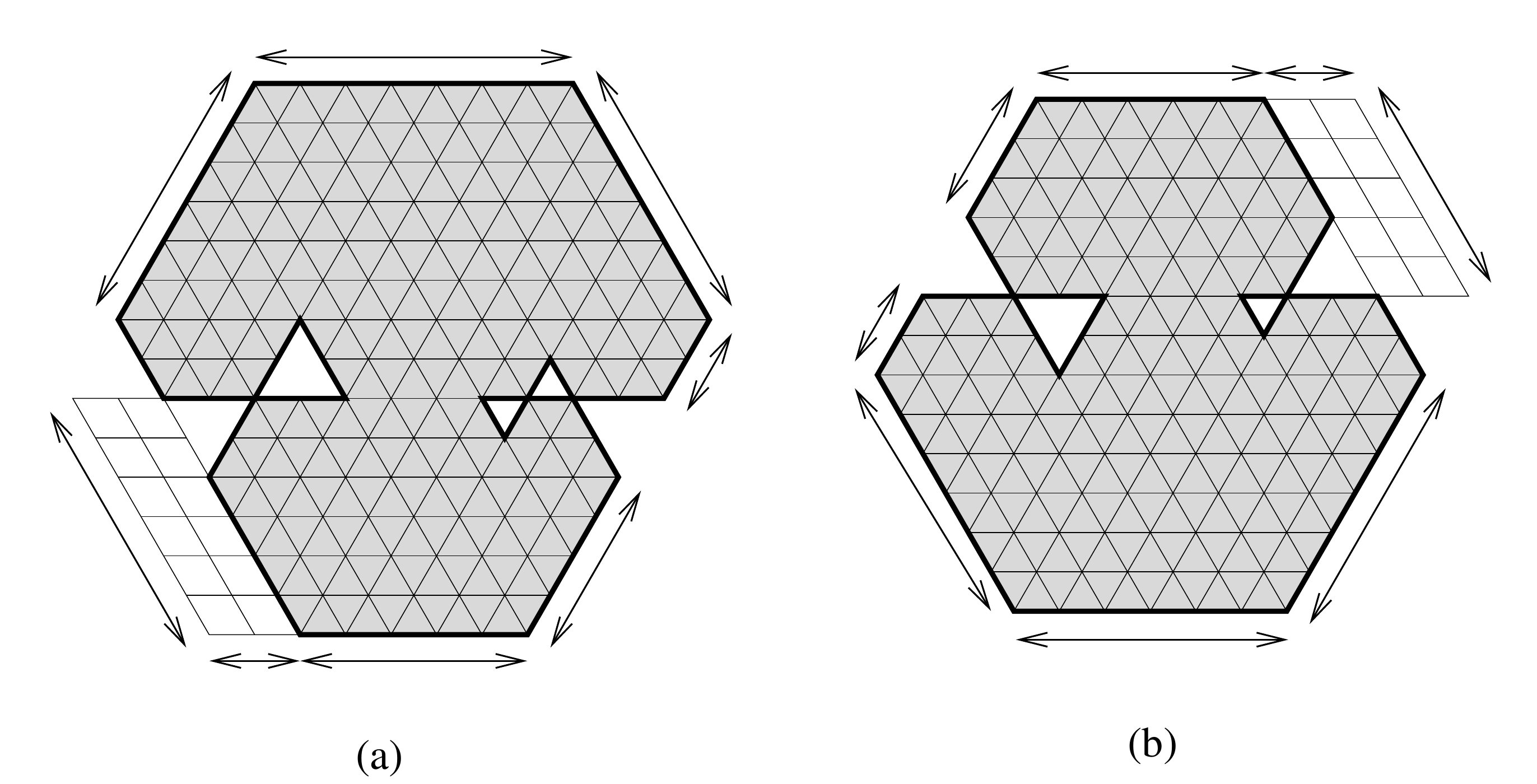}%
\end{picture}%
\begin{picture}(13164,6713)(2247,-7198)
\put(9512,-3294){\makebox(0,0)[lb]{\smash{{\SetFigFont{17}{20.4}{\rmdefault}{\mddefault}{\itdefault}{$x$}%
}}}}
\put(10292,-1944){\rotatebox{60.0}{\makebox(0,0)[lb]{\smash{{\SetFigFont{17}{20.4}{\rmdefault}{\mddefault}{\itdefault}{$y+b_2$}%
}}}}}
\put(11537,-924){\makebox(0,0)[lb]{\smash{{\SetFigFont{17}{20.4}{\rmdefault}{\mddefault}{\itdefault}{$z+a_2+b_3$}%
}}}}
\put(13247,-939){\makebox(0,0)[lb]{\smash{{\SetFigFont{17}{20.4}{\rmdefault}{\mddefault}{\itdefault}{$b_1$}%
}}}}
\put(14447,-1449){\rotatebox{300.0}{\makebox(0,0)[lb]{\smash{{\SetFigFont{17}{20.4}{\rmdefault}{\mddefault}{\itdefault}{$y+a_1+b_2$}%
}}}}}
\put(14057,-5574){\rotatebox{60.0}{\makebox(0,0)[lb]{\smash{{\SetFigFont{17}{20.4}{\rmdefault}{\mddefault}{\itdefault}{$x+y+a_2+b_3$}%
}}}}}
\put(11492,-6369){\makebox(0,0)[lb]{\smash{{\SetFigFont{17}{20.4}{\rmdefault}{\mddefault}{\itdefault}{$z+a_1+b_2$}%
}}}}
\put(9407,-4269){\rotatebox{300.0}{\makebox(0,0)[lb]{\smash{{\SetFigFont{17}{20.4}{\rmdefault}{\mddefault}{\itdefault}{$y+a_2+b_1+b_3$}%
}}}}}
\put(2341,-4396){\rotatebox{300.0}{\makebox(0,0)[lb]{\smash{{\SetFigFont{17}{20.4}{\rmdefault}{\mddefault}{\itdefault}{$y+a_2+b_1+b_3$}%
}}}}}
\put(4382,-6549){\makebox(0,0)[lb]{\smash{{\SetFigFont{17}{20.4}{\rmdefault}{\mddefault}{\itdefault}{$a_1$}%
}}}}
\put(5357,-6519){\makebox(0,0)[lb]{\smash{{\SetFigFont{17}{20.4}{\rmdefault}{\mddefault}{\itdefault}{$z+a_3+b_2$}%
}}}}
\put(7397,-5979){\rotatebox{60.0}{\makebox(0,0)[lb]{\smash{{\SetFigFont{17}{20.4}{\rmdefault}{\mddefault}{\itdefault}{$y+a_2+b_3$}%
}}}}}
\put(8447,-3924){\makebox(0,0)[lb]{\smash{{\SetFigFont{17}{20.4}{\rmdefault}{\mddefault}{\itdefault}{$t$}%
}}}}
\put(7622,-1329){\rotatebox{300.0}{\makebox(0,0)[lb]{\smash{{\SetFigFont{17}{20.4}{\rmdefault}{\mddefault}{\itdefault}{$y+a_1+a_3+b_2$}%
}}}}}
\put(3122,-2589){\rotatebox{60.0}{\makebox(0,0)[lb]{\smash{{\SetFigFont{17}{20.4}{\rmdefault}{\mddefault}{\itdefault}{$y+t+a_3+b_2$}%
}}}}}
\put(5042,-804){\makebox(0,0)[lb]{\smash{{\SetFigFont{17}{20.4}{\rmdefault}{\mddefault}{\itdefault}{$z+a_2+b_1+b_3$}%
}}}}
\put(2904,-3706){\makebox(0,0)[lb]{\smash{{\SetFigFont{17}{20.4}{\rmdefault}{\mddefault}{\itdefault}{$a_1$}%
}}}}
\put(10224,-2806){\makebox(0,0)[lb]{\smash{{\SetFigFont{17}{20.4}{\rmdefault}{\mddefault}{\itdefault}{$a_1$}%
}}}}
\put(7516,-4291){\makebox(0,0)[lb]{\smash{{\SetFigFont{17}{20.4}{\rmdefault}{\mddefault}{\itdefault}{$b_1$}%
}}}}
\put(14401,-3421){\makebox(0,0)[lb]{\smash{{\SetFigFont{17}{20.4}{\rmdefault}{\mddefault}{\itdefault}{$b_1$}%
}}}}
\put(13486,-2896){\makebox(0,0)[lb]{\smash{{\SetFigFont{17}{20.4}{\rmdefault}{\mddefault}{\itdefault}{$b_2$}%
}}}}
\put(12916,-2911){\makebox(0,0)[lb]{\smash{{\SetFigFont{17}{20.4}{\rmdefault}{\mddefault}{\itdefault}{$b_3$}%
}}}}
\put(3841,-4141){\makebox(0,0)[lb]{\smash{{\SetFigFont{17}{20.4}{\rmdefault}{\mddefault}{\itdefault}{$a_2$}%
}}}}
\put(11146,-3241){\makebox(0,0)[lb]{\smash{{\SetFigFont{17}{20.4}{\rmdefault}{\mddefault}{\itdefault}{$a_2$}%
}}}}
\put(4606,-3811){\makebox(0,0)[lb]{\smash{{\SetFigFont{17}{20.4}{\rmdefault}{\mddefault}{\itdefault}{$a_3$}%
}}}}
\put(6189,-3796){\makebox(0,0)[lb]{\smash{{\SetFigFont{17}{20.4}{\rmdefault}{\mddefault}{\itdefault}{$b_3$}%
}}}}
\put(6759,-4186){\makebox(0,0)[lb]{\smash{{\SetFigFont{17}{20.4}{\rmdefault}{\mddefault}{\itdefault}{$b_2$}%
}}}}
\end{picture}}
  \caption{The $Q$-region when (a) $x=0$ and  (b) $t=0$.}\label{arrayPbaseb}
\end{figure}

\begin{proof}[Proof of Theorem \ref{main2}]
By Lemma \ref{ratio}, we only need to show that
\begin{equation}\label{main2eq1b}
\M_1(P_{x,y,z,t}(\textbf{a};\textbf{b}))=q^{A^{(1)}_{x,y,z,t}(\textbf{a};\textbf{b})}\Phi^{q}_{x,y,z,t}(\textbf{a};\textbf{b})
\end{equation}
and
\begin{equation}\label{main2eq2b}
\M_1(Q_{x,y,z,t}(\textbf{a};\textbf{b}))=q^{B^{(1)}_{x,y,z,t}(\textbf{a};\textbf{b})}\Psi^{q}_{x,y,z,t}(\textbf{a};\textbf{b}).
 \end{equation}

The second weight $\wt_2$ (which was not needed in proving the results so far involving only $\wt_1$), does now come up in our proof of these equations (this is the reason $\wt_2$ had to be considered in the first place). The proof is by induction on $x+y+z+t+a+b+\overline{m}+\overline{n}$, where $a=a_1+a_2+\cdots$ and $b=b_1+b_2+\cdots$, and where $\overline{m}$ and $\overline{n}$ are the numbers of positive terms in the sequences $\textbf{a}$ and $\textbf{b}$, respectively. The base cases are the situations when at least one of the parameters $y$, $z$, $\overline{m}$ and $\overline{n}$ is equal to $0$.


We first verify that \eqref{main2eq1b} holds in all these base cases.

If $\overline{n}=0$, we only have the left fern removed from the hexagon, and (\ref{main2eq1b}) follows directly from Theorem \ref{qmain1}. If $\overline{m}=0$, reflecting the region $P_{x,y,z,t}(\emptyset;\textbf{b})$ across a vertical line, we get the region $R_{t,y,z,y+x}(\textbf{b})$  weighted by $\wt_2$. Then (\ref{main2eq1b}) follows from the $M_2$-enumeration formula of equation \eqref{qmain1eq1a}.


Suppose now that $y=0$. One readily checks that the portion of $P_{x,0,z,t}(\textbf{a};\textbf{b})$ above the common axis of the ferns is balanced. Therefore, by the region-splitting lemma \ref{GS}, the weighted tiling count of $P_{x,0,z,t}(\textbf{a};\textbf{b})$ is equal to the product of the weighted tiling counts of the portions that are above and below the fern axis. The top part is a dented semihexagon and the lower part is a dented semihexagon reflected across its base. More precisely, the upper and lower dented semihexagons are: (1) $S(a_1,\dots,a_{m-1},a_m+z+b_{n},b_{n-1},\dots,b_1)$ and  $S(x,a_1,\dots,a_m,z,b_n,\dots,b_1,t)$ if $m$ and $n$ are both even (see Figure \ref{arraybasenew1}(a)), (2) $S(a_1,\dots,a_m,z,b_{n},\dots,b_1)$ and $S(x,a_1,\dots,a_{m-1},a_m+z+b_n,b_{n-1},\dots,b_1,t)$ if $m$ and $n$ are both odd  (see Figure \ref{arraybasenew1}(b)), (3) $S(a_1,\dots,a_{m},z+b_{n},b_{n-1},\dots,b_1)$ and $S(x,a_1,\dots,a_{m-1},a_m+z,b_n,\dots,b_1,t)$ if $m$ is odd and $n$ is even (illustrated in Figure \ref{arraybasenew1}(c)), and (4) $S(a_1,\dots,a_{m-1},a_m+z,b_{n},\dots,b_1)$ and  $S(x,a_1,\dots,a_{m},z+b_n,b_{n-1}\dots,b_1,t)$ if $m$ is even and $n$ is odd (pictured in Figure \ref{arraybasenew1}(d)).

Note that for all parities of $m$ and $n$, if we divide the weight of each right lozenge in the upper part by $q^{x+y+e_a+e_b}$, we get the weight assignment $\wt_1$ on it. Furthermore, if we divide the weight of each right lozenge in the lower part by  $q^{x+y+e_a+e_b+1}$ and reflect the region across its base, we get the weight assignment $\wt_2$ on it, with $q$ eplaced by $q^{-1}$. Then \eqref{main2eq1b} follows from Lemma \ref{semi}.

If $z=0$, there are $y$ unit segments between the rightmost point of the left fern and the leftmost point of the right fern (as the distance between these points is $y+z$ in general). It is not hard to see that, in any lozenge tiling of $P_{x,y,0,t}(\textbf{a};\textbf{b})$, each of these $y$ unit segments must have a vertical lozenge across it (the reason is that the portion of $P_{x,y,0,t}(\textbf{a};\textbf{b})$ that is above the fern axis has an excess of $y$ up-pointing unit triangles compared to down-pointing triangles). After removing these $y$ contiguous forced lozenges, and all the other lozenges they force in their turn, the remaining region disconnects into an upper and a lower part, both of which are dented semihexagons. To be precise, the upper and lower dented semihexagons are: (1) $S(a_1,\dots,a_m,y,b_{n},\dots,b_1)$ and $S(x,a_1,\dots,a_{m-1},a_m+y+b_n,b_{n-1},\dots,b_1,t)$ if $m$ and $n$ are both even (see Figure \ref{arrayQbase}(a)), (2)  $S(a_1,\dots,a_{m-1},a_m+y+b_n,b_{n-1},\dots,b_1)$ and $S(x,a_1,\dots,a_{m},y,b_{n},\dots,b_1,t)$ if $m$ and $n$ are both odd (illustrated in Figure \ref{arrayQbase}(b)), (3)   $S(a_1,\dots,a_{m-1},a_m+y,b_n,\dots,b_1)$ and $S(x,a_1,\dots,a_{m},y+b_{n},b_{n-1},\dots,b_1,t)$ if $m$ is odd and $n$ is even (shown in Figure \ref{arrayQbase}(c)), and (4) $S(a_1,\dots,a_{m},y+b_n,b_{n-1},\dots,b_1)$ and $S(x,a_1,\dots,a_{m-1},a_m+y,b_{n}\dots,b_1,t)$ if $m$ is even and $n$ is odd (see Figure \ref{arrayQbase}(d)). Similarly to the case $y=0$, we obtain (\ref{main2eq1b}) from Lemma \ref{semi}.

We can verify (\ref{main2eq2b}) in a similar fashion. When $\overline{n}=0$, (\ref{main2eq2b}) follows directly from Theorem \ref{qmain1}. To verify \eqref{main2eq2b} for $\overline{m}=0$, we rotate the region  $Q_{x,y,z,t}(\emptyset;\textbf{b})$ by $180^{\circ}$ to get the region $R_{t,x+y,z,y}(\textbf{b})$. By dividing the resulting weights of all right lozenges in this region by $q^{x+2y+t+a+1}$, we obtain the weight assignment $\wt_1$, but with $q$ replaced by $q^{-1}$ (recall that $a=a_1+a_2+\cdots$). Then (\ref{main2eq2b}) follows from equation \eqref{qmain1eq2a}.

When $y=0$, we can also split up our $Q$-region into two semihexagons as in Figure \ref{arrayPbase3}, using Lemma \ref{GS}; when $z=0$, we can partition our region into two semihexagons and several forced vertical lozenges in the middle as in Figure \ref{arrayPbase2}. Then (\ref{main2eq2b}) follows again from Lemma \ref{semi}.

\medskip

For the induction step, we assume therefore that $y,z,\overline{m},\overline{n}\geq1$, and that (\ref{main2eq1b}) and (\ref{main2eq1b}) hold for any $P$-and $Q$-regions with the sum of eight parameters strictly less than $x+y+z+t+a+b+\overline{m}+\overline{n}$. Before considering the recurrences for our regions by applying Kuo condensation, we have two notices below.

\medskip

We can assume that all the terms $a_i$'s and $b_j$'s are positive, for $i=1,2,\dotsc,m$ and $j=1,2,\dotsc,n$. Indeed, if an even number of initial terms in a sequence are equal to $0$, say $a_1=a_2=\dotsc=a_{2l}=0$, we can simply replace $\textbf{a}$ by $\textbf{a}'=(a_{2l+1},\dotsc,a_m)$ to reduce the sum of the seven parameters above. If we have $a_1=a_2=\dotsc=a_{2l-1}=0$, and $a_{2l}>0$, we can remove several forced lozenges along the northwest side of the region to have a new region of the same type with the sum of the seven parameters smaller (see Figure \ref{positivehole}(a) for a $P$-region, similarly for a $Q$-region). Next, if the initial term of a sequence is positive, but some middle term is equal to $0$, we can remove forced lozenges and combine two neighbor holes as in Figure \ref{positivehole}(b) (for a $Q$-region, similarly for a $P$-region), this way we again get  a new region with the sum of the eight parameters strictly less than $x+y+z+t+a+b+\overline{m}+\overline{n}$. Therefore, without loss of generality, we can assume in the rest of this proof that all terms in $\textbf{a}$ and $\textbf{b}$ are positive (i.e. $m=\overline{m}$ and $n=\overline{n}$).

We can   assume further that $x,t>0$. Indeed, if $x=0$, we remove the forced lozenges along the southwest side of the region $P_{0,y,z,t}(\textbf{a};\textbf{b})$, reflect the resulting region about a vertical line, and  obtain the region $Q_{t,y,z,a_1}(\textbf{b};a_2,a_2,\dots,a_{n})$ weighted by $\wt_2$ (see Figure \ref{arraybaseQc}(a)). This new $Q$-region has the sum of the eight parameters 1 less than $x+y+z+t+a+b+m+n$, and the statement follows by the induction hypothesis. For the region $Q_{0,y,z,t}(\textbf{a};\textbf{b})$, removing the forced lozenges along the southwest side and reflecting the resulting region about a horizontal line, we get a weighted version of the region $P_{t,y,z,a_1}(a_2,a_2,\dots,a_{n};\textbf{b})$ (see Figure \ref{arrayPbaseb}(a)). To relate this weight to $\wt_1$,
we divide the weight of each left lozenge by $q^{2y+t+a+1}$,  and reflecting the thus weighted region across a vertical line. We obtain this way the weight assignment $\wt_1$ where $q$ is replaced by $q^{-1}$. Then the theorem follows again by the induction hypothesis. We can verify (\ref{main2eq2b}) for $t=0$ similarly, having now forced lozenges along the southeast side (see Figures  \ref{arraybaseQc}(b) and \ref{arrayPbaseb}(b)).

\medskip

We now apply Kuo's Theorem \ref{kuothm3} to the dual graph $G$ of the region $\overline{P}$ obtained from $P_{x,y,z,t}(\textbf{a};\textbf{b})$ (for $x,t>0$) by adding a strip of unit triangles on top (see Figure \ref{KuoarrayQ}).

\begin{figure}\centering
\setlength{\unitlength}{3947sp}%
\begingroup\makeatletter\ifx\SetFigFont\undefined%
\gdef\SetFigFont#1#2#3#4#5{%
  \reset@font\fontsize{#1}{#2pt}%
  \fontfamily{#3}\fontseries{#4}\fontshape{#5}%
  \selectfont}%
\fi\endgroup%
\resizebox{7cm}{!}{
\begin{picture}(0,0)%
\includegraphics{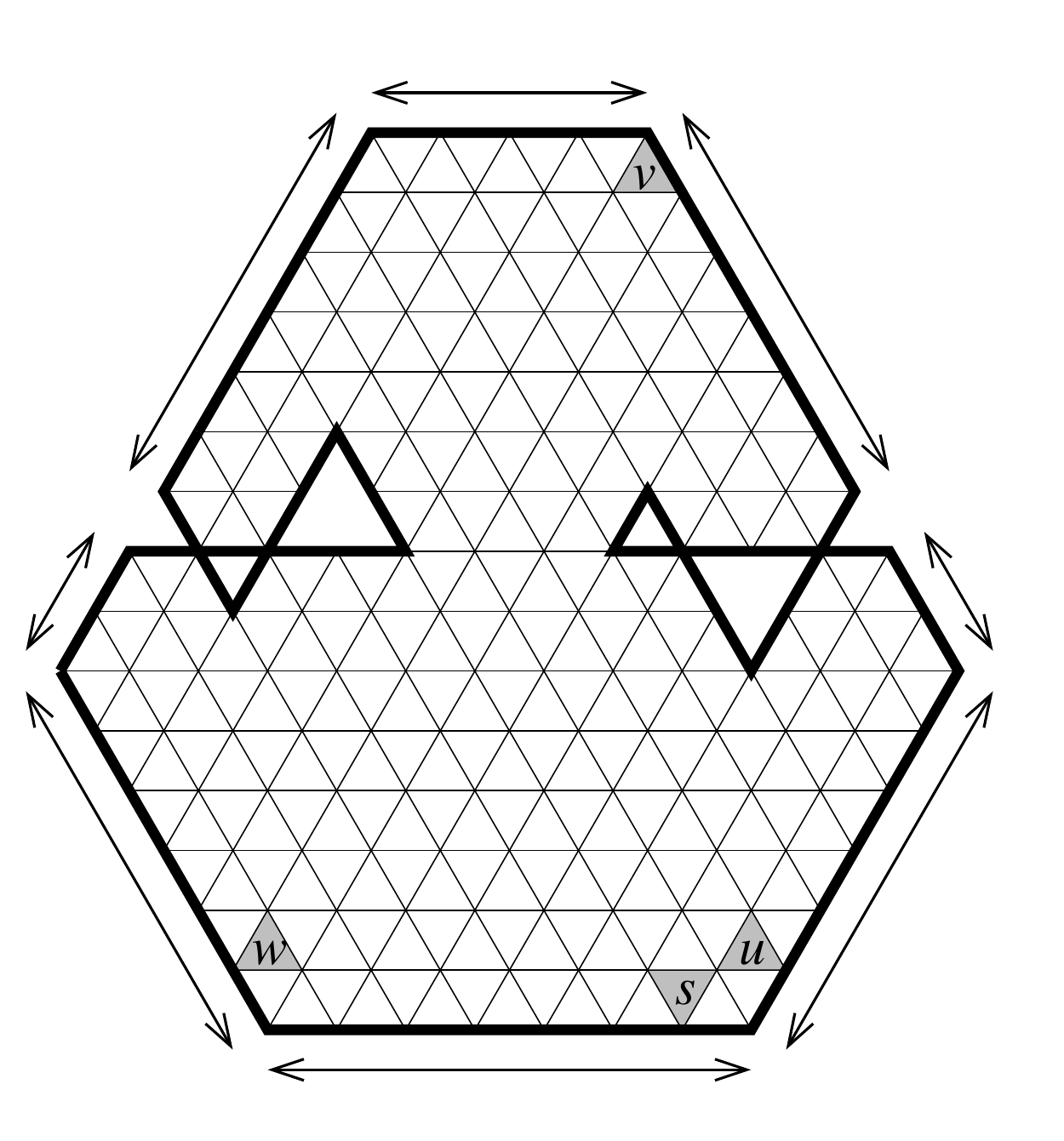}%
\end{picture}%
\begin{picture}(5993,6420)(2000,-7884)
\put(2709,-3826){\rotatebox{60.0}{\makebox(0,0)[lb]{\smash{{\SetFigFont{17}{20.4}{\rmdefault}{\mddefault}{\itdefault}{$y+a_3+b_1+b_3+1$}%
}}}}}
\put(4261,-1816){\makebox(0,0)[lb]{\smash{{\SetFigFont{17}{20.4}{\rmdefault}{\mddefault}{\itdefault}{$z+a_2+b_2-1$}%
}}}}
\put(6127,-2251){\rotatebox{300.0}{\makebox(0,0)[lb]{\smash{{\SetFigFont{17}{20.4}{\rmdefault}{\mddefault}{\itdefault}{$y+a_1+a_3+b_3+1$}%
}}}}}
\put(7621,-4719){\makebox(0,0)[lb]{\smash{{\SetFigFont{17}{20.4}{\rmdefault}{\mddefault}{\itdefault}{$t$}%
}}}}
\put(6946,-7081){\rotatebox{60.0}{\makebox(0,0)[lb]{\smash{{\SetFigFont{17}{20.4}{\rmdefault}{\mddefault}{\itdefault}{$x+y+a_2+b_2$}%
}}}}}
\put(3894,-7869){\makebox(0,0)[lb]{\smash{{\SetFigFont{17}{20.4}{\rmdefault}{\mddefault}{\itdefault}{$z+a_1+a_3+b_1+b_3$}%
}}}}
\put(2094,-6016){\rotatebox{300.0}{\makebox(0,0)[lb]{\smash{{\SetFigFont{17}{20.4}{\rmdefault}{\mddefault}{\itdefault}{$y+t+a_2+b_2$}%
}}}}}
\put(2124,-4711){\makebox(0,0)[lb]{\smash{{\SetFigFont{17}{20.4}{\rmdefault}{\mddefault}{\itdefault}{$x$}%
}}}}
\end{picture}}
\caption{Applying Kuo condensation to the region $\overline{P}$.}
\label{KuoarrayQ}
\end{figure}

\begin{figure}\centering
\setlength{\unitlength}{3947sp}%
\begingroup\makeatletter\ifx\SetFigFont\undefined%
\gdef\SetFigFont#1#2#3#4#5{%
  \reset@font\fontsize{#1}{#2pt}%
  \fontfamily{#3}\fontseries{#4}\fontshape{#5}%
  \selectfont}%
\fi\endgroup%
\resizebox{12cm}{!}{
\begin{picture}(0,0)%
\includegraphics{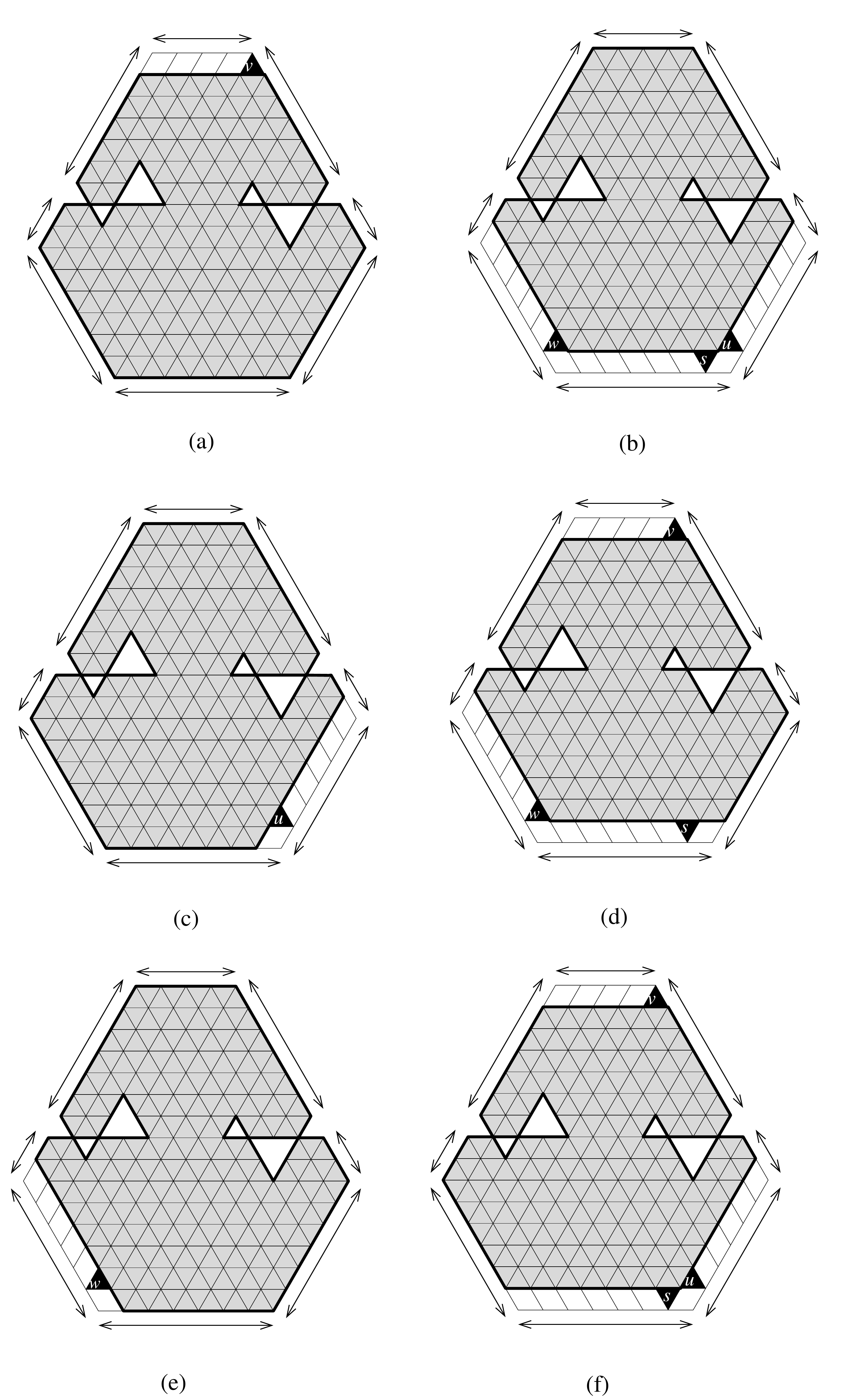}%
\end{picture}%

\begin{picture}(13143,21802)(753,-21515)
\put(1742,-2150){\rotatebox{60.0}{\makebox(0,0)[lb]{\smash{{\SetFigFont{17}{20.4}{\rmdefault}{\mddefault}{\itdefault}{$y+a_3+b_1+b_3+1$}%
}}}}}
\put(3294,-140){\makebox(0,0)[lb]{\smash{{\SetFigFont{17}{20.4}{\rmdefault}{\mddefault}{\itdefault}{$z+a_2+b_2-1$}%
}}}}
\put(5213,-643){\rotatebox{300.0}{\makebox(0,0)[lb]{\smash{{\SetFigFont{17}{20.4}{\rmdefault}{\mddefault}{\itdefault}{$y+a_1+a_3+b_3+1$}%
}}}}}
\put(6654,-3043){\makebox(0,0)[lb]{\smash{{\SetFigFont{17}{20.4}{\rmdefault}{\mddefault}{\itdefault}{$t$}%
}}}}
\put(5979,-5405){\rotatebox{60.0}{\makebox(0,0)[lb]{\smash{{\SetFigFont{17}{20.4}{\rmdefault}{\mddefault}{\itdefault}{$x+y+a_2+b_2$}%
}}}}}
\put(2927,-6193){\makebox(0,0)[lb]{\smash{{\SetFigFont{17}{20.4}{\rmdefault}{\mddefault}{\itdefault}{$z+a_1+a_3+b_1+b_3$}%
}}}}
\put(1127,-4340){\rotatebox{300.0}{\makebox(0,0)[lb]{\smash{{\SetFigFont{17}{20.4}{\rmdefault}{\mddefault}{\itdefault}{$y+t+a_2+b_2$}%
}}}}}
\put(1157,-3035){\makebox(0,0)[lb]{\smash{{\SetFigFont{17}{20.4}{\rmdefault}{\mddefault}{\itdefault}{$x$}%
}}}}
\put(8612,-2075){\rotatebox{60.0}{\makebox(0,0)[lb]{\smash{{\SetFigFont{17}{20.4}{\rmdefault}{\mddefault}{\itdefault}{$y+a_3+b_1+b_3+1$}%
}}}}}
\put(10164,-65){\makebox(0,0)[lb]{\smash{{\SetFigFont{17}{20.4}{\rmdefault}{\mddefault}{\itdefault}{$z+a_2+b_2-1$}%
}}}}
\put(12083,-568){\rotatebox{300.0}{\makebox(0,0)[lb]{\smash{{\SetFigFont{17}{20.4}{\rmdefault}{\mddefault}{\itdefault}{$y+a_1+a_3+b_3+1$}%
}}}}}
\put(13524,-2968){\makebox(0,0)[lb]{\smash{{\SetFigFont{17}{20.4}{\rmdefault}{\mddefault}{\itdefault}{$t$}%
}}}}
\put(12849,-5330){\rotatebox{60.0}{\makebox(0,0)[lb]{\smash{{\SetFigFont{17}{20.4}{\rmdefault}{\mddefault}{\itdefault}{$x+y+a_2+b_2$}%
}}}}}
\put(9744,-6114){\makebox(0,0)[lb]{\smash{{\SetFigFont{17}{20.4}{\rmdefault}{\mddefault}{\itdefault}{$z+a_1+a_3+b_1+b_3$}%
}}}}
\put(7942,-4126){\rotatebox{300.0}{\makebox(0,0)[lb]{\smash{{\SetFigFont{17}{20.4}{\rmdefault}{\mddefault}{\itdefault}{$y+t+a_2+b_2$}%
}}}}}
\put(8027,-2960){\makebox(0,0)[lb]{\smash{{\SetFigFont{17}{20.4}{\rmdefault}{\mddefault}{\itdefault}{$x$}%
}}}}
\put(1607,-9485){\rotatebox{60.0}{\makebox(0,0)[lb]{\smash{{\SetFigFont{17}{20.4}{\rmdefault}{\mddefault}{\itdefault}{$y+a_3+b_1+b_3+1$}%
}}}}}
\put(3159,-7475){\makebox(0,0)[lb]{\smash{{\SetFigFont{17}{20.4}{\rmdefault}{\mddefault}{\itdefault}{$z+a_2+b_2-1$}%
}}}}
\put(5002,-7921){\rotatebox{300.0}{\makebox(0,0)[lb]{\smash{{\SetFigFont{17}{20.4}{\rmdefault}{\mddefault}{\itdefault}{$y+a_1+a_3+b_3+1$}%
}}}}}
\put(6519,-10378){\makebox(0,0)[lb]{\smash{{\SetFigFont{17}{20.4}{\rmdefault}{\mddefault}{\itdefault}{$t$}%
}}}}
\put(5844,-12740){\rotatebox{60.0}{\makebox(0,0)[lb]{\smash{{\SetFigFont{17}{20.4}{\rmdefault}{\mddefault}{\itdefault}{$x+y+a_2+b_2$}%
}}}}}
\put(2686,-13539){\makebox(0,0)[lb]{\smash{{\SetFigFont{17}{20.4}{\rmdefault}{\mddefault}{\itdefault}{$z+a_1+a_3+b_1+b_3$}%
}}}}
\put(937,-11574){\rotatebox{300.0}{\makebox(0,0)[lb]{\smash{{\SetFigFont{17}{20.4}{\rmdefault}{\mddefault}{\itdefault}{$y+t+a_2+b_2$}%
}}}}}
\put(1022,-10370){\makebox(0,0)[lb]{\smash{{\SetFigFont{17}{20.4}{\rmdefault}{\mddefault}{\itdefault}{$x$}%
}}}}
\put(8327,-9395){\rotatebox{60.0}{\makebox(0,0)[lb]{\smash{{\SetFigFont{17}{20.4}{\rmdefault}{\mddefault}{\itdefault}{$y+a_3+b_1+b_3+1$}%
}}}}}
\put(9879,-7385){\makebox(0,0)[lb]{\smash{{\SetFigFont{17}{20.4}{\rmdefault}{\mddefault}{\itdefault}{$z+a_2+b_2-1$}%
}}}}
\put(11798,-7888){\rotatebox{300.0}{\makebox(0,0)[lb]{\smash{{\SetFigFont{17}{20.4}{\rmdefault}{\mddefault}{\itdefault}{$y+a_1+a_3+b_3+1$}%
}}}}}
\put(13239,-10288){\makebox(0,0)[lb]{\smash{{\SetFigFont{17}{20.4}{\rmdefault}{\mddefault}{\itdefault}{$t$}%
}}}}
\put(12564,-12650){\rotatebox{60.0}{\makebox(0,0)[lb]{\smash{{\SetFigFont{17}{20.4}{\rmdefault}{\mddefault}{\itdefault}{$x+y+a_2+b_2$}%
}}}}}
\put(9512,-13438){\makebox(0,0)[lb]{\smash{{\SetFigFont{17}{20.4}{\rmdefault}{\mddefault}{\itdefault}{$z+a_1+a_3+b_1+b_3$}%
}}}}
\put(7712,-11585){\rotatebox{300.0}{\makebox(0,0)[lb]{\smash{{\SetFigFont{17}{20.4}{\rmdefault}{\mddefault}{\itdefault}{$y+t+a_2+b_2$}%
}}}}}
\put(7742,-10280){\makebox(0,0)[lb]{\smash{{\SetFigFont{17}{20.4}{\rmdefault}{\mddefault}{\itdefault}{$x$}%
}}}}
\put(1487,-16696){\rotatebox{60.0}{\makebox(0,0)[lb]{\smash{{\SetFigFont{17}{20.4}{\rmdefault}{\mddefault}{\itdefault}{$y+a_3+b_1+b_3+1$}%
}}}}}
\put(3039,-14686){\makebox(0,0)[lb]{\smash{{\SetFigFont{17}{20.4}{\rmdefault}{\mddefault}{\itdefault}{$z+a_2+b_2-1$}%
}}}}
\put(4958,-15189){\rotatebox{300.0}{\makebox(0,0)[lb]{\smash{{\SetFigFont{17}{20.4}{\rmdefault}{\mddefault}{\itdefault}{$y+a_1+a_3+b_3+1$}%
}}}}}
\put(6421,-17761){\makebox(0,0)[lb]{\smash{{\SetFigFont{17}{20.4}{\rmdefault}{\mddefault}{\itdefault}{$t$}%
}}}}
\put(5724,-19951){\rotatebox{60.0}{\makebox(0,0)[lb]{\smash{{\SetFigFont{17}{20.4}{\rmdefault}{\mddefault}{\itdefault}{$x+y+a_2+b_2$}%
}}}}}
\put(2672,-20739){\makebox(0,0)[lb]{\smash{{\SetFigFont{17}{20.4}{\rmdefault}{\mddefault}{\itdefault}{$z+a_1+a_3+b_1+b_3$}%
}}}}
\put(847,-18811){\rotatebox{300.0}{\makebox(0,0)[lb]{\smash{{\SetFigFont{17}{20.4}{\rmdefault}{\mddefault}{\itdefault}{$y+t+a_2+b_2$}%
}}}}}
\put(902,-17581){\makebox(0,0)[lb]{\smash{{\SetFigFont{17}{20.4}{\rmdefault}{\mddefault}{\itdefault}{$x$}%
}}}}
\put(8027,-16681){\rotatebox{60.0}{\makebox(0,0)[lb]{\smash{{\SetFigFont{17}{20.4}{\rmdefault}{\mddefault}{\itdefault}{$y+a_3+b_1+b_3+1$}%
}}}}}
\put(9579,-14671){\makebox(0,0)[lb]{\smash{{\SetFigFont{17}{20.4}{\rmdefault}{\mddefault}{\itdefault}{$z+a_2+b_2-1$}%
}}}}
\put(11392,-15061){\rotatebox{300.0}{\makebox(0,0)[lb]{\smash{{\SetFigFont{17}{20.4}{\rmdefault}{\mddefault}{\itdefault}{$y+a_1+a_3+b_3+1$}%
}}}}}
\put(12939,-17574){\makebox(0,0)[lb]{\smash{{\SetFigFont{17}{20.4}{\rmdefault}{\mddefault}{\itdefault}{$t$}%
}}}}
\put(12196,-20026){\rotatebox{60.0}{\makebox(0,0)[lb]{\smash{{\SetFigFont{17}{20.4}{\rmdefault}{\mddefault}{\itdefault}{$x+y+a_2+b_2$}%
}}}}}
\put(9061,-20716){\makebox(0,0)[lb]{\smash{{\SetFigFont{17}{20.4}{\rmdefault}{\mddefault}{\itdefault}{$z+a_1+a_3+b_1+b_3$}%
}}}}
\put(7321,-18721){\rotatebox{300.0}{\makebox(0,0)[lb]{\smash{{\SetFigFont{17}{20.4}{\rmdefault}{\mddefault}{\itdefault}{$y+t+a_2+b_2$}%
}}}}}
\put(7442,-17566){\makebox(0,0)[lb]{\smash{{\SetFigFont{17}{20.4}{\rmdefault}{\mddefault}{\itdefault}{$x$}%
}}}}
\end{picture}}
\caption{Obtaining the recurrence for the numbers of tilings of $P$-regions.}
\label{KuoarrayQ2}
\end{figure}

By considering the product of the weights of the forced lozenges shown in Figures \ref{KuoarrayQ2}(a)--(f), and normalizing the weight assignment of the resulting region if needed, we obtain
\begin{equation}
\M(G-\{v\})=q^{(x+2y+t+a+b)(z+e_a+e_b-1)}\M_1(P_{x,y,z,t}(\textbf{a};\textbf{b})),
\end{equation}
\begin{equation}
\M(G-\{u,w,s\})=q\cdot q^{V_{x-1,y+1,z-1,t-1}}\M_1(P_{x-1,y+1,z-1,t-1}(\textbf{a};\textbf{b})),
\end{equation}
\begin{equation}
\M(G-\{u\})=q\M_1(P_{x,y+1,z-1,t-1}(\textbf{a};\textbf{b})),
\end{equation}
\begin{equation}
\M(G-\{v,w,s\})=q^{(x+2y+t+a+b)(z+e_a+e_b-1)+1}\cdot q^{V_{x-1,y,z,t}}\M_1(P_{x-1,y,z,t}(\textbf{a};\textbf{b})),
\end{equation}
\begin{equation}
\M(G-\{w\})=\M_1(P_{x-1,y+1,z-1,t}(\textbf{a};\textbf{b})),
\end{equation}
and
\begin{equation}
\M(G-\{u,v,s\})=q^{(x+2y+t+a+b)(z+e_a+e_b-1)+1}\cdot q^{V_{x,y,z,t-1}}\M_1(P_{x,y,z,t-1}(\textbf{a};\textbf{b})),
\end{equation}
respectively, where $V_{x,y,z,t}=V_{x,y,z,t}(\textbf{a};\textbf{b})$ is the number of right lozenges in each tiling of the region $P_{x,y,z,t}(\textbf{a};\textbf{b})$. A straightforward calculation shows that
\begin{align}\label{rightlozenge}
V_{x,y,z,t}&(\textbf{a};\textbf{b})=\sum_{i=1}^{\lfloor\frac{m+1}{2}\rfloor}a_{2i-1}\left(x+\sum_{j=1}^{i-1}a_{2j}\right)+z(x+y+e_a+o_b)
+\sum_{i=1}^{\lfloor\frac{n}{2}\rfloor}b_{2i}\sum_{j=1}^{i}b_{2j-1}\notag\\
&+\sum_{i=1}^{\lfloor\frac{n+1}{2}\rfloor}b_{2i-1}\left(x+y+e_a+e_b-\sum_{j=1}^{i-1}b_{2j}\right)+
\sum_{i=1}^{\lfloor\frac{m}{2}\rfloor}a_{2i}\left(y+o_a+o_b-\sum_{j=1}^{i}a_{2j-1}\right)\\
&=x(o_a+o_b)+z(x+y+e_a+o_b)+y(e_a+o_b)\notag\\
&+e_a(o_a+o_b)+o_b(e_a+e_b)\notag\\
&+\sum_{i=2}^{\lfloor\frac{m+1}{2}\rfloor}a_{2i-1}\sum_{j=1}^{i-1}a_{2j}
-\sum_{i=1}^{\lfloor\frac{m}{2}\rfloor}a_{2i}\sum_{j=1}^{i}a_{2j-1}+\sum_{i=1}^{\lfloor\frac{n}{2}\rfloor}b_{2i}\sum_{j=1}^{i}b_{2j-1}
-\sum_{i=2}^{\lfloor\frac{n+1}{2}\rfloor}b_{2i-1}\sum_{j=1}^{i-1}b_{2j}.
\end{align}

Plugging the above equality to the recurrence into Kuo's Theorem \ref{kuothm3} and simplifying, we obtain
\begin{align}\label{recur1}
\M_1(P_{x,y,z,t}(\textbf{a};\textbf{b}))&\M_1(P_{x-1,y+1,z-1,t-1}(\textbf{a};\textbf{b}))\notag\\
& =\,  q^{x+y-z+1}\M_1(P_{x,y+1,z-1,t-1}(\textbf{a};\textbf{b}))\M_1(P_{x-1,y,z,t}(\textbf{a};\textbf{b}))\notag\\
&+q^{o_a+o_b+x+y}\M_1(P_{x-1,y+1,z-1,t}(\textbf{a};\textbf{b}))\M_1(P_{x,y,z,t-1}(\textbf{a};\textbf{b})).
\end{align}

In order to obtain an analogous recurrence for our $Q$-regions, apply Kuo's condensation Theorem \ref{kuothm2} to the dual graph $H$ of $Q_{x,y,z,t}(\textbf{a};\textbf{b})$ as shown in Figure \ref{KuoarrayP}(c).

\begin{figure}\centering
\setlength{\unitlength}{3947sp}%
\begingroup\makeatletter\ifx\SetFigFont\undefined%
\gdef\SetFigFont#1#2#3#4#5{%
  \reset@font\fontsize{#1}{#2pt}%
  \fontfamily{#3}\fontseries{#4}\fontshape{#5}%
  \selectfont}%
\fi\endgroup%
\resizebox{12cm}{!}{
\begin{picture}(0,0)%
\includegraphics{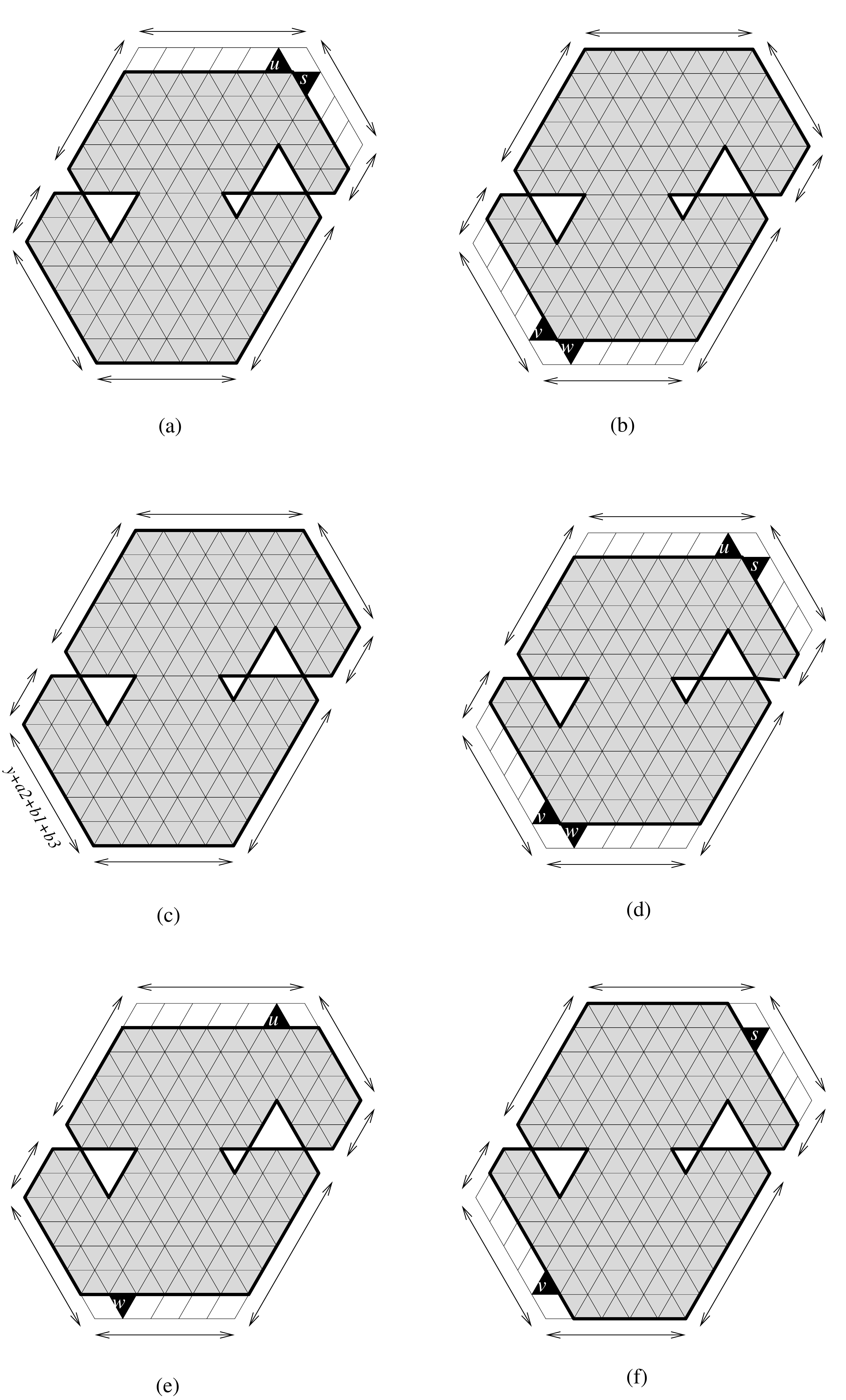}%
\end{picture}%
%
%

\begin{picture}(15684,25957)(77,-25594)
\put(301,-3496){\makebox(0,0)[lb]{\smash{{\SetFigFont{17}{20.4}{\rmdefault}{\mddefault}{\itdefault}{$x$}%
}}}}
\put(1351,-1741){\rotatebox{60.0}{\makebox(0,0)[lb]{\smash{{\SetFigFont{17}{20.4}{\rmdefault}{\mddefault}{\itdefault}{$y+t+b_2$}%
}}}}}
\put(3601, 29){\makebox(0,0)[lb]{\smash{{\SetFigFont{17}{20.4}{\rmdefault}{\mddefault}{\itdefault}{$z+a_2+b_1+b_3$}%
}}}}
\put(6421,-736){\rotatebox{300.0}{\makebox(0,0)[lb]{\smash{{\SetFigFont{17}{20.4}{\rmdefault}{\mddefault}{\itdefault}{$y+a_1+b_2$}%
}}}}}
\put(6991,-3136){\makebox(0,0)[lb]{\smash{{\SetFigFont{17}{20.4}{\rmdefault}{\mddefault}{\itdefault}{$t$}%
}}}}
\put(2596,-7066){\makebox(0,0)[lb]{\smash{{\SetFigFont{17}{20.4}{\rmdefault}{\mddefault}{\itdefault}{$z+a_1+b_2$}%
}}}}
\put(232,-4951){\rotatebox{300.0}{\makebox(0,0)[lb]{\smash{{\SetFigFont{17}{20.4}{\rmdefault}{\mddefault}{\itdefault}{$y+a_2+b_1+b_3$}%
}}}}}
\put(8584,-3526){\makebox(0,0)[lb]{\smash{{\SetFigFont{17}{20.4}{\rmdefault}{\mddefault}{\itdefault}{$x$}%
}}}}
\put(9634,-1771){\rotatebox{60.0}{\makebox(0,0)[lb]{\smash{{\SetFigFont{17}{20.4}{\rmdefault}{\mddefault}{\itdefault}{$y+t+b_2$}%
}}}}}
\put(11884, -1){\makebox(0,0)[lb]{\smash{{\SetFigFont{17}{20.4}{\rmdefault}{\mddefault}{\itdefault}{$z+a_2+b_1+b_3$}%
}}}}
\put(14679,-594){\rotatebox{300.0}{\makebox(0,0)[lb]{\smash{{\SetFigFont{17}{20.4}{\rmdefault}{\mddefault}{\itdefault}{$y+a_1+b_2$}%
}}}}}
\put(15274,-3166){\makebox(0,0)[lb]{\smash{{\SetFigFont{17}{20.4}{\rmdefault}{\mddefault}{\itdefault}{$t$}%
}}}}
\put(10666,-7051){\makebox(0,0)[lb]{\smash{{\SetFigFont{17}{20.4}{\rmdefault}{\mddefault}{\itdefault}{$z+a_1+b_2$}%
}}}}
\put(8515,-4981){\rotatebox{300.0}{\makebox(0,0)[lb]{\smash{{\SetFigFont{17}{20.4}{\rmdefault}{\mddefault}{\itdefault}{$y+a_2+b_1+b_3$}%
}}}}}
\put(244,-12451){\makebox(0,0)[lb]{\smash{{\SetFigFont{17}{20.4}{\rmdefault}{\mddefault}{\itdefault}{$x$}%
}}}}
\put(1294,-10696){\rotatebox{60.0}{\makebox(0,0)[lb]{\smash{{\SetFigFont{17}{20.4}{\rmdefault}{\mddefault}{\itdefault}{$y+t+b_2$}%
}}}}}
\put(3376,-8956){\makebox(0,0)[lb]{\smash{{\SetFigFont{17}{20.4}{\rmdefault}{\mddefault}{\itdefault}{$z+a_2+b_1+b_3$}%
}}}}
\put(6364,-9691){\rotatebox{300.0}{\makebox(0,0)[lb]{\smash{{\SetFigFont{17}{20.4}{\rmdefault}{\mddefault}{\itdefault}{$y+a_1+b_2$}%
}}}}}
\put(6934,-12091){\makebox(0,0)[lb]{\smash{{\SetFigFont{17}{20.4}{\rmdefault}{\mddefault}{\itdefault}{$t$}%
}}}}
\put(2539,-16021){\makebox(0,0)[lb]{\smash{{\SetFigFont{17}{20.4}{\rmdefault}{\mddefault}{\itdefault}{$z+a_1+b_2$}%
}}}}
\put(8644,-12496){\makebox(0,0)[lb]{\smash{{\SetFigFont{17}{20.4}{\rmdefault}{\mddefault}{\itdefault}{$x$}%
}}}}
\put(9694,-10741){\rotatebox{60.0}{\makebox(0,0)[lb]{\smash{{\SetFigFont{17}{20.4}{\rmdefault}{\mddefault}{\itdefault}{$y+t+b_2$}%
}}}}}
\put(11791,-9031){\makebox(0,0)[lb]{\smash{{\SetFigFont{17}{20.4}{\rmdefault}{\mddefault}{\itdefault}{$z+a_2+b_1+b_3$}%
}}}}
\put(14764,-9736){\rotatebox{300.0}{\makebox(0,0)[lb]{\smash{{\SetFigFont{17}{20.4}{\rmdefault}{\mddefault}{\itdefault}{$y+a_1+b_2$}%
}}}}}
\put(15334,-12136){\makebox(0,0)[lb]{\smash{{\SetFigFont{17}{20.4}{\rmdefault}{\mddefault}{\itdefault}{$t$}%
}}}}
\put(10861,-16051){\makebox(0,0)[lb]{\smash{{\SetFigFont{17}{20.4}{\rmdefault}{\mddefault}{\itdefault}{$z+a_1+b_2$}%
}}}}
\put(8575,-13951){\rotatebox{300.0}{\makebox(0,0)[lb]{\smash{{\SetFigFont{17}{20.4}{\rmdefault}{\mddefault}{\itdefault}{$y+a_2+b_1+b_3$}%
}}}}}
\put(266,-21224){\makebox(0,0)[lb]{\smash{{\SetFigFont{17}{20.4}{\rmdefault}{\mddefault}{\itdefault}{$x$}%
}}}}
\put(1316,-19469){\rotatebox{60.0}{\makebox(0,0)[lb]{\smash{{\SetFigFont{17}{20.4}{\rmdefault}{\mddefault}{\itdefault}{$y+t+b_2$}%
}}}}}
\put(3406,-17776){\makebox(0,0)[lb]{\smash{{\SetFigFont{17}{20.4}{\rmdefault}{\mddefault}{\itdefault}{$z+a_2+b_1+b_3$}%
}}}}
\put(6386,-18464){\rotatebox{300.0}{\makebox(0,0)[lb]{\smash{{\SetFigFont{17}{20.4}{\rmdefault}{\mddefault}{\itdefault}{$y+a_1+b_2$}%
}}}}}
\put(6956,-20864){\makebox(0,0)[lb]{\smash{{\SetFigFont{17}{20.4}{\rmdefault}{\mddefault}{\itdefault}{$t$}%
}}}}
\put(2266,-24826){\makebox(0,0)[lb]{\smash{{\SetFigFont{17}{20.4}{\rmdefault}{\mddefault}{\itdefault}{$z+a_1+b_2$}%
}}}}
\put(197,-22679){\rotatebox{300.0}{\makebox(0,0)[lb]{\smash{{\SetFigFont{17}{20.4}{\rmdefault}{\mddefault}{\itdefault}{$y+a_2+b_1+b_3$}%
}}}}}
\put(8636,-21224){\makebox(0,0)[lb]{\smash{{\SetFigFont{17}{20.4}{\rmdefault}{\mddefault}{\itdefault}{$x$}%
}}}}
\put(9686,-19469){\rotatebox{60.0}{\makebox(0,0)[lb]{\smash{{\SetFigFont{17}{20.4}{\rmdefault}{\mddefault}{\itdefault}{$y+t+b_2$}%
}}}}}
\put(11716,-17761){\makebox(0,0)[lb]{\smash{{\SetFigFont{17}{20.4}{\rmdefault}{\mddefault}{\itdefault}{$z+a_2+b_1+b_3$}%
}}}}
\put(14756,-18464){\rotatebox{300.0}{\makebox(0,0)[lb]{\smash{{\SetFigFont{17}{20.4}{\rmdefault}{\mddefault}{\itdefault}{$y+a_1+b_2$}%
}}}}}
\put(15326,-20864){\makebox(0,0)[lb]{\smash{{\SetFigFont{17}{20.4}{\rmdefault}{\mddefault}{\itdefault}{$t$}%
}}}}
\put(10786,-24796){\makebox(0,0)[lb]{\smash{{\SetFigFont{17}{20.4}{\rmdefault}{\mddefault}{\itdefault}{$z+a_1+b_2$}%
}}}}
\put(8544,-22659){\rotatebox{300.0}{\makebox(0,0)[lb]{\smash{{\SetFigFont{17}{20.4}{\rmdefault}{\mddefault}{\itdefault}{$y+a_2+b_1+b_3$}%
}}}}}
\put(5461,-5986){\rotatebox{60.0}{\makebox(0,0)[lb]{\smash{{\SetFigFont{17}{20.4}{\rmdefault}{\mddefault}{\itdefault}{$x+y+a_2+b_3$}%
}}}}}
\put(13676,-6128){\rotatebox{60.0}{\makebox(0,0)[lb]{\smash{{\SetFigFont{17}{20.4}{\rmdefault}{\mddefault}{\itdefault}{$x+y+a_2+b_3$}%
}}}}}
\put(5381,-15068){\rotatebox{60.0}{\makebox(0,0)[lb]{\smash{{\SetFigFont{17}{20.4}{\rmdefault}{\mddefault}{\itdefault}{$x+y+a_2+b_3$}%
}}}}}
\put(13751,-15083){\rotatebox{60.0}{\makebox(0,0)[lb]{\smash{{\SetFigFont{17}{20.4}{\rmdefault}{\mddefault}{\itdefault}{$x+y+a_2+b_3$}%
}}}}}
\put(5396,-23798){\rotatebox{60.0}{\makebox(0,0)[lb]{\smash{{\SetFigFont{17}{20.4}{\rmdefault}{\mddefault}{\itdefault}{$x+y+a_2+b_3$}%
}}}}}
\put(13811,-23813){\rotatebox{60.0}{\makebox(0,0)[lb]{\smash{{\SetFigFont{17}{20.4}{\rmdefault}{\mddefault}{\itdefault}{$x+y+a_2+b_3$}%
}}}}}
\end{picture}}
\caption{Obtaining the recurrence for the numbers of tilings of $Q$-regions.}
\label{KuoarrayP}
\end{figure}

From Figures \ref{KuoarrayP}(a), (b), (d), (e) and (f), by accounting for the weights of the forced lozenges and normalizing the weight assignment of the resulting regions if needed, we obtain
\begin{equation}
\M(G-\{u,s\})=q^{(x+2y+a+b)(z+e_a+o_b-1)}\M_1(Q_{x,y,z,t-1}(\textbf{a};\textbf{b})),
\end{equation}
\begin{equation}
\M(G-\{v,w\})=q^{(z+o_a+e_b-1)}q^{W_{x-1,y,z,t}}\M_1(Q_{x-1,y,z,t}(\textbf{a};\textbf{b})),
\end{equation}
\begin{equation}
\M(G-\{u,v,w,s\})=q^{(x+2y+a+b)(z+e_a+o_b-1)+(z+o_a+e_b-1)}q^{W_{x-1,y,z,t-1}}\M_1(Q_{x-1,y,z,t-1}(\textbf{a};\textbf{b})),
\end{equation}
\begin{equation}
\M(G-\{u,w\})=q^{(x+2y+a+b)(z+e_a+o_b-1)+(z+o_a+e_b-1)}q^{W_{x,y-1,z+1,t}}\M_1(Q_{x,y-1,z+1,t}(\textbf{a};\textbf{b})),
\end{equation}
\begin{equation}
\M(G-\{v,s\})=\M_1(Q_{x-1,y+1,z-1,t-1}(\textbf{a};\textbf{b})),
\end{equation}
respectively, where $W_{x,y,z,t}=W_{x,y,z,t}(\textbf{a};\textbf{b})$ is the number of right lozenges in each tiling of the region $P_{x,y,z,t}(\textbf{a};\textbf{b})$. It is not hard to see that this number is
\begin{align}
W_{x,y,z,t}&(\textbf{a};\textbf{b})=\sum_{i=1}^{\lfloor\frac{m+1}{2}\rfloor}a_{2i-1}\left(x+\sum_{j=1}^{i-1}a_{2j}\right)+z(x+y+t+e_a+e_b)\notag\\
&+\sum_{i=1}^{\lfloor\frac{n+1}{2}\rfloor}b_{2j-1}\left(t+\sum_{j=1}^{i-1}b_{2j}\right)+\sum_{i=1}^{\lfloor\frac{n}{2}\rfloor}b_{2i}\left(x+y+e_a+o_b-\sum_{j=1}^{i}b_{2j-1}\right)
\notag\\
&+\sum_{i=1}^{\lfloor\frac{m}{2}\rfloor}a_{2i}\left(y+t+o_a+e_b-\sum_{j=1}^{i}a_{2j-1}\right)\\
&=x(o_a+e_b)+t(e_a+o_b)+y(e_a+o_b)+z(x+y+t+e_a+e_b)\notag\\
&+e_b(e_a+o_b)+e_a(o_a+e_b)\notag\\
&+\sum_{i=1}^{\lfloor\frac{m+1}{2}\rfloor}a_{2i-1}\sum_{j=2}^{i-1}a_{2j}-\sum_{i=1}^{\lfloor\frac{m}{2}\rfloor}a_{2i}\sum_{j=1}^{i}a_{2j-1}
+\sum_{i=2}^{\lfloor\frac{n+1}{2}\rfloor}b_{2j-1}\sum_{j=1}^{i-1}b_{2j}-\sum_{i=2}^{\lfloor\frac{n}{2}\rfloor}b_{2i}\sum_{j=1}^{i}b_{2j-1}.
\end{align}

Plugging in the above equalities into the recurrence in Kuo's Theorem \ref{kuothm2} and simplifying, we obtain
\begin{align}\label{recur2}
q^{e_a+o_b+z}&\M_1(Q_{x,y,z,t-1}(\textbf{a};\textbf{b}))\M_1(Q_{x-1,y,z,t}(\textbf{a};\textbf{b}))\notag\\
&=\M_1(Q_{x,y,z,t}(\textbf{a};\textbf{b}))\M_1(Q_{x-1,y,z,t-1}(\textbf{a};\textbf{b}))\notag\\
&+q^{x+y+z+t+e_a+b-1}\M_1(Q_{x,y-1,z+1,t}(\textbf{a};\textbf{b}))\M_1(Q_{x-1,y+1,z-1,t-1}(\textbf{a};\textbf{b})).
\end{align}

Note that in both \eqref{recur1} and \eqref{recur2}, the region with indices $x,y,z,t$ is the only one for which the sum of the eight parameters $x+y+z+t+a+b+m+n$ on which we are doing the induction is maximal. Express the weighted count of this region in terms of the weighted counts of the other regions in \eqref{recur1} and \eqref{recur2}. These latter weighted counts are given, by the induction hypothesis, by formulas \eqref{main2eq1b} and \eqref{main2eq2b}. To complete our proof, it suffices to verify that the functions $\Phi^{q}$ and $\Psi^{q}$ satisfy recurrences (\ref{recur1}) and (\ref{recur2}), respectively.

%
%

We show below that $\Phi^{q}$ satisfies the recurrence \eqref{recur1} when $n=2k$ and $m=2l$; the other cases are similar. An analogous calculation verifies that $\Psi^{q}$ satisfies \eqref{recur2}.

We need to verify that
\begin{align}\label{recur3}
&q^{A^{(1)}_{x,y,z,t}(\textbf{a};\textbf{b})+A^{(1)}_{x-1,y+1,z-1,t-1}(\textbf{a};\textbf{b})}\Phi^{q}_{x,y,z,t}(\textbf{a};\textbf{b})\Phi^{q}_{x-1,y+1,z-1,t-1}(\textbf{a};\textbf{b})=\notag\\
&\,\,\,\,\,\,\,\,\,\,\,\,\,\,\,\,\,\,\,\,\,\,q^{(x+y-z+1)+A^{(1)}_{x,y+1,z-1,t-1}(\textbf{a};\textbf{b})+A^{(1)}_{x-1,y,z,t}(\textbf{a};\textbf{b})}\Phi^{q}_{x,y+1,z-1,t-1}(\textbf{a};\textbf{b})\Phi^{q}_{x-1,y,z,t}(\textbf{a};\textbf{b})\notag\\
&\,\,\,\,\,\,\,\,\,\,\,\,\,\,\,\,\,\,\,\,\,\,+q^{(x+y+o_a+o_b)+A^{(1)}_{x-1,y+1,z-1,t}(\textbf{a};\textbf{b})+A^{(1)}_{x,y,z,t-1}(\textbf{a};\textbf{b})}\Phi^{q}_{x-1,y+1,z-1,t}(\textbf{a};\textbf{b})\Phi^{q}_{x,y,z,t-1}(\textbf{a};\textbf{b}).
\end{align}

It is easy to verify from the definition of the function $A^{(1)}_{x,y,z,t}(\textbf{a};\textbf{b})$ in Section 6  that
\begin{align}
\label{Arel1}
A^{(1)}_{x,y,z,t}(\textbf{a};\textbf{b})+A^{(1)}_{x-1,y+1,z-1,t-1}(\textbf{a};\textbf{b})=&A^{(1)}_{x,y+1,z-1,t-1}(\textbf{a};\textbf{b})\notag\\&+A^{(1)}_{x-1,y,z,t}(\textbf{a};\textbf{b})+x+y-z+1.
\end{align}
and
\begin{align}
\label{Arel2}
A^{(1)}_{x,y,z,t}(\textbf{a};\textbf{b})+A^{(1)}_{x-1,y+1,z-1,t-1}(\textbf{a};\textbf{b})=A^{(1)}_{x-1,y+1,z-1,t}(\textbf{a};\textbf{b})+A^{(1)}_{x,y,z,t-1}(\textbf{a};\textbf{b})-e_a+o_b.
\end{align}

By \eqref{Arel1} and \eqref{Arel2}, equation (\ref{recur3}) is seen to be equivalent to
\begin{align}\label{recur4}
\frac{\Phi^{q}_{x-1,y,z,t}(\textbf{a};\textbf{b})}{\Phi^{q}_{x,y,z,t}(\textbf{a};\textbf{b})}
\frac{\Phi^{q}_{x,y+1,z-1,t-1}(\textbf{a};\textbf{b})}{\Phi^{q}_{x-1,y+1,z-1,t-1}(\textbf{a};\textbf{b})}+q^{x+y+a}\frac{\Phi^{q}_{x-1,y+1,z-1,t}(\textbf{a};\textbf{b})}{\Phi^{q}_{x-1,y+1,z-1,t-1}(\textbf{a};\textbf{b})}\frac{\Phi^{q}_{x,y,z,t-1}(\textbf{a};\textbf{b})}{\Phi^{q}_{x,y,z,t}(\textbf{a};\textbf{b})}=1.
\end{align}

Let us simplify the first term on the left-hand side above. Consider the two $\Phi^{q}$-functions in the numerator and denominator of the first fraction in the first term. They differ only in their $x$-parameters.
Cancelling out all the factors not involving the $x$-parameter and using the trivial fact $\Hf_q(n+1)=[n]!_q\Hf_q(n)$, we get
\begin{align}\label{simplify1}
\frac{\Phi^{q}_{x-1,y,z,t}(\textbf{a};\textbf{b})}{\Phi^{q}_{x,y,z,t}(\textbf{a};\textbf{b})}&=\frac{[a+b+x+2y+t-1]!_q}{[a+b+x+2y+z+t-1]!_q}\frac{[a+x-1]!_q}{[a+x+y-1]!_q}\notag\\
&\times \frac{[a+b+x+y+z+t-1]!_q}{[a+b+x+y+t-1]!_q} \frac{[e_a+e_b+x+y+t-1]!_q}{[e_a+e_b+x+t-1]!_q}\notag\\
&\times \frac{s_{q}(x-1,a_1,\dotsc,a_{2k},y+z,b_{2l},\dotsc,b_{1},t)}{s_{q}(x,a_1,\dotsc,a_{2k},y+z,b_{2l},\dotsc,b_{1},t)}.
\end{align}
A similar simplification of the second fraction in the first term on the left hand side of \eqref{recur4} yields
\begin{align}\label{simplify2}
\frac{\Phi^{q}_{x,y+1,z-1,t-1}}{\Phi^{q}_{x-1,y+1,z-1,t-1}}&=\frac{[a+b+x+2y+z+t-1]!_q}{[a+b+x+2y+t]!_q}\frac{[a+x+y]!_q}{[a+x-1]!_q}\notag\\
&\times \frac{[a+b+x+y+t-1]!_q}{[a+b+x+y+z+t-2]!_q} \frac{[e_a+e_b+x+t-2]!_q}{[e_a+e_b+x+y+t-1]!_q}\notag\\
&\times \frac{s_{q}(x,a_1,\dotsc,a_{2k},y+z,b_{2l},\dotsc,b_{1},t-1)}{s_{q}(x-1,a_1,\dotsc,a_{2k},y+z,b_{2l},\dotsc,b_{1},t-1)}.
\end{align}
Furthermore, using definition \eqref{semieqq}, one readily checks that
\small{\begin{align}\label{simplify3}
&\frac{s_q(x,a_1,\dotsc,a_{2k},y+z,b_{2l},\dotsc,b_{1},t-1)}{s_q(x-1,a_1,\dotsc,a_{2k},y+z,b_{2l},\dotsc,b_{1},t-1)}\frac{s_q(x-1,a_1,\dotsc,a_{2k},y+z,b_{2l},\dotsc,b_{1},t)}
{s_q(x,a_1,\dotsc,a_{2k},y+z,b_{2l},\dotsc,b_{1},t)}\notag\\
&=\frac{[e_a+e_b+x+t-1]_q}{[a+b+x+y+z+t-1]_q}.
\end{align}}
\normalsize By (\ref{simplify1}),  (\ref{simplify2}) and (\ref{simplify3}), we get
\begin{equation}
\frac{\Phi^{q}_{x-1,y,z,t}}{\Phi^{q}_{x,y,z,t}}\frac{\Phi^{q}_{x,y+1,z-1,t-1}}{\Phi^{q}_{x-1,y+1,z-1,t-1}}=\frac{[a+x+y]_q}{[a+b+x+2y+t]_q}.
\end{equation}
We can simplify the second term on the left-hand side of \eqref{recur4} in the same way (by comparing now the $t$-parameters of the numerator and denominator in each fraction). This leads to
\begin{equation}
\frac{\Phi^{q}_{x,y,z,t-1}}{\Phi^{q}_{x,y,z,t}}\frac{\Phi^{q}_{x-1,y+1,z-1,t}}{\Phi^{q}_{x-1,y+1,z-1,t-1}}=\frac{[b+y+t]_q}{[a+b+x+2y+t]_q}.
\end{equation}
Therefore, equation (\ref{recur4}) is equivalent to
\begin{equation}
\frac{[a+x+y]_q}{[a+b+x+2y+t]_q}
+q^{x+y+a}\frac{[b+y+t]_q}{[a+b+x+2y+t]_q}=1,
\end{equation}
which is readily checked.
\end{proof}

\section{A Schur function identity}

We recall the Cohn--Larsen--Propp's original tiling formula for the (dented) semihexagon as follows. Let $T_{m,n}(x_1,\dotsc,x_n)$ be the region obtained from the trapezoid of side lengths $m$, $n$, $m+n$, $n$ (clockwise from top) by removing the up-pointing unit triangles from along its bottom that are in positions $x_1,x_2,\dotsc,x_n$ as counted from left to right. Cohn, Larsen, and Propp \cite{CLP} showed that the number of lozenge tilings of  $T_{m,n}(x_1,\dotsc,x_n)$ is given by $\prod_{1\leq i<j\leq n}\frac{x_j-x_i}{j-i}$.

We notice that Theorem \ref{main1} can be restated as following:

\begin{thm}
Consider the hexagon of side-lengths $y+u,z+d,y+u,y+d,z+u,y+d$ (clockwise, starting from the northwestern side). Remove $a+b$ unit triangles from along the horizontal diagonal as follows: Remove $a$ consecutive unit triangles from  the left and $b$ consecutive unit triangles from the right, so that $u$ of the removed unit triangles point up, and $v$ point down (in particular, $u+v=a+b$).
Denote the resulting region by $F_{y,z}(a_1,\dotsc,a_u;b_1,\dotsc,b_{d})$, where the
$a_i$'s and $b_j$'s are the positions of the removed up-pointing and down-pointing unit triangles (from left to right), respectively.

Then
\small{\begin{align}\label{function1a}
\M&\left(F_{y,z}(a_1,\dotsc,a_u;b_1,\dotsc,b_{d})\right)=
 \frac{\Hf(y)\Hf(z)\Hf\left(a+b+2y+z\right)}
 {\Hf(y+z)\Hf\left(a+b+2y\right)}\notag\\
 &\times\frac{\Hf\left(a+b+y\right)}{\Hf\left(a+b+y+z\right)}
 \frac{\Hf\left(a+y\right)\Hf\left(b+y\right)}{\Hf\left(a\right)\Hf\left(b\right)}
 \frac{\Hf\left(d\right)\Hf\left(u\right)}{\Hf\left(d+y\right)\Hf\left(u+y\right)}\notag\\
 &\times \M\Big(T_{u,y+z+d}(a_1,a_2,\dotsc,a_u)\Big)\notag\\
  &\times \M\Big(T_{d,y+z+u}(b_1,b_2,\dotsc,b_d)\Big).
\end{align}}
\end{thm}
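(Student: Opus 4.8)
The plan is to obtain this statement as a reformulation of Theorem~\ref{main1}, in the equivalent, unified form given in Section~3 (the formula for $\M(D^{(i)}_{y,z}(\cdot))$), by recognizing the region $F_{y,z}(a_1,\dots,a_u;b_1,\dots,b_d)$ as one of the regions $D^{(1)}_{y,z}$ or $D^{(2)}_{y,z}$, and then rewriting the two dented-semihexagon counts $\M(S^+)$, $\M(S^-)$ occurring there as the trapezoidal counts $\M(T_{u,y+z+d}(a_1,\dots,a_u))$ and $\M(T_{d,y+z+u}(b_1,\dots,b_d))$ via the Cohn--Larsen--Propp dictionary \cite{CLP} recalled at the start of this section. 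No new combinatorics is required; the work is in the translation of notation.

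First I would set up the identification $F\leftrightarrow D^{(i)}$. The hexagon of side-lengths $y+u,z+d,y+u,y+d,z+u,y+d$ is exactly the hexagon underlying $D^{(i)}_{y,z}$ once one puts $u_a+u_b=u$ and $d_a+d_b=d$; and removing $a$ contiguous unit triangles from the left end of the horizontal diagonal and $b$ from the right end is precisely the removal of the $a$-fern from the western corner and of the $b$-fern from the eastern corner, the alternating lobe sizes being read off from the gaps between the prescribed dent positions. Under this identification $A:=a_0+\dots+a_m=a$, $B:=b_0+\dots+b_n=b$, $u_a+u_b=u$, and $d_a+d_b=v=d$, with $u+v=a+b$ as required. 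Whether one lands in $D^{(1)}$ or $D^{(2)}$ is dictated by the orientation of the lobe of the right fern nearest the eastern corner, but this is immaterial since the unified theorem gives the \emph{same} formula for $i=1$ and $i=2$. As elsewhere in the paper there are four cases according to the parities of the numbers of lobes of the two ferns; I would carry one out in detail and note that the others are analogous.

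Next, applying the unified version of Theorem~\ref{main1} with these substitutions gives $\M(F_{y,z}(a_1,\dots,a_u;b_1,\dots,b_d))=\M(S^+)\,\M(S^-)$ times the product of hyperfactorials displayed there; substituting $A+B=a+b$, $u_a+u_b=u$ and $d_a+d_b=d$ turns that product verbatim into the hyperfactorial prefactor of \eqref{function1a}. It then remains only to identify the two dented semihexagons. By construction $S^+$ is the dented semihexagon lying above the fern axis, its dents being exactly the $u$ up-pointing removed unit triangles, located at the positions $a_1,\dots,a_u$ along its base; reading its side-lengths off Figure~\ref{Dregion} one verifies that it coincides with the Cohn--Larsen--Propp region $T_{u,y+z+d}(a_1,\dots,a_u)$, so that $\M(S^+)=\M(T_{u,y+z+d}(a_1,\dots,a_u))$. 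The mirror computation below the axis gives $\M(S^-)=\M(T_{d,y+z+u}(b_1,\dots,b_d))$. Substituting these two identities into the formula from Theorem~\ref{main1} yields \eqref{function1a}, completing the proof.

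The step I expect to require the most care is this last identification of $S^+$ and $S^-$ with the $T$-regions: one must keep straight which removed unit triangles land above versus below the fern axis in each of the parity cases, match correctly the positions of the dents and the two remaining side-lengths of each dented semihexagon with the trapezoid data of $T_{u,y+z+d}$ and $T_{d,y+z+u}$, and guard against the usual off-by-one slips in those side-lengths. Everything else is either a direct appeal to Theorem~\ref{main1} (in its unified form) or a mechanical hyperfactorial verification.
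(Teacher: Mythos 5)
Your proposal matches the paper's own treatment: the paper presents this theorem as a direct restatement of Theorem 2.1 in its unified form (Theorem 3.1), with the fern/contiguous-unit-triangle equivalence via forced lozenges and the Cohn--Larsen--Propp identification of the dented semihexagons $S^{+}$, $S^{-}$ with the regions $T_{u,y+z+d}(a_1,\dotsc,a_u)$ and $T_{d,y+z+u}(b_1,\dotsc,b_d)$ being exactly the translation you carry out. The paper offers no argument beyond this observation, so your version is simply a more explicit write-up of the same routine bookkeeping.
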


Note that, due to forced lozenges, removing an arbitrary fern is equivalent to removing some sequence of contiguous unit triangles, and therefore the above formulation does cover the general case.

By \cite{CLP} and equation (7.105) of \cite{Stanley}, we have\footnote{ The notation $1^n$ in the argument of a Schur function stands for $n$ arguments equal to 1.}
\begin{equation}
\M(T_{u,y+z+d}(a_1,a_2,\dotsc,a_u))=\prod_{1\leq i< j \leq u}\frac{a_j-a_i}{j-i}=\textbf{s}_{\lambda(A)}(1^{u}),
\end{equation}
and
\begin{equation}
\M(T_{d,y+z+u}(b_1,b_2,\dotsc,b_d))=\prod_{1\leq i< j \leq d}\frac{b_j-a_i}{j-i}=\textbf{s}_{\lambda(B)}(1^{d}),
\end{equation}
where $A=\{a_1,\dotsc,a_u\}$, $B=\{b_1,\dotsc,b_d\}$, and for a set $X=\{x_1<\cdots<x_n\}$, $\lambda(X)$ denotes the partition $(x_n-n+1,\dotsc,x_2-1,x_1)$.

On the other hand, it is not hard to see that we also have
\begin{align}
\M&\left(F_{y,z}(a_1,\dotsc,a_u;b_1,\dotsc,b_{d})\right)
=\sum_{|S|=y}\textbf{s}_{\lambda(A\cup S)}(1^{y+u})\textbf{s}_{\lambda(B \cup S)}(1^{y+d}),
\end{align}
where the sum runs over all sets $S=\{s_1,\dotsc,s_y\}$ with $a+1\leq s_1<s_2<\cdots<s_y\leq a+y+z$. Indeed, this follows from the fact that in each tiling, precisely $y$ of the $y+z$ unit segments on the lattice line from along which we removed the $a+b$ unit triangles are straddled by vertical lozenges.

Thus, we obtain the Schur function identity
\begin{align}
\label{id1}
\sum_{|S|=y}&\textbf{s}_{\lambda(A\cup S)}(1^{y+u})\textbf{s}_{\lambda(B \cup S)}(1^{y+d})=\frac{\Hf(y)\Hf(z)\Hf\left(a+b+2y+z\right)}
{\Hf(y+z)\Hf\left(a+b+2y\right)}\notag\\
 &\times\frac{\Hf\left(a+b+y\right)}{\Hf\left(a+b+y+z\right)}
 \frac{\Hf\left(a+y\right)\Hf\left(b+y\right)}{\Hf\left(a\right)\Hf\left(b\right)}
 \frac{\Hf\left(d\right)\Hf\left(u\right)}{\Hf\left(d+y\right)\Hf\left(u+y\right)}
\textbf{s}_{\lambda(A)}(1^{u})\textbf{s}_{\lambda(B)}(1^{d}).
\end{align}


\begin{op}
Let $T=\{t_1,t_2,\dotsc,t_{a+b+y+z}\}$, where $0<t_1<t_2<\dotsc<t_{a+b+y+z}$ are integers, and let $U$ and $D$ be two subsets of $T$ so that $|U|=u$, $|D|=d$, $U\cup D=\{t_1,\dotsc,t_a\}\cup\{t_{a+y+z+1},\dotsc,t_{a+b+y+z}\}$, and $U\cap D=\emptyset$. Is the ratio
\[\frac{\sum_{|S|=y}\textbf{s}_{\lambda(U\cup S)}(x_1,\dotsc,x_{y+n})\textbf{s}_{\lambda(D \cup S)}(x_1,\dotsc,x_{y+n+d-u})}{ \textbf{s}_{\lambda(U)}(x_1,\dotsc,x_n)\textbf{s}_{\lambda(D)}(x_1,\dotsc,x_{n+d-u})},\]
where the sum is taken over all subsets $S=\{s_1,\dotsc,s_y\}$ with $a+1\leq s_1<\cdots<s_y\leq a+y+z$, a simple polynomial in the $x_i$'s, and if so, what is it?
\end{op}

We note that for $x_i=q^{i-1}$, $1\leq i\leq n$, the ratio above is a monomial in $q$ with coefficient equal to the coefficient on the right-hand side of \eqref{id1}, and exponent of $q$ equal to a somewhat complicated expression, which can be worked out using Theorem \ref{main2}.

\section{Some further open problems}

The formulas we found for the number of lozenge tilings of doubly-intruded hexagons allow one to approach some statistical physics questions of a kind that does not seem to have been addressed yet in the literature.

For simplicity, focus on the case when $x+y=y+t=z=N$, all the lobe sizes $a_i$ and $b_i$ of the ferns are equal to 1, both ferns have an even number of lobes, and their combined length $m+n$ is equal to~$M$. Then the region $P_{x,y,z,t}(\textbf{a};\textbf{b})$ is a regular hexagon of side-length $M/2+N$, with two unit-lobed ferns of combined length $M$ intruding from the left and right at height $x$ above the horizontal diagonal.

Given $0<p<2$, in the limit as the parameters grow to infinity so that $M/(M/2+N)\to p$, $x/(M/2+N)\to h$ and $m/n\to r$ for some $0<h<1$ and $r>0$, one may ask for what values $h=h_p$ and $r=r_p$ does the number of lozenge tilings achieve its maximum.

To get some physical intuition about this question, recall that by \cite{CLP} a typical lozenge tiling of a large regular hexagon is such that the corresponding stepped surface is very close to a certain limiting surface (called the limit shape of a boxed plane partition). Due to this, outside the inscribed circle (the ``arctic circle'') the tiling is forced (``frozen'') with probability approaching 1, and near any given point inside the arctic circle it contains lozenges of the three orientations with specific probabilities.

The new ingredient in our set-up is the presence of the two intrusions, which in the fine mesh limit become two slits ``probing'' the random lozenge tiling. The geometry of the slits is not compatible with the pattern in the frozen regions, and it sets great restriction inside the arctic circle. Our question asks for the ratio of the slit lengths and the height where they need to be in order to have a maximum number of tilings.

\begin{conj} For any given $0<p<2$ and any fixed $0<h<1$, the maximum occurs when $r_p=1$ $($i.e., when the slits have the same length$)$.
\end{conj}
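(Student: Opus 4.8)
The plan is to reduce the conjecture to an explicit one-variable concavity statement, using the exact product formula of Theorem~\ref{main1} and the Cohn--Larsen--Propp evaluation $\prod_{1\le i<j\le n}\frac{x_j-x_i}{j-i}$ for the dented trapezoids appearing in it. First I would specialize to the regime of the conjecture: $x+y=y+t=z=N$ (so $t=x$ and $y=N-x$), all lobe sizes equal to $1$, $m$ and $n$ even with $m+n=M$, and write $s=M/2+N$ for the common side-length. The region depends on the split $m+n=M$ only through the two ferns --- one with $m$ unit lobes, one with $n$ --- and a left--right reflection, which fixes the region type here because $x=t$, interchanges the two ferns; hence $\M(P_{x,y,z,t}(\textbf{a};\textbf{b}))$ is a symmetric function of $(m,n)$. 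Writing $\log\M=\mathrm{const}+\Gamma(m)$ with $\Gamma(m)=\Gamma(M-m)$, the value $m=n$, i.e.\ $r=1$, is automatically a critical point. Moreover, Theorem~\ref{main1} writes $\Gamma(m)$ explicitly as a sum $\sum\pm\log\Hf(\ell)$ with $\ell$ ranging over affine functions of $m$ (with $n=M-m$), plus quadratic-in-$m$ multiples of $\log2$ coming from evaluating $\prod_{i<j}\frac{x_j-x_i}{j-i}$ on the two arithmetic progressions of slit positions.

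Next I would take the scaling limit $M/s\to p$, $x/s\to h$, $m/s\to\alpha$, $n/s\to p-\alpha$, and use $\log\Hf(cs)=\tfrac{c^2s^2}{2}\log(cs)-\tfrac34 c^2s^2+o(s^2)$ to get $s^{-2}\Gamma(\alpha s)\to\gamma(\alpha)$ on the admissible interval, where $\gamma(\alpha)=\gamma(p-\alpha)$ and, after the (automatic) cancellation of the $s^2\log s$ terms --- recorded by the polynomial identity $\sum_i\varepsilon_i c_i(\alpha)^2\equiv0$ --- one gets
\[
\gamma(\alpha)=\sum_i\varepsilon_i\,\ell\bigl(c_i(\alpha)\bigr)+Q(\alpha),\qquad \ell(c)=\tfrac{c^2}{2}\log c,
\]
with $\varepsilon_i\in\{\pm1\}$, each $c_i$ affine in $\alpha$ (coefficients explicit rational functions of $p,h$), and $Q$ a quadratic polynomial. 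It then suffices to show that $\gamma$ is strictly concave on its domain for every $0<p<2$ and $0<h<1$: strict concavity together with $\gamma(\alpha)=\gamma(p-\alpha)$ forces the maximum at $\alpha=p/2$, i.e.\ $r_p=1$.

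Since $\ell''(c)=\log c+\tfrac32$, and differentiating $\sum_i\varepsilon_i c_i^2\equiv0$ twice gives $\sum_i\varepsilon_i(c_i')^2=0$ (the $c_i$ being affine), the $\tfrac32$ cancels and
\[
\gamma''(\alpha)=\sum_i\varepsilon_i\,(c_i')^2\,\log c_i(\alpha)+Q'',
\]
where $Q''$ is a constant and the logarithmic sum has positive and negative weights of equal total mass. The hard part --- and this is where the geometry of the hexagon and of the two slits really enters --- will be to prove this is negative for all admissible $\alpha,p,h$. The natural route is to exhibit a majorization of the affine forms $c_i$ with positive weight by those with negative weight and invoke concavity of $\log$ (a weighted Karamata comparison), absorbing the constant $Q''$ into the estimate. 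This is not a soft inequality: $\Gamma$ genuinely mixes competing pieces --- some contributions, such as the $\binom{m/2}{2}+\binom{n/2}{2}$ from the arithmetic progressions of slit positions, are convex in the split, others concave --- so checking the majorization uniformly over the two-parameter family $(p,h)$ is the real content of the conjecture. If $\gamma$ is not globally concave, a fallback is to show directly that $\gamma'(\alpha)=\sum_i\varepsilon_i\,c_i'\,c_i(\alpha)\bigl(\log c_i(\alpha)-1\bigr)+Q'(\alpha)>0$ on $(0,p/2)$ and use $\gamma'(\alpha)=-\gamma'(p-\alpha)$; this is the same difficulty repackaged. Throughout, the $q$-formula of Theorem~\ref{main2} (via $\M_1,\M_2$) should give the same $\gamma$ and, as a bonus, the exponent of $q$, which could be used afterwards to locate the optimal slits inside the arctic circle.
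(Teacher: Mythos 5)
The statement you are addressing is a \emph{conjecture}: the paper offers no proof of it, so there is nothing on the authors' side to compare your argument with. What you have written is a reduction strategy, and you are candid that the decisive step is not carried out; as it stands this is not a proof. The parts that do work are worth recording. The reflection symmetry is sound: in the regime $x+y=y+t=z=N$ one has $t=x$, and reflecting the hexagon across a vertical line carries $P_{x,y,z,x}(1^m;1^n)$ to $P_{x,y,z,x}(1^n;1^m)$ (an up-pointing lobe stays up-pointing under this reflection, and both ferns begin with an up-pointing unit lobe, so the region type and lobe orientations are preserved). Hence $\M$ is symmetric in $(m,n)$ and $\alpha=p/2$ is automatically a critical point of the limiting free energy $\gamma$. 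The asymptotic expansion $\log\Hf(cs)=\tfrac{c^2s^2}{2}\log(cs)-\tfrac34c^2s^2+o(s^2)$, the cancellation identity $\sum_i\varepsilon_i c_i(\alpha)^2\equiv 0$ and its twice-differentiated consequence $\sum_i\varepsilon_i(c_i')^2=0$ are all correct and do put $\gamma''$ in the clean form you state.

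The genuine gap is exactly where you place it: nothing in the proposal shows that $\gamma''<0$ (or that $\gamma'>0$ on $(0,p/2)$). Symmetry alone only gives criticality; $\alpha=p/2$ could a priori be a local minimum or a saddle of the symmetric profile, and you yourself note that the pieces of $\gamma$ pull in opposite directions --- the $\binom{m/2}{2}+\binom{n/2}{2}$ contribution from the arithmetic progressions of unit lobes is convex in the split, so $Q''>0$ fights the logarithmic terms. The proposed ``weighted Karamata / majorization of the affine forms $c_i$'' is named but not exhibited: you would need to write down the explicit list of affine forms $c_i(\alpha)$ with their signs $\varepsilon_i$ (these come from the six hyperfactorial ratios in \eqref{function1} \emph{and} from the cross terms of the two Cohn--Larsen--Propp products, which you have partially glossed over --- only the within-progression pairs give pure powers of $2$; the cross-progression pairs contribute further hyperfactorials of affine arguments) and then verify the majorization uniformly in $(p,h)$ on the whole admissible interval of $\alpha$. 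Until that two-parameter family of inequalities is actually checked, the conjecture remains open; your write-up is a reasonable roadmap, not a proof.
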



\begin{op} Given $0<p<2$, determine the value of $h_p$ for which the overall maximum is achieved. 
\end{op}


These open problems can be phrased also in the case when the hexagon is not asymptotically regular, and when the lobes of the ferns have more general side-lengths. The asymptotics of our explicit formulas should give some insigths into the effect of probing random tilings of hexagons with such intrusions.

\subsection*{Acknowledgements}
The authors would like to thank the anonymous reviewers for their careful reading of the manuscript and for their many helpful comments and suggestions. Special thanks go to the reviewer who provided us with the \texttt{Mathematica} code that produced  Figures \ref{house1} and \ref{house2}.


\end{document}